\newtheorem{theorem}{Theorem}[section]
\newtheorem{conjecture}[theorem]{Conjecture}
\newtheorem{question}[theorem]{Question}
\newtheorem{lemma}[theorem]{Lemma}
\newtheorem{corollary}[theorem]{Corollary}
\newtheorem{proposition}[theorem]{Proposition}
\newtheorem{defn}[theorem]{Definition}
\newtheorem{thmx}{Theorem}
\newtheorem{questx}[thmx]{Question}
\newtheorem{conjx}[thmx]{Conjecture}
\newtheorem{defx}[thmx]{Definition}
\theoremstyle{definition}%below are displayed non-italic
\newtheorem{example}[theorem]{Example}
\newtheorem{remark}[theorem]{Remark}
\newtheorem{obs}[theorem]{Observation}
\newcommand{\JM}{\mathbf{JM}}
\newcommand{\Ext}{\operatorname{Ext}}
\newcommand{\soc}{\operatorname{soc}}
\newcommand{\surj}{\rightarrow\mathrel{\mkern-14mu}\rightarrow}
\newcommand{\inj}{\hookrightarrow}
\newcommand{\hd}{\operatorname{hd}}
\newcommand{\inv}{^{-1}}
\newcommand{\kl}{\underline H}
\newcommand{\jc}{\mathcal J}
\newcommand{\lc}{\mathcal L}
\newcommand{\rc}{\mathcal R}
\newcommand{\hc}{\mathcal H}
\newcommand{\cO}{\mathcal O}
\newcommand{\co}{\Delta_e/\Delta}
\newcommand{\WD}{W(D_{n+2})}
\newcommand{\WB}{W(B_{n+1})}
\newcommand{\so}{\operatorname{soc}\Delta_e/\Delta}
\newcommand{\maxd}{\operatorname{max.deg. }}
\newcommand{\mind}{\operatorname{min.deg. }}
\newcommand{\hk}[1]{{\color{teal}HK: #1}}
\newcommand{\desc}[2]{\ensuremath{ {}_{#1}W_{#2} }}
\newcommand{\BG}[2]{\ensuremath{ {}_{#1}\mathbf{BG}_{#2} }}
\newcommand{\JI}[2]{\ensuremath{ {}_{#1}\mathbf{JI}_{#2} }}
\newcommand{\Ou}{\ensuremath{ 0^+ }}
\newcommand{\Od}{\ensuremath{ 0^- }}
\newcommand{\skal}{\operatorname{skal}}
\newcommand{\skbl}{\operatorname{skbl}}
\tikzset{cross/.style={cross out, draw=black, minimum size=2*(#1-\pgflinewidth), inner sep=0pt, outer sep=0pt},
%default radius will be 1pt. 
cross/.default={2pt}}
\begin{document}

\title[Join operation for the Bruhat order and Verma modules]
{Join operation for the Bruhat order and Verma modules}

\author[H.~Ko, V.~Mazorchuk and R.~Mr{\dj}en]
{Hankyung Ko, Volodymyr Mazorchuk and Rafael Mr{\dj}en}

\begin{abstract}
We observe that the join operation for the Bruhat order on a Weyl group agrees with the intersections of Verma modules in type $A$. The statement is not true in other types, and we propose a conjectural statement of a weaker correspondence. Namely, we introduce distinguished subsets of the Weyl group on which the join operation conjecturally agrees with the intersections of Verma modules.
We also relate our conjecture with a statement about the socles of the 
cokernels of inclusions between Verma modules. The latter determines the first Ext spaces between a simple module and a Verma module.
  We give a conjectural complete description of such socles, which we verify in a number of cases. 
Along the way, we determine the poset structure of the join-irreducible elements in Weyl groups and obtain closed formulae for certain families of Kazhdan-Lusztig polynomials.
\end{abstract}

\maketitle

\section{Introduction}

Let $\mathfrak{g}$ be a finite dimensional complex semisimple Lie algebra
with a fixed triangular decomposition 
\[\mathfrak{g} = \mathfrak{n}^+\oplus\mathfrak{h}\oplus\mathfrak{n}^-.\]
Let $W$ be the associated Weyl group, which we view as 
a Coxeter system $(W,S)$. Consider the associated BGG category $\cO$
and its principal block $\cO_0$ (the indecomposable 
summand of $\cO$ containing the trivial  $\mathfrak{g}$-module).  
The Verma modules in $\cO_0$ are indexed by the Weyl group elements. For $w\in W$, we denote by $\Delta_w$ the Verma module of heighst weight $w.0 = w\rho - \rho$, where $\rho$ is the half of the sum of all positive roots. 
The set $W$ has a poset structure with respect to the Bruhat order $\leq$.
By a result of Bernstein, Gelfand and Gelfand, see \cite[Chapter~7]{Di},
for $w,x\in W$, the following assertions are equivalent:
\begin{itemize}
\item there is a (necessarily unique up to scalar and injective)
non-zero homomorphism from $\Delta_w$ to $\Delta_x$.
\item $w\geq x$.
\end{itemize}
This allows us to unambiguously write $\Delta_w\subseteq \Delta_x$
provided that $w\geq x$. In particular, we can view each $\Delta_w$
as a canonical submodule of the dominant Verma module $\Delta_e$. 
In other words, we have an isomorphism of posets as follows:
\begin{equation}\label{posetiso}
    (W,\leq) \cong (\{\Delta_w\subseteq \Delta_e\ |\ w\in W\},\supseteq).
\end{equation}

Being a poset, $W$ has the join operation 
\[ \vee:  2^W \to W\coprod \{\text{`does not exist'}\},\]
where $2^W$ denotes the power set of $W$.
The main purpose of the current paper is to 
understand the module-theoretic interpretation of 
this structure on the right hand side of \eqref{posetiso}. 
The obvious candidate is the intersection:
\begin{questx}\label{intque}
Given $U\subseteq W$, do we have $w=\bigvee U$ if and only if $\displaystyle\bigcap_{x\in U}\Delta_x = \Delta_w$?
\end{questx}
Using the results of \cite{kmm2}, we observe that
the answer to Question~\ref{intque} is always positive in type $A$ 
(see Corollary~\ref{typeAconj}). A similar observation is made in \cite{Ko}.

\begin{thmx}\label{intthm:A}
Suppose $\mathfrak{g} = \mathfrak{sl}_n$. Then for any $U\subseteq W$, we have $\bigvee U =w\in W$ if and only if $\displaystyle\bigcap_{x\in U}\Delta_x = \Delta_w$.
\end{thmx}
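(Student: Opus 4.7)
My plan is to reduce the statement to a purely order-theoretic tautology together with a single input from \cite{kmm2}. Using the poset isomorphism \eqref{posetiso}, for any $w\in W$ and any $U\subseteq W$ we have
\[
\Delta_w \subseteq \bigcap_{x\in U}\Delta_x \iff \Delta_w\subseteq \Delta_x \text{ for every }x\in U \iff w \geq x \text{ for every }x\in U.
\]
In other words, the Weyl group elements $w'$ such that $\Delta_{w'}$ sits inside $\bigcap_{x\in U}\Delta_x$ are precisely the upper bounds of $U$ in the Bruhat order. This observation, which uses no type restriction, already yields the ``if'' direction: if $\bigcap_{x\in U}\Delta_x = \Delta_w$, then $w$ is an upper bound of $U$, and every other upper bound $w'$ satisfies $\Delta_{w'}\subseteq \Delta_w$, hence $w'\geq w$, so $w=\bigvee U$.

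For the ``only if'' direction I would invoke the type-$A$ results of \cite{kmm2}, which I expect to provide precisely the statement that, in type $A$, the intersection of any family of Verma submodules of $\Delta_e$ is again a Verma module. Granted this, if $\bigvee U = w$, then $\bigcap_{x\in U}\Delta_x = \Delta_{w'}$ for some $w'$, and the first paragraph forces $w'=w$. One technical point I would have to verify is that \cite{kmm2} indeed covers arbitrary, and not merely pairwise, intersections; if only the pairwise case is at hand there, a short induction on $|U|$ using the associativity of the join in a poset reduces the general statement to the pairwise one $\Delta_x\cap\Delta_y = \Delta_{x\vee y}$.

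The main obstacle, and the reason Theorem~\ref{intthm:A} is restricted to type $A$, lies entirely in the external input: in other types the intersection of two Verma submodules of $\Delta_e$ need not be Verma, which is precisely the motivation for the weaker conjectural statement proposed later in the paper. Beyond locating and applying the correct statement from \cite{kmm2}, the proof is a formal translation through \eqref{posetiso}, so I do not anticipate a further obstacle.
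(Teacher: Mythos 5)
Your ``if'' direction is correct and is exactly the paper's Proposition~\ref{propnw-2}. The gap is in the ``only if'' direction, where you outsource the entire content of the theorem to an input you expect to find in \cite{kmm2}: that in type $A$ the intersection of any family of Verma submodules of $\Delta_e$ is again a Verma module. First, that statement is false as you phrase it, even in type $A$: the Bruhat order on $S_n$ is not a lattice (already in $S_3$ the join $s\vee t$ of the two simple reflections does not exist), so by your own first paragraph (equivalently Proposition~\ref{propnw-2}) the intersection $\Delta_s\cap\Delta_t$ cannot be a Verma module. Second, even the correct restricted version --- $\bigcap_{x\in U}\Delta_x$ is Verma whenever $\bigvee U$ exists --- is not in \cite{kmm2}; it is precisely the new observation this paper is making. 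What \cite{kmm2} actually provides is a description of $\soc(\Delta_e/\Delta_w)$ as a multiplicity-free sum indexed by certain bigrassmannian permutations, i.e.\ the socle-sum property (Theorem~\ref{con1A}). Getting from that to the intersection statement is the real work: Proposition~\ref{sumtocap} converts the socle data into the identity $\Delta_w=\bigcap_{x\in\mathbf{JM}(w)}\Delta_x$ for the \emph{distinguished} family $\mathbf{JM}(w)$, and Proposition~\ref{disscon1} then uses dissectivity of type $A$ (Lemma~\ref{lem:diss}, Proposition~\ref{prop:dissJM}, Corollary~\ref{cor:JM=JM}) to pass from $\mathbf{JM}(w)$ to an arbitrary $U$ with $\bigvee U=w$, by showing each $x\in\mathbf{JM}(y)$, $y\in U$, lies below some $z\in\mathbf{JM}(w)$. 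None of this is a formal translation through \eqref{posetiso}.

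Your fallback --- induction on $|U|$ via pairwise intersections --- also fails: a finite set can have a join while some pair in it has none (e.g.\ $U=\{s,t,st\}$ in $S_3$ has $\bigvee U=st$ but $s\vee t$ does not exist), so you cannot replace $\{x,y\}$ by $\{x\vee y\}$ inside $U$. The restriction to arbitrary $U$ rather than $U=\mathbf{JM}(w)$ is exactly where dissectivity is needed, and it is also why the statement fails outside the dissective types (cf.\ Example~\ref{e644bigex} in type $E_6$), not merely because intersections of Vermas fail to be Vermas there.
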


In general, the answer to   Question~\ref{intque}
can be negative (see Example~\ref{e644bigex} in type $E_6$).
In this paper, we explore a number of other special cases and examples
which suggest that full understanding of Question~\ref{intque} 
is very likely to be highly non-trivial. 
Our main idea for relating the join operation on $W$ and the intersection of Verma submodules is to restrict $\vee$ from $2^W$ to certain subsets consisting of 
join-irreducible elements, which we introduce now.

An element $w\in W$ is called \emph{join-irreducible} if there is no `join expression'
\[w = \bigvee U\]
with $w\not\in U$.
We denote by $\JI{}{}$ the set of join-irreducible elements in $W$.
Then every element $w\in W$ has a distinguished join expression, as the join of join-irreducibles, given by the subset $\mathbf{JM}(w)\subset \JI{}{}$ defined as follows.
First, recall that $s\in S$ is a left (resp., right) descent of $x\in W$ if $sx < x$ (resp., $xs < x$). 
Consider the set
\[\JI{}{}(w) = \{x \in \mathbf{JI}\ |\   x \leq w \text{ and each left (resp., right) decent of $x$ is a left (resp., right) descent of $w$}\}.\]
Now we let
\[\JM(w) = \max\JI{}{}(w) = \text{the set of Bruhat maximal elements in \JI{}{}(w)} .\]
Then $\JM(w)$ is, in fact, a join expression of $w$, that is, we have (see Lemma~\ref{lemma:joinJM})
\[w = \bigvee \JM(w).\]
What we conjecture to hold, in all types, is that each Verma module has an 
expression given by $\JM(w)$. 

\begin{conjx}\label{conjintro}
For each $w\in W$, we have
\begin{equation*}
\Delta_w = \bigcap_{x\in \mathbf{JM}(w)}\Delta_x.    
\end{equation*}
\end{conjx}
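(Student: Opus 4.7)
The inclusion $\Delta_w \subseteq \bigcap_{x\in\mathbf{JM}(w)}\Delta_x$ is immediate: Lemma~\ref{lemma:joinJM} gives $w=\bigvee\mathbf{JM}(w)$, so $x\leq w$ for every $x\in\mathbf{JM}(w)$ and hence, by the isomorphism \eqref{posetiso}, $\Delta_w\subseteq\Delta_x$. The whole content of the conjecture therefore lies in the reverse inclusion, which I would recast as the injectivity of the canonical diagonal map
\[
\iota_w : \Delta_e/\Delta_w \longrightarrow \bigoplus_{x\in\mathbf{JM}(w)}\Delta_e/\Delta_x,
\]
whose kernel is exactly $\bigl(\bigcap_{x\in\mathbf{JM}(w)}\Delta_x\bigr)/\Delta_w$. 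Since $\Delta_e/\Delta_w$ has finite length in $\cO_0$ and hence an essential socle, $\iota_w$ is injective if and only if its restriction to $\soc(\Delta_e/\Delta_w)$ is injective. The problem thus reduces to the following: for every simple submodule $L_y\subseteq\soc(\Delta_e/\Delta_w)$, exhibit some $x=x(y)\in\mathbf{JM}(w)$ for which $L_y\not\subseteq\Delta_x/\Delta_w$, equivalently, for which the image of $L_y$ in $\Delta_e/\Delta_x$ is nonzero.

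To run this argument I would need a workable description of $\soc(\Delta_e/\Delta_w)$, which is exactly the separate socle conjecture alluded to in the abstract. Granting such a description, each $y$ with $L_y\subseteq\soc(\Delta_e/\Delta_w)$ should be tightly constrained by the left and right descents of $w$, and the definition of $\mathbf{JM}(w)$ is engineered precisely to capture this compatibility. For each such $y$ I would try to produce a join-irreducible $x\leq w$ whose descents lie among those of $w$ and whose Bruhat position is ``low enough'' that $L_y$ does not occur as a composition factor of $\Delta_x/\Delta_w$; then Bruhat-maximising $x$ within $\JI{}{}(w)$ (using the descent-inheritance property built into its definition) would land the chosen element in $\mathbf{JM}(w)$ while preserving the non-containment.

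The hard part is the socle input: in arbitrary type there is no closed form for $\soc(\Delta_e/\Delta_w)$, and any direct attack would rely on $\Ext^1(L_y,\Delta_w)$-computations governed by Kazhdan--Lusztig combinatorics. The cleanest route therefore appears to be a conditional theorem of the shape ``socle conjecture $\Rightarrow$ Conjecture~\ref{conjintro}'', to be combined with unconditional verifications in the accessible settings: type $A$, where Theorem~\ref{intthm:A} (via \cite{kmm2}) already delivers the stronger statement of Question~\ref{intque}; small ranks; and restricted descent configurations in which both the simples in $\soc(\Delta_e/\Delta_w)$ and the join-irreducibles below $w$ can be enumerated explicitly and matched by hand.
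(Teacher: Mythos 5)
You are proposing a proof of what is, in the paper, an open conjecture: the authors prove it only in type $A$ and explicitly leave the general case open, so no blind attempt could legitimately close it, and the right comparison is with the paper's partial results. Against those, your plan coincides with the paper's own strategy almost step for step: the easy inclusion is handled the same way; your reformulation of the reverse inclusion as injectivity of the diagonal map $\Delta_e/\Delta_w\to\bigoplus_{x\in\mathbf{JM}(w)}\Delta_e/\Delta_x$, tested on the essential socle, is exactly Proposition~\ref{sumtocap}; the ``socle input'' you ask for is the socle-sum property of Definition~\ref{sosum def}; and your proposed conditional theorem plus type-$A$ verification is Corollary~\ref{capverma} combined with Theorem~\ref{con1A} and Corollary~\ref{con2JMA}.

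The gap is the one you flag, but it is more serious than your sketch suggests. The socle description you would feed into the argument is itself conjectural outside types $A$, $B$, $D$ (Conjecture~\ref{combE} in type $E$ is only partially verified), and, more importantly, the full socle-sum property that your matching argument implicitly relies on is \emph{false} in general: Remark~\ref{soclesumfk} and Example~\ref{F4example} exhibit elements in types $B$ and $F$ whose socle is strictly smaller than the sum of the $\so_x$ over $x\in\mathbf{JM}(w)$, and even an element $f$ with $\so_f\cong L\langle d\rangle\oplus L'\langle d'\rangle$ such that no single $z$ has $\so_z\cong L\langle d\rangle$. What the paper can establish unconditionally (Theorem~\ref{intthm:sosum}, via case-by-case socle-killing chains in $\BG{s}{t}$) is only the ``containment'' half, properties \eqref{1} and \eqref{3}, and even with that in hand the conjecture is not considered settled. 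Your plan also omits the paper's second, more computable reduction: when every $L_u\langle d\rangle$ with $u\in\jc$ occurs multiplicity-freely in $\Delta_e$ (types $A$ and $B$), the identity \eqref{eq:joinexpintersection} is equivalent to a purely Kazhdan--Lusztig-combinatorial minimum formula, which is how the authors verify small-rank type $B$ by computer. Any genuine attack must confront the higher-multiplicity isotypic components (already $3L\langle-29\rangle$ in $E_6$), where the naive ``match each socle constituent to one $x\in\mathbf{JM}(w)$'' argument is precisely where the known counterexamples to related statements live.
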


When $W$ is dissective (see Subsection~~\ref{ss:posets}), Conjecture~\ref{conjintro} answers Question~\ref{intque} completely and positively (see Corollary~\ref{coraddedforintro}), and, in fact, this is how we prove Theorem~\ref{intthm:A} in type $A$.

In our proof of Conjecture~\ref{conjintro} in type $A$, the following property plays a crucial role and provides a connection to the results in \cite{kmm2}.
Recall that, by definition and \eqref{posetiso}, for each $x\in \mathbf{JM}(w)$, we have 
a canonical map
\[\psi_x:\Delta_e/\Delta_w\surj \Delta_e/\Delta_x.\]

\begin{defx}\label{sosum def}\label{defintro}
An element $w\in W$ is said to have the \emph{socle-sum property} if 
\begin{enumerate}
    \item\label{1} each $\psi_x$ restricts to 
\[\phi_x:\operatorname{soc}\Delta_e/\Delta_w\to \operatorname{soc}\Delta_e/\Delta_x;\]
\item\label{3}  we have $\displaystyle\bigcap_{x\in \mathbf{JM}(w)} \ker \phi_x=0$;
\item\label{2} the maps $\phi_x$, where $x\in \mathbf{JM}(w)$, are surjective.
\end{enumerate}
\end{defx}

The properties \eqref{1} and \eqref{2} make precise the statement that `$\so_w$ contains the sum of $\so_x$ taken over all $x\in \JM(w)$ inside $\Delta_e$'. 
The properties \eqref{1} and \eqref{3} together say that `$\so_w$ is contained in the sum of $\so_x$'.
Often, and always in type $A$, this sum is direct. In such a case, the socle-sum property is equivalent to a simpler condition
\[\so_w \cong \bigoplus_{x\in\JM(w)} \so_x  .\]
The main result of \cite{kmm2} can be interpreted as the socle-sum property in type $A$.
%graded!!
The socle-sum property does not hold in general as we show in Remark~\ref{soclesumfk} in type $B$
and in Example~\ref{F4example} in type $F$. 
However, one direction of the socle-sum property is true in general.

\begin{thmx}\label{intthm:sosum}
For each $w\in W$, the properties \eqref{1} and \eqref{3} hold. In other words, 
the socle of $\co_w$ is contained in the sum of $\so_x$ taken over all $x\in \JM(w)$. 
\end{thmx}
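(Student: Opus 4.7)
The plan is to handle Property~\eqref{1} by a general principle, and to reformulate Property~\eqref{3} as an intersection statement about Verma modules that can then be attacked via the descent-set definition of $\JM(w)$. Property~\eqref{1} is immediate: for any morphism $f\colon M\to N$ in $\cO$, the image $f(\soc M)$ is a semisimple submodule of $N$, hence contained in $\soc N$; applying this to $f=\psi_x$ gives the restriction $\phi_x$.

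For Property~\eqref{3}, I first observe that inside $\Delta_e/\Delta_w$ one has
\[\ker\phi_x \;=\; \soc(\Delta_e/\Delta_w)\cap(\Delta_x/\Delta_w) \;=\; \soc(\Delta_x/\Delta_w),\]
since the socle commutes with taking submodules. Intersecting over $x\in\JM(w)$ yields
\[\bigcap_{x\in\JM(w)}\ker\phi_x \;=\; \soc\!\Bigl(\bigl(\bigcap_{x\in\JM(w)}\Delta_x\bigr)\big/\Delta_w\Bigr),\]
and since every nonzero module in $\cO_0$ has nonzero socle, Property~\eqref{3} is equivalent to the intersection equality $\bigcap_{x\in\JM(w)}\Delta_x=\Delta_w$, i.e.\ to Conjecture~\ref{conjintro}. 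The containment $\Delta_w\subseteq\bigcap_x\Delta_x$ is automatic from $w\geq x$. To rule out a nonzero simple submodule $L_y\hookrightarrow(\bigcap_x\Delta_x)/\Delta_w$, I would lift to an extension $0\to\Delta_w\to N\to L_y\to 0$ with $N\subseteq\bigcap_x\Delta_x$, producing a nonzero class in $\Ext^1(L_y,\Delta_w)$ which dies under every pushforward $\Ext^1(L_y,\Delta_w)\to\Ext^1(L_y,\Delta_x)$ induced by $\Delta_w\hookrightarrow\Delta_x$. The plan is then to exploit $w=\bigvee\JM(w)$ together with the left/right-descent conditions defining $\JM(w)$ to exhibit, for any such $L_y$, some $x\in\JM(w)$ whose descent sets separate $y$ from $w$ enough to force $L_y\not\subseteq\Delta_x/\Delta_w$, contradicting the assumption.

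The main obstacle is precisely this last separation step. In type $A$ the full socle-sum property is established in \cite{kmm2}, so~\eqref{1} and~\eqref{3} follow there for free; in arbitrary type the surjectivity in~\eqref{2} fails (see Remark~\ref{soclesumfk} and Example~\ref{F4example}), so one cannot appeal to a matching statement on composition multiplicities and must rely solely on the containment direction. The technical core of the proof will therefore be a combinatorial-cum-homological argument based purely on the descent-set definition of $\JM(w)$ and structural properties of join-irreducibles in $W$: first classifying which simples $L_y$ can appear in $\soc(\Delta_e/\Delta_w)$ (e.g.\ via the long exact sequence for $\Hom(L_y,-)$ and the resulting expression in terms of $\Ext^1(L_y,\Delta_w)$), and then, for each such $y$, pinpointing an $x\in\JM(w)$ whose associated $\phi_x$ detects $L_y$.
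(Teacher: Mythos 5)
Your disposal of property~\eqref{1} is fine (the image of a semisimple module under any homomorphism is semisimple, hence lands in the socle), and your reformulation of property~\eqref{3} is logically correct: since $\ker\phi_x=\soc\co_w\cap(\Delta_x/\Delta_w)$ and $\soc N=N\cap\soc M$ for a submodule $N\subseteq M$, one gets $\bigcap_x\ker\phi_x=\soc\bigl((\bigcap_x\Delta_x)/\Delta_w\bigr)$, which vanishes if and only if $\bigcap_{x\in\JM(w)}\Delta_x=\Delta_w$. This is a sharp observation worth flagging: it shows that properties~\eqref{1} and~\eqref{3} alone already give the conclusion of Proposition~\ref{sumtocap} (whose proof never invokes the surjectivity hypothesis), so on the literal reading the statement you are asked to prove is formally equivalent to Conjecture~\ref{conjintro}. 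But this makes your route harder, not easier: you have reduced the theorem to the intersection conjecture, which is exactly the statement the paper treats as the deep open problem, verified in full only in type $A$ and computationally in small rank $B$.

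The genuine gap is that after this reduction nothing is actually proved: the entire content of the theorem is the "separation step," which you explicitly defer to a future "combinatorial-cum-homological argument." There is no evidence that the descent-set conditions defining $\JM(w)$ alone can produce, for each simple $L_y\langle d\rangle\hookrightarrow(\bigcap_x\Delta_x)/\Delta_w$, an element $x\in\JM(w)$ detecting it — indeed Remark~\ref{soclesumfk} and Example~\ref{F4example} show that the interaction between $\JM(w)$ and socles is delicate outside type $A$. The paper's proof is of an entirely different nature. It first reduces the claim to a combinatorial criterion: for each pair $(s,t)$ of simple reflections there must exist a chain in $\BG{s}{t}$ of length $\sum_{u\in{}^{s}\hc^{t}}p_{e,u}(1)$ (Lemma~\ref{newbgchain} together with Propositions~\ref{ccoor:sum} and~\ref{cor:sum}); the key mechanism is a counting argument combining Propositions~\ref{prop3} and~\ref{prop6} with the Kazhdan--Lusztig multiplicity formula~\eqref{eq:p_Delta}, which forces the socles along such a chain to exhaust the relevant penultimate-cell isotypic components of $\Delta_e$. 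It then verifies the chain criterion type by type — Proposition~\ref{Bbgchain} in type $B$, Proposition~\ref{Dbgchain} in type $D$, Proposition~\ref{Ebgchain} (computer-assisted) in type $E$, Corollary~\ref{halfsosumF} in type $F_4$, and the results of \cite{kmm2} in type $A$ — which in turn requires the explicit enumeration of the posets $\JI{s}{t}$ and the closed formulae for the polynomials $p_{e,u}$. None of this machinery appears in your plan, so the proposal cannot be completed along the lines you sketch without essentially rediscovering it.
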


Motivated by the socle-sum property, we determine the socles of $\co_x$, 
for $x\in \JI{}{}$, in almost all cases (we know the answer but do not have
complete proofs in the remaining cases). We also establish the socle-sum property 
for many special cases and examples. 
In order to obtain a description of the socles, we explicitly give a set of generating relations for the poset of join-irreducible elements with fixed descent sets, for each $W$.
There we encounter several interesting examples which illustrate the complexity of the combinatorics and representation theory outside type $A$. In exceptional types, 
this relies heavily on computer assisted computations.

Determining $\so_w$, for $w\in W$, is an important problem independently 
of Question~\ref{intque} or Conjecture~\ref{conjintro}. 
As pointed out in \cite{kmm2}, it has interesting applications to 
understanding various homological invariants of category $\cO$.
In particular, knowing $\so_w$ would completely determine the dimension of
the extension spaces
\[ \Ext^1(L_x,\Delta_w) \cong \Ext^1(\nabla_w,L_x).\]

We use the notation
\[_s \JM_t (w)= \{z\in \JM(w)\ |\ \text{ the left (resp., right) descent of $z$ is $s$ (resp., $t$)}\}, \]
for $w\in W$ and  $s,t\in S$. 
Note that we have 
$\displaystyle\JM(w) = \coprod_{s,t\in S} \ _s \JM_t (w)$ by Lemma~\ref{JIisbig}.
Let $w_0\in W$ be the longest element in $(W,S)$ and let $\jc$ be the 
two-sided Kazhdan-Lusztig cell containing the elements $w_0 S$ and consider the decomposition 
\[\jc = \coprod_{s,t\in S}\ {}^s\hc^t = \coprod_{s,t\in S}\ \{u\in \jc\ |\ \text{ the left (resp., right) descent of $z$ is $S\setminus s$ (resp., $S\setminus t$)}\}\] 
into the Kazhdan-Lusztig $\mathtt{H}$-cells (see Subsection~ \ref{ss:J Jc}).
Then Theorem~\ref{intthm:sosum}, together with Propositions~\ref{prop3}
and \ref{prop6} and 
\cite[Theorem~32]{Ma}, 
gives the following statement.

\begin{thmx}\label{intthm:ext}
Let $w\in W$. 
\begin{enumerate}[(a)]
    \item If $x\not\in \jc\coprod\{w_0\}$, then $\Ext^1(L_x,\Delta_w) = 0 $.
    \item If $x=w_0$, then $\dim\Ext^1(L_{w_0},\Delta_w)$ equals the rank of the minimal parabolic subgroup containing $w_0w$.
    \item\label{c} If $x\in \! {}^s\hc^t$, then
    \begin{equation}\label{eqc}
      \dim\Ext^1(L_x,\Delta_w) \leq [ \so_b : L_x]  \leq |_s\JM_t |
    \end{equation}
        where $b = \bigvee \!_s\JM_t$. In particular, $\Ext^1(L_x,\Delta_w)=0$ if $_s\JM_t =\emptyset$.
\end{enumerate}
\end{thmx}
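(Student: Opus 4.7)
The plan is to apply $\Hom(L_x,-)$ to the canonical short exact sequence
\[
0 \to \Delta_w \to \Delta_e \to \co_w \to 0
\]
and extract the three claims from the resulting long exact sequence, combined with the socle-sum embedding of Theorem~\ref{intthm:sosum} and the descent-cell analysis of Propositions~\ref{prop3} and~\ref{prop6}. Since every Verma has simple socle $L_{w_0}$, for $x\neq w_0$ both $\Hom(L_x,\Delta_w)$ and $\Hom(L_x,\Delta_e)$ vanish, producing
\[
0 \to \Hom(L_x,\co_w) \to \Ext^1(L_x,\Delta_w) \to \Ext^1(L_x,\Delta_e);
\]
for $x=w_0$, the canonical socle inclusions get identified, so $\Hom(L_{w_0},\co_w)=0$ and $\Ext^1(L_{w_0},\Delta_w)\hookrightarrow \Ext^1(L_{w_0},\Delta_e)$, from which part~(b) is immediate from \cite[Theorem~32]{Ma}.

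For parts~(a) and~(c) the main step is to control $\Hom(L_x,\co_w) = [\so_w : L_x]$. Theorem~\ref{intthm:sosum} supplies the injection
\[
\so_w \hookrightarrow \bigoplus_{y\in\JM(w)} \so_y,
\]
and Propositions~\ref{prop3}, \ref{prop6} describe each $\so_y$ for $y\in\JI{}{}$: its simple constituents have the form $L_z$ with $z\in\jc$, lying in the sub-cell ${}^s\hc^t$ determined by the descent pair of $y$, each with multiplicity at most one. This immediately forces $[\so_w:L_x]=0$ whenever $x\notin\jc$, yielding~(a) once combined with the analogous vanishing of $\Ext^1(L_x,\Delta_e)$ for $x\in\jc\setminus\{w_0\}$, which is a separate but parallel input from Propositions~\ref{prop3} and~\ref{prop6}. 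For $x\in {}^s\hc^t$, only $y\in {}_s\JM_t(w)$ can contribute to $[\so_w:L_x]$, giving the crude estimate $|{}_s\JM_t(w)|$.

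To sharpen this to the middle term $[\so_b:L_x]$ appearing in~(c), set $b=\bigvee {}_s\JM_t(w)$ and use that $b\leq w$ induces a surjection $\co_w\twoheadrightarrow \co_b$. By property~(1) of the socle-sum property this restricts to $\so_w\to \so_b$; applying Theorem~\ref{intthm:sosum} to $b$, together with the combinatorial identification $\JM(b)={}_s\JM_t(w)$, embeds both $\so_w$ and $\so_b$ inside the common direct sum $\bigoplus_{y\in {}_s\JM_t(w)}\so_y$, so the restriction is injective on $L_x$-isotypic components and one obtains $[\so_w:L_x]\leq [\so_b:L_x]$. The remaining estimate $[\so_b:L_x]\leq |{}_s\JM_t(w)|$ is the at-most-one multiplicity of Propositions~\ref{prop3}, \ref{prop6} summed over ${}_s\JM_t(w)$.

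The main obstacles are twofold: verifying the identification $\JM(b)={}_s\JM_t(w)$ from the definitions of join and of $\JM$ with fixed descent data, and establishing the compatibility of the two socle-sum embeddings (for $w$ and for $b$) on each $L_x$-isotypic component, so that the surjection $\co_w\twoheadrightarrow\co_b$ preserves every copy of $L_x$ in $\so_w$. Both rest on the descent-aware cell structure carried by Propositions~\ref{prop3} and~\ref{prop6}, which is where the real combinatorial content of the argument lies; a secondary subtlety is ensuring that the boundary term $\Ext^1(L_x,\Delta_e)$ is controlled independently of the statement being proved, to avoid circular reasoning when translating the socle bound into an $\Ext^1$ bound.
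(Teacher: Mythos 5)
Your overall strategy is the one the paper intends: the theorem is stated there as a direct consequence of Theorem~\ref{intthm:sosum}, Propositions~\ref{prop3} and~\ref{prop6}, and \cite[Theorem~32]{Ma}, and the long exact sequence coming from $0\to\Delta_w\to\Delta_e\to\co_w\to 0$ is exactly the bridge between the socle statement and the $\Ext^1$ statement. There is, however, one genuine gap. The exact sequence only yields
\[
0\to\Hom(L_x,\co_w)\to\Ext^1(L_x,\Delta_w)\to\Ext^1(L_x,\Delta_e),
\]
so the \emph{upper} bounds in (a) and (c) require $\Ext^1(L_x,\Delta_e)=0$ for $x\neq w_0$; without this you only get the lower bound $[\so_w:L_x]\le\dim\Ext^1(L_x,\Delta_w)$, which is the wrong direction. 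You flag this boundary term at the end, but you attribute its control to Propositions~\ref{prop3} and~\ref{prop6}; those concern socles of the quotients $\Delta_y/\Delta_x$ and give no information whatsoever about $\Ext^1(L_x,\Delta_e)$ (note that your own argument applied to $w=e$ is vacuous, since $\co_e=0$). The missing input is a separate homological fact, contained in \cite{kmm2}/\cite{Ma}: by the simple-preserving duality $\Ext^1(L_x,\Delta_e)\cong\Ext^1(\nabla_e,L_x)$; the projective cover of $\nabla_e$ is $P_{w_0}$ (as $\nabla_e$ has simple top $L_{w_0}$), and the kernel of $P_{w_0}\twoheadrightarrow\nabla_e$ is filtered by costandard modules $\nabla_y$ with $y\neq e$, each of which has simple top $L_{w_0}$; hence every simple quotient of that kernel is $L_{w_0}$ and $\Ext^1(\nabla_e,L_x)=0$ for $x\neq w_0$. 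This must be quoted or proved explicitly; it is not a formal consequence of the cell combinatorics.

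Two further remarks on part (c). The first inequality does not need the identification $\JM(b)={}_s\JM_t(w)$ that you single out as a main obstacle --- and which is not formally clear: only the inclusion ${}_s\JM_t(w)\subseteq\JM(b)$ follows directly from maximality. It suffices that each $y\in{}_s\JM_t(w)$ satisfies $y\le b\le w$, so that $\phi_y$ factors as $\so_w\to\co_b\to\co_y$; Theorem~\ref{intthm:sosum} and Proposition~\ref{prop6} applied to $w$ make the $L_x$-isotypic part of $\so_w$ inject into $\bigoplus_{y\in{}_s\JM_t(w)}\so_y$, and since the image of $\so_w$ in $\co_b$ is semisimple it lies in $\so_b$, giving $[\so_w:L_x]\le[\so_b:L_x]$ at once. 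For the second inequality, the ``at most one'' multiplicity of $L_x$ in each $\so_y$ with $y$ join-irreducible is again not contained in Propositions~\ref{prop3} and~\ref{prop6}; it comes from the multiplicity analysis of Section~\ref{s:KL}, so you should either invoke that analysis or weaken the bound to what your cited inputs actually deliver.
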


For information on Theorem~\ref{intthm:ext}\eqref{c} depending on the type of $(W,S)$,
see Subsection~ \ref{s:JIcase}. 
The number $[ \so_b : L_x]$ is bounded
\begin{itemize}
\item by $1$ in type $AB$ (see \cite[Corollary 2]{kmm2} for type $A$ where both inequalities in \eqref{c} are, in fact, the equalities),
\item by $2$ in type $DF$,
\item $3$, $4$, and $6$ in types $E_6$, $E_7$ and $E_8$, respectively.
\end{itemize}
Each of these bounds is achieved by $\dim\Ext^1(L_x,\Delta_w)$, for some $x,w\in W$. 
The number  $|_s\JM_t |$ can be larger, e.g., up to $21$ in type $E_8$, 
although we include it to give  a purely combinatorial bound.

The paper is organized as follows: the next section contains all
necessary preliminaries. Section~\ref{s:join_Bruhat} studies
the join operation for Bruhat orders and then 
Section~\ref{s-new4} discusses the module-theoretic interpretation of the
join operation for Verma modules. 
Section~\ref{s:KL} contains a case-by-case combinatorial analysis of 
the posets of join irreducible elements for all types and, in particular,
determines the socles of $\co_x$. Here we also provide figures
illustrating the poset structure of join irreducible elements and
closed formulae for some families of Kazhdan-Lusztig polynomials
in type $BD$. We finish the paper with  Appendix which provides tables
of some families of Kazhdan-Lusztig polynomials in types $E_7$ and $E_8$.

\section{Preliminaries}

\label{sec:preliminaries}

\subsection{Hecke algebra and Kazhdan-Lusztig basis}

Let $(W,S)$ be a finite Coxeter system. We denote by $\leq$
the Bruhat order on $W$, and 
by $\ell:W\to\mathbb{Z}$ the associated length function. As usual, the identity element in $W$ is denoted by $e$ and the longest one by $w_0$.

The Hecke algebra $H(W,S)$ associated to $(W,S)$ is the $\mathbb Z[v,v\inv]$-algebra 
generated by $H_s$, for $s\in S$, which satisfy the (Coxeter) braid relations and
the quadratic relation
\[(H_s+v)(H_s-v\inv)=0,\]
for all $s\in S$.
Given a reduced expression $w=st\cdots u$ of $w\in W$, we let $H_w=H_sH_t\cdots H_u$. 
The element $H_w$ is, in fact, independent of the choice of the reduced expression, 
and $\{H_w\}_{w\in W}$ is a ($\mathbb Z[v,v\inv]$-)basis of $H(W,S)$ called the {\em standard basis}.
Now consider the ($\mathbb Z$-algebra-)involution 
\[\overline{\phantom{A}} :H(W,S)\to H(W,S)\] 
uniquely determined by $\overline{v}= v\inv$ and $\overline{H_s} = H_s\inv$.
Then there is a unique element $\kl_w$ in $H(W,S)$ such that $\overline{\kl_w} = \kl_w$ and 
\begin{equation}
\label{p}
\kl_w = H_w + \sum_{y\in W} p_{y,w}H_y,
\end{equation}
for some $p_{y,w}\in v\mathbb Z[v]$.
The elements $\kl_w$, where $w\in W$, form a basis of $H(W,S)$ called the {\em Kazhdan-Lusztig (KL) basis}. Given $x,y\in W$, the coefficient of $v$ in $p_{x,y}+p_{y,x}$ is denoted by 
$\mu(x,y)=\mu(y,x)$, defining the {\em (Kazhdan-Lusztig) $\mu$-function}. If $s\in S$, then $\kl_s=H_s+v$,
and 
\begin{equation}
\label{sy}
\kl_s\kl_y = \begin{cases} \displaystyle (v+v\inv)\kl_{y} &\colon  sy<y, \\[.7em] \displaystyle
\kl_{sy}+\sum_{sx<x, x<y}\mu(x,y)\kl_x &\colon sy>y.
\end{cases}
\end{equation}
Another basic fact is that
%\begin{equation*}
%\label{longest}
$ p_{x,w_0}=v^{\ell(w_0)-\ell(x)}$.
%\end{equation*}
%
%We can write \begin{equation}\label{p'} p_{x,y} = v^{\ell(y)-\ell(x)}+ p_{x,y}'   \end{equation} where the degree of $p'_{x,y}$ is less than $\ell(y)-\ell(x)-1$ (strictly, by parity).
For more details about KL basis, we refer to \cite{KL,So3}. Note that we are using the normalization of the Hecke algebra as in \cite{So3}.

\subsection{The small and penultimate KL cells}\label{ss:J Jc}
We call an element $s\in S$ a \emph{left descent}  of $y$ if $sy<y$ and a \emph{left ascent} of $y$ is $sy>y$. \emph{Right descent} and \emph{right ascent} are defined similarly. 
We denote by $LD(y)$ and $RD(y)$, respectively, the sets of left and right descents of $y$, and by $LA(y)$ and $RA(y)$, respectively, the sets of left and right ascents of $y$.
An element $w \in W$ is called \emph{bigrassmanian} if it has a unique left and a unique right descent. 

We consider the Kazhdan-Lusztig left, right, and two-sided orders on $W$, denoted, respectively, by $\leq_{\mathtt{L}}$, $\leq_{\mathtt{R}}$, $\leq_{\mathtt{J}}$, and the associated equivalence relations $\sim_{\mathtt{L}}$, $\sim_{\mathtt{R}}$, and $\sim_{\mathtt{J}}$. The corresponding equivalence classes in $W$ are called left, right, and two-sided cells.
We have an explicit description of the two smallest two-sided cells with respect to the two-sided order. The first one is $\mathcal J_0=\{e\}$. The next one $\jc_1$, the so-called \emph{small two-sided cell}, it consists of all elements in $W$ having a unique reduced expression. 
The left and the right cells in $\jc_1$ are both indexed by $S$:  for each $s\in S$, the elements in $\jc_1$ with the right (resp., left) descent set $\{s\}$ form a left (resp., right) cell in $\jc_1$. 

The $\mu$-functions is easy on $\jc_1$, see %\cite[Proposition 3.8 (d)]{Lu} or %add to bibliography
\cite{KMMZ,KM}:

\begin{proposition}\label{smallJmu}
Let $x,y\in \mathcal J_1$. Then $\mu(x,y)\neq 0$ if and only if $x,y$ are adjacent in the Bruhat graph, in which case $\mu(x,y)=1$.
\end{proposition}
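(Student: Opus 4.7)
The plan is to reduce Proposition~\ref{smallJmu} to the simplest-possible form of the KL polynomials on the small cell, namely, $p_{y,x} = v^{\ell(x)-\ell(y)}$ for all $y \leq x$ with $x \in \jc_1$ (equivalently, all classical KL polynomials on $\jc_1$ equal $1$). Once this is known, the coefficient of $v$ in $p_{x,y}+p_{y,x}$ equals $1$ exactly when $|\ell(x)-\ell(y)| = 1$, and vanishes otherwise; combined with the fact that within $\jc_1$ the reflection-adjacency reduces to Bruhat covering (see below), this gives the claim.

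To establish the KL formula I would induct on $\ell(x)$. Since $x \in \jc_1$ has a unique reduced expression, it has a unique left descent $s$, and writing $x = sx'$ places $x' \in \jc_1$ with $\ell(x') = \ell(x) - 1$. Applying \eqref{sy} to $\kl_s \cdot \kl_{x'}$, using the inductive formula for $\kl_{x'}$ together with the inductive fact that the values $\mu(z, x')$ are nonzero only at Bruhat covers (where they are $1$), and collecting coefficients in the standard basis, one obtains $\kl_x = \sum_{y \leq x} v^{\ell(x) - \ell(y)} H_y$. The key combinatorial input is that the interval $[e, x]$ is Boolean of rank $\ell(x)$: distinct sub-reduced-expressions of the unique reduced expression of $x$ yield distinct elements of $W$, so no overcounting occurs in the coefficient bookkeeping.

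The same Boolean structure also handles the remaining step connecting $\mu$ to adjacency in the Bruhat graph: if $y = tx$ with $x, y \in \jc_1$ and $t$ a reflection, then the Boolean description of $[e, x]$ shows that $t$ must correspond to deleting a single letter from the unique reduced expression of $x$, forcing $\ell(x) - \ell(y) = 1$. Combined with the KL formula, this yields $\mu(x, y) \neq 0$ if and only if $(x, y)$ is a Bruhat cover, with $\mu(x, y) = 1$ in that case.

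The main obstacle is the inductive bookkeeping: one must verify that the $\mu$-corrections arising from \eqref{sy} contribute exactly the expected coefficients, rather than producing terms $\kl_z$ with $z \notin \jc_1$ that could disturb the clean formula. This is where the uniqueness of the reduced expression, and hence the Boolean structure of $[e, x]$, is decisive.
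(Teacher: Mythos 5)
The paper does not actually prove this proposition; it is quoted from \cite{KMMZ,KM}. Your attempt at a self-contained proof contains a genuine gap: the claim that the interval $[e,x]$ is Boolean of rank $\ell(x)$ for $x\in\jc_1$, i.e.\ that distinct subwords of the unique reduced expression give distinct group elements, is false outside the simply-laced case. The small cell consists of all elements with a unique reduced expression, and in a multiply-laced type this includes words with repeated letters: for $s,t\in S$ with $m(s,t)\geq 4$ the element $x=sts$ lies in $\jc_1$, yet the subwords in positions $\{1\}$ and $\{3\}$ both equal $s$, so $|[e,x]|=6<2^3$ and the interval is not Boolean. (Your picture of $\jc_1$ as self-avoiding walks on the Dynkin tree is correct precisely in types $A$, $D$, $E$; in types $B$, $F_4$, $G_2$ one has elements such as $010$, $232$, $21012$, etc.) Since the paper uses the proposition for all Weyl groups --- in particular Proposition~\ref{muJ} transports it to the penultimate cell in types $B$, $F_4$ and $G_2$ --- this is not a removable restriction.

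The gap infects both halves of your argument. First, the inductive bookkeeping for $\kl_x=\sum_{y\leq x}v^{\ell(x)-\ell(y)}H_y$ explicitly invokes ``no overcounting'' of subwords, which is exactly what fails in the example above; you would need a different argument (e.g.\ rational smoothness of $X_x$ for $x\in\jc_1$, or the explicit decomposition of products $\kl_{s_1}\cdots\kl_{s_k}$ from \cite{KMMZ}) to get triviality of the Kazhdan--Lusztig polynomials below a small-cell element in general. Second, the step reducing Bruhat-graph adjacency within $\jc_1$ to covering relations (``$t$ must correspond to deleting a single letter'') again appeals to the Boolean structure and needs a separate justification when letters repeat. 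The overall strategy --- trivial KL polynomials on $[e,x]$ plus adjacency-equals-cover --- is reasonable and would yield the statement, but as written the key combinatorial input is simply not true for the groups the paper cares about.
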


Multiplication by $w_0$ induces an involution on the set of two-sided cells, under which the small cell $\jc_1$ corresponds to the \emph{penultimate two-sided cell} denoted by $\jc$.
That is, $\jc=w_0\jc_1=\jc_1w_0$.

The left and the right cells in $\jc$ have unique ascents, so we index them by $S$. 
For example, $\lc^s$, for $s\in S$, is the left cell that has the right ascent set $\{s\}$ (and hence the right descent set $S\setminus\{s\}$). Similalry, ${}^t\rc$, for $t\in S$, is the 
right cell that has the left ascent set $\{t\}$.
We index the intersections of left and right cells (called $\mathtt{H}$-cells) by $S\times S$. An $\mathtt{H}$-cell in $\jc$ is thus written as ${}^t\hc^s=\lc^s\cap \ ^t\rc$, and $y\in {}^t\hc^s$ satisfies $RA(y)= \{s\}$ and $LA(y)=\{t\}$. In (Weyl) types $A$, $D$ and $E$, all $\mathtt{H}$-cells in $\jc$ are singletons. The latter property fails in types $B$, $F$ and $G$.

\begin{proposition}\label{muJ}
Let $x,y\in \jc$. Then $\mu(x,y)\neq 0$ if and only if $x,y$ are adjacent in the Bruhat graph, in which case $\mu(x,y)=1$.
\end{proposition}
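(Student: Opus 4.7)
The plan is to transfer Proposition~\ref{smallJmu} from the small cell $\jc_1$ to the penultimate cell $\jc$ via the bijection $w \mapsto w_0 w$, which sends $\jc_1$ onto $\jc$ as already recorded in the excerpt ($\jc = w_0 \jc_1$). Given $x, y \in \jc$, I would write $x = w_0 x'$ and $y = w_0 y'$ with $x', y' \in \jc_1$, and then reduce both sides of the statement — the $\mu$-value and the adjacency in the Bruhat graph — to the corresponding statements for $x', y'$.

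For the $\mu$-values, I would invoke the Kazhdan-Lusztig symmetry
\[ p_{x,y}(v) = p_{w_0 y,\, w_0 x}(v), \]
which, in the normalization of \cite{So3} used here, is the standard translation of the classical identity $P_{x,y}(q) = P_{w_0 y,\, w_0 x}(q)$ from \cite{KL} via $p_{a,b}(v) = v^{\ell(b)-\ell(a)} P_{a,b}(v^{-2})$. Combined with the built-in symmetry $\mu(a,b) = \mu(b,a)$, this yields
\[ \mu(w_0 x,\, w_0 y) = \mu(x, y) \]
for all $x, y \in W$, so the bijection $\jc_1 \to \jc$ preserves $\mu$-values.

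For the adjacency in the Bruhat graph, I would use that two elements $u, v \in W$ are adjacent iff $v = tu$ for some reflection $t$, and that $w_0 v = (w_0 t w_0^{-1})(w_0 u)$ with $w_0 t w_0^{-1}$ again a reflection, as the set of reflections is closed under conjugation. Hence $x', y'$ are adjacent iff $x, y$ are. Combining the preservation of $\mu$-values and of adjacency with Proposition~\ref{smallJmu} applied to $x', y' \in \jc_1$ then delivers the conclusion. The only genuinely external ingredient is the $p$-polynomial symmetry, which I expect to be a one-line citation rather than a real obstacle; the rest of the argument is formal.
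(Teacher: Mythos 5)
Your overall architecture is the same as the paper's: transport Proposition~\ref{smallJmu} from $\jc_1$ to $\jc$ along multiplication by $w_0$, checking that both adjacency in the Bruhat graph and the $\mu$-function are preserved under this bijection. The adjacency half of your argument (the set of reflections is closed under conjugation by $w_0$) is fine. The problem is the key identity you lean on for the $\mu$-values.

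The symmetry $P_{x,y}(q) = P_{w_0 y,\, w_0 x}(q)$ is not a theorem; it is false in general. A counterexample already occurs in type $A_3$: for $y = 3412 = s_2s_1s_3s_2$ one has $P_{e,y} = 1+q$, whereas $P_{w_0 y,\, w_0\cdot e} = P_{w_0y,\,w_0} = 1$, since $p_{x,w_0} = v^{\ell(w_0)-\ell(x)}$ for every $x$ (as recorded in the preliminaries). What the Kazhdan--Lusztig inversion formula
\[
\sum_{x\le z\le y} (-1)^{\ell(x)+\ell(z)}\, P_{x,z}\, P_{w_0 y,\, w_0 z} \;=\; \delta_{x,y}
\]
actually says is that the matrices $(P_{x,y})$ and $(P_{w_0 y, w_0 x})$ are inverse to each other up to signs, not equal. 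However, extracting the coefficient of $q^{(\ell(y)-\ell(x)-1)/2}$ from this identity --- the terms with $x<z<y$ contribute only in strictly lower degree --- yields exactly the weaker statement you need, namely $\mu(x,y) = \mu(w_0 y, w_0 x) = \mu(w_0 x, w_0 y)$. So your conclusion survives, but the justification must go through the leading coefficients of the inversion formula (or Koszul duality) rather than an equality of polynomials; this is precisely what the paper does, citing the inversion formula to get $\mu(x,y)=\mu(w_0x^{-1},w_0y^{-1})$ and then applying Proposition~\ref{smallJmu}.
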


\begin{proof}
By the Kazhdan-Lusztig inversion formula, see \cite[Section~3]{KL}, (or, equivalently, by Koszul duality,
see \cite{BGS}), we have $\mu(x,y)=\mu(w_0x\inv,w_0y\inv)$. Now, the claim follows from Proposition \ref{smallJmu}.
\end{proof}

\subsection{Kazhdan-Lusztig combinatorics, descent sets, and Verma modules}\label{ss:klprelim}

The statements in this subsection are proved in type A in \cite[Subsection 2.2]{kmm2}. The general case has exactly the same proof.

\begin{proposition}\label{prop3}
Let $x,y,z\in W$ be such that $x\geq y$ and $L_z$ is in the socle of
$\Delta_y/\Delta_x$. Then $z\in \mathcal{J}$. 
\end{proposition}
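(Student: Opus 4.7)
My approach combines the long exact Ext sequence in category $\cO$ with Kazhdan-Lusztig combinatorics, and proceeds as follows. First, I would reduce to $y=e$ using that the canonical inclusion $\Delta_y\hookrightarrow\Delta_e$ induces $\soc(\Delta_y/\Delta_x)\subseteq\soc(\Delta_e/\Delta_x)$, so it suffices to prove the implication for $y=e$.

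Next, I would apply $\Hom(L_z,-)$ to the short exact sequence $0\to\Delta_x\to\Delta_e\to\Delta_e/\Delta_x\to 0$. Since $\soc(\Delta_w)=L_{w_0}$ for every $w$, one has $\Hom(L_z,\Delta_w)=0$ whenever $z\neq w_0$. The case $z=w_0$ is ruled out directly: $L_{w_0}$ sits inside $\Delta_x$ as its socle, hence is killed in the quotient (we may assume $x>e$, else the statement is vacuous). For $z\neq w_0$ the connecting homomorphism identifies $\Hom(L_z,\Delta_e/\Delta_x)$ with the kernel of $\Ext^1(L_z,\Delta_x)\to\Ext^1(L_z,\Delta_e)$, so a nonzero socle embedding $L_z\hookrightarrow\Delta_e/\Delta_x$ produces a nonzero class in $\Ext^1(L_z,\Delta_x)$ that vanishes on push-out to $\Delta_e$.

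The final step is to translate this Ext condition into Kazhdan-Lusztig data. Using the description of $\Ext^i(L_z,\Delta_w)$ in terms of KL polynomials (via a projective resolution of $L_z$ combined with BGG reciprocity, or via Koszul duality in the style of BGS), the simultaneous nonvanishing/vanishing of $\Ext^1(L_z,\Delta_x)$ and of its image in $\Ext^1(L_z,\Delta_e)$ matches the characterization $\jc=w_0\jc_1$. Equivalently, one can rephrase the conclusion by saying that $L_z$ must appear in the second socle layer $\soc_2(\Delta_e)/\soc(\Delta_e)$; a classical KL computation of the Loewy/socle layers of Vermas (in the spirit of Irving, where multiplicities are given by coefficients of KL polynomials) shows that this layer is semisimple with all its simples of the form $L_z$ for $z\in\jc$.

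The main obstacle I anticipate is precisely this last step: extracting cell membership from the Ext-theoretic condition requires a careful bookkeeping of KL polynomial coefficients and of how the long exact sequence interacts with Koszul duality. An alternative route that bypasses explicit Ext computations is to argue directly with the Loewy filtration of $\Delta_e$: one shows that the ``first semisimple layer above $\Delta_x$'' in $\Delta_e$ consists only of simples $L_z$ with $z\in\jc$, which immediately forces any $L_z\subseteq\soc(\Delta_e/\Delta_x)$ to satisfy $z\in\jc$.
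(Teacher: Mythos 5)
Your opening moves are correct and are essentially the opening moves of the proof the paper relies on (the paper itself gives no argument for Proposition~\ref{prop3}; it states that the type $A$ proof from \cite[Subsection~2.2]{kmm2} carries over verbatim): the reduction to $y=e$ via $\Delta_y/\Delta_x\subseteq\Delta_e/\Delta_x$, the exclusion of $z=w_0$ via $[\Delta_e:L_{w_0}]=1$, and the long exact sequence producing a nonzero class in $\Ext^1(L_z,\Delta_x)$ that dies in $\Ext^1(L_z,\Delta_e)$. But the entire content of the proposition is the step you defer, and neither of the two ways you propose to supply it works. The statement you would need, that $\Ext^1(L_z,\Delta_x)\neq 0$ forces $z\in\jc$ or $z=w_0$, is part (a) of Theorem~\ref{intthm:ext}, which the paper \emph{deduces from} Proposition~\ref{prop3} (together with Proposition~\ref{prop6} and \cite[Theorem~32]{Ma}); invoking it here is circular. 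There is also no off-the-shelf ``description of $\Ext^1(L_z,\Delta_w)$ in terms of KL polynomials'' in the direction you need: the graded multiplicities $[\Delta_w:L_z\langle -d\rangle]$ and the groups $\Ext^i(\Delta_w,L_z)$ are controlled by the polynomials $p_{y,w}$, but the ``wrong-way'' groups $\Ext^i(L_z,\Delta_w)$ involve the inverse Kazhdan--Lusztig polynomials, and extracting the cell condition from them is precisely the work that is missing and that you acknowledge not having done.

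The ``equivalent rephrasing'' and the alternative route at the end are moreover based on a false claim. It is not true that $L_z\subseteq\soc(\Delta_e/\Delta_x)$ forces $L_z$ to lie in the second socle layer of $\Delta_e$: already for $\mathfrak{sl}_3$ and $x=s$, the quotient $\Delta_e/\Delta_s$ is uniserial with socle $L_t$, and $L_t$ sits in the \emph{third} socle layer of $\Delta_e$ (the socle layers from the bottom are $L_{w_0}$, then $L_{st}\oplus L_{ts}$, then $L_s\oplus L_t$, then $L_e$). In general the socle of $\Delta_e/\Delta_x$ lies in whichever layer sits just above $\Delta_x$, and from the third layer upwards these layers contain simples from cells other than $\jc$ (for $\mathfrak{sl}_4$ the third socle layer from the bottom contains $L_{w_0s_1s_3}$, and $w_0s_1s_3\notin\jc$ because $s_1s_3$ does not have a unique reduced expression). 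So no statement about the Loewy layers of $\Delta_e$ alone can prove the proposition; what forces $z\in\jc$ is the requirement that the submodule being quotiented out is a \emph{Verma} module, and your closing sentence (``the first semisimple layer above $\Delta_x$ consists only of simples $L_z$ with $z\in\jc$'') is a restatement of the claim to be proved rather than an argument for it.
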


By Kazhdan-Lusztig conjecture (proved  in \cite{BB,BK,EW}), the 
multiplicities of the
graded composition factors in $\Delta_y$ isomorphic to each $L_z$, for $z\in\jc$, are determined by the Kazhdan-Lusztig polynomials as
\begin{equation}
\label{eq:p_Delta}
    \sum_{d} [\Delta_y:L_z\langle -d\rangle] v^d =p_{y,z},
\end{equation}
where $p_{y,z}$ is defined in (\ref{p}).
Since each Verma module $\Delta_y$ is canonically a submodule of $\Delta_e$, we restrict our attention to the case $y=e$. 

We denote by $\mathbf{a} \colon W \to \mathbb{Z}_{\geq 0}$ \emph{Lusztig's $\mathbf{a}$-function}, see \cite{Lu2}. The value $\mathbf{a}(w)$ does not change when $w$ varies over a two-sided cell $\mathcal{J}'$. We write $\mathbf{a}(\mathcal{J}')=\mathbf{a}(w)$, for $w\in \mathcal{J}'$. By definition, the value $\mathbf{a}(w)$, for $w \in \mathcal{J}'$,  describes the minimal possible  degree shift $d$, for which there exists $u\in \mathcal{J}'$ such that $\Delta_e$
has $L_u\langle -d\rangle$ as a simple subquotient.
The minimal shift is achieved exactly when $u$ is a \emph{Duflo element}
(called \emph{distinguished involution} in \cite[Section~1]{Lu2}). It follows that 
\begin{equation}\label{pdegrees}
    p_{e,w}\in \mathbb Z \{ v^a\ |\ \mathbf{a}(w)\leq a\leq \ell(w)\},
\end{equation} 
where the first equality holds if and only if $w$ is a Duflo element. Moreover, all exponents appearing in $p_{x,y}$ are of the same parity.
Every Duflo element is an involution, and hence belongs to a diagonal $\mathtt{H}$-cell.

\begin{proposition}\label{prop4}
Let $x\in W$ and $s\in S$.
\begin{enumerate}[$($i$)$]
\item \label{prop4.1} If $sx<x$, then the socle of $\Delta_e/\Delta_x$ contains
some $L_y$ such that $sy>y$.
\item \label{prop4.2} If $xs<x$, then the socle of $\Delta_e/\Delta_x$ contains
some $L_y$ such that $ys>y$.
\end{enumerate}
\end{proposition}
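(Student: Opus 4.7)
The plan for each part is to exhibit a specific non-zero submodule of $\Delta_e/\Delta_x$ whose composition factors manifestly have the required ascent property; the socle of this submodule then lies inside $\soc(\Delta_e/\Delta_x)$ and supplies the desired $L_y$. For (i), the natural candidate is $\Delta_{sx}/\Delta_x$, which embeds non-trivially into $\Delta_e/\Delta_x$ thanks to the chain $\Delta_x\subsetneq\Delta_{sx}\subseteq\Delta_e$ coming from $sx<x$. For (ii), the analogous candidate is $\Delta_{xs}/\Delta_x$, using $xs<x$.

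The central step is to show that every composition factor $L_y$ of $\Delta_{sx}/\Delta_x$ satisfies $sy>y$. By the Kazhdan-Lusztig multiplicity formula \eqref{eq:p_Delta} evaluated at $v=1$, this reduces to the numerical identity $p_{sx,y}(1)=p_{x,y}(1)$ whenever $s\in LD(y)$. The input is \eqref{sy}: when $s\in LD(y)$ it yields $\kl_s\kl_y=(v+v\inv)\kl_y$, and substituting $\kl_s=H_s+v$ rewrites this as $H_s\kl_y=v\inv\kl_y$. Expanding $\kl_y=\sum_w p_{w,y}(v)H_w$ and applying the standard-basis product
\[
H_sH_w=\begin{cases}H_{sw},& sw>w,\\ H_{sw}+(v\inv-v)H_w,& sw<w,\end{cases}
\]
then equating the coefficient of each $H_u$ on both sides produces
\[
p_{sw,y}(v)=\begin{cases}v\inv\,p_{w,y}(v),& sw>w,\\ v\,p_{w,y}(v),& sw<w.\end{cases}
\]
In either case $p_{sw,y}(1)=p_{w,y}(1)$, and the specialization $w=x$ gives the required identity. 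Note that the graded multiplicities in $\Delta_{sx}$ and $\Delta_x$ genuinely differ by a shift, but only their ungraded counts matter for the composition-factor bookkeeping.

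Part (ii) follows by the symmetric computation, starting from the right-hand analog $\kl_y\kl_s=(v+v\inv)\kl_y$ when $s\in RD(y)$, equivalently $\kl_y H_s=v\inv\kl_y$; the same Hecke-algebra manipulation produces $p_{ws,y}(1)=p_{w,y}(1)$ whenever $s\in RD(y)$, and setting $w=x$ closes the argument. The main subtlety throughout is just keeping the Hecke conventions straight; no genuinely hard step arises. Alternatively, part (i) admits a functorial proof by observing that the translation functor $\theta_s$ through the $s$-wall turns the inclusion $\Delta_x\hookrightarrow\Delta_{sx}$ into an isomorphism (both Vermas are sent to the same Verma on the wall, and the induced map between their common characters forces bijectivity), whence $\theta_s(\Delta_{sx}/\Delta_x)=0$ and the claim follows from $\theta_s L_y=0\Leftrightarrow sy>y$.
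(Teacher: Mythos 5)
Your main argument---realizing $\Delta_{sx}/\Delta_x$ as a nonzero submodule of $\Delta_e/\Delta_x$ (so its socle sits inside $\soc \Delta_e/\Delta_x$) and showing via the Kazhdan--Lusztig identity $p_{sx,y}(1)=p_{x,y}(1)$ for $sy<y$ that every composition factor of this submodule has $s$ as a left ascent---is correct and is essentially the argument of \cite[Subsection~2.2]{kmm2} to which the paper defers. One caveat on your closing aside: the translation functor $\theta_s$ identifies $\Delta_x$ with $\Delta_{xs}$ (it is governed by the \emph{right} action) and kills $L_y$ exactly when $ys>y$, so that functorial shortcut proves part (ii) directly, while part (i) requires a left-handed analogue (e.g.\ twisting or shuffling functors) or the combinatorial route you already gave.
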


\begin{corollary}\label{cor5}
Let $w\in W$  be such that  the socle of $\Delta_e/\Delta_w$ is simple.
Then $w$ is bigrassmannian.
\end{corollary}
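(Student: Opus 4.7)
The plan is to combine Proposition~\ref{prop3} with Proposition~\ref{prop4} and use the structural fact about ascents in the penultimate cell $\jc$ recalled in Subsection~\ref{ss:J Jc}.

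First I would dispense with the trivial case: if $w=e$ then $\Delta_e/\Delta_w=0$ and its socle is not simple, so we may assume $w\neq e$; in particular $w$ has at least one left descent and at least one right descent. Writing $\operatorname{soc}(\Delta_e/\Delta_w)=L_y$ for the hypothesized unique simple socle constituent, Proposition~\ref{prop3} places $y$ in the penultimate two-sided cell $\jc$. By the description of left and right cells inside $\jc$ given in Subsection~\ref{ss:J Jc}, every element of $\jc$ has exactly one left ascent and exactly one right ascent; call them $s_0$ and $t_0$, respectively.

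Now suppose $s\in S$ is any left descent of $w$. Proposition~\ref{prop4}\eqref{prop4.1} produces some simple module $L_{y'}$ in $\operatorname{soc}(\Delta_e/\Delta_w)$ with $sy'>y'$, i.e.\ with $s\in LA(y')$. Simplicity of the socle forces $L_{y'}=L_y$, hence $y'=y$, and since $LA(y)=\{s_0\}$ we conclude $s=s_0$. Thus $w$ has a unique left descent. The same argument, applied with Proposition~\ref{prop4}\eqref{prop4.2} in place of \eqref{prop4.1}, shows that every right descent of $w$ must coincide with $t_0$, so $w$ has a unique right descent. Therefore $w$ is bigrassmannian.

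There is no real obstacle here: the statement is a direct conjunction of the two preceding results and the combinatorial fact $|LA(y)|=|RA(y)|=1$ for $y\in\jc$. The only care needed is to rule out $w=e$ up front, and to observe that it is precisely the rigidity of ascent sets in $\jc$ (which fails one cell lower, in $\jc_1$, where ascent sets can be arbitrarily large) that makes the argument work in every type uniformly.
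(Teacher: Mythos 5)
Your proof is correct and is exactly the intended deduction: the paper leaves this corollary's proof implicit (noting only that the general case follows as in type $A$ from \cite{kmm2}), and the argument is precisely the combination of Proposition~\ref{prop3}, Proposition~\ref{prop4}, and the uniqueness of left/right ascents for elements of $\jc$ that you give, including the observation that $w\neq e$ guarantees the existence of at least one descent on each side.
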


\begin{proposition}\label{prop6}
Assume that $x\in W$ and $y\in {}^s\mathcal{H}^t$, for $s,t\in S$, be such that 
$L_y$ is in the socle of $\Delta_e/\Delta_x$.  Then $s\in LD(x)$ and $t\in RD(x)$.
\end{proposition}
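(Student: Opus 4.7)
My plan is a contradiction argument focusing on the left descent; the claim $t\in RD(x)$ follows by a symmetric argument, using the standard duality on $\mathcal O$ that interchanges the roles of left and right (equivalently, the anti-involution $w\mapsto w^{-1}$ on $W$). Accordingly, suppose toward a contradiction that $s\notin LD(x)$, so that $sx>x$ and hence $\Delta_{sx}\subsetneq \Delta_x$.

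First I would convert the socle assumption into a non-vanishing $\Ext^1$. By Proposition~\ref{prop3} we already know that $y\in\jc$, and since $\jc\neq\{w_0\}$ we have $y\neq w_0$; as $\soc\Delta_e=L_{w_0}$ this forces $\Hom(L_y,\Delta_e)=0$. The long exact sequence obtained from $0\to\Delta_x\to\Delta_e\to\Delta_e/\Delta_x\to 0$ by applying $\Hom(L_y,-)$ then shows that the inclusion $L_y\hookrightarrow\Delta_e/\Delta_x$ produces a non-zero class $\xi\in\Ext^1(L_y,\Delta_x)$.

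Second, I would translate this non-vanishing into Kazhdan--Lusztig data using the translation functor $\theta_s$. The relevant properties are: $\theta_s$ is exact and self-adjoint, $\theta_s L_y\neq 0$ precisely because $sy>y$, and $\theta_s\Delta_x$ admits a standard filtration involving $\Delta_x$ and $\Delta_{sx}$ because $sx>x$. Combining self-adjointness with the long exact $\Ext^\bullet(L_y,-)$-sequence of this filtration expresses $\xi$ in terms of graded composition multiplicities of $L_y$ in $\Delta_x$ and $\Delta_{sx}$, which by \eqref{eq:p_Delta} are governed, respectively, by $p_{x,y}$ and $p_{sx,y}$.

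The final step, and the main obstacle, is to verify that these multiplicities cannot simultaneously support a surviving $\Ext^1$-class when $s$ is a left ascent of both $x$ and $y$ with $y\in\jc$. This is an explicit Kazhdan--Lusztig computation: the recursion for $p_{x,y}$ coming from multiplication by $\kl_s$ (see \eqref{sy}), combined with the drastic simplification furnished by Proposition~\ref{muJ} (the $\mu$-function on $\jc$ is merely Bruhat adjacency), reduces the desired vanishing to a parity and degree check on the polynomials, using that $\mathbf{a}(y)=\ell(w_0)-1$ for $y\in\jc$. The same calculation is executed in type $A$ in \cite{kmm2}, and thanks to Propositions~\ref{smallJmu} and~\ref{muJ} it transfers verbatim to every finite Coxeter system, producing the desired contradiction and thereby the proposition.
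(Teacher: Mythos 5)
Your opening move is the right one and matches the paper's source: since $y\in\jc$ we have $y\neq w_0$, so $\Hom(L_y,\Delta_e)=0$ and the inclusion $L_y\hookrightarrow \Delta_e/\Delta_x$ produces a non-zero class in $\Ext^1(L_y,\Delta_x)$. The toolbox you name (exactness and biadjointness of $\theta$, the standard filtration of $\theta\Delta_x$) is also the right one — this is exactly the argument of \cite[Subsection~2.2]{kmm2} to which the paper defers. But the central mechanism in your sketch is inverted, and this is a genuine gap, not a typo. Wall-crossing \emph{kills} the simple module when the corresponding reflection is an ascent on the relevant side: writing $\theta_t=T^0_\mu T^\mu_0$ for $\mu$ on the $t$-wall, one has $T^\mu_0L_y=0$ precisely when $yt>y$, which is our situation since $RA(y)=\{t\}$. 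Your assertion that $\theta L_y\neq 0$ ``precisely because'' $s$ is an ascent of $y$ is false, and the vanishing you are discarding is what makes the proof work: if $xt>x$, the standard filtration is $0\to\Delta_x\to\theta_t\Delta_x\to\Delta_{xt}\to 0$, and since $\Ext^i(L_y,\theta_t\Delta_x)\cong\Ext^i(T^\mu_0L_y,T^\mu_0\Delta_x)=0$ for all $i$, the long exact sequence collapses to $\Ext^1(L_y,\Delta_x)\cong\Hom(L_y,\Delta_{xt})$, which is zero because $\soc\Delta_{xt}\cong L_{w_0}$ and $y\neq w_0$. This contradicts your step one and finishes the proof of $t\in RD(x)$; the left-descent half then follows from the $w\mapsto w^{-1}$ symmetry of the whole picture (note that translation functors only detect \emph{right} ascents/descents, so applying $\theta_s$ directly to the left-descent claim with a filtration ``involving $\Delta_{sx}$'' is a second left/right confusion in your sketch).

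Your proposed endgame would not close the argument even with the sign corrected. You want to deduce the vanishing of $\Ext^1(L_y,\Delta_x)$ from the graded composition multiplicities governed by $p_{x,y}$ and $p_{sx,y}$, but $\Ext^1(L_y,\Delta_x)$ (equivalently, the socle multiplicity $[\soc\co_x:L_y]$, up to the correction term in $\Ext^1(L_y,\Delta_e)$) is \emph{not} a function of these composition multiplicities — determining it beyond what the Kazhdan--Lusztig polynomials give is precisely the open problem this paper is about (see Theorem~\ref{intthm:ext} and the discussion around Conjecture~\ref{combE}). Relatedly, the input $\mathbf{a}(y)=\ell(w_0)-1$ for $y\in\jc$ is false: already in type $A_2$ one has $\mathbf{a}(\jc)=1$ while $\ell(w_0)-1=2$, and in type $B_{n+1}$ the paper computes $\mathbf{a}(\jc)=n^2$ against $\ell(w_0)=(n+1)^2$. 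The correct proof needs no Kazhdan--Lusztig computation at all; Propositions~\ref{smallJmu} and~\ref{muJ} play no role here.
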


The following statement is implicit in \cite{kmm2}, but we provide a proof.

\begin{lemma}\label{socledie}
Let $x,y\in W$, with $x\leq y$, and suppose that the socle of $\co_x$ contains a graded component $X$ (of $\Delta_e$) isomorphic to $L_w\langle d\rangle$, for $w\in {}^s\mathcal{H}^t$.
If there exist $z \in W$ with $x\leq z\leq y$, such that $s\not\in LD(z)$ or  $t\not\in RD(z)$ (equivalently, if  $x \leq sy $ or $ x \leq yt$), then $\soc\co_y$ does not contain $X$. % isotypic component $L_w\langle d\rangle$.
\end{lemma}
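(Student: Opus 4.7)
The plan is to argue by contradiction via Proposition~\ref{prop6} applied to the intermediate element $z$ rather than to $y$. The combinatorial constraint in Proposition~\ref{prop6} rules out the appearance of $L_w$ (for $w \in {}^s\mathcal{H}^t$) in the socle of $\Delta_e/\Delta_z$, so it suffices to show that if the socle of $\Delta_e/\Delta_y$ picked up $X$, then the socle of $\Delta_e/\Delta_z$ would pick up a copy of $L_w\langle d\rangle$ as well.

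Suppose, toward a contradiction, that the image of $X$ in $\Delta_e/\Delta_y$ lies in $\soc(\Delta_e/\Delta_y)$, so it is isomorphic to $L_w\langle d\rangle$. Consider, say, the first case: $s\notin LD(z)$ for some $z$ with $x\leq z\leq y$. The relation $z\leq y$ yields $\Delta_y\subseteq \Delta_z$ by~\eqref{posetiso}, and hence a canonical projection
\[
\pi\colon \Delta_e/\Delta_y \twoheadrightarrow \Delta_e/\Delta_z.
\]
Since any module map sends a semisimple submodule to a semisimple submodule, $\pi(X+\Delta_y) = X+\Delta_z$ lies in $\soc(\Delta_e/\Delta_z)$. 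This image is nonzero: otherwise $X\subseteq \Delta_z$, and from $x\leq z$ we would get $\Delta_z\subseteq \Delta_x$, whence $X\subseteq \Delta_x$, contradicting the hypothesis that $X+\Delta_x$ is a nonzero simple submodule of $\Delta_e/\Delta_x$. So $X+\Delta_z \cong L_w\langle d\rangle$ is a simple submodule of $\soc(\Delta_e/\Delta_z)$ with $w\in {}^s\mathcal{H}^t$, and Proposition~\ref{prop6} forces $s\in LD(z)$, contradicting the choice of $z$. The case $t\notin RD(z)$ is entirely symmetric.

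The equivalence with the conditions $x\leq sy$ or $x\leq yt$ is a direct application of the lifting property of the Bruhat order: when $sy>y$ one takes $z=y$, when $sy<y$ one takes $z=sy$ (since then $s(sy)=y>sy$ and $x\leq sy\leq y$), while conversely $z\leq y$ together with $sz>z$ forces $z\leq sy$ by lifting, giving $x\leq sy$. The only real content of the proof is the observation that Proposition~\ref{prop6} should be invoked at the level of the intermediate $z$ rather than directly at $y$; once this is in place, everything follows from the fact that module maps transport semisimple submodules into the socle of the target, together with the short non-vanishing check afforded by $x\leq z$.
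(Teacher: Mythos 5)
Your argument is correct and is essentially the paper's own proof: both pass through the chain of surjections $\co_y\surj\co_z\surj\co_x$, use the fact that a semisimple submodule maps into the socle of the quotient, verify non-vanishing of the image via the bound $x\leq z$, and then invoke Proposition~\ref{prop6} at the intermediate element $z$ to derive the contradiction with $s\not\in LD(z)$. The only difference is that you additionally spell out the lifting-property equivalence with $x\leq sy$ or $x\leq yt$, which the paper leaves implicit.
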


\begin{proof}
Consider the quotients maps $\co_y\surj \co_z\surj \co_x$.
Since $s\not\in LD(z)$, by Proposition \ref{prop6}, the socle of $\co_z$ does not contain any $L_w\langle d\rangle$.
So $X$ is not in the socle of $\co_z$ and thus is not in the socle of $\co_y$ either.
\end{proof}

\subsection{Join-irreducible elements and dissectors}
\label{ss:posets}

Let $(P,\leq)$ be a partially ordered set, and let $\vee$ denote the join operation (i.e., supremum) in $P$. Of course, not every subset of $P$ needs to have a join. An element $x \in P$ is called \emph{join-irreducible} if $x \neq \bigvee Y$ for any $Y \subseteq P \setminus \{x\}$. Equivalently (see \cite[Lemme 2.3]{LS}), there exists $y \in P$ such that $x$ is minimal in $P \setminus \{z \in P \colon z \leq y\}$. Note that the smallest element $e$ in $P$ (if exists) is not join-irreducible, since then, by convention, we have $\bigvee \varnothing = e$.

Denote by $\mathbf{JI}$ the set of all join-irreducible elements in $P$. This set forms a \emph{base} of $P$ in the sense of \cite[Page~284]{GeK}. In more detail, denote by $\mathcal{P}(\mathbf{JI})$ the poset of all subsets of $\mathbf{JI}$ ordered by inclusion. The map $P \to \mathcal{P}(\mathbf{JI})$, $w \mapsto \{x \in \mathbf{JI} \colon x \leq w \}$ is an isomorphism of posets onto the image, and, moreover, any subset of $P$ with this property must contain $\mathbf{JI}$.

For $w \in P$, denote
\[ \mathbf{JM'}(w) := \text{ the set of maximal elements in } \{x \in \mathbf{JI} \colon x \leq w \} .\]

\begin{lemma}[{\cite[Page~284]{GeK}} or {\cite[Proposition 9]{Re}}]\label{rex}
For any $w\in P$, we have $w=\bigvee\mathbf{JM}'(w)$.
\end{lemma}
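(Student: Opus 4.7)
The plan is to reduce the claim to the order embedding $\iota \colon P \hookrightarrow \mathcal{P}(\mathbf{JI})$ given by $x \mapsto \{y \in \mathbf{JI} \colon y \leq x\}$, which is recalled in the paragraph just above the lemma. Since every element of $\mathbf{JM}'(w)$ is by definition $\leq w$, the element $w$ is certainly an upper bound of $\mathbf{JM}'(w)$ in $P$. It therefore remains to verify that $w$ is the \emph{least} such upper bound; this will simultaneously show that the join $\bigvee \mathbf{JM}'(w)$ exists in $P$ and equals $w$.

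Fix any $v \in P$ with $v \geq x$ for every $x \in \mathbf{JM}'(w)$. Because $\iota$ is order-reflecting, showing $w \leq v$ reduces to showing the inclusion $\iota(w) \subseteq \iota(v)$ inside $\mathcal{P}(\mathbf{JI})$, i.e., that every join-irreducible $y \leq w$ also satisfies $y \leq v$. Given such a $y$, the set $\{z \in \mathbf{JI} \colon y \leq z \leq w\}$ is nonempty (it contains $y$) and, in the finite setting relevant to the paper, contains a maximal element $z_{0}$. One checks directly that $z_{0}$ is then maximal in the larger set $\{z \in \mathbf{JI} \colon z \leq w\}$ as well, so $z_{0} \in \mathbf{JM}'(w)$; by hypothesis on $v$, this gives $v \geq z_{0} \geq y$, and hence $y \leq v$, as required.

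There is no real obstacle in this argument: the lemma is essentially a restatement of the base/embedding property recalled immediately before it, and the only mild subtlety is confirming that the join actually exists inside $P$ (as opposed to merely in $\mathcal{P}(\mathbf{JI})$), which is exactly what checking the least-upper-bound condition above achieves. In greater generality (without finiteness), the existence of $z_{0}$ would need to be secured by a Zorn's lemma argument on the chain-complete poset $\{z \in \mathbf{JI} \colon y \leq z \leq w\}$, but for the applications of the paper to finite Coxeter groups this is unnecessary.
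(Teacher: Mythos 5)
Your proof is correct, and it is essentially the standard argument behind the cited references: the paper itself gives no proof of Lemma~\ref{rex}, only the attributions to \cite{GeK} and \cite{Re}, and your derivation from the order-embedding (``base'') property recalled in the paragraph immediately preceding the lemma is exactly how \cite[Proposition 9]{Re} proceeds. The one step worth checking carefully --- that a maximal element of $\{z \in \mathbf{JI} \colon y \leq z \leq w\}$ is automatically maximal in $\{z \in \mathbf{JI} \colon z \leq w\}$ --- you handle correctly, and your remark that finiteness (or a Zorn-type argument) is needed for such a maximal element to exist is an appropriate caveat that is harmless in the paper's setting of finite Coxeter groups.
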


An element $x \in P$ is called a \emph{dissector} if $P$ can be written as a disjoint union $P=\{z \colon z \geq x\} \coprod \{z \colon z \leq y\}$, for some $y \in P$ (such $y$ is called a \emph{codissector} associated to $x$).

\begin{lemma}[{\cite[Proposition 12]{Re}}]
Every dissector is join-irreducible.
\end{lemma}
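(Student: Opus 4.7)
The plan is to argue by contradiction: assume $x$ is a dissector with codissector $y$, and assume there exists $Y \subseteq P \setminus \{x\}$ with $x = \bigvee Y$; then derive that $x \leq y$, which contradicts the disjointness in the definition of a dissector.

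First I would dispatch the degenerate case $Y = \varnothing$. Here $x = \bigvee \varnothing$ would force $x$ to be the bottom element of $P$. But then $\{z \in P : z \geq x\} = P$, so the complementary set $\{z \in P : z \leq y\}$ must be empty, which is impossible since it always contains $y$ itself. So one may assume $Y \neq \varnothing$.

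Next, for the main case, I would observe that for every $z \in Y$ we have $z \leq \bigvee Y = x$, and $z \neq x$ because $x \notin Y$, hence $z < x$. Now apply the dissector hypothesis to $z$: either $z \geq x$ or $z \leq y$. The first alternative is ruled out by $z < x$, so $z \leq y$ for every $z \in Y$. Therefore $y$ is an upper bound for $Y$, and by the defining property of the join we get $x = \bigvee Y \leq y$. But then $x$ lies in both parts of the asserted disjoint union $P = \{z : z \geq x\} \coprod \{z : z \leq y\}$, which is the desired contradiction.

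The argument is short and uses only the definitions; there is no real obstacle. The only subtlety worth flagging is the convention that the bottom element is not join-irreducible (since $\bigvee \varnothing = e$), which is exactly what rules out the $Y = \varnothing$ case and keeps the statement clean. No properties of $W$, Verma modules, or Hecke algebras enter; the lemma is purely order-theoretic.
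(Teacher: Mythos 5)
Your proof is correct: the paper itself gives no argument for this lemma, citing it directly to Reading's Proposition 12, so there is nothing internal to compare against. Your argument (every $z\in Y$ satisfies $z<x$, hence lands in the $\{z\leq y\}$ part of the dissection, forcing $x=\bigvee Y\leq y$ and contradicting disjointness, with the empty-join case handled by the convention $\bigvee\varnothing=e$) is exactly the standard order-theoretic proof one would expect.
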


The poset $P$ is called \emph{dissective} (one also says that it admits \emph{``clivage''}, see \cite{GeK}) if every join-irreducible element is also a dissector.

\begin{proposition}
\label{prop:dissJM}
Suppose $P$ is dissective, and fix $w \in P$. Then $\mathbf{JM'}(w)$ is the unique minimal set of join-irreducible elements with the property that $w=\bigvee\mathbf{JM}'(w)$.
\end{proposition}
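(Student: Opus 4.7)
The plan is to prove the stronger claim that $\mathbf{JM'}(w)$ is contained in every set $S\subseteq\mathbf{JI}$ satisfying $\bigvee S = w$. Combined with Lemma~\ref{rex}, this immediately yields both the minimality and the uniqueness asserted in the proposition: $\mathbf{JM'}(w)$ is then in fact the minimum among all join expressions of $w$ by join-irreducibles, so no proper subset of it can have join $w$, and any minimal such set must coincide with $\mathbf{JM'}(w)$.

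To prove the containment, fix an arbitrary $m \in \mathbf{JM'}(w)$; I aim to show $m \in S$. Since $P$ is dissective, $m$ is a dissector, so there exists a codissector $y \in P$ with
\[ P = \{z \in P \colon z \geq m\} \coprod \{z \in P \colon z \leq y\}. \]
Because $m \leq w$, the element $w$ lies in the first piece, and the disjointness of the partition forces $w \not\leq y$.

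Now if every element of $S$ satisfied $x \leq y$, then $y$ would be an upper bound of $S$ and hence $w = \bigvee S \leq y$, contradicting what we just established. So there must exist $x^\ast \in S$ with $x^\ast \not\leq y$, and the dichotomy above forces $x^\ast \geq m$. On the other hand, $x^\ast \in \mathbf{JI}$ and $x^\ast \leq \bigvee S = w$, so $x^\ast$ belongs to the set $\{x \in \mathbf{JI} \colon x \leq w\}$, in which $m$ is, by definition of $\mathbf{JM'}(w)$, a maximal element. Consequently $x^\ast \geq m$ forces $x^\ast = m$, and hence $m \in S$.

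The only step requiring any care is the final passage from $x^\ast \geq m$ to $x^\ast = m$: merely producing some $x^\ast \in S$ above $m$ is not enough, and one must invoke the maximality of $m$ in $\{x \in \mathbf{JI} \colon x \leq w\}$ to convert the inequality into equality. Apart from this small subtlety, the argument is a direct unwinding of the dissector/codissector definition and requires no further computation; the main conceptual leap is the realization that the natural statement to prove is containment rather than minimality, and that the dissector property is exactly tailored to deliver it.
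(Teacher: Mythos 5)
Your proof is correct and follows essentially the same route as the paper's: both establish that $\mathbf{JM'}(w)$ is contained in any set $Y\subseteq\mathbf{JI}$ with $\bigvee Y=w$ by combining the dissector/codissector dichotomy with the maximality of elements of $\mathbf{JM'}(w)$ in $\{x\in\mathbf{JI}\colon x\leq w\}$. The paper phrases it as a contradiction (assuming some $x\in\mathbf{JM'}(w)\setminus Y$ and showing $y$ would be an upper bound of $Y$ not above $w$), while you argue directly that some element of $S$ must lie above $m$ and hence equal it; the logical content is identical.
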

\begin{proof}
Assume $w=\bigvee Y$ for $Y \subseteq \mathbf{JI}$. We will prove that $\mathbf{JM}'(w) \subseteq Y$. Suppose $x \in \mathbf{JM}'(w) \setminus Y$, and let $y$ be the codissector associated to the dissector $x$. Since, for $z \in Y$, we cannot have $z \geq x$, we must have $z \leq y$. Also, since $w \geq x$, we cannot have $w \leq y$. This contradicts $w=\bigvee Y$.
\end{proof}

\section{Join operation on Bruhat orders}

\label{s:join_Bruhat}

In this section we collect and prove some properties of the join operation on a Coxeter group $(W,S)$,
%Even though we are primarily interested in Weyl groups, in this section we consider slightly more generally, a finite Coxeter group $(W,S)$, where $W$ is
partially ordered with respect to the strong Bruhat order $\leq$. 

\begin{lemma}[{\cite[Theorems 2.5, 3.4, 4.6, and Table I]{GeK}}]\label{JIisbig}
Every join-irreducible element in $W$ is bigrassmanian. For finite Coxeter groups, the converse is true if and only if $(W,S)$ is of type $A$, $B_3$ or of rank $2$.
\end{lemma}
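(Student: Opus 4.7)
My plan is to prove the two assertions separately. For the first, I argue the contrapositive: suppose $y \in W$ has two distinct left descents $s \neq t$. Then $sy$ and $ty$ are two distinct lower covers of $y$, both of length $\ell(y)-1$. Their join $z := sy \vee ty$ exists (in the Bruhat order, which is a meet-semilattice, so joins of subsets with common upper bounds exist), satisfies $z \leq y$, and $z \geq sy, ty$ forces $\ell(z) \geq \ell(y)-1$. If $\ell(z) = \ell(y)-1$, then $z \geq sy$ combined with equal lengths would give $z = sy$, and similarly $z = ty$, contradicting $sy \neq ty$. Hence $\ell(z) = \ell(y)$, which combined with $z \leq y$ yields $z = y$. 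So $y = sy \vee ty$ is a nontrivial join, and $y$ is not join-irreducible. The case of two distinct right descents is symmetric, completing the contrapositive.

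For the second assertion, the \emph{if} direction (types $A$, $B_3$, and rank $2$) is obtained by proving the sharper statement that in these types every bigrassmannian is a \emph{dissector}, whence join-irreducible by the preceding lemma. Rank-$2$ is immediate since the Bruhat order is a chain in the dihedral case. Type $A$ uses the Lascoux-Schützenberger parameterization of bigrassmannian permutations by quadruples $(i,j,a,b)$ together with an explicit codissector construction, while $B_3$ is verified by direct finite enumeration.

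For the \emph{only if} direction, in each remaining finite irreducible type (namely $B_n$ and $D_n$ for $n \geq 4$; $E_6, E_7, E_8$; $F_4$; $H_3, H_4$), one must exhibit a bigrassmannian element that fails to be join-irreducible. The standard construction is to locate two distinct bigrassmannians $x, y$ strictly below a third bigrassmannian $z$ with $z = x \vee y$, realizing $z$ as a nontrivial join. I expect the main obstacle to be this case-by-case construction of counterexamples in each exceptional type, for which I would defer to the detailed analysis of \cite{GeK}.
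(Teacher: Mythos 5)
The paper does not actually prove this lemma; it is quoted verbatim from \cite{GeK}, so the only question is whether your sketch is sound. Your strategy for the first assertion (show that an element $y$ with two distinct left descents $s\neq t$ satisfies $y=sy\vee ty$, hence is not join-irreducible) is indeed the standard route, but it has a genuine gap exactly at the crucial step: you justify the \emph{existence} of the join $sy\vee ty$ by claiming that the Bruhat order is a meet-semilattice in which ``joins of subsets with common upper bounds exist.'' This is false. Already in $S_3$ the elements $st$ and $ts$ have two maximal common lower bounds, namely $s$ and $t$, which are incomparable, so no meet exists (you may be thinking of the weak order, which is a lattice for finite $W$). Consequently the existence of $\bigvee\{sy,ty\}$ cannot be taken for granted; proving that every common upper bound $u$ of $sy$ and $ty$ satisfies $u\geq y$ \emph{is} the substance of the first assertion. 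One correct argument: if $s\in LD(u)$ (or $t\in LD(u)$) the lifting property gives $y=s(sy)\leq u$ directly; in general one uses Deodhar's theorem that $x\leq u$ in $W$ if and only if the images of $x$ and $u$ agree in every maximal parabolic quotient $W_{S\setminus\{r\}}\backslash W$ --- since $r$ cannot equal both $s$ and $t$, each such quotient identifies $y$ with $sy$ or with $ty$, and monotonicity of the projections yields $y\leq u$. Once the join exists, your length argument showing $z=y$ is fine.

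Two smaller points. First, your rank-$2$ justification is also wrong as stated: the Bruhat order on a dihedral group is not a chain (the two elements of each intermediate length are incomparable); the conclusion that every bigrassmannian there is join-irreducible, indeed a dissector with the other element of the same length as codissector, is still true but needs that (easy) observation rather than chain-ness. Second, for the ``only if'' direction you, like the paper, defer entirely to the case analysis in \cite{GeK}; that is acceptable here, and note that the paper's Remark~\ref{D4exrem} exhibits such a counterexample explicitly in type $D_4$.
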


\begin{lemma}[{\cite{LS}} and {\cite[Table I]{GeK}}]\label{lem:diss}
A finite Coxeter group $W$ is dissective if and only if it is of type $A$, $B$, $H$ or of rank $2$.
\end{lemma}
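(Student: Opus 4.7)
The statement is a biconditional ranging over the classification of finite irreducible Coxeter groups, so my plan is to split it into an ``if'' and an ``only if'' direction and handle each type separately, leveraging Lemma \ref{JIisbig} to first pin down the set $\JI{}{}$ before checking the dissector property.

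For the ``if'' direction, I would first dispose of the rank~$2$ case by direct inspection: in $I_2(m)$ the Bruhat order has exactly two incomparable elements at each length $1,\ldots,m-1$, every such element is visibly join-irreducible, and the ``other'' element of the same length serves as its codissector, so $W=\{z\geq x\}\coprod\{z\leq y\}$ is immediate. For type $A$, Lemma \ref{JIisbig} identifies $\JI{}{}$ with the set of bigrassmannians in $S_n$, which admit the standard parametrization by triples $(a,b,c)$ corresponding to ``staircase'' shapes. For each such element I would write down an explicit codissector (the largest permutation whose one-line notation avoids a specific rectangular pattern) and verify the disjoint decomposition $W=\{z\geq x\}\coprod\{z\leq y\}$ using the tableau criterion for Bruhat order. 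Type $B$ proceeds analogously using signed permutations; again by Lemma \ref{JIisbig} join-irreducibles are exactly bigrassmannians only for $B_3$, while for general $B_n$ one must first isolate genuine join-irreducibles among the bigrassmannians before constructing codissectors. For type $H$, Lemma \ref{JIisbig} no longer reduces the question to bigrassmannians, so the natural route is to enumerate $\JI{}{}$ in $H_3$ and $H_4$ explicitly (realistic since $|H_4|=14400$) and to verify dissectivity case by case, with computer assistance.

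For the ``only if'' direction, for each of the remaining types $D_n$ ($n\geq 4$), $F_4$, $E_6$, $E_7$, $E_8$ I would exhibit one explicit join-irreducible $x$ together with two incomparable maximal elements of $W\setminus\{z\geq x\}$, which rules out the existence of any codissector. A natural starting point is type $D_4$, where the symmetry of the Dynkin diagram allows one to find a bigrassmannian attached to the trivalent node whose principal coideal has two distinct maxima; one then needs to verify that this element is itself join-irreducible (not just bigrassmannian). For the exceptional types, I would hunt for similar witnesses built around the branching node of the Dynkin diagram. One must resist the temptation to reduce to parabolic subgroups, since the dissective property is not preserved under parabolic inclusion: the decomposition witnessing dissectivity concerns the whole ambient poset, not a parabolic piece.

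The main obstacle will be type $H$. Outside the classical crystallographic series there is no permutation-style model of the Coxeter group, so the well-developed combinatorics of bigrassmannians and their codissectors available in types $A$ and $B$ does not transfer. The result then rests either on a bespoke combinatorial analysis of reduced expressions in $H_3, H_4$ or, more honestly, on computer verification of the full Bruhat order. A secondary challenge is uniformity in $B_n$: the bigrassmannians split into several families of ``linear'' and ``signed'' types, and one must carefully match each join-irreducible with its codissector while tracking sign changes, which is where the proof is most prone to bookkeeping errors.
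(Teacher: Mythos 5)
This lemma is not proved in the paper at all: it is imported verbatim from the literature, with the type-$A$ case (and the general framework of dissectors/``clivage'') due to Lascoux--Sch\"utzenberger \cite{LS} and the full classification recorded in \cite[Table~I]{GeK}. So the honest comparison is between your reconstruction plan and the proofs in those two references, and in outline your plan matches them: \cite{LS} proves type $A$ exactly by exhibiting, for each bigrassmannian (= join-irreducible, by Lemma~\ref{JIisbig}) parametrized by a triple, an explicit codissector and checking the decomposition with the tableau criterion; \cite{GeK} handles $B$, $D$, $H$ and the exceptional types case by case, isolating the genuine join-irreducibles among the bigrassmannians in $B_n$ and $D_n$, producing witnesses against dissectivity in $D$, $E$, $F$, and resorting to explicit computation for $H_3$, $H_4$. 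Your only-if witnesses exist where you expect them: the paper's own Remark~\ref{D4exrem} records that in $D_4$ the three join-irreducibles $1\Ou{}21$, $1\Od{}21$, $1\Ou{}\Od{}1$ in $\BG{1}{1}$ fail to be dissectors, which is precisely the kind of example your strategy calls for.

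That said, as a \emph{proof} your proposal has a genuine gap of execution rather than of conception: the only case actually completed is rank $2$. Everything else --- the explicit codissector attached to each bigrassmannian triple in type $A$ and the verification of the disjoint decomposition, the identification of which bigrassmannians in $B_n$ ($n\geq 4$) are join-irreducible together with their codissectors, the concrete non-dissector witnesses in $D_n$, $F_4$, $E_6$, $E_7$, $E_8$, and the entire type $H$ analysis --- is deferred to ``explicit verification'' or ``computer assistance'' without being carried out, and each of these is where the real work (and the real risk of error) lives. Carrying the plan through would amount to rewriting \cite{LS} and \cite[Sections~3--5]{GeK}. In the context of this paper the correct move is the one the authors make: cite the classification rather than reprove it.
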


Given $w\in W$, we can find $x,\dots, y\in \mathbf{JI}$ such that $w=x\vee\cdots\vee y$. We call this a \emph{join expression} of $w$.
One way to find a join expression is to consider $\mathbf{JM'}(w)$. We can find a potentially smaller join expression by restricting the descent sets. More precisely, for $T,U \subseteq S$, denote
\[ \desc{T}{U} := \{x \in W \colon LD(x)\subseteq T,\ RD(x)\subseteq U\}.\]
This is a poset
with the order induced from the Bruhat order on $W$. Note that $\desc{T}{U}$ is %(by, for example, \cite[Subsection~ 2]{Ko2}) 
the set of Bruhat minimal representatives of the two sided cosets $W_{\tilde{T}} \backslash W / W_{\tilde{U}}$, where $W_{\tilde{T}}, W_{\tilde{U}}$ are the parabolic subgroups of $W$ generated by $\tilde{T}=S \setminus T$ and $\tilde{U}= S \setminus U$, respectively. We would  like to 
point out that our notation ${}_TW_U$ is 
different from, in particular, the notation   ${}^{\tilde{T}}W^{\tilde{U}}$ used in \cite{Re}. Also note that, for $s,t \in S$, the set $\desc{\{s\}}{\{t\}} \setminus \{e\}$ is exactly the set of all bigrassmannians with left descent $s$ and right descent $t$. We will simplify the notation in such a case and write
\[\BG{s}{t} = \desc{\{s\}}{\{t\}} \setminus \{e\}. \]
For fixed simple reflections $s,t\in S$, we let
\[ \JI{s}{t} := \JI{}{}\cap \BG{s}{t}, \]
the (po)set of all join-irreducible elements with fixed descents.

For $w \in W$, define
\[ \mathbf{JM}(w) := \text{ the set of Bruhat maximal elements in } \left\{x \in \mathbf{JI} \cap \left( \desc{LD(w)}{RD(w)} \right) \colon x \leq w \right\} .\]

\begin{lemma}
\label{lm:cosets}
Let $X \subseteq \desc{T}{U}$ and $w \in W$ such that $w$ is a minimal upper bound of $X$ in $W$. Then $w \in \desc{T}{U}$. 
Consequently, %the join of $X$ in the poset $\desc{T}{U}$ exists if and only if it exists in the poset $W$, and in this case they coincide.
the following diagram commutes:
%
\begin{comment}
%This version makes too much spacing afterwards.
\[\begin{tikzcd}[remember picture]
   \prod_i W \arrow{r}{\vee} & W\coprod \{\text{`does not exist'}\}\\
    \prod_i \desc{T}{U} \arrow{r}{\vee} & \desc{T}{U}\coprod \{\text{`does not exist'}\}\\
\end{tikzcd}
\begin{tikzpicture}[overlay,remember picture]
\path (\tikzcdmatrixname-1-1) to node[midway,sloped]{$\supset$}
(\tikzcdmatrixname-2-1);
\path (\tikzcdmatrixname-1-2) to node[midway,sloped]{$\supset$}
(\tikzcdmatrixname-2-2);
\end{tikzpicture}\]
\end{comment}
%
\[ \xymatrix{
 \displaystyle\prod_i W \ar[r]^-\vee & W\coprod \{\text{`does not exist'}\} \\
 \displaystyle\prod_i \desc{T}{U} \ar[r]^-\vee
 \ar@{}[u]|{\rotatebox[origin=c]{90}{$\subset$}} & \desc{T}{U}\coprod \{\text{`does not exist'}  \ar@{}[u]|{\rotatebox[origin=c]{90}{$\subset$}}   \}
} \]

\end{lemma}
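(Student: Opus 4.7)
The plan is to argue by contradiction using the standard lifting property of the Bruhat order: if $s \in S$ and $sw < w$, then any $x \leq w$ with $sx > x$ automatically satisfies $x \leq sw$, and symmetrically for multiplication on the right. So assume $w$ is a minimal upper bound of $X$ in $W$ but $w \not\in \desc{T}{U}$. Then at least one of $LD(w) \not\subseteq T$ or $RD(w) \not\subseteq U$ holds; the two cases are symmetric, so I will treat only the first.

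Fix $s \in LD(w) \setminus T$. Since every element of $X$ lies in $\desc{T}{U}$ and $s \notin T$, no $x \in X$ has $s$ as a left descent, so $sx > x$ for each $x \in X$. Moreover $s \in LD(w)$ gives $sw < w$. I would then apply the lifting property to the inequality $x \leq w$ (observing that $x \neq w$, since $x \in \desc{T}{U}$ while $w \not\in \desc{T}{U}$) to obtain $x \leq sw$ for every $x \in X$. Thus $sw$ is an upper bound of $X$ strictly below $w$, contradicting the minimality of $w$.

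For the commutative diagram, I would note that $\desc{T}{U}$ inherits its order from $W$. Consequently, if $\bigvee_W X$ exists, then as a minimal upper bound of $X$ in $W$ it lies in $\desc{T}{U}$ by the first part of the lemma; any upper bound of $X$ in $\desc{T}{U}$ is also an upper bound in $W$, so this same element serves as the least upper bound in $\desc{T}{U}$. The reverse direction is handled analogously, since a join computed in $\desc{T}{U}$ is automatically a minimal upper bound in $W$ once one checks it is dominated by $\bigvee_W X$. I do not anticipate a substantive obstacle here; the only real content is recognising the lifting property as precisely the tool that produces the smaller upper bound needed to contradict minimality.
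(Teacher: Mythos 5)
Your argument is correct and coincides with the paper's own proof: both fix $s\in LD(w)\setminus T$, note $sw<w$ while $sx>x$ for all $x\in X$, and invoke the lifting property to conclude $x\leq sw$, contradicting the minimality of $w$. The remarks on the commutative diagram likewise match the paper's (implicit) reasoning, so there is nothing to add.
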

\begin{proof}
Suppose $s \in LD(w)\setminus T$. Then $sw<w$ and $sx>x$, for all $x \in X$. From $w>x$ and the lifting property of Coxeter groups,
it follows that $sw>x$, for all $x \in X$, which is in contradiction with the minimality of $w$. This shows $LD(w)\subseteq T$. An analogous argument applies for the right descents of $w$.
\end{proof}

By Lemma \ref{lm:cosets}, the join operation for the poset $(W,\leq)$ can be computed in the subposets $(\BG{s}{t},\leq)$. It is worth noting that, according to \cite[Proposition 32]{Re} (see also \cite[Corollary 2.8]{GeK}), an element $x \in \desc{s}{t}$ is join-irreducible (resp. a dissector) in $W$ if and only if it is join-irreducible (resp. a dissector) in the poset $\desc{s}{t}$.

\begin{lemma}
\label{lemma:joinJM}
For any $w\in W$, we have $w=\bigvee\mathbf{JM}(w)$.
\end{lemma}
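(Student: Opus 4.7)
My plan is to deduce the statement from Lemma~\ref{rex} via the inclusion $\mathbf{JM}'(w) \subseteq \mathbf{JM}(w)$, where $\mathbf{JM}'(w)$ denotes the set of Bruhat-maximal join-irreducibles $\leq w$ (without the descent restriction). Granting this inclusion, the lemma follows immediately: the containment $\mathbf{JM}(w) \subseteq [e,w]$ yields $\bigvee \mathbf{JM}(w) \leq w$, while $\mathbf{JM}'(w) \subseteq \mathbf{JM}(w)$ means any upper bound of $\mathbf{JM}(w)$ is also an upper bound of $\mathbf{JM}'(w)$, hence bounds $\bigvee \mathbf{JM}'(w) = w$ from above; combining, $\bigvee \mathbf{JM}(w) = w$.

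The inclusion $\mathbf{JM}'(w) \subseteq \mathbf{JM}(w)$ in turn reduces to the descent-containment assertion that every $y \in \mathbf{JM}'(w)$ satisfies $LD(y) \subseteq LD(w)$ and $RD(y) \subseteq RD(w)$, since this places $y$ inside $\mathbf{JI}(w)$, and any strictly larger element of $\mathbf{JI}(w)$ would contradict the maximality of $y$ inside $\mathbf{JI} \cap [e,w]$. By Lemma~\ref{JIisbig} such a $y$ is bigrassmannian with $LD(y) = \{s\}$ and $RD(y) = \{t\}$; I would argue by contradiction, assuming $s \notin LD(w)$ (the case $t \notin RD(w)$ is symmetric), so that $sw > w$ while $sy < y \leq w$. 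Fixing a reduced expression $w = c_1 c_2 \cdots c_m$, the hypothesis $s \notin LD(w)$ forces $c_1 \neq s$. Because $y$ is bigrassmannian with unique left descent $s$, every reduced expression of $y$ begins with $s$; so by the subword property one can extract a reduced subword of $w$ representing $y$ whose first selected letter is $s$ at some position $i_1 \geq 2$. Prepending $c_1$ at position $1$ to that subword produces a reduced expression of $c_1 y$ (the new word is reduced because $c_1 \notin LD(y)$ gives $\ell(c_1 y) = \ell(y)+1$), so that $y < c_1 y \leq w$.

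To finish, one needs to extract from $c_1 y$ an actual join-irreducible strictly greater than $y$ and still $\leq w$, contradicting the maximality of $y$ in $\mathbf{JM}'(w)$. A natural attempt is to apply Lemma~\ref{rex} to $c_1 y$: since $c_1 y > y$, the set $\mathbf{JM}'(c_1 y) \subseteq \mathbf{JI}$ cannot consist entirely of elements $\leq y$, so there is some $y' \in \mathbf{JM}'(c_1 y)$ with $y' \not\leq y$ and $y' \leq c_1 y \leq w$; in the favourable case $y' > y$ we are done. The main obstacle I anticipate is the remaining case in which $y'$ is only incomparable with $y$: the element $c_1 y$ itself is typically not bigrassmannian (for instance, when $c_1$ commutes with $s$, one checks that $\{c_1, s\} \subseteq LD(c_1 y)$), so one cannot simply take $y' = c_1 y$, and a more delicate argument --- iterating the construction, or analysing the bigrassmannian sub-poset of $[e,w]$ directly via the lifting property --- seems to be required to produce a bigrassmannian strictly above $y$ and below $w$ in all cases.
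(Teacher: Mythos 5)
There is a fatal gap: the inclusion $\mathbf{JM'}(w) \subseteq \mathbf{JM}(w)$ that your whole argument rests on is false in general, and so is the descent-containment assertion you reduce it to. The paper's own example in type $D_4$ (with $w=\Od{}1\Ou{}21\Od{}1\Ou{}$) exhibits $\mathbf{JM'}(w) = \mathbf{JM}(w) \cup \{1\Od{}\Ou{}1\}$: here $1\Od{}\Ou{}1$ is a Bruhat-maximal join-irreducible below $w$, yet it fails to lie in $\desc{LD(w)}{RD(w)}$, i.e.\ its descent is not a descent of $w$. (Indeed, if the descent containment held for every element of $\mathbf{JM'}(w)$, then by the maximality argument you give, $\mathbf{JM'}(w)\subseteq\mathbf{JM}(w)$ would follow and, by Lemma~\ref{lemma:JM_subset}, the two sets would coincide --- which they do not in this example.) Consequently, the ``remaining case'' you flag at the end --- where the element $y'\in\mathbf{JM'}(c_1y)$ produced from $c_1y$ is merely incomparable with $y$ --- is not a technical wrinkle to be ironed out by iteration: it is exactly what happens in the counterexample, where there is genuinely no join-irreducible strictly above $y=1\Od{}\Ou{}1$ and below $w$. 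No refinement of the subword manipulation can rescue the claim, because the claim itself is wrong.

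The correct route, and the one the paper takes, sidesteps the comparison of $\mathbf{JM}(w)$ with $\mathbf{JM'}(w)$ entirely. One works inside the subposet $\desc{T}{U}$ with $T=LD(w)$, $U=RD(w)$, which contains $w$. By \cite[Propositions 27 and 31]{Re}, the join-irreducible elements of the poset $\desc{T}{U}$ are precisely $\mathbf{JI}\cap\desc{T}{U}$, so $\mathbf{JM}(w)$ is the set of maximal join-irreducibles of $\desc{T}{U}$ lying below $w$; Lemma~\ref{rex}, applied to the poset $\desc{T}{U}$ rather than to $W$, then gives $w=\bigvee\mathbf{JM}(w)$ there, and Lemma~\ref{lm:cosets} transfers this join from $\desc{T}{U}$ to $W$. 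Your proposal uses Lemma~\ref{rex} only for the ambient poset $W$, which is why it is forced into proving the false inclusion.
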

\begin{proof}
According to \cite[Proposition 27 and Proposition 31]{Re}, the join-irreducible elements in the poset $\desc{T}{U}$ (with $T = LD(w)$ and $U = RD(w)$) are exactly $\mathbf{JI} \cap \desc{T}{U}$. From Lemma \ref{rex} we have that $w=\bigvee\mathbf{JM}(w)$, where the join is taken inside the poset $\desc{T}{U}$. From Lemma \ref{lm:cosets} we see that the same equality also holds in the poset $W$.
\end{proof}

\begin{lemma}
\label{lemma:JM_subset}
Given $w\in W$, if $\mathbf{JM'}(w) \subseteq \mathbf{JM}(w)$ then $\mathbf{JM'}(w) = \mathbf{JM}(w)$.
\end{lemma}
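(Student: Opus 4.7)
The plan is to prove the reverse inclusion $\mathbf{JM}(w) \subseteq \mathbf{JM'}(w)$ under the hypothesis, exploiting the fact that both sets are antichains in the Bruhat order (since each is defined as the set of maximal elements of some subset of $W$).

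First I would fix an arbitrary $z \in \mathbf{JM}(w)$ and argue by contradiction that $z \in \mathbf{JM'}(w)$. By definition of $\mathbf{JM}(w)$, the element $z$ is join-irreducible and satisfies $z \leq w$, so $z$ belongs to the set $\{x \in \mathbf{JI} : x \leq w\}$ whose maximal elements constitute $\mathbf{JM'}(w)$. If $z$ were not maximal in that set, there would exist $z' \in \mathbf{JI}$ with $z < z' \leq w$; by choosing $z'$ to be a maximal such element, we can arrange $z' \in \mathbf{JM'}(w)$.

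Now I invoke the hypothesis $\mathbf{JM'}(w) \subseteq \mathbf{JM}(w)$ to conclude $z' \in \mathbf{JM}(w)$. This yields two elements $z < z'$ both sitting in $\mathbf{JM}(w)$, which is impossible since $\mathbf{JM}(w)$, being a set of maximal elements in some subposet of $W$, is an antichain. This contradiction forces $z \in \mathbf{JM'}(w)$, proving $\mathbf{JM}(w) \subseteq \mathbf{JM'}(w)$ and hence equality in view of the assumed inclusion.

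There is no serious obstacle here: the statement is essentially a formal consequence of the fact that both $\mathbf{JM}(w)$ and $\mathbf{JM'}(w)$ are antichains in $(\mathbf{JI}, \leq)$, combined with the observation that $\mathbf{JM}(w) \subseteq \{x \in \mathbf{JI} : x \leq w\}$, which is the set whose maximal elements define $\mathbf{JM'}(w)$. No properties of Coxeter groups or of join-irreducibility beyond what was already established in Lemmas \ref{rex} and \ref{lemma:joinJM} are needed.
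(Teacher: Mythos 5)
Your argument is correct and is essentially the same as the paper's: take $z\in\mathbf{JM}(w)\setminus\mathbf{JM'}(w)$, pick a maximal join-irreducible $z'$ with $z<z'\leq w$ so that $z'\in\mathbf{JM'}(w)\subseteq\mathbf{JM}(w)$, and derive a contradiction with the maximality (antichain property) defining $\mathbf{JM}(w)$. Nothing further is needed.
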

\begin{proof}
Take $x \in \mathbf{JM}(w)$ and assume that $x \notin \mathbf{JM'}(w)$. Then there is $y \in \mathbf{JI}$ such that $x<y \leq w$. Without loss of generality we can take $y$ to be maximal with this property. This gives $y \in \mathbf{JM'}(w)$. But then $y \in \mathbf{JM}(w)$ %gives $y \in {}^{LD(w)} W^{RD(w)}$, 
which contradicts $x \in \mathbf{JM}(w)$. This proves $\mathbf{JM'}(w) = \mathbf{JM}(w)$.
\end{proof}
\begin{corollary}
\label{cor:JM=JM}
If $W$ is dissective%of type $A$, $B$, $H$ or of rank $2$
, then $\mathbf{JM'}(w)=\mathbf{JM}(w)$, for all $w \in W$.
\end{corollary}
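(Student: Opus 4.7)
The plan is to chain together the two tools the authors have just set up: Lemma~\ref{lemma:joinJM}, which produces $\mathbf{JM}(w)$ as a join expression of $w$ from join-irreducibles, and Proposition~\ref{prop:dissJM}, which (in the dissective case) characterizes $\mathbf{JM'}(w)$ as the \emph{unique minimal} such join expression. Once both containments of the desired equality are in hand, the final step is a direct appeal to Lemma~\ref{lemma:JM_subset}.

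More concretely, first I would invoke Lemma~\ref{lemma:joinJM} to write
\[
w = \bigvee \mathbf{JM}(w),
\]
noting that $\mathbf{JM}(w)$ is by definition a subset of $\mathbf{JI}$ (its elements are required to lie in $\mathbf{JI} \cap \desc{LD(w)}{RD(w)}$). Then, under the hypothesis that $W$ is dissective, Proposition~\ref{prop:dissJM} applies and tells us that $\mathbf{JM'}(w)$ is the unique minimal subset $Y \subseteq \mathbf{JI}$ satisfying $w = \bigvee Y$. Applied to $Y = \mathbf{JM}(w)$, this yields the inclusion
\[
\mathbf{JM'}(w) \subseteq \mathbf{JM}(w).
\]

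Finally, I would invoke Lemma~\ref{lemma:JM_subset}, whose conclusion is precisely that such an inclusion is automatically an equality. This gives $\mathbf{JM'}(w) = \mathbf{JM}(w)$, as required.

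I do not expect a serious obstacle here: both Lemma~\ref{lemma:joinJM} and Proposition~\ref{prop:dissJM} have already done the substantive work, and the corollary really is a short composition of these statements with the bookkeeping lemma. The only point that deserves explicit attention in writing the proof is to verify that $\mathbf{JM}(w) \subseteq \mathbf{JI}$, which is immediate from its defining formula, so that Proposition~\ref{prop:dissJM} is applicable.
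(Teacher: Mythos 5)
Your proof is correct and follows exactly the paper's own argument: the paper's proof of this corollary is the one-line citation of Proposition~\ref{prop:dissJM}, Lemma~\ref{lemma:joinJM} and Lemma~\ref{lemma:JM_subset}, combined in precisely the order you describe. The only extra care you take — checking $\mathbf{JM}(w)\subseteq\mathbf{JI}$ so that Proposition~\ref{prop:dissJM} applies — is a reasonable bit of bookkeeping that the paper leaves implicit.
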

\begin{proof}
This follows from %Lemma \ref{lem:diss}, 
Proposition \ref{prop:dissJM}, Lemma \ref{lemma:joinJM} and Lemma \ref{lemma:JM_subset}.

Alternatively, this also follows from the \emph{essential set} description of $\mathbf{JM'}(w)$ from \cite[Subsection~4.3]{kmm2} (originally introduced in \cite{Fu}) for type $A$, and \cite[Theorem 2.2]{anderson} for type $B$.
%
\begin{comment}
%OLD PROOF
For type $G_2$, one can check the claim directly (all elements except $e$ and $w_0$ are join-irreducible).
Assume $W$ is of type $A$, and take $x \in \mathbf{JM'}(w)$. From \cite[Corollary 18]{kmm2} (see also \cite{Ko}), the fact that join-irreducible elements in  type $A$ are exactly bigrassmannian permutations, and the definition of essential set (\cite[4.3]{kmm2}, originally introduced in \cite{Fu}) it follows easily that $x \in\desc{LD(w)}{RD(w)}$, i.e., $x \in \mathbf{JM}(w)$. This proves $\mathbf{JM'}(w) \subseteq \mathbf{JM}(w)$ in type $A$. Now Lemma \ref{lemma:JM_subset} gives the claim.
Assume $W$ is of type $B$, and take $x \in \mathbf{JM'}(w)$. Then one can analogously use \cite[Theorem 2.2]{anderson} and track the definitions, to find that $x \in \desc{LD(w)}{RD(w)}$. This proves $\mathbf{JM'}(w) \subseteq \mathbf{JM}(w)$ in type $B$, and again Lemma \ref{lemma:JM_subset} finishes the proof.
\end{comment}
\end{proof}

\begin{example}
Corollary \ref{cor:JM=JM} is not true in non-dissective Weyl groups. %types other than the ones stated. 
For example, in type $D_4$ with the following labeling of simple roots: \begin{tikzpicture}[scale=0.4,baseline=-3]
\protect\draw (0 cm,0) -- (-2 cm,0);
\protect\draw (-2 cm,0) -- (-4 cm,0.7 cm);
\protect\draw (-2 cm,0) -- (-4 cm,-0.7 cm);
\protect\draw[fill=white] (0 cm, 0 cm) circle (.15cm) node[above=1pt]{\scriptsize $2$};
\protect\draw[fill=white] (-2 cm, 0 cm) circle (.15cm) node[above=1pt]{\scriptsize $1$};
\protect\draw[fill=white] (-4 cm, 0.7 cm) circle (.15cm) node[above=1pt]{\scriptsize $\Ou$};
\protect\draw[fill=white] (-4 cm, -0.7 cm) circle (.15cm) node[above=1pt]{\scriptsize $\Od$};
\end{tikzpicture},
take $w=\Od{}1\Ou{}21\Od{}1\Ou{}$. Then we have $\mathbf{JM}(w) = \{\Od{}1\Ou{}21\Od{}, \, 1\Od{}21\Ou{}\}$, but $\mathbf{JM'}(w) = \mathbf{JM}(w) \cup \{1\Od{}\Ou{}1 \}$.

In type $F_4$, with the following labeling of simple roots:
\begin{tikzpicture}[scale=0.4,baseline=-3]
\protect\draw (0 cm,0) -- (-2 cm,0);
\protect\draw (-2 cm,0.1cm) -- (-4 cm,0.1 cm);
\protect\draw (-2 cm,-0.1cm) -- (-4 cm,-0.1 cm);
\protect\draw (-4 cm,0) -- (-6 cm,0);
\protect\draw[fill=white] (0 cm, 0 cm) circle (.15cm) node[above=1pt]{\scriptsize $4$};
\protect\draw[fill=white] (-2 cm, 0 cm) circle (.15cm) node[above=1pt]{\scriptsize $3$};
\protect\draw[fill=white] (-4 cm, 0 cm) circle (.15cm) node[above=1pt]{\scriptsize $2$};
\protect\draw[fill=white] (-6 cm, 0 cm) circle (.15cm) node[above=1pt]{\scriptsize $1$};
\end{tikzpicture},
take $w=3423123432$. In this case, we have $\mathbf{JM}(w) = \{34231234, \, 32341232\}$, but $\mathbf{JM'}(w) = \mathbf{JM}(w) \cup \{3234323 \}$.
\end{example}

\begin{remark}\label{D4exrem}
Note that $w=\bigvee\mathbf{JM}(w)$ is the unique minimal join expression of $w$ in the dissective types (see Proposition \ref{prop:dissJM}, Lemma \ref{lem:diss} and Corollary \ref{cor:JM=JM}). This is not true in general. The smallest type where one can find a counterexample is $D_4$.
In this type, we have a unique counterexample, namely $w=1\Ou{}\Od{}21$. It has $\mathbf{JM}(w)=\mathbf{JM}'(w) = \{1\Ou{}21, 1\Od{}21, 1\Ou{}\Od{}1 \}$. None of the latter three elements 
is a dissector. However, $w$ can be written as the join of any two of them. In particular, $w$ is a bigrassmannian which is not join-irreducible. We give the Bruhat graph of $\BG{1}{1}$, which contains all the above mentioned elements:
\[\xymatrix@R=1.5em{
 & 1 \ar[dl] \ar[d] \ar[dr]   & \\
1\Ou{}21 \ar[dr]   &     1\Od{}21 \ar[d]  &    1\Ou{}\Od{}1 \ar[dl] \\
& 1\Ou{}\Od{}21 \ar[d]  & \\
 & 1\Od{}21\Ou{}1\Od{}21 & } \]
In Subsection~\ref{ss:JID} below, we show
that these three elements correspond to three distinct simple submodules of the isotypic subquotient $2L_x\langle  -9\rangle$ in $\Delta_e$, and the sum of any two is necessarily the entire $2L_x\langle  -9\rangle$. 
This is the place (unique in type $D_4$) where the polynomial  $p_{e,x}$, with $x \in \jc$, has a coefficient bigger than one. Namely, $x=\Ou{}1\Od{}21\Ou{}21\Od{}12 \in {}^1 \mathcal{H}^1$ and $p_{e,x}=v^{11}+2v^9+v^7$.
\end{remark}

\begin{example}
The smallest $W$ where $\mathbf{JM}(w) \not\subseteq \mathbf{JM'}(w)$, for some $w$, is of type $D_6$. We denote the simple roots by \begin{tikzpicture}[scale=0.4,baseline=-3]
\protect\draw (4 cm,0) -- (2 cm,0);
\protect\draw (2 cm,0) -- (0 cm,0);
\protect\draw (0 cm,0) -- (-2 cm,0);
\protect\draw (-2 cm,0) -- (-4 cm,0.7 cm);
\protect\draw (-2 cm,0) -- (-4 cm,-0.7 cm);
\protect\draw[fill=white] (4 cm, 0 cm) circle (.15cm) node[above=1pt]{\scriptsize $4$};
\protect\draw[fill=white] (2 cm, 0 cm) circle (.15cm) node[above=1pt]{\scriptsize $3$};
\protect\draw[fill=white] (0 cm, 0 cm) circle (.15cm) node[above=1pt]{\scriptsize $2$};
\protect\draw[fill=white] (-2 cm, 0 cm) circle (.15cm) node[above=1pt]{\scriptsize $1$};
\protect\draw[fill=white] (-4 cm, 0.7 cm) circle (.15cm) node[above=1pt]{\scriptsize $\Ou{}$};
\protect\draw[fill=white] (-4 cm, -0.7 cm) circle (.15cm) node[above=1pt]{\scriptsize $\Od{}$};
\end{tikzpicture} and take $w=1\Ou{}21\Od{}321\Ou{}4321\Od{}321\Ou{}12$. Then we have
\begin{align*}
\mathbf{JM}(w) &=   \{ 4321\Od{}\Ou{}12, \, 1\Ou{}21\Od{}321\Ou{}4321\Od{}, \, 1\Od{}21\Ou{}321\Od{}4321\Ou{}, \, 1\Ou{}21\Od{}1\Ou{}2132 \} , \\
\mathbf{JM'}(w) &=  \{ 4321\Od{}\Ou{}12, \, 1\Ou{}21\Od{}321\Ou{}4321\Od{}, \, 1\Od{}21\Ou{}321\Od{}4321\Ou{}, \, 1\Od{}21\Ou{}321\Od{}21\Ou{}321 \} ,
\end{align*}
while $\mathbf{JM}(w) \cap \mathbf{JM'}(w)$ contains only $3$ elements.
\end{example}

Put 
\begin{equation}\label{JM''}
    \mathbf{JM''}(w) := \{ x \in  \mathbf{JM'}(w)\ |\ x \in \desc{LD(w)}{RD(w)} \}
\end{equation}
and  note that $\mathbf{JM''}(w) = \mathbf{JM}(w) \cap \mathbf{JM'}(w)$.
We do not know if $w=\bigvee\JM''(w)$ holds in general. In type $D_6$ (and all types of rank smaller than $6$) we do have $w = \bigvee \mathbf{JM''}(w)$.

\section{Join operation and Verma modules}\label{s-new4}

\subsection{Intersection conjecture}

Our main conjecture is that the join expression $w=\bigvee \mathbf{JM}(w)$ gives 
rise to a join expression for the Verma module $\Delta_w$.

\begin{conjecture}\label{con2JM}
For $w\in W$, we have 
\begin{equation}\label{eq:joinexpintersection}
\Delta_w = \bigcap_{x\in \mathbf{JM}(w)}\Delta_x ,    
\end{equation}
where the intersection is taken inside (the ungraded module) $\Delta_e$.
\end{conjecture}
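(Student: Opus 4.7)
The plan is to derive Conjecture \ref{con2JM} directly from the socle-injectivity statement granted by Theorem \ref{intthm:sosum}. The inclusion $\Delta_w \subseteq \bigcap_{x\in\mathbf{JM}(w)}\Delta_x$ is formal: by Lemma \ref{lemma:joinJM} we have $w=\bigvee\mathbf{JM}(w)$, so $w\geq x$ for every $x\in\mathbf{JM}(w)$, and the poset isomorphism \eqref{posetiso} translates each such Bruhat inequality into a containment $\Delta_w\subseteq \Delta_x$.

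For the reverse inclusion, set $I=\bigcap_{x\in\mathbf{JM}(w)}\Delta_x$ and suppose, toward a contradiction, that $I\supsetneq \Delta_w$. Then $I/\Delta_w$ is a non-zero submodule of $\Delta_e/\Delta_w$, and, because $I\subseteq \Delta_x$, each canonical quotient $\psi_x\colon \Delta_e/\Delta_w \twoheadrightarrow \Delta_e/\Delta_x$ annihilates $I/\Delta_w$. Every non-zero module in category $\cO$ has a non-zero socle, and any simple submodule of $I/\Delta_w$ is automatically a simple submodule of $\Delta_e/\Delta_w$; hence $\operatorname{soc}(I/\Delta_w)$ is a non-zero subobject of $\operatorname{soc}(\Delta_e/\Delta_w)$. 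Any non-zero $v$ in it would satisfy $\phi_x(v)=0$ for every $x\in\mathbf{JM}(w)$, contradicting the assertion $\bigcap_{x\in\mathbf{JM}(w)}\ker\phi_x=0$ supplied by Theorem \ref{intthm:sosum}. Therefore $I=\Delta_w$.

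The conceptual heart of the argument, and therefore the main obstacle, is entirely packaged into the input Theorem \ref{intthm:sosum}; once the socle-level injectivity $\operatorname{soc}(\Delta_e/\Delta_w)\hookrightarrow \bigoplus_{x\in\mathbf{JM}(w)}\operatorname{soc}(\Delta_e/\Delta_x)$ is available, the passage to the full Verma-module intersection is the short submodule-and-socle manipulation sketched above. Proving Theorem \ref{intthm:sosum} itself requires, for each simple constituent $L_y$ of $\operatorname{soc}(\Delta_e/\Delta_w)$, locating at least one $x\in\mathbf{JM}(w)$ through which $L_y$ survives as a simple submodule of $\operatorname{soc}(\Delta_e/\Delta_x)$; by Propositions \ref{prop3} and \ref{prop6} the candidates $y$ are constrained to lie in $\mathcal J$ with $LD(y)\subseteq LD(w)$ and $RD(y)\subseteq RD(w)$, and it is here that the combinatorics of join-irreducible elements and of Kazhdan-Lusztig cells enter crucially.
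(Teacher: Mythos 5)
The statement you are proving is one of the paper's central \emph{conjectures}; the paper itself establishes it only in type $A$ (Corollary~\ref{con2JMA}), using the full socle-sum property imported from \cite{kmm2} together with dissectivity. Your reduction is sound as far as it goes --- it is precisely the paper's Proposition~\ref{sumtocap} --- but the input you feed into it is not an independent fact. Writing $I=\bigcap_{x\in\mathbf{JM}(w)}\Delta_x$, one has
\[
\bigcap_{x\in\mathbf{JM}(w)}\ker\phi_x \;=\; \soc(\Delta_e/\Delta_w)\cap\bigl(I/\Delta_w\bigr)\;=\;\soc\bigl(I/\Delta_w\bigr),
\]
and since every non-zero module in $\cO$ has non-zero socle, this vanishes if and only if $I=\Delta_w$. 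In other words, condition~\eqref{3} of Definition~\ref{defintro}, read literally for the maps $\phi_x$ defined on $\soc(\Delta_e/\Delta_w)$, \emph{is} Conjecture~\ref{con2JM} for the element $w$. Citing it as ``supplied by Theorem~\ref{intthm:sosum}'' therefore makes your argument circular: the entire difficulty has been relabelled, not resolved.

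The substantive content behind Theorem~\ref{intthm:sosum} is the weaker, isotypic-component statement of Lemma~\ref{newbgchain}: each maximal penultimate isotypic subquotient $cL_u\langle d\rangle$ of $\Delta_e$ is jointly detected by the socles $\so_y$ taken over \emph{all} $y\in\JI{s}{t}$. Passing from this to joint injectivity of the $\phi_x$ for the subfamily $\mathbf{JM}(w)$ --- i.e.\ for those join-irreducibles lying \emph{below} $w$ --- is exactly where the open problem sits once $c\geq 2$: a simple submodule $X$ of $\soc(\Delta_e/\Delta_w)$ is one line in a $c$-dimensional multiplicity space, and a join-irreducible $y$ whose socle detects that particular line need not satisfy $y\leq w$ (compare the $D_4$ configuration of Remark~\ref{D4exrem}, where three pairwise incomparable join-irreducibles have socles giving three distinct lines in a two-dimensional isotypic component). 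This is why the paper keeps \eqref{eq:joinexpintersection} as a conjecture, reduces it in general to the Kazhdan--Lusztig identity \eqref{[conj]}, and proves it unconditionally only in type $A$, where the surjectivity condition~\eqref{2} and dissectivity are available. To turn your sketch into a proof you would need to show that for every $w$ and every simple submodule $X\subseteq\soc(\Delta_e/\Delta_w)$ there is some $x\in\mathbf{JM}(w)$ (not merely some $x\in\JI{s}{t}$) with $\phi_x(X)\neq 0$; nothing in the results you quote delivers this.
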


Each Verma module $\Delta_w$ is uniquely determined by the socle of 
$\Delta_e/\Delta_w$. By Proposition~\ref{prop3}, each simple subquotient of
the latter socle is isomorphic to a shift of $L_u$, for some $u\in \jc$. Thus, if each $L_u\langle d\rangle$ (for $u\in\jc$ and $d\in\mathbb Z$) is multiplicity-free in $\Delta_e$, then \eqref{eq:joinexpintersection} is equivalent to
\begin{equation}\label{[conj]}
[\Delta_w\langle -\ell(w)\rangle:L_u\langle d \rangle] = 
\min_{x\in\JM(w)}[\Delta_x\langle -\ell(x)\rangle:L_u\langle d \rangle],
\end{equation}
for all $u\in\jc$ and $d\in \mathbb Z$. This reduces the module-theoretic 
Conjecture~\ref{con2JM} to Kazhdan-Lusztig combinatorics. 
The multiplicity-free assumption holds in types $A$ and $B$.
In type $A$, Conjecture~\ref{con2JM} 
follows from the results of \cite{kmm2}, see Corollary~\ref{con2JMA}.
In type $B$, we checked \eqref{[conj]} for small ranks on a computer.

In the next subsections, we discuss another approach to Conjecture~\ref{con2JM}. 
 
\subsection{Socle-sum property and the intersection conjecture}

The following proposition explains a relation between the intersection 
of Verma modules and and the socles of the corresponding quotients.

\begin{proposition}\label{sumtocap}
Let $w\in W$ and $U\subset W$.
Assume that
the following holds:
\begin{itemize}
\item $x\leq w$, for any $x\in U$;
\item the natural surjection
$\co_w\to\co_x$ restricts to a surjection $\phi_x:\operatorname{soc}\Delta_e/\Delta_w\to \operatorname{soc}\Delta_e/\Delta_x$, for any $x\in U$;
\item $\displaystyle\bigcap_{x\in U}\mathrm{ker}(\phi_x)=0$.
\end{itemize}
Then $\displaystyle\bigcap_{x\in U} \Delta_x = \Delta_w$.
\end{proposition}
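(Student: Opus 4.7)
The plan is to prove the two inclusions separately. The inclusion $\Delta_w \subseteq \bigcap_{x \in U} \Delta_x$ is immediate from the hypothesis $x \leq w$ for all $x \in U$, via the poset isomorphism \eqref{posetiso}. All the work lies in the reverse inclusion, which I would reformulate as the vanishing of the subobject
\[
M \;:=\; \Big(\bigcap_{x \in U} \Delta_x\Big)\!\Big/\Delta_w \;\subseteq\; \Delta_e/\Delta_w .
\]
By construction, $M$ is precisely the kernel of the natural map $\Delta_e/\Delta_w \to \bigoplus_{x \in U} \Delta_e/\Delta_x$ assembled from the quotient maps $\pi_x : \Delta_e/\Delta_w \twoheadrightarrow \Delta_e/\Delta_x$, whose restrictions to the socle are by hypothesis the maps $\phi_x$.

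Since $M$ is a submodule of the finite-length $\mathcal{O}$-module $\Delta_e/\Delta_w$, it suffices to verify that $\operatorname{soc}(M) = 0$, as any nonzero object of $\mathcal{O}$ has a nonzero socle. For this I would invoke the general (and essentially formal) fact that, for any submodule $N \subseteq \Delta_e/\Delta_w$, one has $\operatorname{soc}(N) = N \cap \operatorname{soc}(\Delta_e/\Delta_w)$. Indeed, the right-hand side is a submodule of a semisimple module, hence itself semisimple, and is therefore contained in $\operatorname{soc}(N)$; the reverse inclusion is trivial.

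Applying this with $N = M = \bigcap_{x \in U} \ker(\pi_x)$ and using that $\phi_x$ is by definition the restriction of $\pi_x$ to $\operatorname{soc}(\Delta_e/\Delta_w)$ (so that $\ker(\phi_x) = \operatorname{soc}(\Delta_e/\Delta_w) \cap \ker(\pi_x)$), one obtains
\[
\operatorname{soc}(M) \;=\; \operatorname{soc}(\Delta_e/\Delta_w) \cap \bigcap_{x \in U} \ker(\pi_x) \;=\; \bigcap_{x \in U} \ker(\phi_x) \;=\; 0
\]
by the third assumption, finishing the argument.

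The proof is essentially formal and I do not foresee a substantial obstacle: the first bullet of the hypothesis makes $M$ well-defined as a submodule of $\Delta_e/\Delta_w$, the second bullet encodes that the relevant restrictions commute with taking socles (only the ``lands in the socle'' part is used here; the surjectivity is a by-product of matching the statement to the socle-sum property), and the third bullet does all the real work, while the finite-length property of Verma modules in category $\mathcal{O}$ closes the deduction. The conceptual content is simply that equality of Verma submodules of $\Delta_e$ can be detected already at the level of socles of the quotients.
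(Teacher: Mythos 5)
Your proof is correct and follows essentially the same route as the paper's: both reduce the reverse inclusion to showing that the submodule $\bigl(\bigcap_{x\in U}\Delta_x\bigr)/\Delta_w$ of $\Delta_e/\Delta_w$ vanishes, detect this on the socle using that a nonzero finite-length module has nonzero socle, and conclude from $\bigcap_{x\in U}\ker\phi_x=0$. The only cosmetic difference is that you compute $\operatorname{soc}(M)$ directly via $\operatorname{soc}(N)=N\cap\operatorname{soc}(\Delta_e/\Delta_w)$ while the paper phrases the same step as a proof by contradiction; you also correctly note, as the paper's argument implicitly shows, that the surjectivity of $\phi_x$ is not used.
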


\begin{proof}
That $\Delta_w\subseteq \displaystyle\bigcap_{x\in U} \Delta_x$ follows directly from the first assumption.
For the opposite inclusion, consider the quotient map $\pi: \Delta_e\surj\Delta_e/\Delta_w$. 
We claim that the kernel of $\pi$ contains $\displaystyle\bigcap_{x\in U}\Delta_x=:M$. 
Suppose not. Then $\pi(M)$ intersects the socle of $\co_w$, and, by our assumptions, there exist $x\in U$ such that $\phi_x(\pi(M)\cap \soc\co_w)\neq 0$. But this contradicts $M\subseteq \Delta_x$.
Therefore, the map $\pi$ factors through $\Delta_e/M \surj \co_w$, which implies that $M\subseteq \Delta_w$.
\end{proof}

Note that the converse of Proposition \ref{sumtocap} is not true.
For example, if $x<y=w$, then $\Delta_x\cap \Delta_y = \Delta_w$.
However, the map $\so_y\to \so_x$ is often not surjective.

Proposition~\ref{sumtocap} and Conjecture~\ref{con2JM} motivate Definition~\ref{defintro}.

\begin{corollary}\label{capverma}
If all $w\in W$ has the socle-sum property, then Conjecture~\ref{con2JM} is true for $(W,S)$.
\end{corollary}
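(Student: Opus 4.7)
The plan is simply to apply Proposition~\ref{sumtocap} to the subset $U = \mathbf{JM}(w)$, since the three clauses of Definition~\ref{defintro} (the socle-sum property) are essentially engineered to match, one-for-one, the three hypotheses of that proposition. Concretely, I would proceed as follows.

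Fix $w\in W$. First, by the very definition of $\mathbf{JM}(w)$ as a set of elements in $\mathbf{JI}\cap \desc{LD(w)}{RD(w)}$ that are bounded above by $w$, every $x\in \mathbf{JM}(w)$ satisfies $x\leq w$; this gives the first bullet of Proposition~\ref{sumtocap}. Next, clause \eqref{1} of the socle-sum property ensures that each canonical map $\psi_x: \Delta_e/\Delta_w\twoheadrightarrow \Delta_e/\Delta_x$ restricts to a well-defined map $\phi_x:\operatorname{soc}\Delta_e/\Delta_w \to \operatorname{soc}\Delta_e/\Delta_x$, and clause \eqref{2} says that each $\phi_x$ is surjective; together these yield the second bullet of Proposition~\ref{sumtocap}. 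Finally, clause \eqref{3} is literally the third bullet. Proposition~\ref{sumtocap} then gives $\bigcap_{x\in\mathbf{JM}(w)}\Delta_x = \Delta_w$, which is exactly the content of Conjecture~\ref{con2JM} for this particular $w$. Running this argument over all $w\in W$ completes the proof.

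There is no genuine obstacle at this step: Definition~\ref{defintro} was crafted so that it slots directly into Proposition~\ref{sumtocap}, so the corollary is a formal consequence. The real content of the strategy lies upstream, in \emph{verifying} the socle-sum property; as the authors note, this property already fails in types $B$ and $F$ (Remark~\ref{soclesumfk} and Example~\ref{F4example}), which is why Corollary~\ref{capverma} is only a conditional bridge from socle computations to the intersection conjecture, rather than an unconditional theorem.
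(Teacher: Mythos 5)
Your proposal is correct and is exactly the paper's argument: the authors prove this corollary by citing Proposition~\ref{sumtocap}, and your unpacking — taking $U=\mathbf{JM}(w)$, noting $x\leq w$ for $x\in\mathbf{JM}(w)$ by definition, and matching clauses \eqref{1}, \eqref{2}, \eqref{3} of Definition~\ref{defintro} to the three hypotheses of that proposition — is precisely the intended verification. Nothing is missing.
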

\begin{proof}
This follows from  Proposition \ref{sumtocap}.
\end{proof}

The result in \cite{kmm2} readily proves the socle-sum property and Conjecture~\ref{con2JM} in type $A$. 

\begin{theorem}\label{con1A}
Let $(W,S)$ be of type $A$. Then every $w\in W$ has the socle-sum property.
\end{theorem}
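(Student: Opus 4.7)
The plan is to appeal to Theorem~\ref{intthm:sosum} together with the main result of \cite{kmm2}. Specifically, Theorem~\ref{intthm:sosum} already establishes properties \eqref{1} and \eqref{3} of Definition~\ref{sosum def} in complete generality, so the only thing left to verify for type $A$ is property \eqref{2}: for each $x \in \JM(w)$, the canonical map
$$\phi_x : \soc(\co_w) \to \soc(\co_x)$$
is surjective.

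First I would assemble the type-$A$ specific inputs. By Lemma~\ref{JIisbig}, the join-irreducibles of $W$ are exactly the bigrassmannian elements. The main theorem of \cite{kmm2} then supplies an explicit description of all socles: for any bigrassmannian $x$, $\soc(\co_x)$ is a single graded simple $S_x = L_{u_x}\langle d_x\rangle$ with $u_x \in \jc$, and for arbitrary $w$ one has a natural direct sum decomposition
$$\soc(\co_w) \;\cong\; \bigoplus_{x \in \JM(w)} S_x,$$
realised by the canonical simple subquotients of $\Delta_e$ (this is the statement alluded to in the paragraph after Definition~\ref{defintro} that in type $A$ the sum is direct). In addition, in type $A$ each graded simple $L_u\langle d\rangle$ with $u \in \jc$ occurs with multiplicity at most one as a composition factor of $\Delta_e$.

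Given these inputs, the verification of property \eqref{2} is short. Fix $x \in \JM(w)$ and let $S \subseteq \soc(\co_w)$ be the summand identified with $S_x = L_{u_x}\langle d_x\rangle$. Since $L_{u_x}\langle d_x\rangle$ occurs as a composition factor of $\co_x = \Delta_e/\Delta_x$ (living in its socle), the multiplicity-free property forces this occurrence to account for the unique copy of $L_{u_x}\langle d_x\rangle$ in $\Delta_e$; in particular, $L_{u_x}\langle d_x\rangle$ does not occur as a composition factor of $\Delta_x$, and \emph{a fortiori} not in the subquotient $\Delta_x/\Delta_w = \ker \psi_x$. Hence $S \not\subseteq \ker \psi_x$, so $\phi_x(S) \ne 0$, and since $\soc(\co_x) = S_x$ is simple, $\phi_x$ is surjective, as required.

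The substantive content is really imported from \cite{kmm2}, which does the heavy combinatorial lifting via essential sets of permutations; the reduction above is largely bookkeeping. The subtlest point to get right will be to ensure that the identification of the $x$-summand of $\soc(\co_w)$ coming from the \cite{kmm2} decomposition is genuinely given by the same graded copy of $L_{u_x}\langle d_x\rangle$ inside $\Delta_e$ as the one providing $\soc(\co_x)$ — this compatibility is precisely what the multiplicity-free property in type $A$ forces, leaving no room for ambiguity, and it is the reason the argument fails in types where higher multiplicities appear (cf.\ Remark~\ref{D4exrem}).
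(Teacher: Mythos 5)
Your argument is correct and, at bottom, takes the same route as the paper: Theorem~\ref{con1A} is proved there in one line by observing that join-irreducibles coincide with bigrassmannians in type $A$ (Lemma~\ref{JIisbig}) and invoking the main theorem of \cite{kmm2}, which is exactly the input you rely on. Your extra steps (deducing surjectivity of $\phi_x$ from the simplicity of $\soc\co_x$ together with graded multiplicity-freeness of the penultimate composition factors of $\Delta_e$) are a sound unpacking of why that citation suffices, not a different method.
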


\begin{proof}
Since the bigrassmanians are exactly the join-irreducibles in type A, the statement follows from the main theorem in \cite{kmm2}.
\end{proof}

\begin{corollary}\label{con2JMA}
Conjecture~\ref{con2JM} is true in type $A$.
\end{corollary}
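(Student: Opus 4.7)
The plan is to combine two results that have just been established in the excerpt: Theorem~\ref{con1A} and Corollary~\ref{capverma}. The former says that, in type $A$, every $w\in W$ has the socle-sum property, which in turn follows from the main theorem of \cite{kmm2} together with the fact (Lemma~\ref{JIisbig}) that in type $A$ the set of join-irreducibles coincides with the set of bigrassmannians. The latter says that the socle-sum property of every $w\in W$ implies Conjecture~\ref{con2JM}. Chaining these two statements immediately yields the corollary, so the proof is essentially a one-line deduction.

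To spell out the logic once more: for each $w\in W$ of type $A$, applying the socle-sum property to the set $U=\mathbf{JM}(w)$ yields precisely the three hypotheses of Proposition~\ref{sumtocap}, namely that $x\leq w$ for all $x\in \mathbf{JM}(w)$ (which is part of the definition of $\mathbf{JM}(w)$), that the natural surjections $\co_w\tto\co_x$ restrict to surjections $\phi_x$ on socles, and that $\bigcap_{x\in \mathbf{JM}(w)}\ker(\phi_x)=0$. Proposition~\ref{sumtocap} then gives the desired equality $\Delta_w=\bigcap_{x\in\mathbf{JM}(w)}\Delta_x$ inside $\Delta_e$, which is exactly the content of Conjecture~\ref{con2JM}.

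There is no genuine obstacle here, because the real work has already been done: the combinatorial identification of $\mathbf{JM}(w)$ with the maximal bigrassmannians below $w$ having compatible descents (Lemmas~\ref{JIisbig} and \ref{lemma:joinJM}), and the representation-theoretic input from \cite{kmm2} describing the socle of $\co_w$ in type $A$ as a multiplicity-free direct sum indexed by the essential set, which matches $\mathbf{JM}(w)$. Thus the corollary is a formal consequence of Theorem~\ref{con1A} via Corollary~\ref{capverma}, and no additional argument is required.
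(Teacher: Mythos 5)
Your proposal is correct and is exactly the paper's own proof: the authors also derive Corollary~\ref{con2JMA} directly from Theorem~\ref{con1A} and Corollary~\ref{capverma} (which in turn rests on Proposition~\ref{sumtocap}, as you spell out). No further comment is needed.
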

\begin{proof}
This follows from Corollary~\ref{capverma} and Theorem~\ref{con1A}.
\end{proof}

%Compare the following examples concerning Conjecture~\ref{con1} and Conjecture~\ref{con2JM} respectively.

Note that it is crucial to take the intersection over $\mathbf{JM}(w)$ in 
Definition~\ref{defintro}, as we see in the following easy example. 

\begin{example}\label{con1notJM}
We have $w=w\vee x$, for any $x<w$. In this case, $\soc\co_x$ does not contribute to $\so_w$. For example, if $w=st$ for $s,t\in S$ then $\so_w$ is of the form $L_{w}$ (up to shift) with $w\in {}^s\hc{}^t$ (see Proposition~\ref{prop3} and Proposition~\ref{prop6}). Writing $st=st\vee s$ we see that $\so_s$, which is of the form $L_{x}$ for $x\in{}^s\hc^s$ (see Proposition~\ref{prop3} and Proposition~\ref{prop6}), is not contained in $\so_w$.
\end{example}

In contrast, Conjecture~\ref{con2JM} seems to be less sensitive
to the choice of a join expression.
Here is a comparison with the situation in Example~\ref{con1notJM}.

\begin{example}\label{con2notJM}
We have $w=w\vee x$, for any $x<w$. In this case, we have $\Delta_w=\Delta_w\cap \Delta_x$ since $\Delta_x\supseteq \Delta_w$.
\end{example}

The following proposition also suggests that \eqref{eq:joinexpintersection} may not require the join expression to be given by $\mathbf{JM}$. 

\begin{proposition}\label{propnw-2}
If $\displaystyle\bigcap_{x\in U}\Delta_x = \Delta_w$, then $w=\bigvee U$.
\end{proposition}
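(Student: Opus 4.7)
The proposition is essentially a formal consequence of the poset isomorphism \eqref{posetiso}, so the plan is short. The strategy is to translate each side of the equation $w = \bigvee U$ into a statement about Verma submodules of $\Delta_e$ and read off the result from $\bigcap_{x\in U}\Delta_x = \Delta_w$.

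First, I would verify that $w$ is an upper bound of $U$. For any $x \in U$, the hypothesis gives $\Delta_w = \bigcap_{y\in U}\Delta_y \subseteq \Delta_x$, and then \eqref{posetiso} translates this containment into $w \geq x$ in the Bruhat order. Hence $w$ is an upper bound of $U$ in $(W,\leq)$.

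Next, I would show $w$ is the \emph{least} upper bound. Let $z \in W$ be any other upper bound of $U$, so that $z \geq x$ for every $x \in U$. Applying \eqref{posetiso} again, we get $\Delta_z \subseteq \Delta_x$ for every $x \in U$, and therefore $\Delta_z \subseteq \bigcap_{x\in U}\Delta_x = \Delta_w$. One more application of \eqref{posetiso} yields $z \geq w$, as required. Combining the two steps shows that $w$ is the join of $U$, so in particular $\bigvee U$ exists and equals $w$.

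There is no real obstacle: the entire argument is the observation that a (contravariant) poset isomorphism sends intersections on the submodule side to joins on the Weyl group side whenever the intersection happens to lie in the distinguished collection $\{\Delta_w \subseteq \Delta_e \mid w \in W\}$; this is exactly what the hypothesis guarantees. The converse direction (Question~\ref{intque}) is the substantive one, because a priori $\bigcap_{x\in U}\Delta_x$ need not be of the form $\Delta_w$ at all — but that is precisely what Conjecture~\ref{con2JM} (applied to $w = \bigvee U$) is about and is not needed here.
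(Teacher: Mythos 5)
Your proof is correct and follows essentially the same argument as the paper: use the order-reversing correspondence \eqref{posetiso} to show first that $w$ is an upper bound of $U$ (since $\Delta_w\subseteq\Delta_x$ for each $x\in U$) and then that any upper bound $z$ satisfies $\Delta_z\subseteq\bigcap_{x\in U}\Delta_x=\Delta_w$, hence $z\geq w$. Nothing is missing.
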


\begin{proof}
Suppose $\displaystyle\bigcap_{x\in U}\Delta_x = \Delta_w$. Then we have $x\leq w$, for each $x\in U$. If $z\in W$ is such that $x\leq z$, for all $x\in U$, then $\Delta_x\supseteq \Delta_z$, for all $x\in U$, so $\displaystyle\Delta_w =\bigcap_{x\in U}\Delta_x \supseteq \Delta_z$.
The latter implies $w\leq z$ which proves the claim.
\end{proof}

This motivates the main question of the paper (Question~\ref{intque}).

\begin{question}\label{con2}
Given $U\subseteq W$, do we have $w=\bigvee U$ if and only if $\displaystyle\bigcap_{x\in U}\Delta_x = \Delta_w$?
\end{question}

Note that the join $\bigvee U$ does not exist in general, in which case $\displaystyle\bigcap_{x\in U}\Delta_x$ is not isomorphic to a Verma module by Proposition~\ref{propnw-2}. Therefore, for such $U\subseteq W$, the answer to Question~\ref{con2} is positive. 

Under a combinatorial condition on the Bruhat order, the socle-sum property guarantees that
the answer to Question~\ref{con2} is positive.

\begin{proposition}\label{disscon1}
Let $(W,S)$ be dissective and suppose Conjecture~\ref{con2JM} holds for $W$.
Then the answer to Question~\ref{con2} is positive, for any $U\subseteq W$.
\end{proposition}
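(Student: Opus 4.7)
The plan is to split Question~\ref{con2} into its two implications. One direction---that $\bigcap_{x\in U}\Delta_x = \Delta_w$ forces $w = \bigvee U$---is already Proposition~\ref{propnw-2}, so the task reduces to the converse: assuming $w = \bigvee U$, show $\bigcap_{x\in U}\Delta_x = \Delta_w$. The inclusion $\Delta_w \subseteq \bigcap_{x\in U}\Delta_x$ is immediate, since $x \leq w$ for every $x \in U$ yields $\Delta_w \subseteq \Delta_x$ via the correspondence \eqref{posetiso}.

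For the reverse inclusion, the plan is to reduce to the canonical join representation $\mathbf{JM}(w)$ and then invoke Conjecture~\ref{con2JM}. Fix an arbitrary $y \in \mathbf{JM}(w) \subseteq \mathbf{JI}$. Since $W$ is dissective, $y$ is a dissector, so there exists a codissector $y^\ast$ with the partition $W = \{z : z \geq y\} \coprod \{z : z \leq y^\ast\}$. Because $y \leq w = \bigvee U$ while $y \not\leq y^\ast$, not every element of $U$ can lie below $y^\ast$---otherwise $\bigvee U \leq y^\ast$, which would force $y \leq y^\ast$, a contradiction. Hence some $x \in U$ satisfies $x \geq y$, giving $\Delta_x \subseteq \Delta_y$ and therefore $\bigcap_{x\in U}\Delta_x \subseteq \Delta_y$. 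Letting $y$ range over $\mathbf{JM}(w)$ and applying Conjecture~\ref{con2JM} yields
\[
\bigcap_{x\in U}\Delta_x \;\subseteq\; \bigcap_{y \in \mathbf{JM}(w)}\Delta_y \;=\; \Delta_w,
\]
which closes the argument.

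I do not anticipate any substantive obstacle. The core step is the dissector/codissector dichotomy, which converts the hypothesis $y \leq \bigvee U$ into a concrete witness $x \in U$ with $x \geq y$; this is precisely the order-theoretic content that makes dissective posets amenable to such arguments, and it is the only place where the dissectivity hypothesis is used. Once it is in hand, Conjecture~\ref{con2JM} and \eqref{posetiso} supply the rest, and no further combinatorial or representation-theoretic input is needed.
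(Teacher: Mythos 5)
Your proof is correct, and it is a genuine (if mild) variation on the paper's argument. The paper proves the ``only if'' direction by rewriting $w=\bigvee U$ as $\bigvee_{y\in U}\bigvee_{x\in \mathbf{JM}(y)}x=\bigvee_{z\in\mathbf{JM}(w)}z$, invoking Proposition~\ref{prop:dissJM} together with Corollary~\ref{cor:JM=JM} to conclude that $\mathbf{JM}(w)$ is a subexpression of the left-hand side, and then translating the resulting dominance relations into an equality of intersections of Verma modules before applying Conjecture~\ref{con2JM}. You instead apply the dissector/codissector dichotomy directly: for each $y\in\mathbf{JM}(w)$ the partition $W=\{z:z\geq y\}\coprod\{z:z\leq y^{\ast}\}$ forces some $x\in U$ with $x\geq y$, whence $\bigcap_{x\in U}\Delta_x\subseteq\Delta_y$, and intersecting over $\mathbf{JM}(w)$ and applying the conjecture finishes. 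This is essentially the mechanism hiding inside the paper's citation of Proposition~\ref{prop:dissJM}, but you apply it to the arbitrary set $U$ rather than first decomposing each $y\in U$ into join-irreducibles; as a result you avoid Corollary~\ref{cor:JM=JM} and the intermediate manipulation of join expressions entirely. Your route is shorter and slightly more self-contained; the paper's route makes explicit the containment relations among the sets $\mathbf{JM}(y)$ and $\mathbf{JM}(w)$, which is information it reuses conceptually elsewhere. Both arguments use dissectivity in exactly one place and both reduce correctly to Conjecture~\ref{con2JM}, so there is no gap.
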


\begin{proof}
The ``if'' claim follows from Proposition~\ref{propnw-2}. 
To prove the ``only if'' part, let $U\subseteq W$ and $\bigvee U = w$. 
Consider the equality
\begin{equation}\label{vvv}
\bigvee_{y\in U}\bigvee_ {x\in \mathbf{JM}(y)} x = \bigvee_{z\in \mathbf{JM}(w)} z.    
\end{equation}
Since $W$ is dissective, by Proposition~\ref{prop:dissJM} and Corollary~\ref{cor:JM=JM}, the expression on the right hand side of \eqref{vvv} is a subexpression of the left hand side. 
Therefore, for any $y\in U$ and each $x\in\mathbf{JM}(y)$, either $x\in\mathbf{JM}(w)$ or $x\leq z$, for some $z\in \mathbf{JM}(w)$.
%Consequently, for each $x$ appearing in the left hand side of \eqref{vvv}, we have $\Delta_x\supseteq \bigcap_{z\in\mathbf{JM}(w)} \Delta_z$.
Therefore, we have
\begin{equation}\label{capcapcap}
\bigcap_{y\in U}\bigcap_{x\in\mathbf{JM}(y)}\Delta_x = \bigcap_{z\in \mathbf{JM}(w)}\Delta_z.    
\end{equation}
Now the validity of Conjecture~\ref{con2JM} reduces \eqref{capcapcap} to $\displaystyle\bigcap_{y\in U}\Delta_y = \Delta_w$.
\end{proof}

\begin{corollary}\label{coraddedforintro}
Let $(W,S)$ be dissective and suppose all $w\in W$ have the socle-sum property.
Then the answer to Question~\ref{con2} is always positive.
\end{corollary}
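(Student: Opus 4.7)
The plan is essentially to chain together two already-established results. First, I would invoke Corollary~\ref{capverma}, which asserts that if every $w \in W$ has the socle-sum property, then Conjecture~\ref{con2JM} holds for $(W,S)$. Under the standing hypothesis of the corollary, this applies directly and gives the intersection formula $\Delta_w = \bigcap_{x \in \mathbf{JM}(w)} \Delta_x$ for every $w \in W$.

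Next, I would invoke Proposition~\ref{disscon1}, whose hypotheses are precisely (a) that $(W,S)$ is dissective and (b) that Conjecture~\ref{con2JM} holds for $W$. Both conditions are now in place: (a) is assumed in the statement of the corollary, and (b) was just established in the previous step. The conclusion of Proposition~\ref{disscon1} is exactly what we want, namely that the answer to Question~\ref{con2} is positive for every $U \subseteq W$.

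Since the corollary is a direct two-step deduction from prior results, I do not anticipate any genuine obstacle; the only thing worth being careful about is simply citing the right statements in the right order, and noting explicitly that the dissectivity assumption is used only in the second step (via Proposition~\ref{disscon1}), whereas the socle-sum assumption feeds into the first step (via Corollary~\ref{capverma}). The whole proof can therefore be written in two lines.
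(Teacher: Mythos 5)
Your proof is correct and is exactly the paper's argument: the authors also deduce the corollary by combining Corollary~\ref{capverma} with Proposition~\ref{disscon1}. Your added remark about which hypothesis feeds into which step is accurate and slightly more explicit than the paper's one-line proof.
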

\begin{proof}
This follows from Corollary~\ref{capverma} and Proposition~\ref{disscon1}.
\end{proof}

\begin{corollary}\label{typeAconj}
Let $(W,S)$ be of type $A$. Then, for any $U\subseteq W$, we have 
$\bigvee U =w\in W$ if and only if $\displaystyle\bigcap_{x\in U}\Delta_x = \Delta_w$.
\end{corollary}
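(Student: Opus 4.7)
The plan is to assemble the corollary from three ingredients already on the table. First, by Lemma~\ref{lem:diss}, Weyl groups of type $A$ are dissective. Second, by Theorem~\ref{con1A}, every element $w$ of such a group has the socle-sum property of Definition~\ref{sosum def}; this is the substantive input, obtained by repackaging the main result of \cite{kmm2}. Third, Corollary~\ref{coraddedforintro} takes precisely these two hypotheses and delivers the positive answer to Question~\ref{con2}, which is exactly the statement to be proved.

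For transparency, the chain of implications inside Corollary~\ref{coraddedforintro} runs as follows. The socle-sum property, together with Proposition~\ref{sumtocap}, yields
\[ \Delta_w \;=\; \bigcap_{x \in \mathbf{JM}(w)}\Delta_x \]
for every $w$ (this is Corollary~\ref{capverma}, giving Conjecture~\ref{con2JM} in type $A$; equivalently, Corollary~\ref{con2JMA}). Then Proposition~\ref{disscon1} uses dissectiveness to upgrade this from the canonical expression $w=\bigvee\mathbf{JM}(w)$ to arbitrary join expressions: for any $U\subseteq W$ with $\bigvee U = w$, one rewrites each $y\in U$ via $\mathbf{JM}(y)$, notes by Proposition~\ref{prop:dissJM} and Corollary~\ref{cor:JM=JM} that $\bigcup_{y\in U}\mathbf{JM}(y)$ Bruhat-dominates $\mathbf{JM}(w)$ element by element, and concludes
\[ \bigcap_{y\in U}\Delta_y \;=\; \bigcap_{z\in \mathbf{JM}(w)}\Delta_z \;=\; \Delta_w. \]
Conversely, the ``if'' direction is automatic from Proposition~\ref{propnw-2}, which uses only the poset isomorphism \eqref{posetiso} and no representation theory.

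No genuinely new work is required here: all the serious obstacles have already been overcome, namely in \cite{kmm2} (for Theorem~\ref{con1A}) and in the poset-theoretic results around Proposition~\ref{prop:dissJM} and Lemma~\ref{lem:diss}. If one wanted to sidestep Corollary~\ref{coraddedforintro}, a parallel route is to combine Corollary~\ref{con2JMA} (Conjecture~\ref{con2JM} in type $A$) directly with Proposition~\ref{disscon1}; the argument is the same, just repackaged. The only conceptual subtlety worth flagging is the role of dissectiveness: without it, one cannot guarantee that an arbitrary join expression for $w$ refines through $\mathbf{JM}(w)$, so the step in Proposition~\ref{disscon1} passing from \eqref{vvv} to \eqref{capcapcap} would fail—this is precisely why the analogous corollary cannot be stated outside the dissective types.
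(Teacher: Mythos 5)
Your proposal is correct and follows essentially the same route as the paper, which proves Corollary~\ref{typeAconj} by combining Theorem~\ref{con1A}, Lemma~\ref{lem:diss}, and Proposition~\ref{disscon1} (your invocation of Corollary~\ref{coraddedforintro} is just the packaged form of Corollary~\ref{capverma} plus Proposition~\ref{disscon1}). Your unpacking of the intermediate implications, including the role of dissectiveness in Proposition~\ref{disscon1} and the ``if'' direction via Proposition~\ref{propnw-2}, matches the paper's logic exactly.
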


\begin{proof}
This follows from Theorem~\ref{con1A}, Lemma~\ref{lem:diss}, and
Proposition~\ref{disscon1}.
\end{proof}

As we will see later in Example~\ref{e644bigex}, in the situation when
$W$ is not dissective, the answer to Question~\ref{con2} can be negative,
for some $U\subseteq W$. 
Therefore, we ask a more precise question:

\begin{question}\label{con2precise}
Given $w\in W$, which $U\subseteq W$ satisfying $w=\bigvee U$ have 
the property $\displaystyle\bigcap_{x\in U}\Delta_x = \Delta_w$?
\end{question}

\subsection{Another interpretation of Question \ref{con2}}

Let $Sub$ be the poset of all submodules of $\Delta_e$ where the poset structure is given by inclusion. Then we have an embedding of posets
\begin{equation}\label{mapdelta}
    \Delta: (W,\leq) \to (Sub,\subseteq)^{\operatorname{op}}
\end{equation}
given by $w\mapsto \Delta_w\subseteq\Delta_e$.
In $(Sub,\supseteq)=(Sub,\subseteq)^{\operatorname{op}}$, arbitrary joins exist, since joins are given by intersections. 
If a join of some elements in the image $\Delta(W,\leq)$ lies in $\Delta(W,\leq)$ again, then the join exists already in $(W,\leq)$.
While the above statement is true for any poset inclusion, the converse, that is, whether joins are mapped to joins under the inclusion, is not true in general.

Let $CSub$ be the subposet of $Sub$ consisting of the submodules of $\Delta_e$ with simple head. 
The join operation on $(Sub,\supseteq)$ restricts to that in $(CSub,\supseteq)$, that is, the meet in $Sub$ restricts to $CSub$, in the following sense.

\begin{proposition}\label{capCSub}
For $P\subseteq CSub$, the meet $\bigwedge P$ exists if and only if $\displaystyle\bigcap_{M\in P} M$ has simple head, in which case we have $\displaystyle\bigwedge P = \bigcap_{M\in P} M$.
\end{proposition}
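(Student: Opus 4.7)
The plan is to treat the two implications separately, with the easy direction being essentially formal and the converse forming the main work. Set $N := \bigcap_{M \in P} M$. For the $(\Leftarrow)$ direction: if $N$ has simple head, then $N \in CSub$, and since $N\subseteq M$ for every $M\in P$, $N$ is a lower bound of $P$ in $CSub$. Any $L\in CSub$ with $L\subseteq M$ for all $M\in P$ satisfies $L\subseteq N$ by the universal property of intersections in $Sub$, so $N$ is the greatest such, and $\bigwedge P=N$.

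For the $(\Rightarrow)$ direction, I would proceed by contrapositive: assume that $N$ does not have simple head and produce two incomparable elements of $CSub$ contained in $N$, which rules out the existence of any greatest lower bound in $CSub$. Since the head $H$ of $N$ is semisimple with at least two simple summands (counted with multiplicity), I would pick generators $\bar v_1,\bar v_2$ of two distinct such summands and lift each to a weight vector $v_i\in N$. Then the cyclic submodule $M_i := U(\mathfrak g)v_i\subseteq N$ has simple head, because a cyclic semisimple module is simple; tracing $v_i$ through the composite $M_i\hookrightarrow N\twoheadrightarrow H$ identifies $\hd(M_i)$ with the simple summand of $H$ generated by $\bar v_i$. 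In particular, each $M_i\in CSub$.

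The main obstacle is the incomparability argument. For any $K\in CSub$ with simple head $L_u$, the image of the composite $K\hookrightarrow N\twoheadrightarrow H$ is a quotient of $L_u$, hence either $0$ or a simple submodule of $H$ isomorphic to $L_u$. If $K$ were to contain both $v_1$ and $v_2$, then both $\bar v_i$ would lie inside a single simple $L_u$-copy in $H$; this is impossible, since either (a) the two chosen summands sit in different isotypic components $L_{w_1}^{m_{w_1}}$ and $L_{w_2}^{m_{w_2}}$ of $H$, forcing the contradictory $u=w_1=w_2$, or (b) they sit in the same isotypic component $L_w^{m_w}$, but then the weight space at $w\cdot 0$ of a single simple $L_w$-submodule of $H$ is one-dimensional, so $\bar v_1,\bar v_2$ would be linearly dependent. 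Taking $K = M_2$ gives $M_1\not\subseteq M_2$, and symmetrically $M_2\not\subseteq M_1$; taking $K$ to be a hypothetical meet of $P$ yields the contradiction that finishes the argument.
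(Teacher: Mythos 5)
Your strategy (reduce to producing two elements of $CSub$ below every $M\in P$ that cannot both sit inside a single member of $CSub$) is sound, and both the ``if'' direction and the final containment argument via the image in $H$ are fine. The genuine gap is the construction of the $M_i$: the assertion that $M_i=U(\mathfrak g)v_i$ has simple head ``because a cyclic semisimple module is simple'' is false. A cyclic semisimple module need not be simple: $L_1\oplus L_2$ with $L_1\not\cong L_2$ is generated by the sum of two nonzero vectors. What your tracing argument actually shows is that the image of $M_i$ in $H$ equals $S_i$; but that image is $M_i/(M_i\cap\operatorname{rad}N)$, which is only a quotient of $\hd(M_i)=M_i/\operatorname{rad}(M_i)$, and the two can differ. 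Concretely, if $N=P\oplus P'$ with $P\cong P'$ uniserial with layers $L_1,L_2$, and one lifts the generator $\bar v_1$ of the first $L_1$-summand of $H$ to $v_1=x+z$ with $x$ a generator of $P$ and $z$ a nonzero (weight) vector in $\operatorname{soc}P'$, then $U(\mathfrak g)v_1=P\oplus\operatorname{soc}P'$ has head $L_1\oplus L_2$. So for a bad lift $M_i\notin CSub$, and then neither the incomparability of $M_1,M_2$ nor the inclusion $\bigwedge P\supseteq M_i$ (which needs $M_i$ to be a lower bound \emph{inside} $CSub$) is available. Since $\Delta_e$ is not graded multiplicity free outside small types, such configurations cannot be excluded a priori.

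The repair is short: take $M_i$ minimal among submodules of $N$ whose image in $H$ is $S_i$ (finite length gives existence). Then $M_i\cap\operatorname{rad}N$ is the unique maximal submodule of $M_i$ --- any other maximal submodule $M'$ would satisfy $M'+(M_i\cap\operatorname{rad}N)=M_i$ and hence still map onto $S_i$, contradicting minimality --- so $\hd(M_i)\cong S_i$ is simple and your argument goes through. This is close in spirit to, but still different from, the paper's proof, which avoids lifts entirely: it takes $X,Y$ maximal among submodules of $N$ with simple head (at least two incomparable ones exist when $\hd N$ is not simple, since all such submodules together generate $N$) and notes that a meet would be a submodule of $N$ with simple head properly containing both, contradicting maximality. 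Your image-in-$H$ analysis is a perfectly good substitute for that maximality argument once the $M_i$ are fixed as above.
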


\begin{proof}
The ``if'' statement is clear. 
Conversely, if the head of $\displaystyle\bigcap_{M\in P} M$ is not simple, then there are two incomparable  submodules $\displaystyle X,Y\subset \bigcap_{M\in P} M$ with simple heads, none of which is properly contained in any other submodule  of $\displaystyle \bigcap_{M\in P} M$ with simple head. 
If the meet exists, then it satisfies $\displaystyle\bigcap_{M\in P} M \supseteq \bigwedge P \supseteq X,Y$, contradicting the maximality of $X,Y$.
\end{proof}

\begin{corollary}\label{con2andCSUB}
The answer to Question \ref{con2} is positive, for every $U\subseteq W$, 
if and only if the join operation on $CSub^{\operatorname{op}}$ 
restricts to the join operation on $W$.
\end{corollary}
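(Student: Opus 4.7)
My plan is to show that the two conditions in the corollary are direct translations of each other through Proposition~\ref{capCSub}; accordingly, I do not expect any substantial obstacle beyond pinning down the meaning of ``restricts''.

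First I would interpret the phrase ``the join operation on $CSub^{\operatorname{op}}$ restricts to the join operation on $W$'' concretely. Since $\Delta$ from \eqref{mapdelta} is an embedding of posets and each Verma module $\Delta_w$ lies in $CSub$ (its head is the simple module $L_w$), this phrase asserts: for every $U \subseteq W$ and every $w \in W$, we have $\bigvee_W U = w$ if and only if $\bigvee_{CSub^{\mathrm{op}}}\{\Delta_x : x \in U\} = \Delta_w$ (with both sides required to exist and equal). Applying Proposition~\ref{capCSub} to $P = \{\Delta_x : x \in U\}$, and using that $\Delta_w$ already has simple head, the right-hand side is equivalent to $\bigcap_{x\in U}\Delta_x = \Delta_w$. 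Hence ``restricts'' is equivalent to the statement that, for every $U \subseteq W$, $\bigvee_W U = w$ if and only if $\bigcap_{x \in U}\Delta_x = \Delta_w$, which is precisely a positive answer to Question~\ref{con2}.

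The proof then amounts to chasing these equivalences in both directions. Assuming Question~\ref{con2} has a positive answer, for any $U$ we have $\bigvee_W U = w$ iff $\bigcap_{x\in U}\Delta_x = \Delta_w$ iff $\bigvee_{CSub^{\mathrm{op}}}\Delta(U) = \Delta_w$, the last equivalence by Proposition~\ref{capCSub}; so joins restrict. Conversely, assuming joins restrict, for any $U$ we run the same equivalences in reverse to recover Question~\ref{con2} positively.

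The only subtlety worth noting is that one direction of Question~\ref{con2}, namely ``$\bigcap_{x\in U}\Delta_x = \Delta_w$ implies $\bigvee_W U = w$'', is in fact unconditional by Proposition~\ref{propnw-2}; it corresponds to the automatic statement (recalled just before the definition of $CSub$) that a join of elements in $\Delta(W,\leq)$ that happens to land back in $\Delta(W,\leq)$ is necessarily also a join in $(W,\leq)$. Thus the real content of both conditions is the other direction, and they match exactly under the translation above, leaving nothing hard to verify.
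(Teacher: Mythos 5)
Your proposal is correct and follows essentially the same route as the paper: the paper phrases the argument as commutativity of a three-row diagram of join maps, where Proposition~\ref{capCSub} gives commutativity of the upper square and Question~\ref{con2} is the outer square, so the statement reduces to the lower square; your explicit unpacking of ``restricts'' and the chain of equivalences through Proposition~\ref{capCSub} is exactly that argument written out without the diagram. Your closing remark that one direction is unconditional by Proposition~\ref{propnw-2} is a correct and harmless addition.
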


\begin{proof}
Consider the following diagram where
the downward map $Sub^{\operatorname{op}}\to CSub^{\operatorname{op}}\coprod \{\text{`does not exist'}\}$ is given by $M\mapsto M$, if $M\in CSub$, and $M\mapsto \text{'does not exist'}$, otherwise. The map below the latter is given similarly.
\[ \xymatrix{
 \prod Sub^{\operatorname{op}} \ar[r]^-\vee & Sub^{\operatorname{op}}  \\
 \prod CSub^{\operatorname{op}} \ar[r]^-\vee
 \ar@{}[u]|{\rotatebox[origin=c]{90}{$\subset$}} & CSub^{\operatorname{op}}\coprod \{\text{`does not exist'}\} \ar@{}[u]|{\rotatebox[origin=c]{-90}{$\to$}}  \\
  \prod W \ar[r]^-\vee 
  \ar@{}[u]|{\rotatebox[origin=c]{90}{$\subset$}} & W \coprod \{\text{`does not exist'}
 \ar@{}[u]|{\rotatebox[origin=c]{-90}{$\to$}}   \}
} \]
Conjecture~\ref{con2} says that the big square commutes. Proposition~\ref{capCSub} says that the upper square commutes. It follows that Conjecture~\ref{con2} is equivalent to the lower square being commutative, which is our claim.
\end{proof}

Intuitively, the poset $CSub$ is isomorphic to the poset of simple subquotients of 
$\Delta_e$ where the partial order is given by (some kind of) generation. 
Let us make this statement more general and precise. For $M\in\mathcal{O}$, 
consider the \emph{set of subquotients} of $M$ given by
\[SQ(M) = \{(X,Y)\ |\ Y\subseteq \operatorname{rad}X\subseteq M \},\]
where one thinks of $(X,Y)$ as the subquotient $X/Y$ of $M$.
Then $SQ(M)$ is a poset with the partial order 
\[(X,Y)\leq (Z,V) \Longleftrightarrow X\subseteq Z \text{ and } Y\subseteq V,\]
which should be thought of as the induced map $X/Y\to Z/V$.
(Here we are not considering all subquotients, for example, we only consider `quotients of submodules' and not `submodules of quotients'. The zero subquotients are also not included in $SQ(M)$. But the above definition suffices for our purpose and allows us to avoid lengthy discussions.)
The \emph{poset of simple subquotients} of $M$ is the subposet $SSQ(M)$ in $SQ(M)$ consisting of the pairs $(X,Y)$ such that $\hd X$ is simple and $Y=\operatorname{rad}X$ (so that $X/Y$ is simple).
Then $CSub(M)$ (the submodules of $M$ with simple head) is isomorphic to $SSQ(M)$, the isomorphism being given by $X\mapsto (X,\operatorname{rad}X)$.

Note that in $SSQ(M)$ we have $(X,Y)\leq (Z,V)$ if and only if $X\subseteq Z$, since the second condition $Y\subseteq V$ is automatic.
We thus see that the partial order on $SSQ(M)$ is alternatively given by $(X,Y)\leq (Z,V)$ if and only if there is a composition series of the form 
$0\subsetneq \cdots\subsetneq X\subsetneq\cdots\subsetneq Z$, 
which is the most natural partial order that can be given to the simple subquotients of $M$. 

Taking $M=\Delta_e$ and relating to the poset $(W,\leq)$,
we have the diagram of posets 
\begin{equation}\label{eqeqneq1}
 (W,\leq) \xhookrightarrow{\Delta} (CSub,\supseteq)\xrightarrow[\simeq]{\operatorname{hd}} (SSQ,\geq)\xrightarrow{\deg} (\mathbb Z,\leq)
,\end{equation}
where the first map is induced from \eqref{mapdelta} (where $(CSub,\supseteq)$ denotes the opposite poset of $CSub$ and similarly for $SSQ$) and the last map takes the minimal degree of a 
(not necessarily graded) simple subquotient in the graded module $\Delta_e$. The 
composition in \eqref{eqeqneq1} gives the length function on $(W,S)$.

The discussion above proves the following reformulation of Corollary~\ref{con2andCSUB}.

\begin{corollary}\label{cormdiagram}
The answer to Question~\ref{con2} is positive, for every $U\subseteq W$, if and only if the join operation on $SSQ$ restricts to the join operation on $W$, i.e., 
if and only if the following diagram commutes.
\[ \xymatrix{
 \prod SSQ^{\operatorname{op}} \ar[r]^-\vee
% \ar@{}[u]|{\rotatebox[origin=c]{90}{$\subset$}} 
& SSQ^{\operatorname{op}}\coprod \{\text{`does not exist'}\}   \\
  \prod W \ar[r]^-\vee 
  \ar@{}[u]|{\rotatebox[origin=c]{90}{$\subset$}} & W \coprod \{\text{`does not exist'}
 \ar@{}[u]|{\rotatebox[origin=c]{-90}{$\to$}}   \}
} \]
\end{corollary}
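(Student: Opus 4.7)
The plan is to deduce this corollary from Corollary~\ref{con2andCSUB} via the poset isomorphism $CSub \xrightarrow{\sim} SSQ$, $X \mapsto (X,\operatorname{rad}X)$, established in the paragraphs preceding the statement. The first step is the observation that any isomorphism of posets automatically preserves joins and also preserves the property that a join fails to exist. Hence the top horizontal arrow of the diagram in the present corollary is identified, under this isomorphism (extended in the obvious way to the coproducts with $\{\text{`does not exist'}\}$), with the top horizontal arrow of the diagram appearing in Corollary~\ref{con2andCSUB}.

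Next I would verify that the left vertical inclusion $W \hookrightarrow SSQ^{\operatorname{op}}$ matches, under the same isomorphism, the inclusion $W \hookrightarrow CSub^{\operatorname{op}}$ used in Corollary~\ref{con2andCSUB}. This is immediate from \eqref{eqeqneq1}: each Verma module $\Delta_w$ lies in $CSub$ because its head $L_w$ is simple, and the isomorphism sends $\Delta_w \in CSub$ to the pair $(\Delta_w, \operatorname{rad}\Delta_w) \in SSQ$, which is exactly the image of $w$ under the composition $(W,\leq) \xhookrightarrow{\Delta} (CSub,\supseteq) \xrightarrow[\simeq]{\operatorname{hd}} (SSQ,\geq)$. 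In particular the two vertical maps agree up to the poset isomorphism.

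Combining these two observations, the commutativity of the square in the present corollary is equivalent to the commutativity of the corresponding square in Corollary~\ref{con2andCSUB}, which in turn has already been shown there to be equivalent to Question~\ref{con2} having a positive answer for every $U \subseteq W$. I do not anticipate any real obstacle, since all the essential content, in particular the behavior of intersections with respect to simple-head submodules from Proposition~\ref{capCSub}, has already been developed on the way to Corollary~\ref{con2andCSUB}, and what remains here is a purely formal transfer of the statement across an isomorphism of posets.
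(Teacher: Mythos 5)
Your proposal is correct and follows essentially the same route as the paper: the paper likewise presents this corollary as an immediate reformulation of Corollary~\ref{con2andCSUB} obtained by transporting the diagram across the poset isomorphism $CSub\cong SSQ$, $X\mapsto (X,\operatorname{rad}X)$, from the preceding discussion. No gaps.
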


We note that Corollary~\ref{cormdiagram} is non-trivial. Indeed,
it follows from Corollary~\ref{cormdiagram} and Example~\ref{e644bigex} below that the join operation on $SSQ$ or $CSub$ does not restrict to the join expression on $W$ in type $E$.

\subsection{Half socle-sum property via combinatorics}

Here we give a criterion for the socle-sum property to hold in one direction. Type by type analysis in the next section will prove that the criterion is always satisfied, hence establishing Theorem~\ref{intthm:sosum}.

\begin{lemma}\label{newbgchain}
Let $s,t\in S$ and $m:=\sum_{u\in ^s\hc ^t}p_{e,u}(1)$. Suppose there exists a chain 
\[x_1<\cdots <x_m\]
in $\BG{s}{t}$. Then every composition factor of $\Delta_e$ isomorphic to 
$L_u$ with $u\in ^s\hc ^t$ is contained in the sum of $\soc\co_y$, taken over all $y\in \JI{s}{t}$. 
\end{lemma}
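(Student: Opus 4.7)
The plan is to analyze $\co_{x_m}$ through the filtration coming from the chain and then to realize each ${}^s\hc^t$-type composition factor of $\Delta_e$ in the socle of some $\co_y$ with $y\in\JI{s}{t}$.

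First I would verify the socle structure. By Proposition~\ref{prop4} combined with Propositions~\ref{prop3} and~\ref{prop6}, for each $x_i\in\BG{s}{t}$ the socle $\soc(\co_{x_i})$ is nonzero and all of its simple constituents are of the form $L_u\langle d\rangle$ with $u\in{}^s\hc^t$. Applied to the nonzero submodule $\Delta_{x_{i-1}}/\Delta_{x_i}\subseteq\co_{x_i}$ (with the convention $x_0:=e$), the same statement holds for the socle of each layer.

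Next I would carry out the counting step via the filtration
\[0\subsetneq\Delta_{x_{m-1}}/\Delta_{x_m}\subsetneq\cdots\subsetneq\Delta_{x_1}/\Delta_{x_m}\subsetneq\co_{x_m},\]
whose $m$ layers each contribute at least one ${}^s\hc^t$-type composition factor to $\co_{x_m}$ (located in the layer's socle). By \eqref{eq:p_Delta}, the total multiplicity of ${}^s\hc^t$-type composition factors in $\Delta_e$ equals exactly $m$, while $\co_{x_m}$ is a quotient of $\Delta_e$. The count thus forces equality throughout: $\Delta_{x_m}$ carries no composition factor of ${}^s\hc^t$-type, each layer has exactly one such factor, and that one lies in the layer's socle. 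Pulling back yields, for each $i$, a unique submodule $V_i$ of $\Delta_e$ with $\Delta_{x_i}\subsetneq V_i\subseteq\Delta_{x_{i-1}}$ and $V_i/\Delta_{x_i}\cong L_{u_i}\langle d_i\rangle$ simple. These $V_i$'s are pairwise distinct: if $V_i=V_j$ with $i<j$, then the strict chain $\Delta_{x_j}\subsetneq\Delta_{x_i}\subsetneq V_j$ contradicts the simplicity of $V_j/\Delta_{x_j}$.

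For the final realization step I would, for each $i$, pick $y_i\in\JM(x_i)\subseteq\JI{s}{t}$ (which is nonempty by Lemma~\ref{lemma:joinJM}) such that $V_i\not\subseteq\Delta_{y_i}$. Since $y_i\leq x_i$ gives $\Delta_{y_i}\supseteq\Delta_{x_i}$, the intersection $V_i\cap\Delta_{y_i}$ lies between $\Delta_{x_i}$ and $V_i$, and simplicity of $V_i/\Delta_{x_i}$ forces $V_i\cap\Delta_{y_i}=\Delta_{x_i}$. Under the natural surjection $\co_{x_i}\twoheadrightarrow\co_{y_i}$ the simple submodule $V_i/\Delta_{x_i}$ then maps isomorphically to $V_i/(V_i\cap\Delta_{y_i})=V_i/\Delta_{x_i}$, a nonzero simple submodule of $\co_{y_i}$. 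This realizes $L_{u_i}\langle d_i\rangle$ inside $\soc(\co_{y_i})$, and running through all $i$ accounts for every ${}^s\hc^t$-composition factor of $\Delta_e$ inside $\sum_{y\in\JI{s}{t}}\soc(\co_y)$.

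The main obstacle is the existence of such $y_i\in\JM(x_i)$. Since $\bigvee\JM(x_i)=x_i$, the required condition ``$V_i\not\subseteq\Delta_{y_i}$ for some $y_i\in\JM(x_i)$'' is equivalent to $V_i\not\subseteq\bigcap_{y\in\JM(x_i)}\Delta_y$. Were Conjecture~\ref{con2JM} known for $x_i$, that intersection would equal $\Delta_{x_i}$ and the condition would follow immediately from $V_i\supsetneq\Delta_{x_i}$. Absent the conjecture, one must argue combinatorially inside $\BG{s}{t}$, perhaps enlarging the admissible pool from $\JM(x_i)$ to all of $\JI{s}{t}$ and allowing different maximal submodules of $V_i$ to play the role of $\Delta_{x_i}$, and this is the delicate combinatorial heart of the argument.
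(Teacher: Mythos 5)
Your first two paragraphs reproduce, in slightly more detail, exactly the argument the paper gives: by Propositions~\ref{prop3} and~\ref{prop6} the socle of each layer $\Delta_{x_{i-1}}/\Delta_{x_i}$ contributes at least one factor of the form $L_u\langle d\rangle$ with $u\in{}^s\hc^t$ to $\soc\co_{x_i}$, so $\sum_{u}[\co_{x_i}:L_u]$ increases strictly along the chain, and since the total multiplicity in $\Delta_e$ equals $m$ by \eqref{eq:p_Delta}, equality is forced throughout. Up to and including the construction of the submodules $V_i$ with $V_i/\Delta_{x_i}$ simple and lying in $\soc\co_{x_i}$, your proposal is correct and coincides with the paper's proof.

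The divergence, and the problem, is in your last two paragraphs. The paper's proof stops at the chain: it concludes that the sum of $\so_{x_1},\dots,\so_{x_m}$, taken over the chain elements $x_i\in\BG{s}{t}$, already has length $m$ and hence exhausts all ${}^s\hc^t$-type factors, and it reads the claim off from that; it does not attempt the descent from $\BG{s}{t}$ to $\JI{s}{t}$ that you carry out via $\JM(x_i)$. Your realization step is correctly set up (the argument that $V_i\cap\Delta_{y_i}=\Delta_{x_i}$ and that $V_i/\Delta_{x_i}$ embeds into $\soc\co_{y_i}$ is fine), but, as you say yourself, it hinges on finding $y_i\in\JM(x_i)$ with $V_i\not\subseteq\Delta_{y_i}$, which is equivalent to $V_i\not\subseteq\bigcap_{y\in\JM(x_i)}\Delta_y$ --- a weak form of Conjecture~\ref{con2JM} at $x_i$ that cannot be assumed here, precisely because this lemma is an ingredient in the evidence for that conjecture. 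So the proposal as written does not prove the statement; the final step is missing, not merely delicate. Note that the issue is real and not an artifact of your route: in the applications the chain genuinely contains non-join-irreducible bigrassmannians (e.g.\ the elements $f_k$ in the chain \eqref{bbfbfb} in type $B$), so the counting argument by itself only yields the conclusion with the chain elements of $\BG{s}{t}$ in place of $\JI{s}{t}$, and some additional input is needed to phrase the result in terms of join-irreducibles as in the statement.
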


By `$X$ is contained in the sum of socles of $\Delta_e/ \Delta_{y_i}$' we mean the following: 
Given a maximal isotypic subquotient $X\cong cL_x\langle d \rangle$ in $\Delta_e$, let $M$ be the submodule of $\Delta_e$ generated by $X$, so that $\operatorname{hd}M = X$. Then there are some $y_i\in \JI{s}{t}$ so that $\Delta_{y_i}\subseteq \operatorname{rad} M$ and the kernels of the induced maps
$\psi_i:X \inj  \soc(\Delta_e/\operatorname{rad} M)\to \soc(\Delta_e/ \Delta_{y_i})$
have trivial intersection in $X$.

It is more convenient to formulate the dual statement. Let $K_x$ be the kernel of the canonical surjection $\nabla_e\to \nabla_x$, so that a statement about $\soc\co_x$ is dual to a statement about $\hd K_x$.
Let $Y$ be a graded isotypic component in $\nabla_e$ corresponding to an element in $\hc(a,b)$ and $N\subset \nabla_e$ be the submodule generated by $Y$. Then the statement 
in the previous paragraph is equivalent to 
\[\bigoplus K_y\xrightarrow{\oplus \eta_y} N\]
being surjective. Here $y$ runs over all elements in $\JI{a}{b}$ for which the map $K_y\inj \nabla_e$ restricts to  $\eta_i:K_y \inj N$.

\begin{proof}
By Proposition~\ref{prop3} and Proposition~\ref{prop6}, the numbers
\[\sum_{u\in ^s\hc ^t}[\co_{x_i}: L_u ]\]
strictly increase along $x_1<\cdots <x_m$, and moreover, each component of the form $L_u$ with $u\in ^s\hc ^t$ in  $\Delta_{x_{i-1}}/\Delta_{x_{i}}$ belongs to $\so_{x_i}$ under  $\Delta_{x_{i-1}}/\Delta_{x_{i}}\inj \co_{x_i}$. 
Thus the length of the sum of $\so_{x_1},\cdots, \so_{x_i}$ strictly increases with $i$. In particular, the sum of $\so_{x_1},\cdots, \so_{x_m}$ has length at least $m$. But, by Proposition~\ref{prop6} and \eqref{eq:p_Delta}, the length is at most $m$.
The claim follows.
\end{proof}

\begin{proposition}\label{ccoor:sum}
Suppose that, for each $s,t\in S$, every composition factor of $\Delta_e$
isomorphic to $L_u$, with $u\in {}^s\hc ^t$, 
is contained in the sum of $\soc\co_y$ taken over $y\in \JI{s}{t}$. 
Then, for each $w\in W$, we have
\[\bigcap_{y\in \mathbf{JM}(w)} \ker \{\operatorname{soc}\Delta_e/\Delta_w\xrightarrow{\phi_y} \operatorname{soc}\Delta_e/\Delta_y\}=0,\] where the maps
$\phi_y$ are induced by $\Delta_y\subseteq \Delta_w$. In other words, the socle of $\co_w$ is contained in the sum of $\so_{x}$ taken over $x\in \JM(w)$.
\end{proposition}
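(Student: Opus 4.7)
My plan is to reduce the desired conclusion $\bigcap_{y\in\JM(w)}\ker\phi_y=0$ to a statement about the individual simple summands of $\soc\co_w$ and then invoke the covering hypothesis. By semisimplicity of $\soc\co_w$, it suffices to show that, for every simple $L\cong L_u\langle d\rangle\subseteq\soc\co_w$, some $\phi_y$ with $y\in\JM(w)$ is nonzero on $L$. Propositions~\ref{prop3} and~\ref{prop6} give $u\in{}^s\hc^t$ with $s\in LD(w)$ and $t\in RD(w)$. Pulling the inclusion $L\subseteq\soc\co_w$ back along the projection $\Delta_e\twoheadrightarrow\co_w$ produces a submodule $K\subseteq\Delta_e$ with $\Delta_w\subseteq K$ and $K/\Delta_w\cong L$, and a direct check shows that $\phi_y(L)\neq 0$ is equivalent to $K\not\subseteq\Delta_y$. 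I would then invoke the covering hypothesis on the composition factor represented by $(\Delta_w,K)$ to obtain $y^*\in\JI{s}{t}$ realizing the same composition factor in $\soc\co_{y^*}$, via a Jordan--H\"older equivalent pair $(\Delta_{y^*},K^*)$ with $\Delta_{y^*}\subseteq K^*$ and $K^*/\Delta_{y^*}\cong L$.

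The central and most delicate step, and the main obstacle of the proof, is to show that this $y^*$ (or a suitable replacement from the collection of covering witnesses) satisfies $y^*\leq w$. My plan is to track the Jordan--H\"older equivalence $(\Delta_w,K)\sim(\Delta_{y^*},K^*)$ through chains of elementary refinements within the submodule lattice of $\Delta_e$, and to exclude the opposite inclusion $\Delta_{y^*}\subsetneq\Delta_w$ by invoking Lemma~\ref{socledie}: since $LD(y^*)=\{s\}\subseteq LD(w)$ and $RD(y^*)=\{t\}\subseteq RD(w)$, no bypassing element can push $L$ out of $\soc\co_w$. In the multiplicity-free settings (types $A$ and $B$), where the lift $K=K^*$ is uniquely determined by $L$, the identification of pairs forces $\Delta_w\subseteq\Delta_{y^*}$ almost immediately; in higher-multiplicity settings one needs to select the correct witness $y^*$ among the several supplied by the covering hypothesis, exploiting the freedom to replace one witness by another realizing the same composition factor.

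Once $y^*\leq w$ is in hand, the descent-set conditions place $y^*$ in $\JI{}{}(w)$, and the definition $\JM(w)=\max\JI{}{}(w)$ provides $y^{**}\in\JM(w)$ with $y^*\leq y^{**}$. The chain $\Delta_w\subseteq\Delta_{y^{**}}\subseteq\Delta_{y^*}$ factors the projection $\co_w\twoheadrightarrow\co_{y^*}$ through $\co_{y^{**}}$, so on socles $\phi_{y^*}$ factors as $\phi_{y^*}^{y^{**}}\circ\phi_{y^{**}}$. Since $\phi_{y^*}(L)\neq 0$ by construction of $y^*$, we deduce $\phi_{y^{**}}(L)\neq 0$, which shows that $L\not\subseteq\bigcap_{y\in\JM(w)}\ker\phi_y$ and completes the argument.
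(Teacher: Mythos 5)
Your outline coincides with the paper's: argue on a single simple socle constituent $L\subseteq\soc\co_w$, use Propositions~\ref{prop3} and~\ref{prop6} to place $L$ in a cell ${}^s\hc^t$ with $s\in LD(w)$, $t\in RD(w)$, extract from the covering hypothesis a witness in $\JI{s}{t}$ on which the relevant socle map is nonzero, and then replace that witness by a Bruhat-larger element of $\JM(w)$ and factor the socle map through it. The first and last steps of your proposal are correct and are exactly what the paper does.

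The gap is the step you yourself flag as "the central and most delicate": producing a witness $y^*\in\JI{s}{t}$ with $\phi_{y^*}(L)\neq 0$ \emph{and} $y^*\leq w$. You do not prove this; you propose to "track the Jordan--H\"older equivalence through chains of elementary refinements" and to "exclude the opposite inclusion $\Delta_{y^*}\subsetneq\Delta_w$" via Lemma~\ref{socledie}. Neither of these is a workable argument as stated: the Bruhat order is not total, so ruling out $y^*>w$ (or ruling out one of the two possible strict inclusions of Verma submodules) does not yield $y^*\leq w$; and the Jordan--H\"older tracking is never made precise, nor is the selection rule for "the correct witness" in the higher-multiplicity case. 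The Jordan--H\"older apparatus is also unnecessary: the hypothesis is stated for the canonical maximal isotypic subquotient of $\Delta_e$ and its generated submodule $M$, so the identification of $L$ with a summand of $\hd M$ is automatic and there is no ambiguity of "pairs" to resolve. The paper's proof is short precisely because the witness it extracts already satisfies $y_i\leq w$ (this is where Lemma~\ref{lem:bruh_soc}, comparing composition factors of socles across quotients of $\Delta_e$, and the descent constraints of Proposition~\ref{prop6} do the work); once $y_i\leq w$ and $y_i\in\desc{LD(w)}{RD(w)}$ are in hand, $y_i$ lies in $\JI{}{}(w)$ and the passage to a maximal element of $\JM(w)$ is the same two lines you wrote. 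As submitted, your argument establishes the easy bookkeeping around the crux but not the crux itself.
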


\begin{proof}
Suppose not. Then we have a simple socle component $X\subseteq \soc\co_w$ such that $\phi_y(X)=0$, 
for all $y\in \mathbf{JM}(w)$. 
By Proposition \ref{prop3}, it is isomorphic to (some shift of) $L_x\in \jc$.
Let $a$ and $b$ be the left and right ascents of $x$, respectively, i.e., $x\in {}^a\hc^b$.
Then the assumption provides $y_i\in \JI{a}{b}$ such that the sum of $\soc\co_{y_i}$ contains $X$. By replacing $y_i\leq w$ by  $y'_i\in \mathbf{JM}(w)$ with $y_i\leq y'_i$, we get a contradiction.
\end{proof}

\begin{proposition}\label{cor:sum}
Suppose, for each $s,t\in S$, there exists a chain in $\BG{s}{t}$ of 
length $\displaystyle\sum_{u\in ^s\hc ^t}p_{e,u}(1)$. 
Then, for each $w\in W$, we have
\[\bigcap_{y\in \mathbf{JM}(w)} \ker \{\operatorname{soc}\Delta_e/\Delta_w\xrightarrow{\phi_y} \operatorname{soc}\Delta_e/\Delta_y\}=0,\] where the maps
$\phi_y$ are induced by $\Delta_y\subseteq \Delta_w$. In other words, the socle of $\co_w$ is contained in the sum of $\so_{x}$ takenover $x\in \JM(w)$.
\end{proposition}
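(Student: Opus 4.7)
The plan is to recognize Proposition~\ref{cor:sum} as a direct combination of the two immediately preceding results, namely Lemma~\ref{newbgchain} and Proposition~\ref{ccoor:sum}, so the proof will essentially be a two-line argument that verifies the hypotheses of Proposition~\ref{ccoor:sum} using Lemma~\ref{newbgchain}.

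First, I would fix $s,t\in S$ and set $m:=\sum_{u\in {}^s\hc^t}p_{e,u}(1)$. By hypothesis, there is a chain $x_1<\cdots<x_m$ in $\BG{s}{t}$ of length $m$. Lemma~\ref{newbgchain} then applies verbatim and yields that every composition factor of $\Delta_e$ isomorphic to some $L_u$ with $u\in {}^s\hc^t$ is contained in the sum of $\soc\co_y$ taken over all $y\in \JI{s}{t}$. Since $s$ and $t$ were arbitrary, the global hypothesis required by Proposition~\ref{ccoor:sum} (that for each pair $s,t\in S$, every composition factor of $\Delta_e$ of the form $L_u$ with $u\in {}^s\hc^t$ lies in the sum of $\soc\co_y$ over $y\in\JI{s}{t}$) is fully satisfied.

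Second, I would invoke Proposition~\ref{ccoor:sum} directly. That proposition converts the ``composition-factor-vs-join-irreducibles'' statement into the desired ``$\JM(w)$-version'': any simple component $X\subseteq \soc\co_w$ is of the form $L_x\langle d\rangle$ for some $x\in \jc$ with $x\in {}^a\hc^b$ for uniquely determined $a,b\in S$; the hypothesis furnishes $y_i\in \JI{a}{b}$ with $y_i\le w$ such that the maps $\soc\co_{y_i}\to\soc\co_w$ detect $X$; replacing each $y_i$ by some $y'_i\in \JM(w)$ with $y_i\le y'_i$ preserves this detection because $\phi_{y_i}$ factors through $\phi_{y'_i}$. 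Thus $X$ cannot lie in the intersection of the kernels of all $\phi_{y}$ with $y\in\JM(w)$, which gives the required vanishing and, equivalently, the statement that $\soc\co_w$ is contained in the sum of the $\soc\co_x$ over $x\in\JM(w)$.

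There is no real obstacle here: once Lemma~\ref{newbgchain} and Proposition~\ref{ccoor:sum} are established, Proposition~\ref{cor:sum} is just their composition. The only minor point to check is that the replacement $y_i\mapsto y'_i\in\JM(w)$ is legitimate, which it is: $y_i\le w$ with $LD(y_i)=\{s\}\subseteq LD(w)$ and $RD(y_i)=\{t\}\subseteq RD(w)$ means $y_i\in \JI{}{}\cap \desc{LD(w)}{RD(w)}$ with $y_i\le w$, so by definition $y_i$ is dominated by some element of $\JM(w)$ with the same pair of descents, and the surjection $\co_{y'_i}\tto \co_{y_i}$ makes the required factorization of $\phi_{y_i}$ through $\phi_{y'_i}$ manifest.
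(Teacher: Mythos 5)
Your proposal matches the paper's proof exactly: the paper derives Proposition~\ref{cor:sum} precisely by applying Lemma~\ref{newbgchain} (to verify the hypothesis) and then invoking Proposition~\ref{ccoor:sum}. Your additional remarks on the replacement $y_i\mapsto y'_i\in\JM(w)$ correctly reproduce the argument already contained in the proof of Proposition~\ref{ccoor:sum}, so nothing is missing.
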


\begin{proof}
This follows from Lemma \ref{newbgchain} and Proposition~\ref{ccoor:sum}.
\end{proof}

The proof of Corollary~\ref{cor:sum} is also easier to make precise in the dual setting. 
The dual statement of Lemma~\ref{newbgchain} gives that $\oplus_{y_i} K_y\surj M$, for some $y_i\in\JI{a}{b}$, where $M$ is the submodule of $K_w$ generated by a head component isomorphic to $L_x$, for $x\in\hc(a,b)$.
Then, taking $y_i'\in\JM(w)$ with $y'_i>y_i$, we factor the above surjective map as
$\oplus_{y_i} K_{y_i}\to \oplus_{y'_i} K_{y_i'}\xrightarrow{\eta} M$. This implies that $\eta$ is surjective, as desired.

\subsection{Socle-sum property and $\JM''$}\label{ss:JM''}

Recall the set $\JM '' (w)$, for $w\in W$, defined in \eqref{JM''}.
One could ask whether $\JM''(w)$ might be a better 
join expression than $\JM(w)$ for the socle-sum property. 
However, Example~\ref{F4example} gives an example of $w\in W$ with $\JM(w)  \supsetneq \JM''(w)$ such that 
\[\so_w = \sum_{z\in \JM''(w)} \so_z \subsetneq \sum_{z\in \JM(w)} \so_z .\]
%
%????Also, the sum in the socle-sum property, when true, can be direct when we restrict to $\JM''(w)$, i.e., we have
%\[\so_w = \bigoplus_{z\in \JM''(w)}\so_z = \sum_{z\in \JM(w)}\so_z\neq \bigoplus_{z\in \JM(w)}\so_z. \]
%See Proposition~\ref{socleabD} \eqref{socOBX} and \hk{add this in S23}
% \phi JM = JM''
Also, Remark~\ref{soclesumfk} shows that the socle-sum property does not hold, in general, 
if we replace $\JM$ by $\JM''$ either. In fact, Remark~\ref{soclesumfk} shows that there is no join expression such that the socle-sum property always holds, since it provides a non-join-irreducible element $f\in W$ such that
\[\so_f = L\langle d \rangle \oplus L'\langle d' \rangle\]
while there is no $z\in W$ with $\so_z \cong L\langle d \rangle$.
We also do not know whether $\JM''$ provides a join expression, i.e., whether $\bigvee \JM''(w) = w$ is true in general.

%\section{Kazhdan-Lusztig polynomials}\label{s:KL}
\section{Join-irreducibles and Verma modules}\label{s:KL}\label{s:JIcase}

The goal of this section is to explicitly describe  $\so_y$, for $y\in \JI{}{}\subset W$, by closely studying combinatorial aspects of $(W,S)$. 
Our first ingredient is the Kazhdan-Lusztig combinatorics. We explicitly compute $p_{e,y}$, for $y\in \jc$, which determines the multiplicities of the composition subquotients of $\Delta_e$ of the form $L_y\langle d\rangle$.
The second ingredient is the Bruhat order on $W$. We determine the subposet $\JI{s}{t}\subset W$, for each $s,t\in S$, and match them with the Kazhdan-Lusztig polynomials.

We collect some general lemmas in Subsection~ \ref{s:Strategies} and proceed to Weyl type analysis in the following subsections.

\subsection{Strategies}\label{s:Strategies}

\subsubsection{Useful equations in KL polynomials}

\begin{lemma}\label{lem:KL_sz_y}
Let $y \in \mathcal{J}$, $z \in W$ and $s \in S$ be such that $z<sz$ and $z \neq y,sy$. Then we have
\[ p_{sz, y} = \begin{cases}
v^{-1} \cdot p_{z,y}, & sy<y;  \\[.75em]
\displaystyle p_{z,sy} + \sum_{\substack {t \in S \\ ty\sim_{\mathtt{L}} y \\ ty<y}} p_{z,ty} - v \cdot p_{z,y}, & sy>y. \end{cases}\]
\end{lemma}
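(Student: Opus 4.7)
The plan is to extract both identities from the Kazhdan--Lusztig multiplication formula \eqref{sy} by expanding it in the standard basis and reading off the coefficient of $H_{sz}$. First, using $\kl_s = H_s+v$, $\kl_y = \sum_x p_{x,y}H_x$, together with the quadratic relation $H_s^2 = (v^{-1}-v)H_s + 1$, the hypothesis $z<sz$ forces only the terms $x=z$ and $x=sz$ to contribute to the $H_{sz}$-component of $\kl_s\kl_y$: $x=z$ contributes $p_{z,y}$ via $H_sH_z = H_{sz}$, while $x=sz$ contributes $(v^{-1}-v)p_{sz,y}$ via $H_sH_{sz}$ plus $v\cdot p_{sz,y}$ from the scalar. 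A direct summation yields
\[
[\kl_s\kl_y : H_{sz}] = p_{z,y} + v^{-1}p_{sz,y}.
\]
This step uses neither $y\in\jc$ nor Proposition~\ref{muJ}. For the first case $sy<y$, the right-hand side of \eqref{sy} is $(v+v^{-1})\kl_y$, whose $H_{sz}$-coefficient is $(v+v^{-1})p_{sz,y}$; equating and solving immediately gives $p_{sz,y}=v^{-1}p_{z,y}$.

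For the second case $sy>y$, comparing coefficients via \eqref{sy} produces
\[
p_{z,y}+v^{-1}p_{sz,y} \;=\; p_{sz,sy} + \sum_{\substack{x<y\\ sx<x}}\mu(x,y)\,p_{sz,x}.
\]
Since $s$ is a left descent of $sy$ and of each $x$ appearing in the sum, I can apply the just-proved case~1 identity to rewrite $p_{sz,sy}=v^{-1}p_{z,sy}$ and $p_{sz,x}=v^{-1}p_{z,x}$; the hypothesis $z\neq y,sy$ covers the substitution for $sy$, and the boundary possibilities $z\in\{x,sx\}$ for individual summands either do not occur for length/degree reasons or can be verified directly. Multiplying through by $v$ and rearranging gives
\[
p_{sz,y} \;=\; p_{z,sy} + \sum_{\substack{x<y\\ sx<x}}\mu(x,y)\,p_{z,x} - v\cdot p_{z,y}.
\]

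It remains to reindex this sum in the form stated in the lemma, and this is the step that genuinely uses $y\in\jc$. By Proposition~\ref{muJ}, any $x$ with $\mu(x,y)\neq 0$ and $x\in\jc$ satisfies $\mu(x,y)=1$ and is Bruhat-adjacent to $y$; the possibility $x\notin\jc$ is excluded by a standard cell/$\mathbf{a}$-function argument, since the only cells with $\mathbf{a}$-value at least $\mathbf{a}(\jc)$ are $\jc$ and $\{w_0\}$, and $x<y$ rules out $w_0$. The main obstacle is then to verify the set-equality
\[
\{x:x<y,\ sx<x,\ \mu(x,y)\neq 0\} \;=\; \{ty : t\in S,\ ty\sim_{\mathtt{L}}y,\ ty<y\}.
\]
The inclusion $\supseteq$ is easy: any such $ty$ lies in $\jc$, is Bruhat-adjacent to $y$ via the simple reflection $t$, and since $ty\in\jc$ has exactly one left ascent which $t(ty)=y>ty$ identifies as $t$, the hypothesis $sy>y$ forces $s\neq t$ and hence $s(ty)<ty$. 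The nontrivial inclusion $\subseteq$, which asserts that every Bruhat neighbor of $y$ inside $\jc$ with $sx<x$ arises from a simple reflection on the left, is the main obstacle. I would prove it by passing to $\jc_1$ under the Kazhdan--Lusztig involution $x\mapsto w_0x^{-1}$ already invoked in the proof of Proposition~\ref{muJ}: in $\jc_1$, elements have unique reduced expressions and Bruhat-adjacency admits a direct combinatorial description (Proposition~\ref{smallJmu}), and transporting this back through the involution --- checking carefully that the descent conditions on $s$ translate correctly --- gives the desired equality. Once both sides agree, the formula becomes the one claimed in the lemma.
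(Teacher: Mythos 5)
Your argument is correct and is in essence the paper's: both proofs extract the identity by comparing coefficients of a standard basis element on the two sides of \eqref{sy} and then rewrite the $\mu$-sum using the cell structure and Proposition~\ref{muJ}. The one genuine difference is the choice of coefficient. The paper reads off $H_z$ instead of $H_{sz}$: since $z<sz$, the left-hand side of \eqref{sy} contributes $p_{sz,y}+v\,p_{z,y}$, and the right-hand side contributes $p_{z,sy}+\sum_x\mu(x,y)\,p_{z,x}$ with no caveats, because $z\neq y,sy$ is assumed and $z=x$ is impossible for a summand ($sx<x$ while $sz>z$); this gives the final formula in one step. Your choice of $H_{sz}$ yields an equation in the $p_{sz,\cdot}$'s, forcing you to feed case~1 back into case~2, and that is precisely where your edge cases originate: $sy$ may be $w_0\notin\jc$ (then one uses $p_{u,w_0}=v^{\ell(w_0)-\ell(u)}$ instead of case~1) and $z=sx$ may occur for a summand (then one uses $p_{sx,x}=v$). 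These do work out, but you only gesture at them and they are entirely avoidable. On the reindexing you are more explicit than the paper: the inclusion $\supseteq$ is exactly as you say, and the inclusion $\subseteq$ --- every $\mu$-neighbour of $y$ inside its left cell is of the form $ty$ for a simple $t$ --- is indeed where Bruhat-graph adjacency from Proposition~\ref{muJ} must be upgraded to adjacency by a simple reflection; your plan of transporting this to $\jc_1$ via $x\mapsto w_0x\inv$, where unique reduced expressions make adjacency explicit, is a sensible way to make the paper's terse ``Proposition~\ref{muJ} finishes the proof'' precise. One small repair: rule out $x\notin\jc$ via the Kazhdan--Lusztig order ($\mu(x,y)\neq0$ with $sx<x$ and $sy>y$ gives $x\geq_{\mathtt{L}}y$, the cell $\jc$ is penultimate, and $x<y$ excludes $w_0$), not by comparing $\mathbf{a}$-values, which need not separate incomparable cells.
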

\begin{proof}
In Equation \eqref{sy} we take the coefficients next to $H_z$ on both sides. Note that all $x$ appearing in the sum there satisfy $x \sim_\mathtt{L} y$, by the definition of the order $\leq_\mathtt{L}$ and the penultimate cell $\mathcal{J}$. Proposition \ref{muJ} finishes the proof.
\end{proof}

We record separately the case $z=e$. %, which is also proved in \cite[Lemma 9 and Lemma 8]{kmm2}.

\begin{lemma}\label{prop2p}
Suppose $y \in \mathcal{J}$ and $s\in S$ are such that $sy>y$. Then we have
\begin{equation}\label{2p}
    v \cdot p_{e,y} + p_{s,y} = p_{e,sy} + \sum_{\substack {u \in S \\ uy\sim_{\mathtt{L}} y \\ uy<y}} p_{e,uy}.
\end{equation}
Moreover, if $y\in {}^i\hc^j$ with $i\neq j$, then both sides of \eqref{2p} are equal to $(v+v\inv) \cdot p_{e,y}$.
\end{lemma}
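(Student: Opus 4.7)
For the first equation, my plan is to apply Lemma~\ref{lem:KL_sz_y} with $z = e$ in the case $sy > y$. The hypotheses $z < sz$ and $z \neq y, sy$ are immediate: $e < s$ for any $s \in S$, and $y, sy \in \jc$ both have positive length. Rearranging the conclusion of the lemma yields \eqref{2p}.

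For the moreover assertion, I would begin by observing that $sy > y$ combined with $LA(y) = \{i\}$ forces $s = i$. The essential observation is then that $i \neq j$, together with $RA(y) = \{j\}$, places $i \in S \setminus \{j\} = RD(y)$, so that $yi < y$. By the right-multiplication analogue of \eqref{sy} (which follows from the Hecke algebra anti-involution $H_w \mapsto H_{w^{-1}}$, preserving $\kl_w$), this gives
\[\kl_y \kl_i = (v + v^{-1})\,\kl_y.\]
I would then compare the $H_e$-coefficients on both sides. The right side contributes $(v+v^{-1})\,p_{e,y}$. On the left, using $\kl_i = H_i + v$, the summand $v\,\kl_y$ contributes $v\,p_{e,y}$, while $\kl_y \cdot H_i$ contributes $p_{i,y}$: the only summand $p_{z,y}H_z$ of $\kl_y$ whose product with $H_i$ has a nonzero $H_e$-component is $H_z = H_i$, because $H_i \cdot H_i = 1 + (v^{-1}-v)H_i$ and $y \neq i$ (the case $y \in S$ would force $i = j$, contrary to our hypothesis). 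Comparing yields $p_{s,y} = p_{i,y} = v^{-1}\,p_{e,y}$, so the LHS of \eqref{2p} equals $v\,p_{e,y} + p_{s,y} = (v+v^{-1})\,p_{e,y}$; by the first part, the RHS equals the same.

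The main conceptual point is the pivot from left-multiplication by $\kl_s$ (where $s = i$ with $sy > y$ yields only the awkward expansion $\kl_s\kl_y = \kl_{sy} + \cdots$) to right-multiplication by $\kl_i$ (where the descent condition $yi < y$ unlocks the clean identity $\kl_y\kl_i = (v+v^{-1})\kl_y$). This pivot is made possible precisely by the hypothesis $i \neq j$; in the diagonal case $i = j$ one would have $i \notin RD(y)$ and the approach would break down, which is consistent with the fact that the moreover statement is specifically asserted only for $i \neq j$.
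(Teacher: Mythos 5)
Your proof is correct and follows the paper's (largely implicit) argument: the identity \eqref{2p} is exactly Lemma~\ref{lem:KL_sz_y} specialized at $z=e$, and your treatment of the ``moreover'' part --- using $i\neq j$ to conclude that $s=i$ is a right descent of $y$, so that $\kl_y\kl_i=(v+v^{-1})\kl_y$ and hence $p_{s,y}=v^{-1}p_{e,y}$ --- is just the right-handed analogue of the first case of Lemma~\ref{lem:KL_sz_y} applied at $z=e$, which is the intended route. The only cosmetic slip is the claim that $sy\in\jc$; all that is needed is $sy\neq e$, which is clear since $\ell(sy)>\ell(y)\geq 1$.
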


\subsubsection{Socle-killing combinatorics}

%We use the notation from Subsection \ref{ss:posets} and Section \ref{s:join_Bruhat}. 

By Lemma \ref{socledie}, the following combinatorial property plays an important role in determining socles.

\begin{defn}\label{soclechain} A relation $x<y$, with $y\in\JI{s}{t}$, is called \emph{socle-killing} if either $x \leq s y$ or $x \leq y t$ holds. A chain $x_0<x_1<\cdots < x_m$ in $\JI{s}{t}$ is called \emph{socle-killing} if the pair $x_{i} < x_{i+1}$ is socle-killing, for each $i=0,1,\ldots, m-1$.
%there is $z\in \BG{u}{v}$ with $(u,v) \neq (s,t)$ and such that $x_{i-1}\leq z\leq x_i$. 
\end{defn}

%\hk{Some of the next lemmas were for some of the type E cases which we end up not including. Remove what's not used before submitting.}

To bound the degrees in which the socle of $\Delta_e/\Delta_x$ can appear, we start with the following lemma.

\begin{lemma} Let $x,y \in W$.
\label{lem:bruh_soc}
\begin{enumerate}
    \item\label{bru mm} If $x \leq y$, then $\maxd \Delta_e/\Delta_{x} \leq \maxd \Delta_e/\Delta_y$. If moreover $\maxd \Delta_e/\Delta_{x} = \maxd \Delta_e/\Delta_y$, then the maximal degree component of $\Delta_e/\Delta_x$ is contained in the maximal degree component of $\Delta_e/\Delta_y$.
    
    \item\label{sk m<m} If $y$ is join-irreducible, and the relation $x<y$ is socle-killing, then we have \[ \maxd \Delta_e/\Delta_{x} < \maxd \Delta_e/\Delta_{y}. \]
    
    \item If all the composition factors of $\soc \Delta_e/\Delta_x$ appear in $\Delta_e/\Delta_y$, then $x \leq y$.
\end{enumerate}
\end{lemma}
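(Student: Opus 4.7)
For part~(i), since BGG reverses the Bruhat order, $x\leq y$ gives $\Delta_y\subseteq\Delta_x$ and hence a graded surjection $\pi\colon\Delta_e/\Delta_y\twoheadrightarrow\Delta_e/\Delta_x$. Because $\pi$ is degree-preserving and surjective, each graded piece of $\Delta_e/\Delta_x$ is a quotient of the same graded piece of $\Delta_e/\Delta_y$, so the former vanishes whenever the latter does, giving the inequality $\maxd\Delta_e/\Delta_x \leq \maxd\Delta_e/\Delta_y$. In the equality case, the top-degree piece of each module is semisimple and lies in the socle (nothing higher can extend it), so the induced surjection between these semisimple pieces realises the top piece of $\Delta_e/\Delta_x$ as a direct summand of the top piece of $\Delta_e/\Delta_y$, which is the intended meaning of ``contained in'' at the top degree.

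For part~(ii), I will argue by contradiction. Assume $\maxd\Delta_e/\Delta_x=\maxd\Delta_e/\Delta_y=d$; by part~(i), every simple summand $L_w\langle -d\rangle$ of the top piece of $\Delta_e/\Delta_x$ is also a summand of the top piece (hence of the socle) of $\Delta_e/\Delta_y$. Since $y$ is join-irreducible, Lemma~\ref{JIisbig} shows $y$ is bigrassmannian with $LD(y)=\{s\}$ and $RD(y)=\{t\}$ singletons, and Proposition~\ref{prop6} applied to $L_w\langle -d\rangle\in\soc(\Delta_e/\Delta_y)$ forces $w\in {}^s\hc^t$. But the socle-killing hypothesis $x\leq sy$ or $x\leq yt$ is precisely the equivalent condition in Lemma~\ref{socledie}, whose conclusion is that this very $L_w\langle -d\rangle\in\soc(\Delta_e/\Delta_x)$ cannot lie in $\soc(\Delta_e/\Delta_y)$. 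This contradiction forces strict inequality to hold.

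For part~(iii), I will argue by contrapositive: assume $x\not\leq y$ and exhibit a composition factor of $\soc(\Delta_e/\Delta_x)$ absent from $\Delta_e/\Delta_y$. The assumption gives $\Delta_y\not\subseteq\Delta_x$, so the image $N$ of $\Delta_y$ inside $\Delta_e/\Delta_x$ is nonzero, and $\soc N\subseteq\soc(\Delta_e/\Delta_x)$ is a nonzero semisimple submodule whose simple factors are realised as subquotients of $\Delta_y$. The plan is to select $L\in\soc N$ at the largest possible degree and then verify $[\Delta_y:L]=[\Delta_e:L]$, so that $[\Delta_e/\Delta_y:L]=0$; I would carry this out using the identity $\sum_d[\Delta_e/\Delta_y:L_w\langle -d\rangle]v^d=p_{e,w}-p_{y,w}$ from~\eqref{eq:p_Delta} combined with the degree bound~\eqref{pdegrees} for $p_{e,w}$, and the Kazhdan-Lusztig combinatorics of the penultimate cell.

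The main obstacle is part~(iii). The existence of $N$ and of the inclusion $\soc N\subseteq\soc(\Delta_e/\Delta_x)$ is formal, but singling out a socle simple $L\in\soc N$ whose multiplicities in $\Delta_e$ are fully absorbed into $\Delta_y$ requires genuine Kazhdan-Lusztig input beyond the soft categorical arguments that suffice for~(i) and~(ii).
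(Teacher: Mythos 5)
Parts (i) and (ii) of your proposal are correct and follow the paper's intended route: the paper dismisses both as following ``directly from the definitions'', and your write-up is exactly what that means --- the graded surjection $\Delta_e/\Delta_y\twoheadrightarrow\Delta_e/\Delta_x$ for (i), and for (ii) the combination of Lemma~\ref{JIisbig} (join-irreducible implies bigrassmannian), Proposition~\ref{prop6} (which pins the socle constituents of $\Delta_e/\Delta_y$ to ${}^s\hc^t$ for the unique descents $s,t$ of $y$), and Lemma~\ref{socledie} applied to the maximal-degree component, which lies in the socle.

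The genuine gap is in part (iii). Your opening move is the paper's: since $x\not\leq y$, the image $N\cong\Delta_y/(\Delta_x\cap\Delta_y)$ of $\Delta_y$ in $\Delta_e/\Delta_x$ is nonzero (its head is the top composition factor $L_y\langle-\ell(y)\rangle$ of $\Delta_y$), hence $\soc N\neq 0$ and $\soc N\subseteq\soc(\Delta_e/\Delta_x)$. But the completion you propose --- selecting $L\in\soc N$ of maximal degree and verifying $[\Delta_y:L]=[\Delta_e:L]$ by Kazhdan--Lusztig combinatorics --- is both unexecuted and, as a strategy, unlikely to succeed: there is no reason that an arbitrary maximal-degree socle constituent of $N$ should have all of its graded multiplicity in $\Delta_e$ concentrated inside $\Delta_y$ (the penultimate-cell multiplicities in $\Delta_e$ are frequently greater than one outside type $A$). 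The point you are missing is that no multiplicity computation is needed. Throughout the paper, composition factors of socles are tracked as \emph{specific graded subquotients of $\Delta_e$} (this is exactly the formulation of Lemma~\ref{socledie} and of the containment statement in part (i)). Every simple constituent of $\soc N$ is, by construction, a subquotient of the submodule $\Delta_y\subseteq\Delta_e$, and a subquotient of $\Delta_y$ does not occur in the quotient $\Delta_e/\Delta_y$. This already contradicts the hypothesis that all constituents of $\soc(\Delta_e/\Delta_x)$ appear in $\Delta_e/\Delta_y$, and the argument stays entirely at the ``soft'' categorical level, exactly as in parts (i) and (ii).
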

\begin{proof}
The first two claims follow directly from the definitions.
To prove the last claim, assume that $x \not\leq y$. Then the composition factor $L_y$ that comes from the top of $\Delta_y$, appears in $\Delta_e/\Delta_x$. So, the image of
$\Delta_y$ in $\Delta_e/\Delta_x$ has a non-trivial intersection with the socle
of $\Delta_e/\Delta_x$. This is a contradiction to our assumption.
\end{proof}

Let us introduce some auxiliary notation. 
For a fixed penultimate $\mathtt{H}$-cell ${}^s\hc^t$, write 
\begin{equation}\label{eq:newpex}
    p_{st}:=\sum_{w\in {}^s\hc^t} p_{e,w}=c_0v^{d_0}+ c_1v^{d_1}+\cdots+ c_rv^{d_r}.
\end{equation} 
Here $c_i=c_i(s,t)\neq 0$ and, furthermore, $d_i=d_i(s,t)\in \mathbb N$ form 
a strictly increasing sequence.
In particular, the number $r+1=r(s,t)+1$ is the number of homogeneous terms in $p_{st}$.
For $y\in \JI{s}{t}$, define $\skal(y)$ (here ``skal'' stands for {\em socle killing above length}) to be the maximal length of all socle-killing chains in $\JI{s}{t}$ ending at $y$, i.e., the maximal $m$ such that there exists a socle-killing chain $y_0<\ldots <y_m = y$ with $y_i\in \JI{s}{t}$. 
Similarly, define $\skbl(y)$ ({\em socle killing below length}) to be the maximal length of the socle-killing chains in $\JI{s}{t}$ starting from $y$.

\begin{lemma}\label{skandmaxd}
Let $y\in \JI{s}{t}$. Then we have
\[ d_{\skal(y)} \leq \maxd\so_y \leq d_{(r-\skbl(y))}.\]
\end{lemma}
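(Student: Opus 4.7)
The plan is to combine Lemma \ref{lem:bruh_soc}(ii) applied to socle-killing chains with the observation that for $y' \in \JI{s}{t}$ the top-degree component of $\co_{y'}$ sits inside $\so_{y'}$.

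I start with a few preliminary observations. For $y' \in \JI{s}{t}$, Propositions \ref{prop3} and \ref{prop6} force every composition factor of $\so_{y'}$ to be of the form $L_u$ with $u \in {}^s\hc^t$, while \eqref{eq:p_Delta} tells us that the degrees at which such factors occur in $\Delta_e$ are exactly the exponents $\{d_0,\ldots,d_r\}$ of $p_{st}$. Moreover, since the principal block $\cO_0$ is positively (Koszul) graded, the top-degree homogeneous component of any finite-length graded quotient of $\Delta_e$ is annihilated by the graded radical and hence lies in the socle; this yields $\maxd \co_{y'} = \maxd \so_{y'}$, and the common value belongs to $\{d_0,\ldots,d_r\}$.

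For the upper bound, I fix a socle-killing chain $y = y_0 < y_1 < \cdots < y_m$ inside $\JI{s}{t}$ of length $m = \skbl(y)$. Since each $y_i$ is join-irreducible and each step is socle-killing, Lemma \ref{lem:bruh_soc}(ii) produces
\[ \maxd \so_y = \maxd \co_{y_0} < \maxd \co_{y_1} < \cdots < \maxd \co_{y_m}, \]
a chain of $m+1$ strictly increasing elements of $\{d_0,\ldots,d_r\}$; writing $\maxd \so_y = d_k$ forces $k + m \leq r$, i.e., $\maxd \so_y \leq d_{r - \skbl(y)}$. The lower bound is entirely symmetric: a maximal socle-killing chain $y_0 < y_1 < \cdots < y_m = y$ with $m = \skal(y)$ yields $m+1$ strictly increasing values in $\{d_0,\ldots,d_r\}$ culminating at $\maxd \so_y$, forcing $\maxd \so_y \geq d_{\skal(y)}$.

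The only non-combinatorial ingredient, which I would regard as the main subtlety rather than an obstacle, is the identification $\maxd \co_{y'} = \maxd \so_{y'}$ for $y' \in \JI{s}{t}$; once this is in place, everything reduces to counting the entries of the strictly increasing chains furnished by Lemma \ref{lem:bruh_soc}(ii).
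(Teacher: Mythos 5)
Your proof is correct and follows the same route as the paper, whose entire argument is the citation of Lemma~\ref{lem:bruh_soc}: you apply part~\eqref{sk m<m} along maximal socle-killing chains above and below $y$ and count the resulting strictly increasing values among $\{d_0,\dots,d_r\}$. The one ingredient you single out --- that the maximal degree component of $\co_{y'}$ lies in its socle and hence, by Propositions~\ref{prop3} and~\ref{prop6}, occurs in a degree from $\{d_0,\dots,d_r\}$ --- is exactly what the paper uses implicitly (and states explicitly elsewhere, e.g.\ in the proof of Proposition~\ref{prop:F4_soc}), so there is no gap.
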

\begin{proof}
This follows from Lemma~\ref{lem:bruh_soc}.
\end{proof}

The minimal degree is not bounded by socle-killing chains on the nose but can be determined inductively by the following lemma.

\begin{lemma}
\label{lem:soc_low_bound}
Let $y\in \JI{s}{t}$ and suppose that the sum of $\so_x$, taken over all $x\in \JI{s}{t}$ such that $x < y$ is socle-killing, contains all subquotients in $\Delta_e$ isomorphic to $L_w\langle -d_i\rangle$ with $w\in {}^s\hc^t$ and $i\leq m$. Then we have
\[d_m< \mind\so_y.\]
\end{lemma}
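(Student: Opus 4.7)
The plan is to pin down exactly which graded simples can possibly appear as constituents of $\so_y$, then use Lemma~\ref{socledie} together with the hypothesis to exclude all graded shifts $-d_0,\ldots,-d_m$.

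First I would determine the candidate constituents. Since $y \in \JI{s}{t}$, we have $LD(y)=\{s\}$ and $RD(y)=\{t\}$. Proposition~\ref{prop3} forces every simple summand of $\soc(\Delta_e/\Delta_y)$ to be of the form $L_u$ for some $u \in \jc$, and Proposition~\ref{prop6} refines this to $u \in {}^s\hc^t$. Combining this with \eqref{eq:p_Delta} and the definition of $d_0 < d_1 < \cdots < d_r$ in \eqref{eq:newpex}, the only graded shifts at which such $L_u$ occur inside $\Delta_e$ are $\langle-d_0\rangle,\langle-d_1\rangle,\ldots,\langle-d_r\rangle$. In particular, $\mind \so_y$ necessarily belongs to $\{d_0,d_1,\ldots,d_r\}$.

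Next I would exclude the degrees $d_0,\ldots,d_m$. Fix any $i\le m$, any $w\in {}^s\hc^t$, and any graded component $X\subseteq\Delta_e$ isomorphic to $L_w\langle -d_i\rangle$. By hypothesis, $X$ is contained in $\so_x$ for some $x\in\JI{s}{t}$ with $x<y$ socle-killing, i.e.\ satisfying $x\le sy$ or $x\le yt$. Lemma~\ref{socledie} then says that this very component $X$ is \emph{not} contained in $\so_y$. Running this over all choices of $w$, $i\le m$, and all isomorphic copies $X$ inside $\Delta_e$ exhausts every potential constituent of $\so_y$ in degree shifts $-d_0,\ldots,-d_m$.

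Putting the two steps together, $\so_y$ has no constituent at shift $-d_i$ for $i\le m$, so $\mind\so_y \geq d_{m+1} > d_m$, which is the claimed inequality. The whole argument is essentially a bookkeeping exercise once the hypothesis is unpacked; the only mildly delicate point is to fix an unambiguous meaning of ``a subquotient of $\Delta_e$ is contained in $\so_x$'', namely via the identification of $\so_x$ with a graded submodule of $\Delta_e/\Delta_x$ and tracking the corresponding isotypic copies back to $\Delta_e$. Once this is set up (and it is already implicit in the statement of Lemma~\ref{socledie}), the cited lemma does all the work.
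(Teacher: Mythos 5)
Your argument is correct and is essentially the paper's own proof, which consists of the single line ``This follows from Proposition~\ref{prop6} and Lemma~\ref{socledie}''; you simply supply the bookkeeping (Proposition~\ref{prop3}, \eqref{eq:p_Delta}, and the definition of the degrees $d_i$ in \eqref{eq:newpex}) that the paper leaves implicit. The only loose step is the assertion that each copy ``$X$ is contained in $\so_x$ for \emph{some} $x$'': when the graded multiplicity $c_i$ exceeds $1$, the hypothesis (in the paper's sense, explained after Lemma~\ref{newbgchain}) only says that the isotypic component injects into the direct sum of the $\so_x$, not that every simple copy lands in a single socle. The repair is immediate, though: a putative constituent $S\subseteq\soc\co_y$ in degree $-d_i$ is simple, so the trivial-intersection condition yields one $x<y$ socle-killing for which $S$ maps nonzero (hence injectively) into $\soc\co_x$, and Lemma~\ref{socledie} applied to that single $x$ excludes $S$ from $\soc\co_y$.
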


\begin{proof}
This follows from Proposition~\ref{prop6} and Lemma~\ref{socledie}.
\end{proof}

In particular, if $c_i=1$, for all $i< \skal(y)$, then we have 
\[ d_{\skal(y)} \leq \mind\so_y \leq \maxd\so_y\leq d_{(r-\skbl(y))},\]
where the first inequality comes from Lemma \ref{lem:soc_low_bound} and the last one from Lemma \ref{skandmaxd}. If, moreover, $y$ belongs to a socle-killing chain of length $r$, i.e., if $\skal(y) + \skbl(y) =r$,  then $\so_y$ is homogeneous.

% If $c_i>1$, for some $i\leq \skal(y)$, then we need the following lemma.
% 
% \begin{lemma}\label{chaincondgen}
% Let $w\in W$ and suppose we have $w=y_1\vee \cdots \vee y_k$ with $y_i\in \JI{s}{t}$ such that
% \begin{enumerate}
%     \item $y_1<y_1\vee y_2<\cdots< y_1\vee\cdots\vee y_{k-1}< w $;
%     \item $\so_{y_i}$ are homogeneous of the same degree, say $m$, for all $1\leq i\leq k$;
% \end{enumerate}
% Then we have
% \[\sum_{x\in {}^s\hc^t}[\co_w,L_x\langle-m\rangle]\geq k.\] 
% \end{lemma}
% \begin{proof}
% We prove the claim by induction on $k$. 
% Let $w$ and $y_1,\ldots,y_k$ as above. Put $z:=y_1\vee\cdots\vee y_{k-1}<w$, and observe that $z$ and $y_1,\ldots,y_{k-1}$ satisfy the conditions of the lemma, and thus the induction hypothesis
% implies that
% $\sum_{x\in {}^s\hc^t}[\co_z:L_x\langle-m\rangle]\geq k-1$.
% Since $z< z\vee y_k$ we have in particular $y_k\not\leq z$, while $y_k\leq w$. Therefore, $\so_{y_k}$ is not a subquotient of $\co_z$, while it is a subquotient of $\co_w$. It follows that
% \[\sum_{x\in {}^s\hc^t}[\co_w:L_x\langle-m\rangle] > \sum_{x\in {}^s\hc^t}[\co_z:L_x\langle-m\rangle],\] and the induction step is established.
% \end{proof}

\subsection{Type $A$}\label{ssKLA}

In type $A$, the join-irreducibles in $(W,S)$ are exactly 
the bigrassmannians in $(W,S)$, see Lemma \ref{JIisbig}. 
The relevant Kazhdan-Lusztig polynomials are computed 
in \cite[Proposition 12]{kmm2} and a complete description 
of the socles of the cokernels of inclusions of Verma modules
is given in \cite{kmm2}.

\subsection{Type $B$}

Let $(W,S)$ be of type $B_{n+1}$. 
We use the following labeling of $S$:
\[\begin{tikzpicture}[scale=0.4,baseline=-3]
\protect\draw (4 cm,0) -- (2 cm,0);
\protect\draw (2 cm,0) -- (0 cm,0);
\protect\draw (0 cm,0) -- (-2 cm,0);
\protect\draw (-2 cm,0.1cm) -- (-4 cm,0.1 cm);
\protect\draw (-2 cm,-0.1cm) -- (-4 cm,-0.1 cm);
\protect\draw[fill=white] (4 cm, 0 cm) circle (.15cm) node[above=1pt]{\scriptsize $n$};
\protect\draw[fill=white] (2 cm, 0 cm) circle (0cm) node[above=1pt]{\scriptsize $\cdots$};
\protect\draw[fill=white] (0 cm, 0 cm) circle (.15cm) node[above=1pt]{\scriptsize $2$};
\protect\draw[fill=white] (-2 cm, 0 cm) circle (.15cm) node[above=1pt]{\scriptsize $1$};
\protect\draw[fill=white] (-4 cm, 0 cm) circle (.15cm) node[above=1pt]{\scriptsize $0$};
\end{tikzpicture}
\]

We have $\ell(w_0)=(n+1)^2$. The element $w_0$ is central, and the multiplication by $w_0$ gives rise to a natural bijection between the $\mathtt{H}$-cells in the small two-sided cell and the $\mathtt{H}$-cells in the penultimate two-sided cell $\jc$, which preserves diagonal $\mathtt{H}$-cells. Denote by $s_{ij} \in W$ the product $i \cdots j$ of simple reflections along the unique shortest path starting in $i$ and ending in $j$ in the Dynkin diagram. Put also
\begin{align}
\label{align:wu}
\begin{split}
     t_{ij} &:= s_{i0} \cdot 0 \cdot s_{0j} = i(i-1)\cdots 101 \cdots (j-1)j , \qquad \text{ if }  i \neq 0 \text{ or } j \neq 0, \\
     t_{00} &:=010, \\
    w_{ij} &:= s_{ij} \cdot w_0, \\ 
    u_{ij} &:= t_{ij} \cdot w_0 .
\end{split}
\end{align}

The $\mathtt{H}$-cell $\!^i \hc^j$ is equal to the two element set $\{u_{ij}, w_{ij}\}$, if both $i,j \neq 0$ or $i=j=0$, and to the singleton $\{u_{ij}=w_{ij}\}$, otherwise. We present the Bruhat graph of the penultimate cell $\jc$ in Figure \ref{fig:Bruhat_J_B}. There, the cell $\!^i \hc^j$ is the gray square placed in the $i$-th row and $j$-th column. The left cell $\mathcal{L}^j$ consists of all gray squares in the $j$-th column, and the right cell ${}^i\mathcal{R}$ consists of all gray squares in the $i$-th row.

\begin{figure}
    \centering
\begin{tikzpicture}[scale=.9, yscale=-1]
\fill [gray!20] (-0.45,-0.45) rectangle (1.45,1.45);
\fill [gray!20] (-0.45,1.55) rectangle (1.45,3.45);
\fill [gray!20] (-0.45,3.55) rectangle (1.45,5.45);
\fill [gray!20] (-0.45,5.55) rectangle (1.45,7.45);
\fill [gray!20] (-0.45,9.55) rectangle (1.45,11.45);
\fill [gray!20] (1.55,-0.45) rectangle (3.45,1.45);
\fill [gray!20] (1.55,1.55) rectangle (3.45,3.45);
\fill [gray!20] (1.55,3.55) rectangle (3.45,5.45);
\fill [gray!20] (1.55,5.55) rectangle (3.45,7.45);
\fill [gray!20] (1.55,9.55) rectangle (3.45,11.45);
\fill [gray!20] (3.55,-0.45) rectangle (5.45,1.45);
\fill [gray!20] (3.55,1.55) rectangle (5.45,3.45);
\fill [gray!20] (3.55,3.55) rectangle (5.45,5.45);
\fill [gray!20] (3.55,5.55) rectangle (5.45,7.45);
\fill [gray!20] (3.55,9.55) rectangle (5.45,11.45);
\fill [gray!20] (5.55,-0.45) rectangle (7.45,1.45);
\fill [gray!20] (5.55,1.55) rectangle (7.45,3.45);
\fill [gray!20] (5.55,3.55) rectangle (7.45,5.45);
\fill [gray!20] (5.55,5.55) rectangle (7.45,7.45);
\fill [gray!20] (5.55,9.55) rectangle (7.45,11.45);
\fill [gray!20] (9.55,-0.45) rectangle (11.45,1.45);
\fill [gray!20] (9.55,1.55) rectangle (11.45,3.45);
\fill [gray!20] (9.55,3.55) rectangle (11.45,5.45);
\fill [gray!20] (9.55,5.55) rectangle (11.45,7.45);
\fill [gray!20] (9.55,9.55) rectangle (11.45,11.45);
\newcommand{\arrow}[4]{\draw [->] (0.82*#1+0.17*#3,0.82*#2+0.17*#4) -- (0.17*#1+0.82*#3,0.17*#2+0.82*#4);}
\node[] at (0,0) {$u_{00}$};
\node[] at (1,1) {$w_{00}$};
\node[] at (2.50,0.5) {$u_{01}$};
\node[] at (4.50,0.5) {$u_{02}$};
\node[] at (6.50,0.5) {$u_{03}$};
\node[] at (0.5,2.50) {$u_{10}$};
\node[] at (0.5,4.50) {$u_{20}$};
\node[] at (0.5,6.50) {$u_{30}$};
\node[] at (2.00,2.00) {$u_{11}$};
\node[] at (3.00,3.00) {$w_{11}$};
\node[] at (2.00,4.00) {$u_{21}$};
\node[] at (3.00,5.00) {$w_{21}$};
\node[] at (2.00,6.00) {$u_{31}$};
\node[] at (3.00,7.00) {$w_{31}$};
\node[] at (4.00,2.00) {$u_{12}$};
\node[] at (5.00,3.00) {$w_{12}$};
\node[] at (4.00,4.00) {$u_{22}$};
\node[] at (5.00,5.00) {$w_{22}$};
\node[] at (4.00,6.00) {$u_{32}$};
\node[] at (5.00,7.00) {$w_{32}$};
\node[] at (6.00,2.00) {$u_{13}$};
\node[] at (7.00,3.00) {$w_{13}$};
\node[] at (6.00,4.00) {$u_{23}$};
\node[] at (7.00,5.00) {$w_{23}$};
\node[] at (6.00,6.00) {$u_{33}$};
\node[] at (7.00,7.00) {$w_{33}$};
\node[] at (8.50,0.5) {$\cdots$};
\node[] at (0.5,8.50) {$\vdots$};
\node[] at (8.00,2.00) {$\cdots$};
\node[] at (9.00,3.00) {$\cdots$};
\node[] at (8.00,4.00) {$\cdots$};
\node[] at (9.00,5.00) {$\cdots$};
\node[] at (8.00,6.00) {$\cdots$};
\node[] at (9.00,7.00) {$\cdots$};
\node[] at (2.00,8.00) {$\vdots$};
\node[] at (3.00,9.00) {$\vdots$};
\node[] at (4.00,8.00) {$\vdots$};
\node[] at (5.00,9.00) {$\vdots$};
\node[] at (6.00,8.00) {$\vdots$};
\node[] at (7.00,9.00) {$\vdots$};
\node[] at (8.00,8.00) {$\ddots$};
\node[] at (9.00,9.00) {$\ddots$};
\node[] at (10.50,0.5) {$u_{0n}$};
\node[] at (0.5,10.50) {$u_{n0}$};
\node[] at (10.00,2.00) {$u_{1n}$};
\node[] at (11.00,3.00) {$w_{1n}$};
\node[] at (10.00,4.00) {$u_{2n}$};
\node[] at (11.00,5.00) {$w_{2n}$};
\node[] at (10.00,6.00) {$u_{3n}$};
\node[] at (11.00,7.00) {$w_{3n}$};
\node[] at (2.00,10.00) {$u_{n1}$};
\node[] at (3.00,11.00) {$w_{n1}$};
\node[] at (4.00,10.00) {$u_{n2}$};
\node[] at (5.00,11.00) {$w_{n2}$};
\node[] at (6.00,10.00) {$u_{n3}$};
\node[] at (7.00,11.00) {$w_{n3}$};
\node[] at (8.00,10.00) {$\cdots$};
\node[] at (9.00,11.00) {$\cdots$};
\node[] at (10.00,8.00) {$\vdots$};
\node[] at (11.00,9.00) {$\vdots$};
\node[] at (10.00,10.00) {$u_{nn}$};
\node[] at (11.00,11.00) {$w_{nn}$};
\arrow{0}{0}{0.5}{2.5};
\arrow{0}{0}{2.5}{0.5};
\arrow{0.5}{2.5}{1}{1};
\arrow{2.5}{0.5}{1}{1};
\arrow{4.50}{0.5}{2.50}{0.5};
\arrow{6.50}{0.5}{4.50}{0.5};
\arrow{8.50}{0.5}{6.50}{0.5};
\arrow{10.50}{0.5}{8.50}{0.5};
\arrow{0.5}{4.50}{0.5}{2.50};
\arrow{0.5}{6.50}{0.5}{4.50};
\arrow{0.5}{8.50}{0.5}{6.50};
\arrow{0.5}{10.50}{0.5}{8.50};
\arrow{5.00}{3.00}{3.00}{3.00};
\arrow{7.00}{3.00}{5.00}{3.00};
\arrow{9.00}{3.00}{7.00}{3.00};
\arrow{11.00}{3.00}{9.00}{3.00};
\arrow{4.00}{2.00}{2.00}{2.00};
\arrow{6.00}{2.00}{4.00}{2.00};
\arrow{8.00}{2.00}{6.00}{2.00};
\arrow{10.00}{2.00}{8.00}{2.00};
\arrow{2.00}{2.00}{0.50}{2.50};
\arrow{0.50}{2.50}{3.00}{3.00};
\arrow{7.00}{5.00}{5.00}{5.00};
\arrow{9.00}{5.00}{7.00}{5.00};
\arrow{11.00}{5.00}{9.00}{5.00};
\arrow{3.00}{5.00}{5.00}{5.00};
\arrow{4.00}{4.00}{2.00}{4.00};
\arrow{6.00}{4.00}{4.00}{4.00};
\arrow{8.00}{4.00}{6.00}{4.00};
\arrow{10.00}{4.00}{8.00}{4.00};
\arrow{2.00}{4.00}{0.50}{4.50};
\arrow{0.50}{4.50}{3.00}{5.00};
\arrow{9.00}{7.00}{7.00}{7.00};
\arrow{11.00}{7.00}{9.00}{7.00};
\arrow{3.00}{7.00}{5.00}{7.00};
\arrow{5.00}{7.00}{7.00}{7.00};
\arrow{4.00}{6.00}{2.00}{6.00};
\arrow{6.00}{6.00}{4.00}{6.00};
\arrow{8.00}{6.00}{6.00}{6.00};
\arrow{10.00}{6.00}{8.00}{6.00};
\arrow{2.00}{6.00}{0.50}{6.50};
\arrow{0.50}{6.50}{3.00}{7.00};
\arrow{3.00}{5.00}{3.00}{3.00};
\arrow{3.00}{7.00}{3.00}{5.00};
\arrow{3.00}{9.00}{3.00}{7.00};
\arrow{3.00}{11.00}{3.00}{9.00};
\arrow{2.00}{4.00}{2.00}{2.00};
\arrow{2.00}{6.00}{2.00}{4.00};
\arrow{2.00}{8.00}{2.00}{6.00};
\arrow{2.00}{10.00}{2.00}{8.00};
\arrow{2.00}{2.00}{2.50}{0.50};
\arrow{2.50}{0.50}{3.00}{3.00};
\arrow{5.00}{7.00}{5.00}{5.00};
\arrow{5.00}{9.00}{5.00}{7.00};
\arrow{5.00}{11.00}{5.00}{9.00};
\arrow{5.00}{3.00}{5.00}{5.00};
\arrow{4.00}{4.00}{4.00}{2.00};
\arrow{4.00}{6.00}{4.00}{4.00};
\arrow{4.00}{8.00}{4.00}{6.00};
\arrow{4.00}{10.00}{4.00}{8.00};
\arrow{4.00}{2.00}{4.50}{0.50};
\arrow{4.50}{0.50}{5.00}{3.00};
\arrow{7.00}{9.00}{7.00}{7.00};
\arrow{7.00}{11.00}{7.00}{9.00};
\arrow{7.00}{3.00}{7.00}{5.00};
\arrow{7.00}{5.00}{7.00}{7.00};
\arrow{6.00}{4.00}{6.00}{2.00};
\arrow{6.00}{6.00}{6.00}{4.00};
\arrow{6.00}{8.00}{6.00}{6.00};
\arrow{6.00}{10.00}{6.00}{8.00};
\arrow{6.00}{2.00}{6.50}{0.50};
\arrow{6.50}{0.50}{7.00}{3.00};
\arrow{10.00}{4.00}{10.00}{2.00};
\arrow{11.00}{3.00}{11.00}{5.00};
\arrow{10.00}{6.00}{10.00}{4.00};
\arrow{11.00}{5.00}{11.00}{7.00};
\arrow{10.00}{8.00}{10.00}{6.00};
\arrow{11.00}{7.00}{11.00}{9.00};
\arrow{10.00}{10.00}{10.00}{8.00};
\arrow{11.00}{9.00}{11.00}{11.00};
\arrow{2.00}{10.00}{0.50}{10.50};
\arrow{0.50}{10.50}{3.00}{11.00};
\arrow{4.00}{10.00}{2.00}{10.00};
\arrow{3.00}{11.00}{5.00}{11.00};
\arrow{6.00}{10.00}{4.00}{10.00};
\arrow{5.00}{11.00}{7.00}{11.00};
\arrow{8.00}{10.00}{6.00}{10.00};
\arrow{7.00}{11.00}{9.00}{11.00};
\arrow{10.00}{10.00}{8.00}{10.00};
\arrow{9.00}{11.00}{11.00}{11.00};
\arrow{10.00}{2.00}{10.50}{0.50};
\arrow{10.50}{0.50}{11.00}{3.00};
\end{tikzpicture}
    \caption{Bruhat graph of the penultimate two-sided cell in type $B_{n+1}$.}
    \label{fig:Bruhat_J_B}
\end{figure}

\subsubsection{Some Kazhdan-Lusztig computation in type B}\label{ssKLB}

Note that $u_{nn} \in {}^n\hc^n$ is the maximal element in the parabolic subgroup generated by $I=\{0,1,\cdots,n-1\}\subset S$. From this, we have 
$$\mathbf{a}(\mathcal{J}) = \mathbf{a}(u_{nn})=\ell(u_{nn})=\ell(w_0)-2n-1 = n^2.$$

\begin{proposition}\label{outerB}
Let $y\in \!^i\hc^j\subset\jc$. If $i=n$ or $j=n$, then $p_{e,y}= v^{\ell(y)}$.
\end{proposition}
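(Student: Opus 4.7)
The plan is to prove the row statement ($i=n$) by induction, and then conclude the column statement ($j=n$) by the left--right symmetry of the setup (exchanging reduced expressions by their reverses, equivalently, using $p_{e,y}=p_{e,y\inv}$). So I focus on elements $y\in{}^n\hc^j$.

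First, I would handle a \emph{base case}. The natural candidate is $y=u_{nn}$: this is the longest element of the standard parabolic $W_I\subset W$ with $I=S\setminus\{n\}$ (which is of type $B_n$), and its length equals $\mathbf{a}(\jc)=n^2$. For such a parabolic longest element, the inclusion $H(W_I)\hookrightarrow H(W)$ sends the KL basis element $\kl^I_{w_0^I}$ to $\kl_{w_0^I}$, and in $H(W_I)$ one has the explicit expansion $\kl^I_{w_0^I}=\sum_{x\in W_I} v^{\ell(w_0^I)-\ell(x)}H_x$. Reading off the $H_e$-coefficient yields $p_{e,u_{nn}}=v^{n^2}=v^{\ell(u_{nn})}$, which gives the base case.

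The \emph{inductive step} is to propagate along the right cell $\!^n\rc$ (the $i=n$ row in Figure~\ref{fig:Bruhat_J_B}) using Lemma~\ref{prop2p}. For $y\in{}^n\hc^j$ with $j\neq n$, the element $n$ is the unique left ascent of $y$ and $i=n\neq j$, so \eqref{2p} collapses to
\[(v+v\inv)\,p_{e,y}\;=\;p_{e,ny}\;+\;\sum_{\substack{u\in S\\ uy\sim_{\mathtt L}y\\ uy<y}}p_{e,uy}.\]
Using the explicit multiplication formulas \eqref{align:wu} together with the centrality of $w_0$, one identifies $ny$ and each $uy$ as specific listed elements of $\jc$ (e.g.\ $n\cdot u_{nj}=u_{(n-1)j}$ and $n\cdot w_{nj}=w_{(n-1)j}$), and one reads off their Bruhat neighbors in the left cell $\lc^j$ directly from Figure~\ref{fig:Bruhat_J_B}. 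The inductive hypothesis then substitutes monomials $v^{\ell(\cdot)}$ for each $p_{e,\cdot}$ on the right-hand side, and a direct count of lengths (using $\ell(ny)=\ell(y)+1$ and $\ell(uy)=\ell(y)-1$ for each surviving $u$, with the combinatorics arranged so that all the $v^{\ell(y)-1}$ contributions collapse to a single $v^{\ell(y)-1}$) would produce exactly $v^{\ell(y)+1}+v^{\ell(y)-1}=(v+v\inv)v^{\ell(y)}$, giving the claim.

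The \emph{main obstacle} is that the element $ny$ produced by the recursion generically lies in row $i-1$ rather than row $n$, so the inductive hypothesis of the proposition alone does not cover $p_{e,ny}$. I see two reasonable ways to get around this. The first is to strengthen the induction: prove simultaneously that $p_{e,y'}=v^{\ell(y')}$ for all $y'$ of the form $z\cdot w_0$ with $z\in\jc_1$ of a restricted shape (essentially, for all $y'$ obtained from $w_0$ by multiplication with an initial segment of some $t_{ij}$ or $s_{ij}$), and then notice that the recursion from Lemma~\ref{prop2p} stays within this larger class. The second is to invoke Kazhdan-Lusztig inversion (as already used in Proposition~\ref{muJ}), transporting the statement $p_{e,y}=v^{\ell(y)}$ for $y=zw_0\in\jc$ to a statement about $p_{w_0z\inv w_0,w_0}=p_{z\inv,w_0}=v^{\ell(w_0)-\ell(z)}$, which is the known formula $p_{x,w_0}=v^{\ell(w_0)-\ell(x)}$ recalled right after \eqref{sy}. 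Either route should conclude the proof; the second is shorter once the inversion bookkeeping is set up, while the first is more self-contained and fits the paper's combinatorial style.
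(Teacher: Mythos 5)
Your overall architecture (base case at the parabolic longest element $u_{nn}$, a recursion coming from Lemma~\ref{prop2p}, and the symmetry $p_{e,w}=p_{e,w^{-1}}$ to transfer between the row $i=n$ and the column $j=n$) is the same as the paper's, and your base case is fine. The gap is in the inductive step, and it is exactly the obstacle you flag, but neither of your proposed repairs closes it. Lemma~\ref{prop2p} comes from left multiplication $\kl_s\kl_y$, so every term on its right-hand side ($sy$ as well as each $uy$ with $uy\sim_{\mathtt L}y$) lies in the \emph{left} cell of $y$, i.e.\ in the same column of Figure~\ref{fig:Bruhat_J_B} — not in the same row. Running the recursion along the row ${}^n\rc$ therefore immediately requires $p_{e,u_{n-1,j}}$ for $j<n$; but these polynomials are not monomials (Proposition~\ref{Bbut1} gives $v^{\ell}+v^{\ell-2}$ already for $d(y)=1$) and, worse, they are only computed later \emph{using} Proposition~\ref{outerB} as the base of that induction, so your argument is circular. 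Your repair (a) fails for the same reason: any class closed under this recursion and containing row $n$ must contain $u_{n-1,j}$, on which the "all monomials" hypothesis is false. Your repair (b) rests on a false identity: Kazhdan--Lusztig inversion is an alternating-sum matrix identity, not a term-by-term equality $p_{e,y}=p_{w_0z^{-1}w_0,w_0}$; if that equality held it would give $p_{e,y}=v^{\ell(y)}$ for \emph{every} $y\in\jc$, contradicting Propositions~\ref{Bbut1} and~\ref{specialB}. Only the $\mu$-function (the first coefficient) transports under inversion/Koszul duality, as in Proposition~\ref{muJ}.

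The paper's proof avoids all of this by running the induction in the direction the recursion actually moves: down the single chain
\[
u_{nn}\to u_{n-1,n}\to\cdots\to u_{0n}\to w_{1n}\to\cdots\to w_{nn}
\]
inside the left cell $\lc^n$ (the column $j=n$), where every element is covered by the statement being proved, so the induction is self-contained; the row case then follows from $(\lc^n)^{-1}={}^n\rc$ and $p_{e,w}=p_{e,w^{-1}}$. Note also that the relation $(v+v^{-1})p_{e,b}=p_{e,a}+p_{e,c}$ is a three-term recursion, so one initial value is not enough: the paper supplies the second one, $p_{e,u_{n-1,n}}=v^{n^2+1}$, from $\ell(u_{n-1,n})=n^2+1$, the lower bound $\mathbf{a}(\jc)=n^2$ in \eqref{pdegrees}, and the parity of the exponents. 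Your proposal is missing this second base case as well. With these two changes — induct along the column, not the row, and add the second initial value — your plan becomes the paper's proof.
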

\begin{proof}
The equality $p_{e,u_{nn}} = v^{n^2}$ follows from the fact that $u_{nn}$ is the maximal element in a parabolic subgroup. Moreover, we have $p_{e,u_{n-1,n}} = v^{n^2+1}$, because of the formulae
$\ell(u_{n-1,n})=n^2+1$ and $\mathbf{a}(u_{n-1,n})=n^2$, Equation~\eqref{pdegrees} and the parity condition.
From Lemma \ref{prop2p}, we get a recursion $(v+v^{-1})p_{e,b} = p_{e,a} + p_{e,c}$, for any consecutive $a \to b \to c$ in the chain  \[ u_{nn} \to u_{n-1,n} \to \ldots \to u_{0n} \to w_{1n} \to w_{2n} \to \ldots \to w_{nn} \]
(see Figure \ref{fig:Bruhat_J_B}).

By a two-step induction, we get the claim of the proposition in the case $j=n$. In the case $i=n$, the claim follows by recalling that $\left( \mathcal{L}^n \right)^{-1} = {}^n\mathcal{R}$, and that $p_{e,w} = p_{e,w^{-1}}$.
\end{proof}

%Therefore, we are in a situation where \eqref{2p} determines $p_{e,y}$ inductively. 

\begin{proposition}\label{Bbut1}
Let $y\in \!^i\hc^j\subseteq \jc$ be such that $(i,j)\neq (0,0)$. 
Then we have 
\begin{equation}\label{want}
    p_{e,y}=v^{\ell(y)}+v^{\ell(y)-2}+\cdots + v^{\ell(y)-2d(y)},
\quad\text{ where }\quad    
d(y):=\operatorname{min}(n-i,n-j).
\end{equation}
\end{proposition}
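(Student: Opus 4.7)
My approach is induction on $d(y) = \min(n-i, n-j)$, with the base case $d(y) = 0$ handled by Proposition~\ref{outerB} (giving $p_{e,y} = v^{\ell(y)}$, which matches the claimed single-term formula).

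For the inductive step at $y \in {}^i\hc^j$ with $d(y) \geq 1$ and $(i,j) \neq (0,0)$, the main tool is Lemma~\ref{prop2p} applied with $s = i$, the unique left ascent of $y$, together with the symmetric right-ascent version obtained via $p_{e,y} = p_{e,y^{-1}}$. When $y$ is off-diagonal ($i\neq j$), the lemma specializes to
\[(v+v^{-1})\,p_{e,y} = p_{e,\,iy} + \sum_{u\in S,\,uy<y,\,uy\sim_L y} p_{e,\,uy}.\]
The geometric input is Figure~\ref{fig:Bruhat_J_B} together with the presentation \eqref{align:wu}: the Bruhat neighbors $iy$ and the surviving $uy$ are identified as explicit elements $u_{i'j'}$ or $w_{i'j'}$. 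A key check is that candidate summands $uy$ which acquire a second left ascent (and so escape $\jc$) are automatically removed by the condition $uy\sim_L y$, leaving only a handful of surviving terms whose polynomials are known by the base case or the inductive hypothesis. Substituting the expected formula into the right-hand side and applying the telescoping identity
\[(v+v^{-1})\sum_{k=0}^{d}v^{\ell-2k} = v^{\ell+1} + 2\sum_{k=1}^{d}v^{\ell-2k+1} + v^{\ell-2d-1}\]
lets one check directly that $p_{e,y} = v^{\ell(y)}+v^{\ell(y)-2}+\cdots+v^{\ell(y)-2d(y)}$ solves the recursion. One subtlety is that, within a fixed $d$-layer, the recursions may relate cells of the same $d$-value (for instance, in $B_3$ the off-diagonal recursion for $u_{01}$ ties it to the diagonal $u_{11}$ and $w_{11}$, all of which have $d=1$), so the induction does not close one cell at a time but rather requires solving a small linear system across each layer with boundary data supplied by the preceding layer.

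The main obstacle is the diagonal case $y \in {}^i\hc^i$ with $0 < i < n$. There, Lemma~\ref{prop2p} no longer simplifies to $(v+v^{-1})\,p_{e,y}$ on the left, and combining it with Lemma~\ref{lem:KL_sz_y} to expand $p_{s,y}$ turns the resulting unique-ascent recursion into a tautology. The diagonal polynomials $p_{e,u_{ii}}$ and $p_{e,w_{ii}}$ must therefore be extracted indirectly from the off-diagonal recursions on the neighboring cells ${}^{i\mp 1}\hc^i$ and ${}^i\hc^{i\mp 1}$, in which $u_{ii}$ and $w_{ii}$ appear on the right-hand side; these yield a single linear relation between $p_{e,u_{ii}}$ and $p_{e,w_{ii}}$. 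To pin the two polynomials down individually, I would exploit that $u_{ii}$ is the Duflo involution of the left cell $\mathcal{L}^i$ (it is the unique involution in that cell of minimal length), which by \eqref{pdegrees} forces the coefficient of $v^{\mathbf{a}(\jc)}=v^{n^2}$ in $p_{e,u_{ii}}$ to be nonzero; combined with the parity constraint, and with an upper bound on the total multiplicity $p_{e,u_{ii}}(1)+p_{e,w_{ii}}(1)$ obtained from Lemma~\ref{newbgchain} applied to a sufficiently long chain in $\BG{i}{i}$, this forces all coefficients in both polynomials to be exactly $0$ or $1$ in the claimed pattern.
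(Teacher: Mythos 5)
Your high-level strategy coincides with the paper's: induct on $d(y)$, start from Proposition~\ref{outerB}, and run the two-term recursion of Lemma~\ref{prop2p}. The difference — and the source of a genuine gap — is \emph{where} you apply the recursion. The paper normalizes to $i=n-d\geq j$ (using $p_{e,y}=p_{e,y^{-1}}$ for the other case) and applies Lemma~\ref{prop2p} not at $y$ but at $y':=(i+1)\cdot y\in{}^{i+1}\hc^j$, which lies in layer $d-1$ and is automatically off-diagonal since $i+1>i\geq j$. The unknown $p_{e,y}$ then appears as the unique ``up'' term $p_{e,(i+1)y'}$, while the other terms, $p_{e,y'}$ and the down-term $p_{e,(i+2)y'}$, sit in layers $d-1$ and $d-2$ and are known by induction; one just solves. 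This works verbatim for the diagonal cells ${}^i\hc^i$ with $i\geq 1$: $p_{e,u_{ii}}$ and $p_{e,w_{ii}}$ are determined by \emph{two separate} equations, namely the recursions at $u_{i+1,i}$ and at $w_{i+1,i}$, because left multiplication by the ascent never mixes the $u$- and $w$-chains inside a column except at the turning point $u_{0j}\leftrightarrow w_{1j}$ — which is exactly why only the excluded cell ${}^0\hc^0$ is genuinely hard and gets the separate treatment of Proposition~\ref{specialB}.

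By instead applying the recursion at $y$ itself, you make the ``up'' term land in layer $d(y)+1$ (or the same layer), which is what forces your ``linear system across each layer'' and leaves the diagonal cells undetermined, since the recursion at a diagonal element is indeed a tautology. Your proposed rescue of the diagonal case does not work as stated. First, the neighboring off-diagonal recursions give two independent equations, each with a single diagonal unknown, not ``a single linear relation between $p_{e,u_{ii}}$ and $p_{e,w_{ii}}$'', so the extra machinery is unnecessary. Second, even if only the sum were available, Lemma~\ref{newbgchain} cannot produce an \emph{upper} bound on $p_{e,u_{ii}}(1)+p_{e,w_{ii}}(1)$: its hypothesis presupposes that number, and its proof yields (chain length) $\leq$ (total multiplicity), i.e.\ a lower bound — invoking it here is circular. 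Finally, checking that the claimed formulas \emph{satisfy} the recursions via your telescoping identity does not show they are the actual Kazhdan--Lusztig polynomials unless you also prove that the system of recursions together with the $d=0$ boundary data has a unique solution; the paper's arrangement makes uniqueness manifest because each equation it uses has exactly one unknown.
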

\begin{proof}
The case $d(y)=0$ is considered in Proposition \ref{outerB}, which gives us the basis of the induction.

Take $y \in {}^i\mathcal{H}^j$ with $d:=d(y) \in \{1,\ldots,n-1\}$, and assume that the assertion of the proposition is true for all $y' \in \mathcal{J}$, for which $d(y')<d$. Assume that $i=n-d$ and $j \leq n-d$ (the opposite case will follow by symmetry). Note that $y':= (i+1) \cdot y \in {}^{i+1}\mathcal{H}^j$ (see Equation (\ref{align:wu}) and Figure \ref{fig:Bruhat_J_B}). Applying Lemma \ref{prop2p} to $y'$ (observe that $i+1 \neq j$), we get
\[ (v+v^{-1} ) \cdot p_{e,y'} = \begin{cases} p_{e,y}, & d=2; \\ p_{e,y} + p_{e,y''},  & d \geq 2; \end{cases} \]
for $y'':=(i+2) \cdot y' \in {}^{i+2}\mathcal{H}^j$.
Applying the inductive assumption to $p_{e,y'}$ and $p_{e,y''}$, we get that $p_{e,y}$ has the desired form.
%
\begin{comment} % Older version of the proof
The proof of induction steps within the $(n,n)$-minor consisting of $\hc$ with 2 elements is identical to that in type A (see \cite[Proposition 11]{kmm2}).
We are left with the cases $s=0$ or $t=0$.
We consider the case $t=0$, that is, $y\in \lc^0$. The other case follows by symmetry.
Let $s>0$. 
We consider the neighboring elements $y0,y1\in \!^s\hc^{1}$.
We have $d(y0)=d(y1)=d(y)=n-s$ and the claim
\eqref{want} is established for both $y0$ and $y1$ above.
Considering Lemma \ref{prop2p} and Proposition \ref{muJ}), we get 
\[(v+v\inv)p_{e,y}=p_{e,y1}+p_{e,y0}.\]
The claim follows.
\end{comment}
\end{proof}

We now consider the remaining cases, namely the two-sided cell ${}^0\mathcal{H}^0$, which has the following two elements: $w_{00}=0 \, w_0$ and $u_{00}=010 \, w_0$
(note that here the symbols $0$ and $1$ denote the simple reflections corresponding to the respective verticies of the Dynkin diagram).

\begin{proposition}\label{specialB}
We have 
\begin{equation}\label{sumB}
p_{e,w_{00}}+p_{e,u_{00}}=v^{\ell(w_0)-1}+v^{\ell(w_0)-3}+ \cdots +v^{\ell(w_0)-2n-1}.    
\end{equation}
More specifically, if $n+1$ is odd, then $w_{00}$ is the Duflo element in
${}^0\hc^0$ and we have \begin{equation}
\label{eq:odd}
    \begin{split}
        p_{e,w_{00}}=v^{\ell(w_0)-1}+v^{\ell(w_0)-5}+\cdots +v^{\ell(w_0)-2n-1}\\
        p_{e,u_{00}}=v^{\ell(w_0)-3}+v^{\ell(w_0)-7}+\cdots +v^{\ell(w_0)-2n+1};
    \end{split}
\end{equation}
%n=2m+1 then first has m+1 terms second m terms
if $n+1$ is even, then $u_{00}$ is the Duflo element in $\!^0\hc^0$ and we have 
\begin{equation}
\label{eq:even}
    \begin{split}
        p_{e,w_{00}}=v^{\ell(w_0)-1}+v^{\ell(w_0)-5}+\cdots +v^{\ell(w_0)-2n+1}\\
        p_{e,u_{00}}=v^{\ell(w_0)-3}+v^{\ell(w_0)-7}+\cdots +v^{\ell(w_0)-2n-1}.
    \end{split}
\end{equation}%if n= 2m then both have m terms
\end{proposition}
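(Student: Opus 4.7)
The proof splits naturally into two phases: first derive the sum formula \eqref{sumB}, then split the sum into the individual polynomials.

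For Phase 1, I would apply Lemma~\ref{prop2p} to $y = u_{10} \in {}^1\hc^0$ with $s = 1$, which is the unique left ascent of $u_{10}$. Since $i=1 \neq 0 = j$, the ``moreover'' clause of Lemma~\ref{prop2p} gives
\[
(v + v^{-1})\, p_{e, u_{10}} \;=\; p_{e, w_{00}} \;+\; \sum_{\substack{u \in S \\ u u_{10} \sim_\mathtt{L} u_{10} \\ u u_{10} < u_{10}}} p_{e, u u_{10}}.
\]
A case analysis of $u \in LD(u_{10}) = S \setminus \{1\}$ identifies the contributions: $u = 0$ yields $u_{00} \in {}^0\hc^0$; $u = 2$ (when $n \geq 2$) yields $u_{20} \in {}^2\hc^0$; and for $u \geq 3$ the reflection $u$ commutes past both $0$ and $1$ in $u_{10} = 10\, w_0$, so $u u_{10} = 10 u\, w_0 = u_{10}\cdot u$ has two right ascents $\{0, u\}$ and lies outside $\jc$. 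Substituting the explicit formulas for $p_{e, u_{10}}$ and $p_{e, u_{20}}$ from Proposition~\ref{Bbut1} and collecting terms yields the sum formula \eqref{sumB}.

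For Phase 2, the individual polynomials are constrained by: top degree equal to $\ell(w_0)-1$ for $p_{e, w_{00}}$ and $\ell(w_0)-3$ for $p_{e, u_{00}}$; bottom degree at least $\mathbf{a}(\jc) = n^2 = \ell(w_0) - 2n - 1$ by \eqref{pdegrees}, with equality iff the element is a Duflo involution; and all exponents sharing the same parity. Both $w_{00}$ and $u_{00}$ are involutions (a direct check using centrality of $w_0$), and ${}^0\hc^0$ contains exactly one Duflo. Combined with the sum formula, these constraints reduce the task to (i) identifying which of $w_{00}, u_{00}$ is the Duflo, and (ii) showing that each individual polynomial has monomials spaced in gaps of $4$.

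The main obstacle is step (i), establishing the parity-dependent Duflo identification. My plan is induction on $n$, with base case $n=1$ (type $B_2$), where all KL polynomials are monomials and one checks directly that $p_{e, w_{00}} = v^3$, $p_{e, u_{00}} = v$, matching \eqref{eq:even} with $u_{00}$ Duflo (consistent with $n+1 = 2$ even). For the inductive step, I would complement the sum formula by applying Lemma~\ref{prop2p} separately to the diagonal elements $w_{00}$ and $u_{00}$; the ``moreover'' no longer applies, but the main equation still yields
\[
v\, p_{e, u_{00}} + p_{0, u_{00}} = p_{e, u_{10}},
\qquad
v\, p_{e, w_{00}} + p_{0, w_{00}} = v^{\ell(w_0)} + p_{e, u_{10}},
\]
after verifying that the sum for $w_{00}$ contributes only $u = 1$ (giving $u_{10}$) and that for $u_{00}$ is empty. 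Eliminating the auxiliary coefficients $p_{0,\cdot}$ via Lemma~\ref{lem:KL_sz_y} and combining with the sum formula together with the top/bottom degree and parity constraints should uniquely pin down both polynomials, producing \eqref{eq:odd} when $n+1$ is odd and \eqref{eq:even} when $n+1$ is even.
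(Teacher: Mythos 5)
Your Phase 1 coincides with the paper's argument: both apply Lemma~\ref{prop2p} at $u_{10}$, identify the surviving contributions $u_{00}$ and $u_{20}$ (discarding $u\geq 3$ because the resulting elements leave $\jc$), and substitute Proposition~\ref{Bbut1} to obtain \eqref{sumB}. Your two auxiliary identities $v\,p_{e,w_{00}}+p_{0,w_{00}}=v^{\ell(w_0)}+p_{e,u_{10}}$ and $v\,p_{e,u_{00}}+p_{0,u_{00}}=p_{e,u_{10}}$ are also exactly the paper's equations \eqref{al:pw} and \eqref{al:pu}. The gap is in how you finish. First, ``eliminating the auxiliary coefficients $p_{0,\cdot}$ via Lemma~\ref{lem:KL_sz_y}'' is circular: that lemma specialized at $z=e$, $s=0$ is precisely Lemma~\ref{prop2p} again, so it only reproduces the same two identities and gives no independent information about $p_{0,w_{00}}$ or $p_{0,u_{00}}$. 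Second, the constraints you list (top degrees, the lower bound $\mathbf{a}(\jc)=n^2$ from \eqref{pdegrees}, parity, the sum formula, and uniqueness of the Duflo element in ${}^0\hc^0$) do \emph{not} pin down the splitting: already for $n=4$ the partition of $v^{24}+v^{22}+v^{20}+v^{18}+v^{16}$ into the two polynomials admits several solutions satisfying all of them, e.g.\ $p_{e,w_{00}}=v^{24}+v^{20}+v^{18}$ and $p_{e,u_{00}}=v^{22}+v^{16}$ is not excluded, and it even gets the Duflo element wrong. Third, ``induction on $n$'' has no available inductive step: $u_{00}$ and $w_{00}$ involve the longest element of the full group, so they lie in no proper parabolic subgroup, and none of the recursions at hand relate the type $B_{n+1}$ polynomials to the type $B_n$ ones.

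The missing ingredient is a positivity input. The paper observes that $p_{e,x}-v\,p_{s,x}$ has non-negative coefficients because $\Delta_s\langle-1\rangle\subseteq\Delta_e$, so in particular $p_{s,x}(1)\leq p_{e,x}(1)$ (this is \eqref{eq:p_s<p_e}). Evaluating the two auxiliary identities at $v=1$ and comparing with \eqref{sumB} forces $p_{e,w_{00}}(1)$ and $p_{e,u_{00}}(1)$ to take their extreme values, which in turn forces the nonnegative polynomial $p_{e,u_{00}}-v\,p_{0,u_{00}}$ to vanish at $v=1$ and hence to be zero (for $n$ even; the odd case is symmetric). Substituting $p_{e,u_{00}}=v\,p_{0,u_{00}}$ back turns the second identity into $(v+v^{-1})\,p_{e,u_{00}}=p_{e,u_{10}}$, and this is what produces the spacing by $4$ and, together with \eqref{sumB}, the formulas \eqref{eq:odd}--\eqref{eq:even} including the parity-dependent Duflo identification. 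Without an argument of this kind your steps (i) and (ii) remain unproved.
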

\begin{proof}
Applying Lemma \ref{prop2p} to $y=u_{10}$ and $sy=w_{00}$ (see also Equation (\ref{align:wu}) and Figure \ref{fig:Bruhat_J_B}), we get the equation
\[ (v+v^{-1}) \cdot p_{e,u_{10}} = p_{e,w_{00}} + p_{e,u_{00}} +p_{e,u_{20}} . \]
Plugging in $p_{e,u_{10}}$ and $p_{e,u_{20}}$ from Proposition \ref{Bbut1}, we obtain \eqref{sumB}.

Note that $x\in\jc$ is a Duflo element if and only if $p_{e,x}$ contains the term $v^{\ell(w_0)-2n-1}$, so it remains to prove Formulae~(\ref{eq:odd}) and (\ref{eq:even}).

Applying Lemma \ref{prop2p} to $y=w_{00}$, $s=0$, and, respectively, to $y=u_{00}$, $s=0$, we get:
\begin{align}
\label{al:pw}   & v \cdot p_{e,w_{00}} + p_{0,w_{00}} = v^{\ell(w_0)} + v^{\ell(w_0)-2} + v^{\ell(w_0)-4} + \cdots + v^{\ell(w_0)-2n} ,   \\
\label{al:pu}   & v \cdot p_{e,u_{00}} + p_{0,u_{00}} = \phantom{v^{\ell(w_0)} +} \ \ v^{\ell(w_0)-2} + v^{\ell(w_0)-4} + \cdots + v^{\ell(w_0)-2n} .
\end{align}
From (\ref{eq:p_Delta}) and the fact that $\Delta_s\langle -1 \rangle\subset \Delta_e$, it follows that, for $s \in S$, $x \in W$, the polynomial $p_{e,x} - v\cdot p_{s,x}$ has non-negative coefficients. In particular, we have
\begin{equation}
\label{eq:p_s<p_e}
p_{s,x}(1) \leq  p_{e,x}(1).
\end{equation}
%
%NOT NEEDED Moreover, from the multiplicity freeness in Proposition \ref{Bbut1} and (\ref{sumB}), it follows that all the coefficients in $p_{e,x} - v\cdot p_{s,x}$ are equal to $1$.

Assume that $n=2k$, the other case being similar and therefore omitted. By evaluating $v=1$ in (\ref{al:pw}) and (\ref{al:pu}), and using (\ref{eq:p_s<p_e}), it follows that $p_{e,w_{00}}(1) \geq k+1$, and that $p_{e,u_{00}}(1) \geq k$. Combining this with (\ref{sumB}), we see that $p_{e,w_{00}}(1)=k+1$, and that $p_{e,u_{00}}(1)=k$. Going back to (\ref{al:pu}), we see that $p_{0,u_{00}}(1)=k$. So the polynomial $p_{e,u_{00}} - v\cdot p_{0,u_{00}}$, with non-negative coefficients, vanishes at $v=1$. It follows that $p_{e,u_{00}} = v\cdot p_{0,u_{00}}$. This makes (\ref{al:pu}) into
\[ (v + v^{-1})\cdot p_{e,u_{00}} = v^{\ell(w_0)-2} + v^{\ell(w_0)-4} + \cdots + v^{\ell(w_0)-4k},   \]
which, in turn, gives (\ref{eq:odd}).
%
%\hk{Reminder: There is an argument using twisting functors.}
%
\end{proof}

We visualize the results in Figure \ref{fig:notoctahedron} by depicting each simple subquotient $L_{w}\langle - k\rangle$ of $\Delta_e$, $w \in {}^i\mathcal{H}^j$, as $(i,j,k) \in \mathbb{Z}^3$. If $|{}^i\mathcal{H}^j|=1$ or if $w$ is the shorter one of the two elements there, we denote this point by $\begin{tikzpicture}
\draw[fill=white, line width=0.2mm] (0,0,0) circle (2pt);
\end{tikzpicture}$; otherwise we denote it by $\begin{tikzpicture}
\draw (1,1,0) node[cross=2pt, thick]{};
\end{tikzpicture}$. Note that both $\begin{tikzpicture}
\draw[fill=white, line width=0.2mm] (0,0,0) circle (2pt);
\end{tikzpicture}$ and $\begin{tikzpicture}
\draw (1,1,0) node[cross=2pt, thick]{};
\end{tikzpicture}$ can appear at the same coordinate. The bottom of the picture consists of the points that come from the elements $w_{ii} = i \cdot w_0$, for $i \in S$, and the top of the picture comes from the Duflo elements.
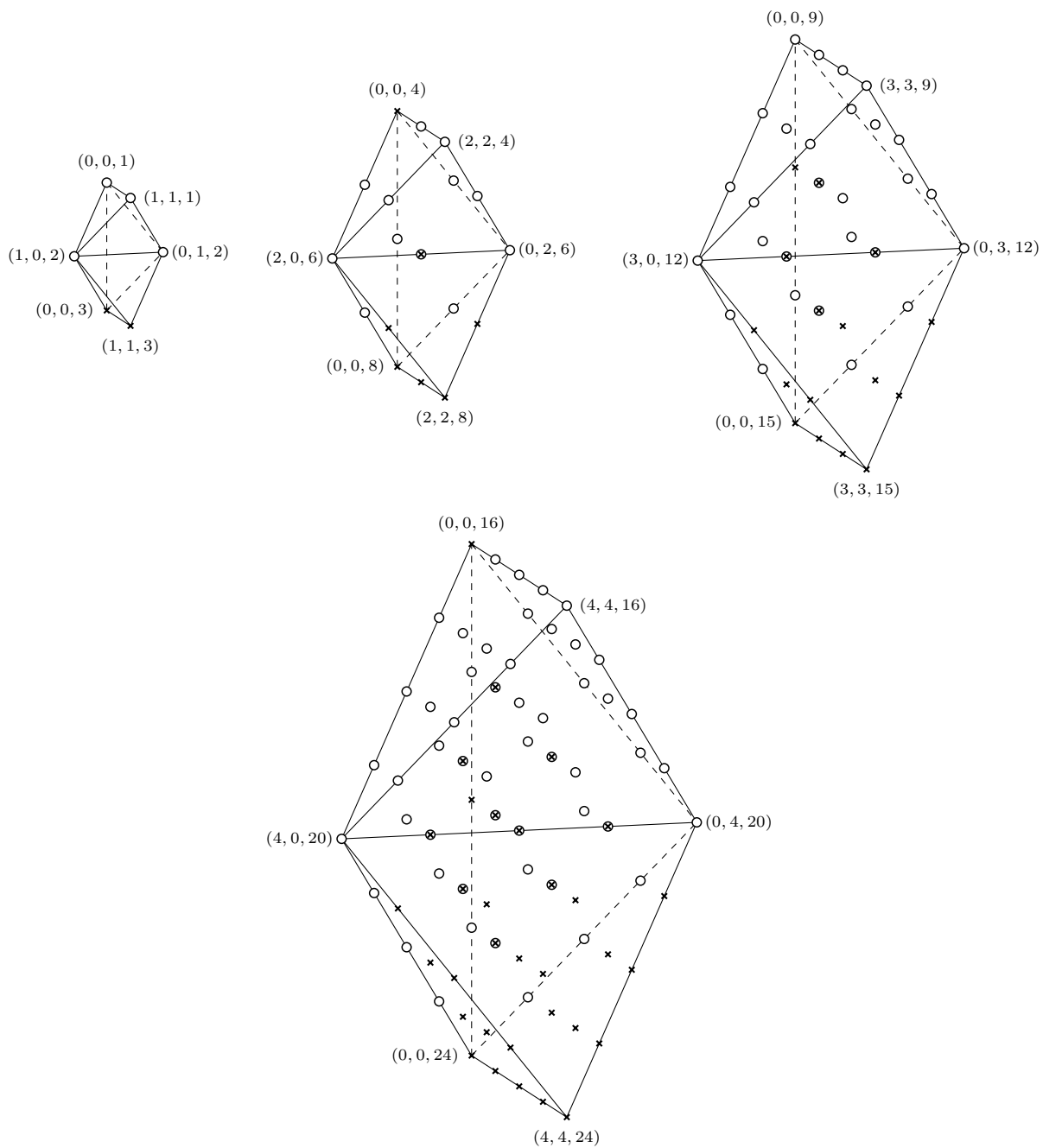
\begin{figure}[ht]
    \centering
\begin{minipage}{0.23\textwidth}
\tdplotsetmaincoords{80}{120}
\begin{tikzpicture}[tdplot_main_coords, scale=1]
\draw[dashed] (0, 0, 0) -- (0, 0, 2);
\draw[dashed] (0, 0, 0) -- (0, 1, 1);
\draw[dashed] (0, 0, 2) -- (0, 1, 1);
\draw[] (0, 0, 0) -- (1, 1, 0);
\draw[] (0, 0, 2) -- (1, 1, 2);
\draw[] (0, 1, 1) -- (1, 0, 1);
\draw[] (0, 0, 0) -- (1, 0, 1);
\draw[] (0, 0, 2) -- (1, 0, 1);
\draw[] (1, 1, 0) -- (1, 0, 1);
\draw[] (1, 1, 2) -- (1, 0, 1);
\draw[] (1, 1, 0) -- (0, 1, 1);
\draw[] (1, 1, 2) -- (0, 1, 1);
\draw[fill=white, line width=0.2mm] (0,0,2) circle (2pt) node[anchor=west] {};
\draw (0,0,0) node[cross=2pt, thick]{};
\draw[fill=white, line width=0.2mm] (0,1,1) circle (2pt) node[anchor=west] {};
\draw[fill=white, line width=0.2mm] (1,0,1) circle (2pt) node[anchor=west] {};
\draw[fill=white, line width=0.2mm] (1,1,2) circle (2pt) node[anchor=west] {};
\draw (1,1,0) node[cross=2pt, thick]{};
\node[left=2pt] at (0,0,0) {{\scriptsize $(0,0,3)$}};
\node[above=2pt] at (0,0,2) {{\scriptsize $(0,0,1)$}};
\node[below=2pt] at (1,1,0) {{\scriptsize $(1,1,3)$}};
\node[right=2pt] at (1,1,2) {{\scriptsize $(1,1,1)$}};
\node[left] at (1,0,1) {{\scriptsize $(1,0,2)$}};
\node[right] at (0,1,1) {{\scriptsize $(0,1,2)$}};
\end{tikzpicture}
\end{minipage}
\begin{minipage}{0.32\textwidth}
\tdplotsetmaincoords{80}{120}
\begin{tikzpicture}[tdplot_main_coords, scale=1]
\draw[dashed] (0, 0, 0) -- (0, 0, 4);
\draw[dashed] (0, 0, 0) -- (0, 2, 2);
\draw[dashed] (0, 0, 4) -- (0, 2, 2);
\draw[] (0, 0, 0) -- (2, 2, 0);
\draw[] (0, 0, 4) -- (2, 2, 4);
\draw[] (0, 2, 2) -- (2, 0, 2);
\draw[] (0, 0, 0) -- (2, 0, 2);
\draw[] (0, 0, 4) -- (2, 0, 2);
\draw[] (2, 2, 0) -- (2, 0, 2);
\draw[] (2, 2, 4) -- (2, 0, 2);
\draw[] (2, 2, 0) -- (0, 2, 2);
\draw[] (2, 2, 4) -- (0, 2, 2);
\draw (0,0,4) node[cross=2pt, thick]{};
\draw[fill=white, line width=0.2mm] (0,0,2) circle (2pt) node[anchor=west] {};
\draw (0,0,0) node[cross=2pt, thick]{};
\draw[fill=white, line width=0.2mm] (0,1,3) circle (2pt) node[anchor=west] {};
\draw[fill=white, line width=0.2mm] (0,1,1) circle (2pt) node[anchor=west] {};
\draw[fill=white, line width=0.2mm] (0,2,2) circle (2pt) node[anchor=west] {};
\draw[fill=white, line width=0.2mm] (1,0,3) circle (2pt) node[anchor=west] {};
\draw[fill=white, line width=0.2mm] (1,0,1) circle (2pt) node[anchor=west] {};
\draw[fill=white, line width=0.2mm] (1,1,4) circle (2pt) node[anchor=west] {};
\draw[fill=white, line width=0.2mm] (1,1,2) circle (2pt) node[anchor=west] {};
\draw (1,1,2) node[cross=2pt, thick]{};
\draw (1,1,0) node[cross=2pt, thick]{};
\draw[fill=white, line width=0.2mm] (1,2,3) circle (2pt) node[anchor=west] {};
\draw (1,2,1) node[cross=2pt, thick]{};
\draw[fill=white, line width=0.2mm] (2,0,2) circle (2pt) node[anchor=west] {};
\draw[fill=white, line width=0.2mm] (2,1,3) circle (2pt) node[anchor=west] {};
\draw (2,1,1) node[cross=2pt, thick]{};
\draw[fill=white, line width=0.2mm] (2,2,4) circle (2pt) node[anchor=west] {};
\draw (2,2,0) node[cross=2pt, thick]{};
\node[left=2pt] at (0,0,0) {{\scriptsize $(0,0,8)$}};
\node[above=2pt] at (0,0,4) {{\scriptsize $(0,0,4)$}};
\node[below=2pt] at (2,2,0) {{\scriptsize $(2,2,8)$}};
\node[right=2pt] at (2,2,4) {{\scriptsize $(2,2,4)$}};
\node[left] at (2,0,2) {{\scriptsize $(2,0,6)$}};
\node[right] at (0,2,2) {{\scriptsize $(0,2,6)$}};
\end{tikzpicture}
\end{minipage}
\begin{minipage}{0.40\textwidth}
\tdplotsetmaincoords{80}{120}
\begin{tikzpicture}[tdplot_main_coords, scale=1]
\draw[dashed] (0, 0, 0) -- (0, 0, 6);
\draw[dashed] (0, 0, 0) -- (0, 3, 3);
\draw[dashed] (0, 0, 6) -- (0, 3, 3);
\draw[] (0, 0, 0) -- (3, 3, 0);
\draw[] (0, 0, 6) -- (3, 3, 6);
\draw[] (0, 3, 3) -- (3, 0, 3);
\draw[] (0, 0, 0) -- (3, 0, 3);
\draw[] (0, 0, 6) -- (3, 0, 3);
\draw[] (3, 3, 0) -- (3, 0, 3);
\draw[] (3, 3, 6) -- (3, 0, 3);
\draw[] (3, 3, 0) -- (0, 3, 3);
\draw[] (3, 3, 6) -- (0, 3, 3);
\draw[fill=white, line width=0.2mm] (0,0,6) circle (2pt) node[anchor=west] {};
\draw (0,0,4) node[cross=2pt, thick]{};
\draw[fill=white, line width=0.2mm] (0,0,2) circle (2pt) node[anchor=west] {};
\draw (0,0,0) node[cross=2pt, thick]{};
\draw[fill=white, line width=0.2mm] (0,1,5) circle (2pt) node[anchor=west] {};
\draw[fill=white, line width=0.2mm] (0,1,3) circle (2pt) node[anchor=west] {};
\draw[fill=white, line width=0.2mm] (0,1,1) circle (2pt) node[anchor=west] {};
\draw[fill=white, line width=0.2mm] (0,2,4) circle (2pt) node[anchor=west] {};
\draw[fill=white, line width=0.2mm] (0,2,2) circle (2pt) node[anchor=west] {};
\draw[fill=white, line width=0.2mm] (0,3,3) circle (2pt) node[anchor=west] {};
\draw[fill=white, line width=0.2mm] (1,0,5) circle (2pt) node[anchor=west] {};
\draw[fill=white, line width=0.2mm] (1,0,3) circle (2pt) node[anchor=west] {};
\draw[fill=white, line width=0.2mm] (1,0,1) circle (2pt) node[anchor=west] {};
\draw[fill=white, line width=0.2mm] (1,1,6) circle (2pt) node[anchor=west] {};
\draw[fill=white, line width=0.2mm] (1,1,4) circle (2pt) node[anchor=west] {};
\draw[fill=white, line width=0.2mm] (1,1,2) circle (2pt) node[anchor=west] {};
\draw (1,1,4) node[cross=2pt, thick]{};
\draw (1,1,2) node[cross=2pt, thick]{};
\draw (1,1,0) node[cross=2pt, thick]{};
\draw[fill=white, line width=0.2mm] (1,2,5) circle (2pt) node[anchor=west] {};
\draw[fill=white, line width=0.2mm] (1,2,3) circle (2pt) node[anchor=west] {};
\draw (1,2,3) node[cross=2pt, thick]{};
\draw (1,2,1) node[cross=2pt, thick]{};
\draw[fill=white, line width=0.2mm] (1,3,4) circle (2pt) node[anchor=west] {};
\draw (1,3,2) node[cross=2pt, thick]{};
\draw[fill=white, line width=0.2mm] (2,0,4) circle (2pt) node[anchor=west] {};
\draw[fill=white, line width=0.2mm] (2,0,2) circle (2pt) node[anchor=west] {};
\draw[fill=white, line width=0.2mm] (2,1,5) circle (2pt) node[anchor=west] {};
\draw[fill=white, line width=0.2mm] (2,1,3) circle (2pt) node[anchor=west] {};
\draw (2,1,3) node[cross=2pt, thick]{};
\draw (2,1,1) node[cross=2pt, thick]{};
\draw[fill=white, line width=0.2mm] (2,2,6) circle (2pt) node[anchor=west] {};
\draw[fill=white, line width=0.2mm] (2,2,4) circle (2pt) node[anchor=west] {};
\draw (2,2,2) node[cross=2pt, thick]{};
\draw (2,2,0) node[cross=2pt, thick]{};
\draw[fill=white, line width=0.2mm] (2,3,5) circle (2pt) node[anchor=west] {};
\draw (2,3,1) node[cross=2pt, thick]{};
\draw[fill=white, line width=0.2mm] (3,0,3) circle (2pt) node[anchor=west] {};
\draw[fill=white, line width=0.2mm] (3,1,4) circle (2pt) node[anchor=west] {};
\draw (3,1,2) node[cross=2pt, thick]{};
\draw[fill=white, line width=0.2mm] (3,2,5) circle (2pt) node[anchor=west] {};
\draw (3,2,1) node[cross=2pt, thick]{};
\draw[fill=white, line width=0.2mm] (3,3,6) circle (2pt) node[anchor=west] {};
\draw (3,3,0) node[cross=2pt, thick]{};
\node[left=2pt] at (0,0,0) {{\scriptsize $(0,0,15)$}};
\node[above=2pt] at (0,0,6) {{\scriptsize $(0,0,9)$}};
\node[below=2pt] at (3,3,0) {{\scriptsize $(3,3,15)$}};
\node[right=2pt] at (3,3,6) {{\scriptsize $(3,3,9)$}};
\node[left] at (3,0,3) {{\scriptsize $(3,0,12)$}};
\node[right] at (0,3,3) {{\scriptsize $(0,3,12)$}};
\end{tikzpicture}
\end{minipage}
\begin{minipage}{0.49\textwidth}
\tdplotsetmaincoords{80}{120}
\begin{tikzpicture}[tdplot_main_coords, scale=1]
\draw[dashed] (0, 0, 0) -- (0, 0, 8);
\draw[dashed] (0, 0, 0) -- (0, 4, 4);
\draw[dashed] (0, 0, 8) -- (0, 4, 4);
\draw[] (0, 0, 0) -- (4, 4, 0);
\draw[] (0, 0, 8) -- (4, 4, 8);
\draw[] (0, 4, 4) -- (4, 0, 4);
\draw[] (0, 0, 0) -- (4, 0, 4);
\draw[] (0, 0, 8) -- (4, 0, 4);
\draw[] (4, 4, 0) -- (4, 0, 4);
\draw[] (4, 4, 8) -- (4, 0, 4);
\draw[] (4, 4, 0) -- (0, 4, 4);
\draw[] (4, 4, 8) -- (0, 4, 4);
\draw (0,0,8) node[cross=2pt, thick]{};
\draw[fill=white, line width=0.2mm] (0,0,6) circle (2pt) node[anchor=west] {};
\draw (0,0,4) node[cross=2pt, thick]{};
\draw[fill=white, line width=0.2mm] (0,0,2) circle (2pt) node[anchor=west] {};
\draw (0,0,0) node[cross=2pt, thick]{};
\draw[fill=white, line width=0.2mm] (0,1,7) circle (2pt) node[anchor=west] {};
\draw[fill=white, line width=0.2mm] (0,1,5) circle (2pt) node[anchor=west] {};
\draw[fill=white, line width=0.2mm] (0,1,3) circle (2pt) node[anchor=west] {};
\draw[fill=white, line width=0.2mm] (0,1,1) circle (2pt) node[anchor=west] {};
\draw[fill=white, line width=0.2mm] (0,2,6) circle (2pt) node[anchor=west] {};
\draw[fill=white, line width=0.2mm] (0,2,4) circle (2pt) node[anchor=west] {};
\draw[fill=white, line width=0.2mm] (0,2,2) circle (2pt) node[anchor=west] {};
\draw[fill=white, line width=0.2mm] (0,3,5) circle (2pt) node[anchor=west] {};
\draw[fill=white, line width=0.2mm] (0,3,3) circle (2pt) node[anchor=west] {};
\draw[fill=white, line width=0.2mm] (0,4,4) circle (2pt) node[anchor=west] {};
\draw[fill=white, line width=0.2mm] (1,0,7) circle (2pt) node[anchor=west] {};
\draw[fill=white, line width=0.2mm] (1,0,5) circle (2pt) node[anchor=west] {};
\draw[fill=white, line width=0.2mm] (1,0,3) circle (2pt) node[anchor=west] {};
\draw[fill=white, line width=0.2mm] (1,0,1) circle (2pt) node[anchor=west] {};
\draw[fill=white, line width=0.2mm] (1,1,8) circle (2pt) node[anchor=west] {};
\draw[fill=white, line width=0.2mm] (1,1,6) circle (2pt) node[anchor=west] {};
\draw[fill=white, line width=0.2mm] (1,1,4) circle (2pt) node[anchor=west] {};
\draw[fill=white, line width=0.2mm] (1,1,2) circle (2pt) node[anchor=west] {};
\draw (1,1,6) node[cross=2pt, thick]{};
\draw (1,1,4) node[cross=2pt, thick]{};
\draw (1,1,2) node[cross=2pt, thick]{};
\draw (1,1,0) node[cross=2pt, thick]{};
\draw[fill=white, line width=0.2mm] (1,2,7) circle (2pt) node[anchor=west] {};
\draw[fill=white, line width=0.2mm] (1,2,5) circle (2pt) node[anchor=west] {};
\draw[fill=white, line width=0.2mm] (1,2,3) circle (2pt) node[anchor=west] {};
\draw (1,2,5) node[cross=2pt, thick]{};
\draw (1,2,3) node[cross=2pt, thick]{};
\draw (1,2,1) node[cross=2pt, thick]{};
\draw[fill=white, line width=0.2mm] (1,3,6) circle (2pt) node[anchor=west] {};
\draw[fill=white, line width=0.2mm] (1,3,4) circle (2pt) node[anchor=west] {};
\draw (1,3,4) node[cross=2pt, thick]{};
\draw (1,3,2) node[cross=2pt, thick]{};
\draw[fill=white, line width=0.2mm] (1,4,5) circle (2pt) node[anchor=west] {};
\draw (1,4,3) node[cross=2pt, thick]{};
\draw[fill=white, line width=0.2mm] (2,0,6) circle (2pt) node[anchor=west] {};
\draw[fill=white, line width=0.2mm] (2,0,4) circle (2pt) node[anchor=west] {};
\draw[fill=white, line width=0.2mm] (2,0,2) circle (2pt) node[anchor=west] {};
\draw[fill=white, line width=0.2mm] (2,1,7) circle (2pt) node[anchor=west] {};
\draw[fill=white, line width=0.2mm] (2,1,5) circle (2pt) node[anchor=west] {};
\draw[fill=white, line width=0.2mm] (2,1,3) circle (2pt) node[anchor=west] {};
\draw (2,1,5) node[cross=2pt, thick]{};
\draw (2,1,3) node[cross=2pt, thick]{};
\draw (2,1,1) node[cross=2pt, thick]{};
\draw[fill=white, line width=0.2mm] (2,2,8) circle (2pt) node[anchor=west] {};
\draw[fill=white, line width=0.2mm] (2,2,6) circle (2pt) node[anchor=west] {};
\draw[fill=white, line width=0.2mm] (2,2,4) circle (2pt) node[anchor=west] {};
\draw (2,2,4) node[cross=2pt, thick]{};
\draw (2,2,2) node[cross=2pt, thick]{};
\draw (2,2,0) node[cross=2pt, thick]{};
\draw[fill=white, line width=0.2mm] (2,3,7) circle (2pt) node[anchor=west] {};
\draw[fill=white, line width=0.2mm] (2,3,5) circle (2pt) node[anchor=west] {};
\draw (2,3,3) node[cross=2pt, thick]{};
\draw (2,3,1) node[cross=2pt, thick]{};
\draw[fill=white, line width=0.2mm] (2,4,6) circle (2pt) node[anchor=west] {};
\draw (2,4,2) node[cross=2pt, thick]{};
\draw[fill=white, line width=0.2mm] (3,0,5) circle (2pt) node[anchor=west] {};
\draw[fill=white, line width=0.2mm] (3,0,3) circle (2pt) node[anchor=west] {};
\draw[fill=white, line width=0.2mm] (3,1,6) circle (2pt) node[anchor=west] {};
\draw[fill=white, line width=0.2mm] (3,1,4) circle (2pt) node[anchor=west] {};
\draw (3,1,4) node[cross=2pt, thick]{};
\draw (3,1,2) node[cross=2pt, thick]{};
\draw[fill=white, line width=0.2mm] (3,2,7) circle (2pt) node[anchor=west] {};
\draw[fill=white, line width=0.2mm] (3,2,5) circle (2pt) node[anchor=west] {};
\draw (3,2,3) node[cross=2pt, thick]{};
\draw (3,2,1) node[cross=2pt, thick]{};
\draw[fill=white, line width=0.2mm] (3,3,8) circle (2pt) node[anchor=west] {};
\draw[fill=white, line width=0.2mm] (3,3,6) circle (2pt) node[anchor=west] {};
\draw (3,3,2) node[cross=2pt, thick]{};
\draw (3,3,0) node[cross=2pt, thick]{};
\draw[fill=white, line width=0.2mm] (3,4,7) circle (2pt) node[anchor=west] {};
\draw (3,4,1) node[cross=2pt, thick]{};
\draw[fill=white, line width=0.2mm] (4,0,4) circle (2pt) node[anchor=west] {};
\draw[fill=white, line width=0.2mm] (4,1,5) circle (2pt) node[anchor=west] {};
\draw (4,1,3) node[cross=2pt, thick]{};
\draw[fill=white, line width=0.2mm] (4,2,6) circle (2pt) node[anchor=west] {};
\draw (4,2,2) node[cross=2pt, thick]{};
\draw[fill=white, line width=0.2mm] (4,3,7) circle (2pt) node[anchor=west] {};
\draw (4,3,1) node[cross=2pt, thick]{};
\draw[fill=white, line width=0.2mm] (4,4,8) circle (2pt) node[anchor=west] {};
\draw (4,4,0) node[cross=2pt, thick]{};
\node[left=2pt] at (0,0,0) {{\scriptsize $(0,0,24)$}};
\node[above=2pt] at (0,0,8) {{\scriptsize $(0,0,16)$}};
\node[below=2pt] at (4,4,0) {{\scriptsize $(4,4,24)$}};
\node[right=2pt] at (4,4,8) {{\scriptsize $(4,4,16)$}};
\node[left] at (4,0,4) {{\scriptsize $(4,0,20)$}};
\node[right] at (0,4,4) {{\scriptsize $(0,4,20)$}};
\end{tikzpicture}
\end{minipage}
    \caption{Composition factors of $\Delta_e$ from the penultimate cell, for $n = 1, 2, 3, 4$.}
    \label{fig:notoctahedron}
\end{figure} 

%by the following rule:
%
%\begin{align*}
%    L_{u_{ij}}\langle - k\rangle &\mapsto (i,j,k), \ \text{ denoted by } \begin{tikzpicture} \draw[fill=white, line width=0.2mm] (0,0,0) circle (2pt) node[anchor=west] {}; \end{tikzpicture}; \\
%    L_{w_{ij}}\langle - k\rangle &  \mapsto \begin{cases}    (i,j,k), \ \text{ denoted by } \begin{tikzpicture} \draw (3,4,1) node[cross=2pt, thick]{}; \end{tikzpicture} & \colon i=j=0; \\
%
%    (n+1-i,n+1-j,k), \ \text{ denoted by } \begin{tikzpicture} \draw (3,4,1) node[cross=2pt, thick]{}; \end{tikzpicture} & \colon \text{otherwise}.
%    \end{cases}
%\end{align*}
%
\begin{remark}\label{octahedron}

%It follows from (\ref{want}) and (\ref{sumB}) that 
The number of graded composition factors in $\Delta_e$ isomorphic to $L_w$, for $w\in \jc$, adds up to the octahedral number $\frac{(2n^2+4n+3)(n+1)}{3}$. %This sequence is given in \cite[A005900]{OEIS}, under the name ``Octahedral numbers''. 
If we depict $L_{w_{ij}}\langle - k\rangle$, for $i,j>0 $, as a point with 
the coordinates  $(n+1-i,n+1-j,k)$, we indeed get a ``combinatorial'' octahedron inscribed in the cuboid $[0,n] \times [0,n] \times [n^2,n^2+2n]$. We present it in Figure \ref{fig:octahedron} for small ranks.
\begin{figure}[ht]
    \centering
\begin{minipage}{0.22\textwidth}
\tdplotsetmaincoords{80}{120}
\begin{tikzpicture}[tdplot_main_coords, scale=1]
\draw[dashed] (0, 0, 0) -- (0, 0, 2);
\draw[dashed] (0, 0, 0) -- (0, 1, 1);
\draw[dashed] (0, 0, 2) -- (0, 1, 1);
\draw[] (0, 0, 0) -- (1, 1, 0);
\draw[] (0, 0, 2) -- (1, 1, 2);
\draw[] (1, 1, 0) -- (1, 1, 2);
\draw[] (0, 0, 0) -- (1, 0, 1);
\draw[] (0, 0, 2) -- (1, 0, 1);
\draw[] (1, 1, 0) -- (1, 0, 1);
\draw[] (1, 1, 2) -- (1, 0, 1);
\draw[] (1, 1, 0) -- (0, 1, 1);
\draw[] (1, 1, 2) -- (0, 1, 1);
\draw[fill=white, line width=0.2mm] (0,0,2) circle (2pt) node[anchor=west] {};
\draw (0,0,0) node[cross=2pt, thick]{};
\draw[fill=white, line width=0.2mm] (0,1,1) circle (2pt) node[anchor=west] {};
\draw[fill=white, line width=0.2mm] (1,0,1) circle (2pt) node[anchor=west] {};
\draw[fill=white, line width=0.2mm] (1,1,2) circle (2pt) node[anchor=west] {};
\draw (1,1,0) node[cross=2pt, thick]{};
\node[left=2pt] at (0,0,0) {{\scriptsize $(0,0,3)$}};
\node[above=2pt] at (0,0,2) {{\scriptsize $(0,0,1)$}};
\node[below=2pt] at (1,1,0) {{\scriptsize $(1,1,3)$}};
\node[right=2pt] at (1,1,2) {{\scriptsize $(1,1,1)$}};
\node[left] at (1,0,1) {{\scriptsize $(1,0,2)$}};
\node[right] at (0,1,1) {{\scriptsize $(0,1,2)$}};
\end{tikzpicture}
\end{minipage}
\begin{minipage}{0.31\textwidth}
\tdplotsetmaincoords{80}{120}
\begin{tikzpicture}[tdplot_main_coords, scale=1]
\draw[dashed] (0, 0, 0) -- (0, 0, 4);
\draw[dashed] (0, 0, 0) -- (0, 2, 2);
\draw[dashed] (0, 0, 4) -- (0, 2, 2);
\draw[] (0, 0, 0) -- (2, 2, 0);
\draw[] (0, 0, 4) -- (2, 2, 4);
\draw[] (2, 2, 0) -- (2, 2, 4);
\draw[] (0, 0, 0) -- (2, 0, 2);
\draw[] (0, 0, 4) -- (2, 0, 2);
\draw[] (2, 2, 0) -- (2, 0, 2);
\draw[] (2, 2, 4) -- (2, 0, 2);
\draw[] (2, 2, 0) -- (0, 2, 2);
\draw[] (2, 2, 4) -- (0, 2, 2);
\draw (0,0,4) node[cross=2pt, thick]{};
\draw[fill=white, line width=0.2mm] (0,0,2) circle (2pt) node[anchor=west] {};
\draw (0,0,0) node[cross=2pt, thick]{};
\draw[fill=white, line width=0.2mm] (0,1,3) circle (2pt) node[anchor=west] {};
\draw[fill=white, line width=0.2mm] (0,1,1) circle (2pt) node[anchor=west] {};
\draw[fill=white, line width=0.2mm] (0,2,2) circle (2pt) node[anchor=west] {};
\draw[fill=white, line width=0.2mm] (1,0,3) circle (2pt) node[anchor=west] {};
\draw[fill=white, line width=0.2mm] (1,0,1) circle (2pt) node[anchor=west] {};
\draw[fill=white, line width=0.2mm] (1,1,4) circle (2pt) node[anchor=west] {};
\draw[fill=white, line width=0.2mm] (1,1,2) circle (2pt) node[anchor=west] {};
\draw (2,2,2) node[cross=2pt, thick]{};
\draw (2,2,0) node[cross=2pt, thick]{};
\draw[fill=white, line width=0.2mm] (1,2,3) circle (2pt) node[anchor=west] {};
\draw (2,1,1) node[cross=2pt, thick]{};
\draw[fill=white, line width=0.2mm] (2,0,2) circle (2pt) node[anchor=west] {};
\draw[fill=white, line width=0.2mm] (2,1,3) circle (2pt) node[anchor=west] {};
\draw (1,2,1) node[cross=2pt, thick]{};
\draw[fill=white, line width=0.2mm] (2,2,4) circle (2pt) node[anchor=west] {};
\draw (1,1,0) node[cross=2pt, thick]{};
\node[left=2pt] at (0,0,0) {{\scriptsize $(0,0,8)$}};
\node[above=2pt] at (0,0,4) {{\scriptsize $(0,0,4)$}};
\node[below=2pt] at (2,2,0) {{\scriptsize $(2,2,8)$}};
\node[right=2pt] at (2,2,4) {{\scriptsize $(2,2,4)$}};
\node[left] at (2,0,2) {{\scriptsize $(2,0,6)$}};
\node[right] at (0,2,2) {{\scriptsize $(0,2,6)$}};
\end{tikzpicture}
\end{minipage}
\begin{minipage}{0.40\textwidth}
\tdplotsetmaincoords{80}{120}
\begin{tikzpicture}[tdplot_main_coords, scale=1]
\draw[dashed] (0, 0, 0) -- (0, 0, 6);
\draw[dashed] (0, 0, 0) -- (0, 3, 3);
\draw[dashed] (0, 0, 6) -- (0, 3, 3);
\draw[] (0, 0, 0) -- (3, 3, 0);
\draw[] (0, 0, 6) -- (3, 3, 6);
\draw[] (3, 3, 0) -- (3, 3, 6);
\draw[] (0, 0, 0) -- (3, 0, 3);
\draw[] (0, 0, 6) -- (3, 0, 3);
\draw[] (3, 3, 0) -- (3, 0, 3);
\draw[] (3, 3, 6) -- (3, 0, 3);
\draw[] (3, 3, 0) -- (0, 3, 3);
\draw[] (3, 3, 6) -- (0, 3, 3);
\draw[fill=white, line width=0.2mm] (0,0,6) circle (2pt) node[anchor=west] {};
\draw (0,0,4) node[cross=2pt, thick]{};
\draw[fill=white, line width=0.2mm] (0,0,2) circle (2pt) node[anchor=west] {};
\draw (0,0,0) node[cross=2pt, thick]{};
\draw[fill=white, line width=0.2mm] (0,1,5) circle (2pt) node[anchor=west] {};
\draw[fill=white, line width=0.2mm] (0,1,3) circle (2pt) node[anchor=west] {};
\draw[fill=white, line width=0.2mm] (0,1,1) circle (2pt) node[anchor=west] {};
\draw[fill=white, line width=0.2mm] (0,2,4) circle (2pt) node[anchor=west] {};
\draw[fill=white, line width=0.2mm] (0,2,2) circle (2pt) node[anchor=west] {};
\draw[fill=white, line width=0.2mm] (0,3,3) circle (2pt) node[anchor=west] {};
\draw[fill=white, line width=0.2mm] (1,0,5) circle (2pt) node[anchor=west] {};
\draw[fill=white, line width=0.2mm] (1,0,3) circle (2pt) node[anchor=west] {};
\draw[fill=white, line width=0.2mm] (1,0,1) circle (2pt) node[anchor=west] {};
\draw[fill=white, line width=0.2mm] (1,1,6) circle (2pt) node[anchor=west] {};
\draw[fill=white, line width=0.2mm] (1,1,4) circle (2pt) node[anchor=west] {};
\draw[fill=white, line width=0.2mm] (1,1,2) circle (2pt) node[anchor=west] {};
\draw (3,3,4) node[cross=2pt, thick]{};
\draw (3,3,2) node[cross=2pt, thick]{};
\draw (3,3,0) node[cross=2pt, thick]{};
\draw[fill=white, line width=0.2mm] (1,2,5) circle (2pt) node[anchor=west] {};
\draw[fill=white, line width=0.2mm] (1,2,3) circle (2pt) node[anchor=west] {};
\draw (3,2,3) node[cross=2pt, thick]{};
\draw (3,2,1) node[cross=2pt, thick]{};
\draw[fill=white, line width=0.2mm] (1,3,4) circle (2pt) node[anchor=west] {};
\draw (3,1,2) node[cross=2pt, thick]{};
\draw[fill=white, line width=0.2mm] (2,0,4) circle (2pt) node[anchor=west] {};
\draw[fill=white, line width=0.2mm] (2,0,2) circle (2pt) node[anchor=west] {};
\draw[fill=white, line width=0.2mm] (2,1,5) circle (2pt) node[anchor=west] {};
\draw[fill=white, line width=0.2mm] (2,1,3) circle (2pt) node[anchor=west] {};
\draw (2,3,3) node[cross=2pt, thick]{};
\draw (2,3,1) node[cross=2pt, thick]{};
\draw[fill=white, line width=0.2mm] (2,2,6) circle (2pt) node[anchor=west] {};
\draw[fill=white, line width=0.2mm] (2,2,4) circle (2pt) node[anchor=west] {};
\draw (2,2,2) node[cross=2pt, thick]{};
\draw (2,2,0) node[cross=2pt, thick]{};
\draw[fill=white, line width=0.2mm] (2,3,5) circle (2pt) node[anchor=west] {};
\draw (2,1,1) node[cross=2pt, thick]{};
\draw[fill=white, line width=0.2mm] (3,0,3) circle (2pt) node[anchor=west] {};
\draw[fill=white, line width=0.2mm] (3,1,4) circle (2pt) node[anchor=west] {};
\draw (1,3,2) node[cross=2pt, thick]{};
\draw[fill=white, line width=0.2mm] (3,2,5) circle (2pt) node[anchor=west] {};
\draw (1,2,1) node[cross=2pt, thick]{};
\draw[fill=white, line width=0.2mm] (3,3,6) circle (2pt) node[anchor=west] {};
\draw (1,1,0) node[cross=2pt, thick]{};
\node[left=2pt] at (0,0,0) {{\scriptsize $(0,0,15)$}};
\node[above=2pt] at (0,0,6) {{\scriptsize $(0,0,9)$}};
\node[below=2pt] at (3,3,0) {{\scriptsize $(3,3,15)$}};
\node[right=2pt] at (3,3,6) {{\scriptsize $(3,3,9)$}};
\node[left] at (3,0,3) {{\scriptsize $(3,0,12)$}};
\node[right] at (0,3,3) {{\scriptsize $(0,3,12)$}};
\end{tikzpicture}
\end{minipage}
    \caption{The octahedron for $n = 1, 2, 3$.}
    \label{fig:octahedron}
\end{figure}
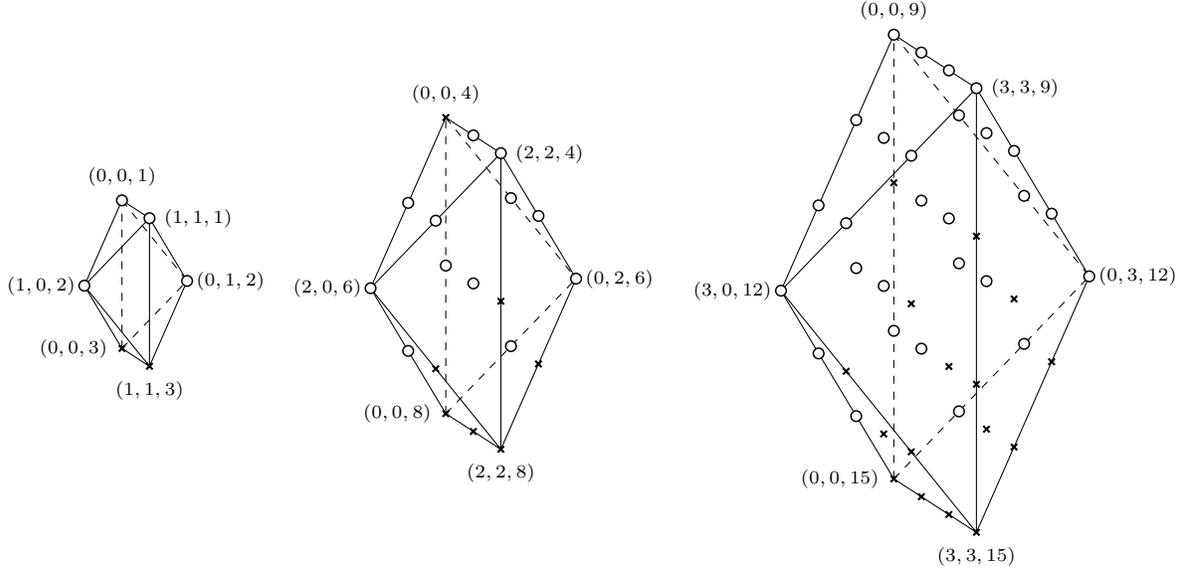 
%
%As usual in depicting positively graded algebras, the $z$-axis is reversed in the pictures, so that 
%
\end{remark}

\subsubsection{Join-irreducibles in type B}\label{ss:JIB}

Join-irreducible elements in $W$ of type $B$ are explicitly determined in \cite[Section 4]{GeK}, % where $JI$ is called the base. 
and, in a different way, in \cite[Section 2]{anderson}. 
We describe these and determine the Bruhat order on each $\JI{i}{j}$. By symmetry, we only consider the case $i\leq j$.

The reduced expressions appearing here are rather lengthy. So, we also present elements in $W$ as signed permutations and provide the corresponding pictures. Recall that a signed permutation is a permutation $w$ of $(-n-1, -n, \ldots, -1,0,1, \ldots, n, n+1)$ with the property that $w(-k)=-w(k)$. We identify such $w$ with the sequence $(w(1),w(2),\ldots, w(n))$, which uniquely determines $w$. We write $\overline{k}$ instead of $-k$ (only in this subsection).

%The conversion for simple reflections is as follows, 
For example, when $n=2$, i.e., in type $B_3$, we have $s_0 = (\overline{1},2,3)$, $s_1 = (2,1,3)$, $s_0s_1 = (2,\overline{1},3)$  and $s_1s_0 = (\overline{2},1,3)$
depicted, respectively, as follows:
%
%\[ \scalebox{0.4}{\input{s0.pgf}},\ \ \ \ \ \ \ \ \ \ \ \scalebox{0.4}{\input{s3.pgf}} .\]
%
\[ \scalebox{1}{
\begin{tikzpicture}[scale=1, baseline=-3mm]
\draw[red,fill=white] (0,0) -- (0,2.000000);
\node[below] at (0,0) {\tiny $0$};
\draw (0.350000,0) -- (-0.350000,2.000000);
\draw (-0.350000,0) -- (0.350000,2.000000);
\node[below] at (0.350000,0) {\tiny $1$};
\node[below=-0.45mm] at (-0.350000,0) {\tiny $\overline{1}$};
\draw (0.700000,0) -- (0.700000,2.000000);
\draw (-0.700000,0) -- (-0.700000,2.000000);
\node[below] at (0.700000,0) {\tiny $2$};
\node[below=-0.45mm] at (-0.700000,0) {\tiny $\overline{2}$};
\draw (1.050000,0) -- (1.050000,2.000000);
\draw (-1.050000,0) -- (-1.050000,2.000000);
\node[below] at (1.050000,0) {\tiny $3$};
\node[below=-0.45mm] at (-1.050000,0) {\tiny $\overline{3}$};
\end{tikzpicture} , \quad
\begin{tikzpicture}[scale=1, baseline=-3mm]
\draw[red,fill=white] (0,0) -- (0,2.000000);
\node[below] at (0,0) {\tiny $0$};
\draw (0.350000,0) -- (0.700000,2.000000);
\draw (-0.350000,0) -- (-0.700000,2.000000);
\node[below] at (0.350000,0) {\tiny $1$};
\node[below=-0.45mm] at (-0.350000,0) {\tiny $\overline{1}$};
\draw (0.700000,0) -- (0.350000,2.000000);
\draw (-0.700000,0) -- (-0.350000,2.000000);
\node[below] at (0.700000,0) {\tiny $2$};
\node[below=-0.45mm] at (-0.700000,0) {\tiny $\overline{2}$};
\draw (1.050000,0) -- (1.050000,2.000000);
\draw (-1.050000,0) -- (-1.050000,2.000000);
\node[below] at (1.050000,0) {\tiny $3$};
\node[below=-0.45mm] at (-1.050000,0) {\tiny $\overline{3}$};
\end{tikzpicture} , \quad
\begin{tikzpicture}[scale=1, baseline=-3mm]
\draw[red,fill=white] (0,0) -- (0,2.000000);
\node[below] at (0,0) {\tiny $0$};
\draw (0.350000,0) -- (0.700000,2.000000);
\draw (-0.350000,0) -- (-0.700000,2.000000);
\node[below] at (0.350000,0) {\tiny $1$};
\node[below=-0.45mm] at (-0.350000,0) {\tiny $\overline{1}$};
\draw (0.700000,0) -- (-0.350000,2.000000);
\draw (-0.700000,0) -- (0.350000,2.000000);
\node[below] at (0.700000,0) {\tiny $2$};
\node[below=-0.45mm] at (-0.700000,0) {\tiny $\overline{2}$};
\draw (1.050000,0) -- (1.050000,2.000000);
\draw (-1.050000,0) -- (-1.050000,2.000000);
\node[below] at (1.050000,0) {\tiny $3$};
\node[below=-0.45mm] at (-1.050000,0) {\tiny $\overline{3}$};
\end{tikzpicture}, \quad
\begin{tikzpicture}[scale=1, baseline=-3mm]
\draw[red,fill=white] (0,0) -- (0,2.000000);
\node[below] at (0,0) {\tiny $0$};
\draw (0.350000,0) -- (-0.700000,2.000000);
\draw (-0.350000,0) -- (0.700000,2.000000);
\node[below] at (0.350000,0) {\tiny $1$};
\node[below=-0.45mm] at (-0.350000,0) {\tiny $\overline{1}$};
\draw (0.700000,0) -- (0.350000,2.000000);
\draw (-0.700000,0) -- (-0.350000,2.000000);
\node[below] at (0.700000,0) {\tiny $2$};
\node[below=-0.45mm] at (-0.700000,0) {\tiny $\overline{2}$};
\draw (1.050000,0) -- (1.050000,2.000000);
\draw (-1.050000,0) -- (-1.050000,2.000000);
\node[below] at (1.050000,0) {\tiny $3$};
\node[below=-0.45mm] at (-1.050000,0) {\tiny $\overline{3}$};
\end{tikzpicture} . }  \]
%
%In general, concatenating braids from top down corresponds to multiplying the corresponding elements in $W$ from left to right, i.e. we consider the right action of $W$.
We read this picture from top to bottom and read a reduced expression from left to right.

%Recall that $s_{ij} \in W$ is defined to be the product $s_i \cdots s_j$ of simple reflections along the unique shortest path starting in $i$ and ending in $j$ in the Dynkin diagram.

\begin{comment}
% Not needed included in the next proposition.
\begin{proposition}\label{BJI00}
The subposet $\JI{0}{0} \subset (W,\leq)$ is a chain 
\[ b(1,1,1) \leq b(2,1,1) \leq \ldots \leq b(n,1,1) \leq b(n+1,1,1) . \]
\end{proposition}
\begin{proof}
The Bruhat relations above follow from the easily checked fact that a reduced expression for $b(k,1,1)$ is given by
$b(k,1,1) = (012\cdots (k-1))\cdots(012)(01)0$.
\end{proof}
\end{comment}

\begin{proposition}
\label{BJI0a}
Let $0 \leq j\leq n$. 
Then the set $\JI{0}{j}$ consists of the elements
\begin{equation}
\label{eq:B_JI_0j}
b^{0j}_k = b_{k} := s_{00} \cdot s_{10} \cdot \ldots \cdot s_{k-1,0} \cdot s_{k1} \cdot s_{k+1,2} \cdot\ldots \cdot s_{k+j-1,j} , \quad 1 \leq k \leq n+1-j, %= \big(01 \cdots (j+k-1)  \big) \big(01 \cdots (j+k-2)\big) \cdots \big(01 \cdots j\big)
\end{equation}
with $b_{k}$ having the following picture:
\[\scalebox{0.9}{
\medmuskip=-2mu
\begin{tikzpicture}[scale=1, baseline=-3mm]
\draw[red,fill=white] (0,0) -- (0,3.000000);
\node[below] at (0,0) {\tiny $0$};
\node[above] at (0,3.000000) {\tiny $0$};
\draw (0.500000,0) -- (2.000000,3.000000);
\draw (-0.500000,0) -- (-2.000000,3.000000);
\node[below] at (0.500000,0) {\tiny $1$};
\node[below=-0.45mm] at (-0.500000,0) {\tiny $\overline{1}$};
\node[above] at (0.500000,3.000000) {\tiny $1$};
\node[above] at (-0.500000,3.000000) {\tiny $\overline{1}$};
\node[below] at (1.000000,0) {\tiny $\cdots$};
\node[below] at (-1.000000,0) {\tiny $\cdots$};
\node[above] at (1.000000,3.000000) {\tiny $\cdots$};
\node[above] at (-1.000000,3.000000) {\tiny $\cdots$};
\draw (1.500000,0) -- (3.000000,3.000000);
\draw (-1.500000,0) -- (-3.000000,3.000000);
\node[below] at (1.500000,0) {\tiny $j$};
\node[below=-0.45mm] at (-1.500000,0) {\tiny $\overline{\jmath}$};
\node[above] at (1.500000,3.000000) {\tiny $k$};
\node[above] at (-1.500000,3.000000) {\tiny $\overline{k}$};
\draw (2.000000,0) -- (-1.500000,3.000000);
\draw (-2.000000,0) -- (1.500000,3.000000);
\node[below] at (2.000000,0) {\tiny $j+1$};
\node[below=-0.45mm] at (-2.000000,0) {\tiny $\overline{\jmath+1}$};
\node[above] at (2.000000,3.000000) {\tiny $k+1$};
\node[above] at (-2.000000,3.000000) {\tiny $\overline{k+1}$};
\node[below] at (2.500000,0) {\tiny $\cdots$};
\node[below] at (-2.500000,0) {\tiny $\cdots$};
\node[above] at (2.500000,3.000000) {\tiny $\cdots$};
\node[above] at (-2.500000,3.000000) {\tiny $\cdots$};
\draw (3.000000,0) -- (-0.500000,3.000000);
\draw (-3.000000,0) -- (0.500000,3.000000);
\node[below] at (3.000000,0) {\tiny $j+k$};
\node[below=-0.45mm] at (-3.000000,0) {\tiny $\overline{\jmath+k}$};
\node[above] at (3.000000,3.000000) {\tiny $k+j$};
\node[above] at (-3.000000,3.000000) {\tiny $\overline{k+\jmath}$};

\draw (3.600000,0) -- (3.600000,3.000000);
\draw (-3.600000,0) -- (-3.600000,3.000000);
\node[below] at (3.600000,0) {\tiny $j+k+1$};
\node[below=-0.45mm] at (-3.600000,0) {\tiny $\overline{\jmath+k+1}$};
\node[above] at (3.600000,3.000000) {\tiny $k+j+1$};
\node[above] at (-3.600000,3.000000) {\tiny $\overline{k+\jmath+1}$};
\node[below] at (4.200000,0) {\tiny $\cdots$};
\node[below] at (-4.200000,0) {\tiny $\cdots$};
\node[above] at (4.200000,3.000000) {\tiny $\cdots$};
\node[above] at (-4.200000,3.000000) {\tiny $\cdots$};
\draw (4.600000,0) -- (4.600000,3.000000);
\draw (-4.600000,0) -- (-4.600000,3.000000);
\node[below] at (4.500000,0) {\tiny $n$};
\node[below=-0.45mm] at (-4.600000,0) {\tiny $\overline{n}$};
\node[above] at (4.600000,3.000000) {\tiny $n$};
\node[above] at (-4.600000,3.000000) {\tiny $\overline{n}$};
\end{tikzpicture}} . \]
There is no cancellation in the above product, i.e. one obtains a reduced expressions for $b_k$ by concatenating the reduced expressions for the factors.
Moreover, as a subposet of $(W,\leq)$, $\JI{0}{j}$ is a chain
\begin{equation}
\label{eq:00_chain} \{b_1 < b_2 < \ldots < b_{n+1-j}\} .
\end{equation}
\end{proposition}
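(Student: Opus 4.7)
\emph{Proof approach.} The plan is to verify, in sequence: (i) the word in \eqref{eq:B_JI_0j} is reduced and realizes the signed permutation depicted; (ii) $\JI{0}{j} = \{b_1, \ldots, b_{n+1-j}\}$; and (iii) the chain \eqref{eq:00_chain}.

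For (i), an inductive computation, multiplying the factors $s_{00}, s_{10}, \ldots, s_{k-1,0}, s_{k1}, \ldots, s_{k+j-1, j}$ one at a time into the identity, shows that as a signed permutation $b_k(m) = k+m$ for $1 \leq m \leq j$, $b_k(m) = -(k+j+1-m)$ for $j+1 \leq m \leq j+k$, and $b_k$ fixes $j+k+1, \ldots, n+1$, matching the picture. The standard length formula for type $B$ signed permutations then gives $\ell(b_k) = \tfrac{k(k+1)}{2} + jk$, which coincides with the sum of the lengths of the individual factors $s_{i0}$ ($0 \leq i \leq k-1$) and $s_{k+i-1, i}$ ($1 \leq i \leq j$). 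Hence the concatenated word is reduced. The descents $LD(b_k) = \{0\}$ and $RD(b_k) = \{j\}$ (or $\{0\}$ when $j = 0$) can then be read off the one-line notation, placing $b_k \in \BG{0}{j}$.

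For (ii), I would invoke the classification of join-irreducibles in type $B$ from \cite[Section 4]{GeK} (or \cite[Section 2]{anderson}), which parametrizes them by combinatorial data (essential sets in Anderson's language). In this parametrization, the join-irreducibles with descent pair $(\{0\}, \{j\})$ form a one-parameter subfamily matching exactly $\{b_1, \ldots, b_{n+1-j}\}$, with the upper bound $n+1-j$ forced by requiring $b_k$ to be a genuine signed permutation on $\{1, \ldots, n+1\}$. As an alternative self-contained argument, one can show directly that each $b_k$ is a dissector in $\BG{0}{j}$ (whence join-irreducible by \cite[Proposition 12]{Re}), and then enumerate the remaining bigrassmannians of $\BG{0}{j}$ to check that none are join-irreducible.

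For (iii), the reduced expression for $b_{k+1}$ in \eqref{eq:B_JI_0j} visibly contains the reduced expression for $b_k$ as a subword: the extra factor $s_{k,0}$ is appended to the left block, and each right-block factor of $b_k$, namely $s_{k+i-1, i}$ of length $k$, appears as a suffix of the corresponding factor $s_{k+i, i}$ of $b_{k+1}$ of length $k+1$. By the subword characterization of the Bruhat order, $b_k \leq b_{k+1}$, with strictness following from $\ell(b_{k+1}) - \ell(b_k) = k+1+j > 0$. Combined with (ii), this yields the chain \eqref{eq:00_chain}. The principal difficulty is step (ii): matching the external classifications of \cite{GeK, anderson} to the descent-sorted presentation used here and ruling out extra join-irreducibles in $\BG{0}{j}$. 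The self-contained alternative trades this for a combinatorial enumeration of bigrassmannians in $\BG{0}{j}$ and a direct dissector check, which is tractable but notation-heavy.
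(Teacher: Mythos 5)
Your proposal is correct and follows essentially the same route as the paper: the identification of $\JI{0}{j}$ with the listed elements is delegated to the classification in \cite[Theorem 4.6]{GeK}, and the chain \eqref{eq:00_chain} is read off from the concatenated reduced expressions (your explicit subword argument, with the length count $\ell(b_{k+1})-\ell(b_k)=k+1+j$, is exactly the verification the paper leaves implicit). The extra material in your step (i) and the self-contained dissector alternative in step (ii) are fine but not needed beyond what the cited theorem already supplies.
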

\begin{proof}
The first part follows directly from \cite[Theorem 4.6]{GeK}. The Bruhat relations in (\ref{eq:00_chain}) follow from the reduced expressions obtained from (\ref{eq:B_JI_0j}).
%
%That $w_{0,a,r}\in BG(0,a)$ and their Bruhat order is seen from the definition.  By Lemma %\ref{BG=JI B} $w_{0,a,r}\in JI(0,a)$.
%That there is no element in $JI(0,a)$ not of the form $w_{0,a,r}$ follows from Proposition \ref{Bbut1} and Lemma \ref{Bnumber}.
\end{proof}

\begin{proposition}\label{BJIab}
Let $1\leq i\leq j\leq n$. Then the set $\JI{i}{j}$ consists of the following three types of elements:
\begin{itemize}
    \item (type $O_A$)  for $1\leq k\leq \min \{i,n+1-j\}$:
    \[b^{ij}_{\circ,k} = b_{\circ,k} := s_{i,i-k+1} \cdot s_{i+1,i-k+2} \cdot \ldots \cdot s_{k+j-1,j} , \] %=\big(i\cdots (j+k-1)\big)\big((i-1)\cdots (j+k-2)\big)\cdots \big((i-k+1)\cdots j\big)
    with the picture
    \[\scalebox{0.9}{
\medmuskip=-2mu
\begin{tikzpicture}[scale=1, baseline=-3mm]
\draw[red,fill=white] (0,0) -- (0,3.000000);
\node[below] at (0,0) {\tiny $0$};
\node[above] at (0,3.000000) {\tiny $0$};
\draw (0.600000,0) -- (0.600000,3.000000);
\draw (-0.600000,0) -- (-0.600000,3.000000);
\node[below] at (0.600000,0) {\tiny $1$};
\node[below=-0.45mm] at (-0.600000,0) {\tiny $\overline{1}$};
\node[above] at (0.600000,3.000000) {\tiny $1$};
\node[above] at (-0.600000,3.000000) {\tiny $\overline{1}$};
\node[below] at (1.200000,0) {\tiny $\cdots$};
\node[below] at (-1.200000,0) {\tiny $\cdots$};
\node[above] at (1.200000,3.000000) {\tiny $\cdots$};
\node[above] at (-1.200000,3.000000) {\tiny $\cdots$};
\draw (1.800000,0) -- (1.800000,3.000000);
\draw (-1.800000,0) -- (-1.800000,3.000000);
\node[below] at (1.800000,0) {\tiny $i-k$};
\node[below=-0.45mm] at (-1.800000,0) {\tiny $\overline{\imath-k}$};
\node[above] at (1.800000,3.000000) {\tiny $i-k$};
\node[above] at (-1.800000,3.000000) {\tiny $\overline{\imath-k}$};
\draw (2.400000,0) -- (4.200000,3.000000);
\draw (-2.400000,0) -- (-4.200000,3.000000);
\node[below] at (2.400000,0) {\tiny $i-k+1$};
\node[below=-0.45mm] at (-2.400000,0) {\tiny $\overline{\imath-k+1}$};
\node[above] at (2.400000,3.000000) {\tiny $i-k+1$};
\node[above] at (-2.400000,3.000000) {\tiny $\overline{\imath-k+1}$};
\node[below] at (3.000000,0) {\tiny $\cdots$};
\node[below] at (-3.000000,0) {\tiny $\cdots$};
\node[above] at (3.000000,3.000000) {\tiny $\cdots$};
\node[above] at (-3.000000,3.000000) {\tiny $\cdots$};
\draw (3.600000,0) -- (5.400000,3.000000);
\draw (-3.600000,0) -- (-5.400000,3.000000);
\node[below] at (3.600000,0) {\tiny $j$};
\node[below=-0.45mm] at (-3.600000,0) {\tiny $\overline{\jmath}$};
\node[above] at (3.600000,3.000000) {\tiny $i$};
\node[above] at (-3.600000,3.000000) {\tiny $\overline{\imath}$};
\draw (4.200000,0) -- (2.400000,3.000000);
\draw (-4.200000,0) -- (-2.400000,3.000000);
\node[below] at (4.200000,0) {\tiny $j+1$};
\node[below=-0.45mm] at (-4.200000,0) {\tiny $\overline{\jmath +1}$};
\node[above] at (4.200000,3.000000) {\tiny $i+1$};
\node[above] at (-4.200000,3.000000) {\tiny $\overline{\imath+1}$};
\node[below] at (4.800000,0) {\tiny $\cdots$};
\node[below] at (-4.800000,0) {\tiny $\cdots$};
\node[above] at (4.800000,3.000000) {\tiny $\cdots$};
\node[above] at (-4.800000,3.000000) {\tiny $\cdots$};
\draw (5.400000,0) -- (3.600000,3.000000);
\draw (-5.400000,0) -- (-3.600000,3.000000);
\node[below] at (5.400000,0) {\tiny $j+k$};
\node[below=-0.45mm] at (-5.400000,0) {\tiny $\overline{\jmath+k}$};
\node[above] at (5.400000,3.000000) {\tiny $j+k$};
\node[above] at (-5.400000,3.000000) {\tiny $\overline{\jmath+k}$};

\draw (6.000000,0) -- (6.000000,3.000000);
\draw (-6.000000,0) -- (-6.000000,3.000000);
\node[below] at (6.000000,0) {\tiny $j+k+1$};
\node[below=-0.45mm] at (-6.000000,0) {\tiny $\overline{\jmath+k+1}$};
\node[above] at (6.000000,3.000000) {\tiny $j+k+1$};
\node[above] at (-6.000000,3.000000) {\tiny $\overline{\jmath+k+1}$};
\node[below] at (6.600000,0) {\tiny $\cdots$};
\node[below] at (-6.600000,0) {\tiny $\cdots$};
\node[above] at (6.600000,3.000000) {\tiny $\cdots$};
\node[above] at (-6.600000,3.000000) {\tiny $\cdots$};
\draw (7.100000,0) -- (7.100000,3.000000);
\draw (-7.100000,0) -- (-7.100000,3.000000);
\node[below] at (7.100000,0) {\tiny $n$};
\node[below=-0.45mm] at (-7.100000,0) {\tiny $\overline{n}$};
\node[above] at (7.100000,3.000000) {\tiny $n$};
\node[above] at (-7.100000,3.000000) {\tiny $\overline{n}$};
\end{tikzpicture}} , \]
    \item (type $O_B$)  for $i< k\leq n+1-j$:
    \[b^{ij}_{\circ,k} = b_{\circ,k}:= s_{i0} \cdot s_{i+1,0} \cdot \ldots \cdot s_{k-1,0}  \cdot s_{k,1}  \cdot s_{k+1,2} \cdot \ldots \cdot s_{k+j-1,j} , \] %=\big(i\cdots (j+k-1)\big)\big((i-1)\cdots (j+k-2)\big)\cdots \big(0\cdots (j+k-1-i)\big)\cdots \big(0\cdots j\big)
    with the picture
    \[\scalebox{0.9}{
\medmuskip=-2mu
\begin{tikzpicture}[scale=1, baseline=-3mm]
\draw[red,fill=white] (0,0) -- (0,3.000000);
\node[below] at (0,0) {\tiny $0$};
\node[above] at (0,3.000000) {\tiny $0$};
\draw (0.600000,0) -- (4.200000,3.000000);
\draw (-0.600000,0) -- (-4.200000,3.000000);
\node[below] at (0.600000,0) {\tiny $1$};
\node[below=-0.45mm] at (-0.600000,0) {\tiny $\overline{1}$};
\node[above] at (0.600000,3.000000) {\tiny $1$};
\node[above] at (-0.600000,3.000000) {\tiny $\overline{1}$};
\node[below] at (1.200000,0) {\tiny $\cdots$};
\node[below] at (-1.200000,0) {\tiny $\cdots$};
\node[above] at (1.200000,3.000000) {\tiny $\cdots$};
\node[above] at (-1.200000,3.000000) {\tiny $\cdots$};
\draw (1.800000,0) -- (5.400000,3.000000);
\draw (-1.800000,0) -- (-5.400000,3.000000);
\node[below] at (1.800000,0) {\tiny $j$};
\node[below=-0.45mm] at (-1.800000,0) {\tiny $\overline{\jmath}$};
\node[above] at (1.800000,3.000000) {\tiny $i$};
\node[above] at (-1.800000,3.000000) {\tiny $\overline{\imath}$};
\draw (2.400000,0) -- (-3.600000,3.000000);
\draw (-2.400000,0) -- (3.600000,3.000000);
\node[below] at (2.400000,0) {\tiny $j+1$};
\node[below=-0.45mm] at (-2.400000,0) {\tiny $\overline{\jmath+1}$};
\node[above] at (2.400000,3.000000) {\tiny $i+1$};
\node[above] at (-2.400000,3.000000) {\tiny $\overline{\imath+1}$};
\node[below] at (3.000000,0) {\tiny $\cdots$};
\node[below] at (-3.000000,0) {\tiny $\cdots$};
\node[above] at (3.000000,3.000000) {\tiny $\cdots$};
\node[above] at (-3.000000,3.000000) {\tiny $\cdots$};
\draw (3.600000,0) -- (-2.400000,3.000000);
\draw (-3.600000,0) -- (2.400000,3.000000);
\node[below] at (3.600000,0) {\rotatebox[origin=c]{270}{\tiny $j+k-i$}};
\node[below=-0.45mm] at (-3.600000,0) {\rotatebox[origin=c]{270}{\tiny $\overline{\jmath+k-\imath}$}};
\node[above] at (3.600000,3.000000) {\tiny $k$};
\node[above] at (-3.600000,3.000000) {\tiny $\overline{k}$};
\draw (4.200000,0) -- (0.600000,3.000000);
\draw (-4.200000,0) -- (-0.600000,3.000000);
\node[below] at (4.200000,0) {\rotatebox[origin=c]{270}{\tiny $j+k-i+1$}};
\node[below=-0.45mm] at (-4.200000,0) {\rotatebox[origin=c]{270}{\tiny $\overline{\jmath+k-\imath+1}$}};
\node[above] at (4.200000,3.000000) {\tiny $k+1$};
\node[above] at (-4.200000,3.000000) {\tiny $\overline{k+1}$};
\node[below] at (4.800000,0) {\tiny $\cdots$};
\node[below] at (-4.800000,0) {\tiny $\cdots$};
\node[above] at (4.800000,3.000000) {\tiny $\cdots$};
\node[above] at (-4.800000,3.000000) {\tiny $\cdots$};
\draw (5.400000,0) -- (1.800000,3.000000);
\draw (-5.400000,0) -- (-1.800000,3.000000);
\node[below] at (5.400000,0) {\tiny $j+k$};
\node[below=-0.45mm] at (-5.400000,0) {\tiny $\overline{\jmath+k}$};
\node[above] at (5.400000,3.000000) {\tiny $k+j$};
\node[above] at (-5.400000,3.000000) {\tiny $\overline{k+\jmath}$};

\draw (6.000000,0) -- (6.000000,3.000000);
\draw (-6.000000,0) -- (-6.000000,3.000000);
\node[below] at (6.000000,0) {\tiny $j+k+1$};
\node[below=-0.45mm] at (-6.000000,0) {\tiny $\overline{\jmath+k+1}$};
\node[above] at (6.000000,3.000000) {\tiny $k+j+1$};
\node[above] at (-6.000000,3.000000) {\tiny $\overline{k+\jmath+1}$};
\node[below] at (6.600000,0) {\tiny $\cdots$};
\node[below] at (-6.600000,0) {\tiny $\cdots$};
\node[above] at (6.600000,3.000000) {\tiny $\cdots$};
\node[above] at (-6.600000,3.000000) {\tiny $\cdots$};
\draw (7.100000,0) -- (7.100000,3.000000);
\draw (-7.100000,0) -- (-7.100000,3.000000);
\node[below] at (7.100000,0) {\tiny $n$};
\node[below=-0.45mm] at (-7.100000,0) {\tiny $\overline{n}$};
\node[above] at (7.100000,3.000000) {\tiny $n$};
\node[above] at (-7.100000,3.000000) {\tiny $\overline{n}$};
\end{tikzpicture}} , \]
    \item(type $X$)  for $1\leq k\leq n+1-j$:
    \[b^{ij}_{\times,k} = b_{\times,k} := %t_{i,j+k-1} \cdot \ldots \cdot t_{i,j+1}\cdot t_{i,j}% 
    t_{ii} \cdot t_{i+1,i} \cdot \ldots \cdot t_{i+k-1,i} \cdot s_{i+k,i+1} \cdot s_{i+k+1,i+2} \cdot \ldots \cdot s_{j+k-1,j} ,
    \] % =\big(i \cdots 101 \cdots (j+k-1)\big) \big(i \cdots 101 \cdots (j+k-2)\big) \cdots \big(i \cdots 101 \cdots j\big)
    with the picture
    \[ \scalebox{0.9}{
\medmuskip=-2mu
\begin{tikzpicture}[scale=1, baseline=-3mm]
\draw[red,fill=white] (0,0) -- (0,3.000000);
\node[below] at (0,0) {\tiny $0$};
\node[above] at (0,3.000000) {\tiny $0$};
\draw (0.600000,0) -- (0.600000,3.000000);
\draw (-0.600000,0) -- (-0.600000,3.000000);
\node[below] at (0.600000,0) {\tiny $1$};
\node[below=-0.45mm] at (-0.600000,0) {\tiny $\overline{1}$};
\node[above] at (0.600000,3.000000) {\tiny $1$};
\node[above] at (-0.600000,3.000000) {\tiny $\overline{1}$};
\node[below] at (1.200000,0) {\tiny $\cdots$};
\node[below] at (-1.200000,0) {\tiny $\cdots$};
\node[above] at (1.200000,3.000000) {\tiny $\cdots$};
\node[above] at (-1.200000,3.000000) {\tiny $\cdots$};
\draw (1.800000,0) -- (1.800000,3.000000);
\draw (-1.800000,0) -- (-1.800000,3.000000);
\node[below] at (1.800000,0) {\tiny $i$};
\node[below=-0.45mm] at (-1.800000,0) {\tiny $\overline{\imath}$};
\node[above] at (1.800000,3.000000) {\tiny $i$};
\node[above] at (-1.800000,3.000000) {\tiny $\overline{\imath}$};
\draw (2.400000,0) -- (4.200000,3.000000);
\draw (-2.400000,0) -- (-4.200000,3.000000);
\node[below] at (2.400000,0) {\tiny $i+1$};
\node[below=-0.45mm] at (-2.400000,0) {\tiny $\overline{\imath+1}$};
\node[above] at (2.400000,3.000000) {\tiny $i+1$};
\node[above] at (-2.400000,3.000000) {\tiny $\overline{\imath+1}$};
\node[below] at (3.000000,0) {\tiny $\cdots$};
\node[below] at (-3.000000,0) {\tiny $\cdots$};
\node[above] at (3.000000,3.000000) {\tiny $\cdots$};
\node[above] at (-3.000000,3.000000) {\tiny $\cdots$};
\draw (3.600000,0) -- (5.400000,3.000000);
\draw (-3.600000,0) -- (-5.400000,3.000000);
\node[below] at (3.600000,0) {\tiny $j$};
\node[below=-0.45mm] at (-3.600000,0) {\tiny $\overline{\jmath}$};
\node[above] at (3.600000,3.000000) {\tiny $i+k$};
\node[above] at (-3.600000,3.000000) {\tiny $\overline{\imath+k}$};
\draw (4.200000,0) -- (-3.600000,3.000000);
\draw (-4.200000,0) -- (3.600000,3.000000);
\node[below] at (4.200000,0) {\tiny $j+1$};
\node[below=-0.45mm] at (-4.200000,0) {\tiny $\overline{\jmath+1}$};
\node[above] at (4.200000,3.000000) {\tiny $i+k+1$};
\node[above] at (-4.200000,3.000000) {\tiny $\overline{\imath+k+1}$};
\node[below] at (4.800000,0) {\tiny $\cdots$};
\node[below] at (-4.800000,0) {\tiny $\cdots$};
\node[above] at (4.800000,3.000000) {\tiny $\cdots$};
\node[above] at (-4.800000,3.000000) {\tiny $\cdots$};
\draw (5.400000,0) -- (-2.400000,3.000000);
\draw (-5.400000,0) -- (2.400000,3.000000);
\node[below] at (5.400000,0) {\tiny $j+k$};
\node[below=-0.45mm] at (-5.400000,0) {\tiny $\overline{\jmath+k}$};
\node[above] at (5.400000,3.000000) {\tiny $j+k$};
\node[above] at (-5.400000,3.000000) {\tiny $\overline{\jmath+k}$};

\draw (6.000000,0) -- (6.000000,3.000000);
\draw (-6.000000,0) -- (-6.000000,3.000000);
\node[below] at (6.000000,0) {\tiny $j+k+1$};
\node[below=-0.45mm] at (-6.000000,0) {\tiny $\overline{\jmath+k+1}$};
\node[above] at (6.000000,3.000000) {\tiny $j+k+1$};
\node[above] at (-6.000000,3.000000) {\tiny $\overline{\jmath+k+1}$};
\node[below] at (6.600000,0) {\tiny $\cdots$};
\node[below] at (-6.600000,0) {\tiny $\cdots$};
\node[above] at (6.600000,3.000000) {\tiny $\cdots$};
\node[above] at (-6.600000,3.000000) {\tiny $\cdots$};
\draw (7.100000,0) -- (7.100000,3.000000);
\draw (-7.100000,0) -- (-7.100000,3.000000);
\node[below] at (7.100000,0) {\tiny $n$};
\node[below=-0.45mm] at (-7.100000,0) {\tiny $\overline{n}$};
\node[above] at (7.100000,3.000000) {\tiny $n$};
\node[above] at (-7.100000,3.000000) {\tiny $\overline{n}$};
\end{tikzpicture}} .\]
\end{itemize}
There are no cancellations in the above products, i.e. one obtains reduced expressions for $b_{\circ,k}$ and $b_{\times,k}$ by concatenating the reduced expressions for the factors.
Moreover, the partial order on $\JI{i}{j}$
is generated by
the following relations (see Figure \ref{fig:BD_JI_gen}):
\begin{align}
\label{relo}    & b_{\circ,k} < b_{\circ,k+1}, & & 1 \leq k \leq n-j, \\
\label{relx}    & b_{\times,k} < b_{\times,k+1}, & & 1 \leq k \leq n-j,\\
\label{relox}    & b_{\circ,k} < b_{\times,k}, & & 1 \leq k \leq n+1-j, \\
\label{relxo}    & b_{\times,k} < b_{\circ,k+i}, & & 1 \leq k \leq n+1-j-i.
\end{align}
%\begin{enumerate}
%    \item $b_{\circ,k} < b_{\circ,k+1}$, %for each $k$;
%    \item $b_{\times,k} < b_{\times,k+1}$, % for each $k$;%for each $1\leq k\leq n+1-j$;
%    \item $b_{\circ,k} < b_{\times,k}$,   %for each $k$;%for each $1\leq k\leq n+1-j$;
%    \item $b_{\times,k} < b_{\circ,k+i}$, % for each $k$. 
%\end{enumerate}
\end{proposition}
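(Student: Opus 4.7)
The plan is to reduce the claim to the known classification of dissectors in type $B$ and then establish the Bruhat relations via signed permutations. By Lemma~\ref{lem:diss}, type $B$ is dissective, so join-irreducible elements coincide with dissectors, and a complete classification is given in \cite[Section~4]{GeK} (alternatively via essential sets in \cite[Theorem~2.2]{anderson}). First I would filter this classification by the descent condition $\JI{i}{j} = \JI{}{} \cap \BG{i}{j}$: the requirement that the unique left descent be $i$ and unique right descent be $j$ severely restricts the shape of the signed-permutation picture. A direct case analysis, according to whether the simple reflection $0$ occurs in a reduced expression and, if so, how the strands cross through position $0$, produces precisely the three families $b^{ij}_{\circ,k}$ of type $O_A$ (when $k\leq i$), $b^{ij}_{\circ,k}$ of type $O_B$ (when $k>i$), and $b^{ij}_{\times,k}$ of type $X$, together with the stated parameter ranges coming from the constraint that the picture fit inside $\{1,\dots,n\}$.

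Next I would verify reducedness of the displayed products. Each factor $s_{ab}$ or $t_{ab}$ acts on the current signed permutation by the obvious shift of a consecutive block of strands; composing the factors left to right yields precisely the pictures drawn in the proposition. Counting inversions via the standard type-$B$ length formula and comparing with the number of letters in the product confirms simultaneously that the given expressions are reduced and that they correctly encode $b^{ij}_{\circ,k}$ and $b^{ij}_{\times,k}$.

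For the partial order, I would apply the tableau (rank-matrix) criterion for the Bruhat order on signed permutations. Each of the relations \eqref{relo}--\eqref{relxo} corresponds to a single local modification of the picture, and a coordinatewise comparison of rank matrices verifies that these are indeed Bruhat relations. To show that they \emph{generate} the order on $\JI{i}{j}$, I would compute length differences between consecutive elements in chains built from \eqref{relo}--\eqref{relxo} and check that any pair of $\leq$-comparable elements in $\JI{i}{j}$ admits such a chain of the correct total length. The main obstacle will be in this last step: ruling out unexpected Bruhat comparisons, for example between $b_{\circ,k}$ and $b_{\times,k'}$ for small $k'$, or between $b_{\times,k}$ and $b_{\circ,k'}$ with $k'-k<i$. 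These I would handle by a position-by-position comparison of the signed permutations, using the explicit pictures to pinpoint entries where one element strictly dominates the other, followed by an induction on the parameter $k$ (and, if needed, on $i$ and $j$).
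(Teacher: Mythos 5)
Your proposal is correct and follows essentially the same route as the paper: the classification of the elements is imported from Geck--Kim, reducedness is read off from the explicit expressions, the relations \eqref{relo}--\eqref{relox} come directly from the reduced expressions, \eqref{relxo} reduces to the case $i=j$ and the type-$B$ tableau criterion, and generation is proved by ruling out exactly the two families of extra comparabilities you identify as the main obstacle. The only real difference is in efficiency at that last step: where you propose position-by-position rank-matrix comparisons, the paper disposes of the non-comparabilities with quicker invariants, namely $b_{\circ,k}\not\le b_{\times,k-1}$ because the reduced expression of $b_{\circ,k}$ uses the simple reflection $j+k-1$ while that of $b_{\times,k-1}$ does not, and $b_{\times,k}\not\le b_{\circ,k'}$ for $k'<k+i$ by counting occurrences of $s_0$.
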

\begin{proof}
The fact that $\JI{i}{j}$ consists of the elements as above follows from \cite[Theorem 4.6]{GeK}. The reduced expressions we choose to give agrees with the expressions used in \cite{GeK}. The case $l=0$ in \cite[Theorem 4.6]{GeK} is our type $O_A$; $l_1=0$ or $l=c$ is our type $X$ (for $i=j$ or $i\neq j$), and $c = m+l$ is our type $O_B$. The Bruhat relations (\ref{relo}), (\ref{relx}) and (\ref{relox}) follow directly from the reduced expressions given above. For the relation (\ref{relxo}), we see from the reduced expressions above that it is enough to prove it in the case $i=j$, which follows easily from e.g. \cite[Theorem 8.1.8]{BB2}.

To prove that (\ref{relo})--(\ref{relxo}) are generating relations in $\JI{i}{j}$, it is enough to observe that $b_{\circ,k} \not\le b_{\times,k-1}$, since  $b_{\circ,k}$ involves the simple reflection $(j+k-1)$ in its reduced expressions while $b_{\times,k-1}$ does not, and, moreover, that $b_{\times,k} \not\le b_{\circ,k'}$ for $k'<k+i$, by comparing the number of occurrences of $s_0$ in their reduced expressions.
%
%Note that the elements of type $O_A$ belong to the parabolic subgroup $(W_I,I=\{1,\cdots,n\})$ of type $A_n$, so are join irreducible.\hk{I do not see an easy argument for these w being JI, though it is easy to see they are big. So use andersen or geck to show it. After that it is KL and Lemma \ref{Bnumber}} The relations \eqref{relo}, \eqref{relx}, \eqref{relox}, \eqref{relxo} are readable from the braid presentations or from the reduced expressions.
\end{proof}

%number of $0\in S$ appearing in  reduced expressions of $x\in W$ is unique, and the index $r$ denotes this number for the join irreducible elements, except in type $O$, the number for $w_{\circ,r}$ is $r-a$ (and zero for $r-a<0$, i.e., for type $O_A$).

\begin{corollary}\label{Bnumber}
Let $i,j\in S$.
Then
\[ \displaystyle|\JI{i}{j}|= \sum_{y\in {}^i\mathcal{H}^j}p_{e,y}(1) = \begin{cases} n+1-\max\{i,j\}, & i=0 \text{ or } j=0; \\ 
2(n+1-\max\{i,j\}), &\text{otherwise.}\end{cases}. \]
\end{corollary}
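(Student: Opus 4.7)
The strategy is to verify each of the two claimed equalities separately, by splitting into cases according to whether $i,j$ are zero, both positive, or one of them equals $n$, and then to observe that the resulting counts agree.

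For the first equality $|\JI{i}{j}| = \cdots$, the plan is to directly count using the explicit lists produced by Propositions~\ref{BJI0a} and~\ref{BJIab}. In the boundary case $i=0$ (or, symmetrically, $j=0$), Proposition~\ref{BJI0a} lists exactly the elements $b_1,\ldots,b_{n+1-j}$, giving $n+1-j = n+1-\max\{i,j\}$ elements. In the interior case $1 \leq i \leq j \leq n$, Proposition~\ref{BJIab} partitions $\JI{i}{j}$ into types $O_A$, $O_B$, and $X$, with index ranges $1\le k \le \min\{i,n+1-j\}$, $i<k\le n+1-j$, and $1\le k \le n+1-j$ respectively. A short case analysis on whether $i \leq n+1-j$ shows that, in either subcase, the three ranges together contain exactly $2(n+1-j) = 2(n+1-\max\{i,j\})$ values. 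The case $j < i$ follows by the symmetry $\JI{i}{j} \leftrightarrow \JI{j}{i}$ induced by $w \mapsto w^{-1}$.

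For the second equality $\sum_{y\in {}^i\hc^j} p_{e,y}(1) = \cdots$, the plan is to plug in $v=1$ in the closed formulas for $p_{e,y}$ obtained in Propositions~\ref{outerB}, \ref{Bbut1}, and~\ref{specialB}. When $(i,j) \neq (0,0)$, Proposition~\ref{Bbut1} gives $p_{e,y}(1) = 1 + d(y) = n+1-\max\{i,j\}$ for each $y\in {}^i\hc^j$; since $|{}^i\hc^j|=1$ if $i=0$ or $j=0$ and $|{}^i\hc^j|=2$ otherwise, summing yields the claimed value. When $(i,j)=(0,0)$, Proposition~\ref{specialB} gives the sum $p_{e,w_{00}}+p_{e,u_{00}}$ as a sum of $n+1$ distinct monomials each with coefficient $1$, so evaluation at $v=1$ gives $n+1 = n+1-\max\{0,0\}$, matching the formula.

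The cases for the two equalities produce matching values in all configurations, which proves the corollary. No serious obstacle is expected here: the hardest work has already been done in Propositions~\ref{BJI0a}, \ref{BJIab}, \ref{outerB}, \ref{Bbut1}, and~\ref{specialB}, and what remains is a careful bookkeeping of small case distinctions, with the mild subtlety being the boundary between $i \leq n+1-j$ and $i > n+1-j$ in the counting of types $O_A$ and $O_B$, which must be handled so that both regimes produce the same total $2(n+1-j)$.
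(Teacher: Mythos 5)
Your proposal is correct and follows exactly the route of the paper's (one-line) proof: count the elements listed in Propositions~\ref{BJI0a} and~\ref{BJIab} and compare with the evaluations at $v=1$ of the polynomials from Propositions~\ref{Bbut1} and~\ref{specialB}, keeping track of whether ${}^i\hc^j$ is a singleton. The case analysis on $i\leq n+1-j$ versus $i>n+1-j$ is handled correctly and both regimes give $2(n+1-j)$ as required.
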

\begin{proof}
Follows from comparing Propositions \ref{BJI0a} and \ref{BJIab} with Propositions  \ref{Bbut1} and \ref{specialB}.
\end{proof}

We will need also to use some information on $\BG{i}{j}$. However, neither a complete enumeration of $\BG{i}{j}\setminus\JI{i}{j}$ nor determination of the 
corresponding poset structure is needed.
The following proposition is enough for our purposes.

\begin{proposition}\label{newelementsofimportance}
Let $1\leq i\leq j \leq n$. The elements
\[f^{ij}_{k} = b^{ij}_{\circ,k+1} \vee b^{ij}_{\times,k} \in \BG{i}{j}\setminus \JI{i}{j}\]
are well-defined, for each $1\leq k\leq n-j$, and 
\begin{equation}\label{eqJMf}
    \JM(f^{ij}_k)=\{b^{ij}_{\circ,k+1}, b^{ij}_{\times,k}\}.
\end{equation} 
Moreover, we have \begin{equation}\label{f<bx}
    f^{ij}_k < b^{ij}_{\times,k+1},
\end{equation} for each $1\leq k\leq n-j$.
\end{proposition}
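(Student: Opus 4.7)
The plan is to argue in the sub-poset $\JI{i}{j}$ using the generating relations \eqref{relo}--\eqref{relxo} from Proposition~\ref{BJIab}, together with the dissectivity of type $B$ (Lemma~\ref{lem:diss}). By Corollary~\ref{cor:JM=JM} and Proposition~\ref{prop:dissJM}, every element of $\BG{i}{j}$ is uniquely determined by the ideal it generates in $\JI{i}{j}$ and admits a canonical minimal join expression by join-irreducibles. The element $b^{ij}_{\times,k+1}$ is already a common upper bound of $b^{ij}_{\circ,k+1}$ (by \eqref{relox}) and $b^{ij}_{\times,k}$ (by \eqref{relx}), so the join $f^{ij}_k$ exists in $\BG{i}{j}$; by Lemma~\ref{lm:cosets} this agrees with the join in $W$, and it lies in $\desc{\{i\}}{\{j\}}$, hence (being nontrivial) in $\BG{i}{j}$.

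Next I would compute $\JM(f^{ij}_k)$ by identifying the ideal it generates in $\JI{i}{j}$ with the union of the ideals generated by $b^{ij}_{\circ,k+1}$ and $b^{ij}_{\times,k}$. Tracing the generating relations, the ideal of $b^{ij}_{\circ,k+1}$ equals $\{b_{\circ,k'} : k' \leq k+1\} \cup \{b_{\times,k'} : k'+i \leq k+1\}$ (the second piece only via \eqref{relxo}), while the ideal of $b^{ij}_{\times,k}$ equals $\{b_{\times,k'} : k' \leq k\} \cup \{b_{\circ,k'} : k' \leq k\}$ (the second piece via the chains $b_{\circ,k'} < b_{\times,k'} \leq b_{\times,k}$ from \eqref{relox} and \eqref{relx}). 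Their union is $\{b_{\circ,k'} : k' \leq k+1\} \cup \{b_{\times,k'} : k' \leq k\}$, whose maxima are precisely $b^{ij}_{\circ,k+1}$ and $b^{ij}_{\times,k}$: the two elements are incomparable since no chain assembled from \eqref{relo}--\eqref{relxo} links $b_{\circ,k+1}$ to $b_{\times,k}$ or (for $i\geq 2$) $b_{\times,k}$ to $b_{\circ,k+1}$. This proves \eqref{eqJMf} and, since $|\JM(f^{ij}_k)|=2$, exhibits $f^{ij}_k$ as a non-trivial join, so $f^{ij}_k\in \BG{i}{j}\setminus \JI{i}{j}$.

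For \eqref{f<bx}, the element $b^{ij}_{\times,k+1}\in \JI{i}{j}$ is a common upper bound of $b^{ij}_{\circ,k+1}$ and $b^{ij}_{\times,k}$, so $f^{ij}_k \leq b^{ij}_{\times,k+1}$; equality would force $b^{ij}_{\times,k+1}$ to be a non-trivial join, contradicting its join-irreducibility. Equivalently, the ideal of $b^{ij}_{\times,k+1}$ strictly contains the ideal computed above for $f^{ij}_k$ by the element $b^{ij}_{\times,k+1}$ itself. The main obstacle is to justify rigorously that \eqref{relo}--\eqref{relxo} exhaust all Bruhat relations on $\JI{i}{j}$ relevant to the incomparability claims and the ideal computation. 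A direct way to do this, and simultaneously to pin down $f^{ij}_k$ concretely, is to write down an explicit reduced expression or signed-permutation model for $f^{ij}_k$ (analogous in form to the $b_{\circ,\ast}$, $b_{\times,\ast}$ expressions of Proposition~\ref{BJIab}), then verify the Bruhat comparisons $b^{ij}_{\circ,k+1}\leq f^{ij}_k$, $b^{ij}_{\times,k}\leq f^{ij}_k$, and $f^{ij}_k < b^{ij}_{\times,k+1}$ by subword arguments, and read off the descents $(\{i\},\{j\})$ from the first and last letters of that reduced expression.
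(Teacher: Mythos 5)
The decisive gap is at the very first step: you infer that the join $b^{ij}_{\circ,k+1}\vee b^{ij}_{\times,k}$ exists because $b^{ij}_{\times,k+1}$ is a common upper bound. A common upper bound does not produce a least upper bound: the Bruhat order is not a lattice, and this fails already in type $A_2$ (the set $\{s,t\}$ has two incomparable minimal upper bounds $st$ and $ts$, so $s\vee t$ does not exist, even though type $A$ is dissective). The paper's proof supplies exactly the ingredient you defer to the last sentence of your proposal: it writes down an explicit reduced word for a candidate $f\in\BG{i}{j}$, obtained from the reduced expression of $b^{ij}_{\circ,k+1}$ by replacing its first $k$ factors $s_{m,i-1}$ by $t_{m,i-1}$, and then certifies $f=b^{ij}_{\circ,k+1}\vee b^{ij}_{\times,k}$ by the join criterion of \cite[Proposition~4.5]{GeK}: $f$ and $f_-=b^{ij}_{\times,k}$ have the same number of occurrences of $s_0$ in their reduced expressions, while $f$ and $f_+=b^{ij}_{\circ,k+1}$ lie in the same layer $W_{j+k}\setminus W_{j+k-1}$. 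Without this construction (or some substitute) there is no element whose down-set of join-irreducibles realizes the ideal you compute, and nothing further can be concluded.

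The rest of your argument is essentially sound modulo this. Once existence is known, dissectivity together with Proposition~\ref{prop:dissJM} and Corollary~\ref{cor:JM=JM} gives \eqref{eqJMf} exactly as you say, and this is also how the paper concludes; your derivation of the strict inequality \eqref{f<bx} from the join-irreducibility of $b^{ij}_{\times,k+1}$ is a clean alternative to the paper's factor-by-factor comparison of reduced expressions. One further point deserves care: for \eqref{eqJMf} to yield a two-element set you need $b^{ij}_{\circ,k+1}$ and $b^{ij}_{\times,k}$ to be incomparable, and your own parenthetical concedes that for $i=1$ the relation \eqref{relxo} reads $b_{\times,k}<b_{\circ,k+1}$; in that case the join degenerates and the claim $f^{ij}_k\notin\JI{i}{j}$ requires separate justification. (This edge case is also passed over in the paper's own proof, but a complete argument must address it rather than leave it implicit.)
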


\begin{proof}
Fix $i,j$ and $1\leq k\leq n-j$. 
We define the elements $f = f^{ij}_k$ as follows.
Take the reduced expression for $b^{ij}_{\circ,k+1}$ given in Proposition~\ref{BJIab}. 
Note that the $k$-th factor is $s_{i+k-1,i-1}$, both for $k\leq i$ and $i<k$.
Let $f$ be given by the expression obtained from 
the reduced expression of $b^{ij}_{\circ,k+1}$
by replacing the first $k$ factors  of the form $s_{m,l}$ by $t_{m,i-1}$. 
The result is an expression of the form used in Section 4 in \cite{GeK}.
In particular, it is reduced and the element belongs to $\BG{i}{j}$.
More explicitly, we have
   \[f = 
    t_{i,i-1} \cdot t_{i+1,i-1} \cdot \ldots \cdot t_{i+k-1,i-1} \cdot s_{i+k,i} \cdot s_{i+k+1,i+1} \cdot \ldots \cdot s_{j+k,j} .    \]

To show that $f = b^{ij}_{\circ,k+1} \vee b^{ij}_{\times,k} $,
we use \cite[Proposition 4.5]{GeK}, which states that any $f\in\BG{i}{j}$ is written as $f=f_- \vee f_+$, whenever $f\geq f_+,f_- \in\BG{i}{j}$ are such that 
\begin{enumerate}
    \item $f$ and $f_-$ have the same number of $s_0$ appearing in their reduced expressions (note that the number is independent of reduced expressions);
    \item $f, f_+ \in W_m\setminus W_{m-1}$, for some $m\leq n$, where $W_m\subset W$ is the parabolic subgroup generated by $\{s_0,\cdots s_m\}\subset S$.
\end{enumerate}
By construction, the pair $f_-=b^{ij}_{\times,k}$ and $f_+=b^{ij}_{\circ,k+1}$ satisfies the conditions, with the numbers $k$ and $m=j+k$, and establishes the first claim. Dissectivity, Proposition~\ref{prop:dissJM}, and Corollary~\ref{cor:JM=JM} give \eqref{eqJMf}.

The relation \eqref{f<bx} follows from the given reduced expressions since the two are comparable factor by factor.
\end{proof}

\begin{lemma}\label{BY}
The Bruhat relations in (\ref{eq:00_chain}), (\ref{relo}), (\ref{relx}) and (\ref{relox}), as well as \eqref{f<bx}, are socle-killing. 

%
%For each $a,b\in S$, $r<r'$ and $?=\circ,\times$ or empty, there is $z\in JI(a\pm 1,b)$ such that $w_{a,b,?,r}< z<w_{a,b,?,r'}$. For each $a,b\geq 1$ and $r$, there is $z\in JI(a\pm 1,b)$ such that $w_{a,b,\times,r}<z<w_{a,b,\circ,r+a}$.
\end{lemma}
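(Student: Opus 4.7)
The plan is to verify each of the five listed relations $x<y$ (with $y\in\JI{i}{j}$) by the subword characterization of the Bruhat order: to establish $x\leq iy$ (respectively $x\leq yj$), it suffices to produce a reduced expression of $y$ such that deleting the leftmost letter $i$ (respectively the rightmost letter $j$) yields a word containing a reduced expression of $x$ as a subword. The explicit reduced expressions for all elements in question are supplied by Propositions~\ref{BJI0a}, \ref{BJIab} and, for $f_k$, by the construction in the proof of Proposition~\ref{newelementsofimportance}, so the whole argument reduces to direct inspection.

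For the chain relations \eqref{eq:00_chain}, \eqref{relo} and \eqref{relx}, all of the form $b_k<b_{k+1}$, I would observe that the two reduced expressions have parallel block structures, where each block of $b_k$ is obtained from the corresponding block of $b_{k+1}$ by removing the leftmost simple reflection. Hence deleting the initial letter (which is the left descent $i$, equal to $0$ in the case of \eqref{eq:00_chain}) from $b_{k+1}$ still leaves $b_k$ as a subword, obtained block by block with each factor shifted one position to the right. The only bookkeeping subtlety is the transition at $k=i$ between types $O_A$ and $O_B$, which changes the shape of the initial blocks but not the validity of the subword argument.

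For \eqref{relox}, namely $b_{\circ,k}<b_{\times,k}$, one checks $b_{\circ,k}\leq i\cdot b_{\times,k}$. Since $b_{\times,k}$ begins with $t_{ii}=i(i-1)\cdots 1\,0\,1\cdots (i-1)i$, deleting the leading $i$ leaves a word that starts with the palindromic tail $(i-1)\cdots 0\cdots (i-1)i$ and continues with $t_{i+1,i}\cdots t_{i+k-1,i}\cdot s_{i+k,i+1}\cdots s_{j+k-1,j}$. The reduced expression of $b_{\circ,k}$ (of type $O_A$ or $O_B$ depending on whether $k\leq i$ or $k>i$) embeds into this word as a subword by selecting the appropriate descending portions of the palindromes.

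Finally, for \eqref{f<bx}, the reduced expression of $f_k$ constructed in the proof of Proposition~\ref{newelementsofimportance} differs from that of $b^{ij}_{\circ,k+1}$ only by replacing each of the first $k$ factors $s_{m,l}$ with $t_{m,i-1}$, and a direct comparison shows that $f_k$ is a subword of the word obtained from $b_{\times,k+1}$ by deleting either the initial $i$ or the final $j$. The main obstacle throughout is the careful bookkeeping needed to track reduced expressions through each of the boundary cases (notably $k=i$ and $i=j$); once the relevant subword embeddings are identified explicitly, however, each case is verified by routine inspection.
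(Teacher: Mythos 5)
Your proposal takes exactly the paper's approach: the paper's own proof of this lemma is literally ``a routine check of all the cases, e.g., using the given reduced expressions,'' and your subword-by-subword verification via the Bruhat subword criterion is precisely the intended elaboration of that check. One small caution for the case \eqref{f<bx}: the displayed reduced word of $f_k$ is not literally a factor-by-factor subword of the word obtained by deleting the initial $i$ from $b_{\times,k+1}$ (already for $i=j=k=1$ one has $f_1=1021$ and $i\cdot b_{\times,2}=012101$, and $1021$ must first be rewritten as $1201$ using the commutation of $s_0$ and $s_2$ before it embeds), but this is harmless since the subword criterion only requires that \emph{some} reduced expression of the smaller element embed.
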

\begin{proof}
This is a routine check of all the cases, e.g., using the given reduced expressions.
%\hk{It is probably acceptable to just provide some examples with pictures? But we could also use the general form to do all cases.}
\end{proof}
%In other words, $w_{a,b,?,r}<w_{a,b,?,r'}$ and $w_{a,b,\times,r}<w_{a,b,\circ,r+a}$ are socle-killing.
%

\begin{proposition}\label{Bbgchain}
For any $0\leq i,j\leq n$, there exists a chain of length $\sum_{y\in {}^i\mathcal{H}^j}p_{e,y}(1)$ in $\BG{i}{j}$.
\end{proposition}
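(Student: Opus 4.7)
The plan is straightforward given the machinery already developed: assemble an explicit chain in $\BG{i}{j}$ using the two families $b^{ij}_{\circ,k}$, $b^{ij}_{\times,k}$ from Proposition~\ref{BJIab} interleaved with the auxiliary elements $f^{ij}_k$ from Proposition~\ref{newelementsofimportance}, and then compare its length to $\sum_{y\in{}^i\hc^j}p_{e,y}(1)$ as computed by Corollary~\ref{Bnumber}. The involution $w\mapsto w^{-1}$ induces an order-isomorphism $\BG{i}{j}\xrightarrow{\sim}\BG{j}{i}$, so I may assume $i\leq j$ throughout.

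The case $\min\{i,j\}=0$ (i.e. $i=0$) is immediate: Proposition~\ref{BJI0a} displays $\JI{0}{j}\subseteq\BG{0}{j}$ as a single chain $b_1<b_2<\cdots<b_{n+1-j}$, and Corollary~\ref{Bnumber} gives the target length $n+1-j$ in this case, so this chain is already of the required size.

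For $1\le i\le j\le n$, the candidate chain I propose in $\BG{i}{j}$ is
\[
b^{ij}_{\circ,1}\ <\ b^{ij}_{\times,1}\ <\ f^{ij}_1\ <\ b^{ij}_{\times,2}\ <\ f^{ij}_2\ <\ \cdots\ <\ b^{ij}_{\times,n-j}\ <\ f^{ij}_{n-j}\ <\ b^{ij}_{\times,n+1-j}.
\]
The relation $b^{ij}_{\circ,1}<b^{ij}_{\times,1}$ is the $k=1$ instance of \eqref{relox}, while $b^{ij}_{\times,k}<f^{ij}_k$ is built into the definition of $f^{ij}_k$ in Proposition~\ref{newelementsofimportance}, and $f^{ij}_k<b^{ij}_{\times,k+1}$ is exactly \eqref{f<bx}. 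Counting the displayed elements gives $1+(n+1-j)+(n-j)=2(n+1-j)$, matching $\sum_{y\in{}^i\hc^j}p_{e,y}(1)$ from Corollary~\ref{Bnumber}.

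I do not foresee any serious obstacle, since the only genuine work needed — the construction of the non-join-irreducible interpolators $f^{ij}_k$ — has already been carried out in Proposition~\ref{newelementsofimportance}. It is essential that one leaves $\JI{i}{j}$: the join-irreducibles alone split into the two subchains of $\circ$-type (length $n+1-j$) and $\times$-type (length $n+1-j$) joined only by the relations \eqref{relox} and \eqref{relxo}, and one can check that no chain entirely inside $\JI{i}{j}$ reaches length $2(n+1-j)$ (for instance, the relations \eqref{relxo} skip too many $\times$-indices). The elements $f^{ij}_k$ bridge $b^{ij}_{\times,k}$ and $b^{ij}_{\times,k+1}$ via $b^{ij}_{\circ,k+1}$, providing exactly the missing comparabilities.
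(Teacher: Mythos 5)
Your proof is correct and is essentially identical to the paper's: the authors also dispose of the case $\min\{i,j\}=0$ via the chain in Proposition~\ref{BJI0a}, and otherwise use exactly the interleaved chain $b_{\circ,1}<b_{\times,1}<f_1<b_{\times,2}<\cdots<f_{n-\max\{i,j\}}<b_{\times,n-\max\{i,j\}+1}$ supplied by Propositions~\ref{BJIab} and~\ref{newelementsofimportance}, with the length count coming from Corollary~\ref{Bnumber}. Your additional remark on why one must leave $\JI{i}{j}$ is accurate but not needed for the argument.
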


\begin{proof}
When $i=0$ or $j=0$, the elements in $\JI{i}{j}$ form a chain, 
as shown in Proposition~\ref{BJI0a}. This chain is of the desired length.

When $i\neq 0 \neq j$, then Proposition~\ref{BJIab} and Proposition~\ref{newelementsofimportance} provide the chain
\begin{equation}\label{bbfbfb}
  b_{\circ,1}<b_{\times,1}<f_1<b_{\times,2}<f_2<\cdots < f_{n-\max \{i,j\}} < b_{\times,n-\max \{i,j\}+1}  
\end{equation}
which has the desired length by Corollary~\ref{Bnumber}.
\end{proof}

\begin{corollary}\label{halfsosumB}
Let $w\in W$. The socle of $\co_w$ is contained in the sum of $\so_{x}$ taken
over $x\in \JM(w)$.
\end{corollary}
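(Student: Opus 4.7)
The plan is to apply the general criterion in Proposition~\ref{cor:sum} to the type $B$ setting, with the combinatorial input supplied by Proposition~\ref{Bbgchain}. Recall that Proposition~\ref{cor:sum} reduces the ``half socle-sum property'' (in the form that $\soc\co_w$ is contained in the sum of $\so_x$ over $x\in\JM(w)$) to the purely combinatorial statement that, for each pair $s,t\in S$, the poset $\BG{s}{t}$ contains a chain of length $\sum_{u\in {}^s\hc^t} p_{e,u}(1)$.

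Thus the proof of Corollary~\ref{halfsosumB} is immediate: apply Proposition~\ref{cor:sum}, whose hypothesis is verified in type $B$ by Proposition~\ref{Bbgchain}. In other words, the chain
\[ b_1<b_2<\cdots<b_{n+1-j} \]
from \eqref{eq:00_chain} handles the case $i=0$ or $j=0$ (and by symmetry of $\JI{i}{j} \cong \JI{j}{i}$, obtained via $w\mapsto w^{-1}$), while for $1\leq i\leq j\leq n$ the chain
\[ b_{\circ,1}<b_{\times,1}<f_1<b_{\times,2}<f_2<\cdots <f_{n-j}<b_{\times,n+1-j} \]
from \eqref{bbfbfb} inside $\BG{i}{j}$ has the required length, as verified by comparing with the count in Corollary~\ref{Bnumber}.

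There is essentially no obstacle at this stage, since all of the work has been carried out in the preceding propositions: the lengths of the chains have been matched against $\sum_{u\in{}^i\hc^j}p_{e,u}(1)$ via the enumeration in Corollary~\ref{Bnumber}, the chain elements have been shown to lie in $\BG{i}{j}$ via the reduced-expression descriptions in Propositions~\ref{BJI0a}, \ref{BJIab} and \ref{newelementsofimportance}, and the Bruhat-order relations between consecutive members of the chain have been established in those same propositions. Consequently, the proof of Corollary~\ref{halfsosumB} consists of a single sentence: it follows immediately from Proposition~\ref{cor:sum} and Proposition~\ref{Bbgchain}.
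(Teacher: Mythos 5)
Your proposal is correct and follows exactly the paper's argument: the authors also deduce Corollary~\ref{halfsosumB} in one line from Proposition~\ref{Bbgchain} together with Proposition~\ref{cor:sum}, with the chains \eqref{eq:00_chain} and \eqref{bbfbfb} doing the combinatorial work inside the proof of Proposition~\ref{Bbgchain}. Your additional commentary on where those chains come from matches the paper's own case analysis, so there is nothing to correct.
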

\begin{proof}
This follows from Proposition~\ref{Bbgchain} and Proposition~\ref{cor:sum}.
\end{proof}

\subsubsection{The join-irreducible socles in type B}\label{ss:jisocleB}

% I think (R) not needed, we already have notation for these elements:
%When the descents $a,b\in S$ are fixed, we denote by the shorter element in $\hc(a,b)$ by o and the longer element by x. If $|\hc(a,b)|=1$ we have $\circ = \times$.

Recall from Subsection~\ref{ssKLB} the structure of the penultimate cell $\mathcal{J}$.

\begin{proposition}\label{Bsoc00}
%The socle of $\Delta_e/\Delta_{w_{0,0,r}}$ is the isotypic subquotient $L_x\langle -\ell(w_0)+2n+3-2r\rangle$ of $\Delta_e$ for $x\in \hc(0,0)$, where $x=$o if $r\equiv n$ modulo $2$ and $x=$x otherwise. 
For $b_k \in \JI{0}{0}$, where $1 \leq k \leq n+1$, we have
\[ \soc \Delta_e / \Delta_{b_k} \cong \begin{cases} L_{u_{00}}\langle -\ell(u_{00})+2(n-k)\rangle, & k \equiv n \text{ (mod $2$)};   \\[.75em]
L_{w_{00}}\langle -\ell(w_{00})+2(n+1-k)\rangle,   &   k \not\equiv n \text{ (mod $2$)} . \end{cases}  \]
\end{proposition}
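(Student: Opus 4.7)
The plan is to squeeze $\so_{b_k}$ between an upper bound on $\maxd\so_{b_k}$ coming from the socle-killing chain and a lower bound on $\mind\so_{b_k}$ obtained inductively from Lemma~\ref{lem:soc_low_bound}, and then to identify the surviving simple subquotient via the alternation encoded in Proposition~\ref{specialB}.

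First, I would record the combinatorial input. By Proposition~\ref{BJI0a}, $\JI{0}{0}$ is the chain $b_1<b_2<\cdots<b_{n+1}$, and by Lemma~\ref{BY} each cover is socle-killing, so $\skal(b_k)=k-1$ and $\skbl(b_k)=n+1-k$. By Proposition~\ref{specialB}, the composition factors of $\Delta_e$ isomorphic to a shift of $L_{u_{00}}$ or $L_{w_{00}}$ are exactly $n+1$ graded simples, each of multiplicity one, placed at the degrees $d_i=\ell(w_0)-2n-1+2i$ for $i=0,\ldots,n$, with the Duflo element ($w_{00}$ when $n$ is even, $u_{00}$ when $n$ is odd) sitting at $d_0$ and the assignment $d_i\mapsto L_{u_{00}}$ or $L_{w_{00}}$ alternating in $i$. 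By Propositions~\ref{prop3} and~\ref{prop6}, every simple summand of $\so_{b_k}$ is a shift of $L_{u_{00}}$ or $L_{w_{00}}$, and Lemma~\ref{skandmaxd} gives $\maxd \so_{b_k}\leq d_{k-1}$.

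Next, I would prove by induction on $k$ that $\so_{b_k}$ is the simple subquotient of $\Delta_e$ at degree $d_{k-1}$. For $k=1$, any such subquotient must sit at degrees $\geq d_0$, and $\so_{b_1}\neq 0$ (since $\co_{b_1}\neq 0$), so the upper bound forces it to be the unique simple at $d_0$. For the inductive step, the induction hypothesis provides that the sum of $\so_{b_j}$ for $j<k$ realizes all ${}^0\hc^0$-composition factors of $\Delta_e$ at the degrees $d_0,\ldots,d_{k-2}$; since every relation $b_j<b_k$ in the chain is socle-killing (it is a composition of socle-killing covers in a chain, and transitivity of $\leq$ together with the left descent $0$ of each $b_j$ propagates the relation $b_j\leq 0\cdot b_k$), Lemma~\ref{lem:soc_low_bound} yields $\mind \so_{b_k}>d_{k-2}$, i.e., $\mind \so_{b_k}\geq d_{k-1}$. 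Combined with the upper bound and the coefficient-one structure at $d_{k-1}$, this squeezes $\so_{b_k}$ to the unique simple subquotient at $d_{k-1}$.

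Finally, I would identify whether the simple at $d_{k-1}$ is $L_{u_{00}}$ or $L_{w_{00}}$. Combining the alternation in Proposition~\ref{specialB} with the observation that $d_0$ carries the Duflo element, a two-case parity check (on whether $n$ is even or odd) shows that $L_{u_{00}}$ occupies $d_{k-1}$ precisely when $k\equiv n\pmod 2$. The shift normalizations in the statement match $d_{k-1}$ via the identities $\ell(u_{00})-2(n-k)=\ell(w_{00})-2(n+1-k)=d_{k-1}$, which follow from $\ell(u_{00})=\ell(w_0)-3$ and $\ell(w_{00})=\ell(w_0)-1$. The main obstacle is the bookkeeping in this last parity match; the core mechanism is the interplay between the chain length $n+1$ in $\JI{0}{0}$ and the total count of ${}^0\hc^0$-composition factors of $\Delta_e$, all carried by coefficient-one terms.
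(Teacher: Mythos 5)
Your proposal is correct and follows essentially the same route as the paper: the socle-killing chain structure of $\JI{0}{0}$ (Proposition~\ref{BJI0a}, Lemma~\ref{BY}), the restriction of socle constituents to shifts of $L_{u_{00}}$ and $L_{w_{00}}$ (Propositions~\ref{prop3} and~\ref{prop6}), the multiplicity-one degree data from Proposition~\ref{specialB}, and a counting argument matching the $n+1$ chain elements to the $n+1$ degrees. Your explicit squeeze via Lemma~\ref{skandmaxd} and the induction through Lemma~\ref{lem:soc_low_bound} is just an unpacking of what the paper compresses into ``Lemma~\ref{socledie}, Lemma~\ref{Bnumber} and the pigeonhole principle,'' and your parity bookkeeping correctly reproduces the stated shifts.
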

\begin{proof}
From Proposition \ref{BJI0a} and Lemma \ref{BY} we know that $\JI{0}{0}$ is a socle-killing chain. By Proposition \ref{prop3} and Proposition \ref{prop6}, the socles corresponding to the elements $b_k$ are direct sums of shifts of $L_{u_{00}}$ and $L_{w_{00}}$. 
Proposition~\ref{specialB} gives the degrees in $\Delta_e$ in which these composition factors appear. The claim now follows from Lemma \ref{socledie}, Lemma \ref{Bnumber} and the pigeonhole principle.
\end{proof}

\begin{proposition}\label{Bsocle0a}
For $1 \leq j \leq n$ and $b_k \in \JI{0}{j}$, where $1 \leq k \leq n+1-j$, we have
\[ \soc \Delta_e/\Delta_{b_{k}} \cong L_{u_{0j}}\langle -\ell(u_{0j})+2(n+1-j-k) \rangle . \]
\end{proposition}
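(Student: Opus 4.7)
The proof follows the template of Proposition~\ref{Bsoc00}, considerably simplified by the fact that when $j\geq 1$ the cell ${}^0\hc^j=\{u_{0j}\}$ is a singleton.

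First, I would invoke Proposition~\ref{Bbut1} to note that
\[ p_{e,u_{0j}} = v^{\ell(u_{0j})} + v^{\ell(u_{0j})-2} + \cdots + v^{\ell(u_{0j})-2(n-j)}, \]
so by \eqref{eq:p_Delta} the composition factors of $\Delta_e$ of the form $L_{u_{0j}}\langle -d\rangle$ are precisely the $n+1-j$ factors $L_{u_{0j}}\langle -\ell(u_{0j})+2i\rangle$, for $i=0,\ldots,n-j$, each with multiplicity one. By Propositions~\ref{prop3} and~\ref{prop6}, each socle $\soc\co_{b_k}$ is a direct sum of some subset of these factors.

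Next, by Proposition~\ref{BJI0a}, the set $\JI{0}{j}$ is a chain $b_1<b_2<\cdots<b_{n+1-j}$, which is socle-killing by Lemma~\ref{BY}. Applying Lemma~\ref{socledie}, no graded component of $\Delta_e$ appearing in $\soc\co_{b_k}$ can appear in $\soc\co_{b_{k'}}$ for $k'>k$. Combined with Corollary~\ref{Bnumber}, which tells us that $|\JI{0}{j}|=n+1-j$ exactly matches the number of available shifts, and with the fact that each $\co_{b_k}$ has nonzero socle, the pigeonhole principle forces each socle to be simple and the assignment $b_k\mapsto\soc\co_{b_k}$ to be a bijection onto the set of available shifts.

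To identify this bijection explicitly, I would use Lemma~\ref{lem:bruh_soc}\eqref{sk m<m}: along the socle-killing chain, $\maxd\co_{b_k}$ is strictly increasing in $k$. Since the candidate shifts sit in the $n+1-j$ distinct degrees $\ell(u_{0j})-2(n-j)<\ell(u_{0j})-2(n-j-1)<\cdots<\ell(u_{0j})$, the unique order-preserving matching sends $b_k$ to the shift at degree $\ell(u_{0j})-2(n+1-j-k)$, giving $\soc\co_{b_k}\cong L_{u_{0j}}\langle -\ell(u_{0j})+2(n+1-j-k)\rangle$, as desired. The only mild technical point is that Lemma~\ref{socledie} must also be applied to non-adjacent pairs $b_k<b_{k'}$; however, if $b_k\leq 0\cdot b_{k+1}$ witnesses socle-killing for the adjacent pair (the other case being analogous), then $z:=0\cdot b_{k+1}$ satisfies $0\notin LD(z)$ and $b_k\leq z\leq b_{k'}$, supplying the required intermediate element.
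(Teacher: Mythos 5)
Your proposal is correct and follows essentially the same route as the paper, which proves this proposition by declaring it ``analogous to the proof of Proposition~\ref{Bsoc00}'': socle-killing chain (Proposition~\ref{BJI0a}, Lemma~\ref{BY}), restriction of socle constituents to shifts of $L_{u_{0j}}$ via Propositions~\ref{prop3} and~\ref{prop6}, the degree count from Proposition~\ref{Bbut1}, and then Lemma~\ref{socledie}, Corollary~\ref{Bnumber} and the pigeonhole principle. Your explicit use of Lemma~\ref{lem:bruh_soc}\eqref{sk m<m} to pin down the order-preserving matching, and the remark on non-adjacent pairs, merely spell out details the paper leaves implicit.
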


\begin{proof}
Analogous to the proof of Proposition \ref{Bsoc00}.
\end{proof}

To determine the case $1\leq i\leq j\leq n$, we need some further computation of KL polynomials.
The following lemma, which is derivable from Lemma~\ref{lem:KL_sz_y} directly, determines all $p_{x,y}$, for $x\in W$ and $y\in\jc$ inductively. Note that the induction also works when we restrict to $y\in \lc\subset \jc$.

\begin{lemma}\label{KLBlemma}
Let $z\in W$ and $s\in S$ be such that $zs>s$.
\begin{enumerate}
\item 
For $i>0$ and $y_{i,j}\in{}^i\hc^j$, we have
\begin{equation}
    p_{zs,y_{i,j}}=\begin{cases}
                        v\inv p_{z,y_{i,j}}, &\text{if $s\neq s_j$};\\
                        -vp_{z,y_{i,j}}+p_{z,y_{i,j+1}}+p_{z,y_{i,j-1}}+\delta_{y_{i,j}.s_{j},w_0}p_{z,w_0}, &\text{if $s=s_j$}.\\
                        \end{cases}
\end{equation}
Here, by convention, $u_{i,j}=w_{i,-j}$, if $j<0$, %w_{i,j} with j<0 does not appear 
and $y_{i,j}=0$, if $j>n$. Note that $\delta_{y_{i,j}.s_{j},w_0}=\delta_{y_{i,j},w_{i,i}}$.
\item For $y_{0,j}\in{}^0\hc^j$, we have
\begin{equation}
    p_{zs,w_{0,j}}=\begin{cases}
                        v\inv p_{z,w_{0,j}}, &\text{if $s\neq s_j$};\\
                        -vp_{z,w_{0,j}}+p_{z,w_{0,j+1}}+p_{z,w_{0,j}.s_j}+\delta_{1,j}p_{z,u_{0,0}},& \text{if $s=s_j$};\\
                        \end{cases}
\end{equation}
and
\begin{equation}
    p_{z.s,u_{0,0}}=\begin{cases}
                        v\inv p_{z,u_{0,0}},& \text{if $s\neq s_0$};\\
                        -vp_{z,u_{0,0}}+p_{z,u_{0,1}},& \text{if $s=s_0$}.\\
                        \end{cases}
\end{equation}
\end{enumerate}
\end{lemma}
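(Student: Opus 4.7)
The plan is to derive the lemma from Lemma~\ref{lem:KL_sz_y} by passing to its right-handed analogue. Using the standard symmetry $p_{x, y} = p_{x^{-1}, y^{-1}}$, Lemma~\ref{lem:KL_sz_y} becomes: for $y \in \jc$, $z \in W$, $s \in S$ with $z < zs$ and $z \neq y, ys$,
\[
p_{zs, y} = \begin{cases} v^{-1} p_{z, y}, & ys < y, \\[.4em]
p_{z, ys} + \sum\limits_{\substack{t \in S \\ yt \sim_\mathtt{R} y \\ yt < y}} p_{z, yt} - v \cdot p_{z, y}, & ys > y. \end{cases}
\]
By Subsection~\ref{ssKLB}, the right descent set of any $y_{ij} \in \lc^j$ is $S \setminus \{s_j\}$, so $ys < y$ is equivalent to $s \neq s_j$. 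This immediately gives the first branch $p_{zs, y} = v^{-1} p_{z, y}$ in each of the three formulas of Lemma~\ref{KLBlemma}.

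For the ascent case $s = s_j$, the remaining task will be to identify $ys$ and the sum explicitly. Both become transparent via the centrality of $w_0$ in type $B$ and the reduced forms $u_{ij} = t_{ij} w_0$, $w_{ij} = s_{ij} w_0$ from \eqref{align:wu}. Since $t_{ij} = i(i-1)\cdots 101 \cdots (j-1)j$ ends in $s_j$, cancellation of the last letter gives $u_{ij} s_j = t_{i, j-1} w_0 = u_{i, j-1}$, where the convention $u_{i, -1} = w_{i, 1}$ is forced by $t_{i, 0} s_0 = s_{i, 1}$ and $s_{i, 1} w_0 = w_{i, 1}$. The parallel computation for $w_{ij}$ yields $w_{ij} s_j = w_{i, j-1}$ when $j > i$ and $w_{ij} s_j = w_{i, j+1}$ when $j < i$, while the diagonal case $j = i$ produces the boundary identity $w_{ii} s_i = s_{ii} s_i w_0 = w_0$ that the term $\delta_{y_{ij} s_j, w_0}\, p_{z, w_0}$ in part~(1) is precisely designed to record. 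In the $i = 0$ row, the formula for $w_{0, j}$ keeps $p_{z, w_{0, j} s_j}$ uninterpreted so as to treat the three subcases ($j = 0$ gives $w_0$, $j = 1$ gives $w_{0, 0}$, $j \geq 2$ gives $w_{0, j-1}$) in one breath, and for $u_{00}$ one simply reads off $u_{00} s_0 = s_{0,1} w_0 = u_{0, 1}$.

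Finally, to compute the sum $\sum p_{z, y_{ij} t}$, I would use that $\ell(y_{ij} t) = \ell(y_{ij}) - 1$, so the only candidates in ${}^i\rc$ are the Bruhat neighbors of $y_{ij}$ at length $\ell(y_{ij}) - 1$. A length check against the inventory $\ell(u_{ij'}) = \ell(w_0) - i - j' - 1$, $\ell(w_{ij'}) = \ell(w_0) - |i - j'| - 1$ from Subsection~\ref{ssKLB} shows that these are precisely $y_{i, j \pm 1}$ (with the expected pair in the diagonal case), and multiplying out with the centrality trick verifies $u_{ij} s_{j+1} = u_{i, j+1}$ and $w_{ij} s_{j \pm 1} = w_{i, j \pm 1}$. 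In row $i = 0$ one additionally finds the extra Bruhat cover $w_{0, 1} s_0 = u_{0, 0}$, which produces the $\delta_{1, j}\, p_{z, u_{00}}$ contribution. Substituting into the right-handed formula displayed above then yields each stated equation. The main (routine but tedious) obstacle will be verifying that for every other $t \in S$ the element $y_{ij} t$ escapes $\jc$ and therefore contributes nothing; this amounts to a length-count argument against the list of elements of $\jc$ recalled in Subsection~\ref{ssKLB}, and once done cleanly by cases it reduces the whole lemma to direct substitution.
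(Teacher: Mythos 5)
Your proposal is correct and is essentially the paper's own: the paper offers no written proof of this lemma beyond the remark that it ``is derivable from Lemma~\ref{lem:KL_sz_y} directly,'' and the inversion $p_{x,y}=p_{x^{-1},y^{-1}}$ followed by the centrality-of-$w_0$ bookkeeping you describe is precisely that derivation. The one soft spot is your claim that a pure length count against the inventory of $\jc$ identifies the relevant Bruhat neighbours of $y_{ij}$ inside its right cell --- length coincidences such as $\ell(w_{ij}s_{j-1})=\ell(w_{i,j+1})$ mean you must also compare descent sets or reduced expressions to exclude the remaining $t\in S$, but this sits inside the case-by-case verification you already flag as routine.
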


\begin{lemma}
\label{lem:B_KL}
For $0 \leq i \leq j \leq n$ and $0<j$, we have:
\begin{align}
    p_{s_{ij}, u_{ij}} & = v^{\ell(w_0) - 2(j+1)} + v^{\ell(w_0) - 2(j+2)} + \cdots + v^{\ell(w_0) - 2n}  %= p_{j,u_{jj}}  = v^{\ell(w_0) - 2(j+1)} + v^{\ell(w_0) - 2(j+2)} + \cdots + v^{\ell(w_0) - 2n} 
    , \\
    p_{s_{ij}, w_{ij}} &= v^{\ell(w_0) - 2(j-i+1)} + v^{\ell(w_0) - 2(j-i+2)} + \cdots + v^{\ell(w_0) - 2(n+1-i)} = v^{2(i-1)}(v^{\ell(w_0) - 2j} +p_{s_{ij}, u_{ij}}) %v^{-j+i} \cdot  p_{e,w_{i-1,j}} 
    , \\
    p_{t_{ij}, u_{ij}} &= v^{-2i} \cdot p_{s_{ij}, u_{ij}}% v^{-2i} \cdot p_{j,u_{jj}} 
    , \\
    p_{t_{ij}, w_{ij}} &= p_{s_{ij}, u_{ij}} . %p_{j,u_{jj}}
\end{align}
%
\begin{comment}
\begin{align}
    p_{s_{ij}, u_{ij}} &= p_{j,u_{jj}}  = v^{\ell(w_0) - 2(j+1)} + v^{\ell(w_0) - 2(j+2)} + \cdots + v^{\ell(w_0) - 2n} , \\
    p_{s_{ij}, w_{ij}} &= v^{-j+i} \cdot  p_{e,w_{i-1,j}} , \\
    p_{t_{ij}, u_{ij}} &= v^{-2i} \cdot p_{j,u_{jj}} , \\
    p_{t_{ij}, w_{ij}} &= p_{j,u_{jj}}
\end{align}
\end{comment}
\end{lemma}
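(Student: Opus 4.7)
The strategy is to derive each of the four identities by iteratively applying the Kazhdan--Lusztig recursion of Lemma \ref{KLBlemma} to reduce $p_{x,y}$, for $y \in \mathcal{J}$, to the known base values $p_{e,y'}$ from Propositions \ref{outerB}, \ref{Bbut1}, and \ref{specialB}. The key structural input is that every $y \in \mathcal{L}^j$ has right ascent set $\{s_j\}$, so any simple reflection $s_k$ with $k \neq j$ is a right descent of $y$, and right-multiplying the left argument of $p_{\cdot,y}$ by such $s_k$ yields a clean factor of $v^{-1}$ by the first case of Lemma \ref{KLBlemma}(1).

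For the first formula, I would write $s_{ij} = s_i s_{i+1}\cdots s_{j-1}\cdot s_j$ and build the left argument from $e$ by successive right-multiplications. The first $j-i$ multiplications, by $s_i,\dots,s_{j-1}$, each contribute $v^{-1}$, giving $p_{s_{i,j-1},u_{ij}}=v^{-(j-i)}\, p_{e,u_{ij}}$. The terminal right-multiplication by $s_j$ then invokes the nontrivial ($s=s_j$) case of Lemma \ref{KLBlemma}(1):
\[ p_{s_{ij},u_{ij}} = -v\, p_{s_{i,j-1},u_{ij}} + p_{s_{i,j-1},u_{i,j+1}} + p_{s_{i,j-1},u_{i,j-1}} + \delta_{u_{ij},w_{ii}}\, p_{s_{i,j-1},w_0}. \]
Each term on the right is then reduced to base cases: the $u_{i,j+1}$ term collapses to $v^{-(j-i)}\, p_{e,u_{i,j+1}}$ by the same simplification (with the convention that it vanishes when $j=n$); the $u_{i,j-1}$ term requires a secondary invocation of the nontrivial case, since $s_{j-1}$ is the right ascent of $u_{i,j-1}$; and the Duflo correction uses $p_{x,w_0} = v^{\ell(w_0)-\ell(x)}$ from the preliminaries. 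Substituting the closed forms of $p_{e,u_{ij}}$ and $p_{e,u_{i,j\pm 1}}$ from Proposition \ref{Bbut1} and collecting terms yields the claimed formula.

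The three remaining identities are handled analogously. For $p_{s_{ij},w_{ij}}$ the same recursion is applied starting from $p_{e,w_{ij}}$; the slight change in the output reflects the difference between $p_{e,u_{ij}}$ and $p_{e,w_{ij}}$ recorded in Proposition \ref{Bbut1}. For $p_{t_{ij},u_{ij}}$ and $p_{t_{ij},w_{ij}}$, I would use the presentation $t_{ij} = s_i s_{i-1}\cdots s_1 s_0 s_1 \cdots s_j$, which differs from $s_{ij}$ by $2i$ extra simple reflections, each of which (at the step where it is inserted) is a right descent of the current target. Each such step contributes a factor $v^{-1}$ via Lemma \ref{KLBlemma}(1), accumulating to the factor $v^{-2i}$ in the formula $p_{t_{ij},u_{ij}} = v^{-2i}\, p_{s_{ij},u_{ij}}$. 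The identity $p_{t_{ij},w_{ij}} = p_{s_{ij},u_{ij}}$ then follows by combining this $v^{-2i}$ factor with the already-established closed form for $p_{s_{ij},w_{ij}}$ and simplifying.

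The main obstacle is the careful bookkeeping in the nested recursion for $p_{s_{i,j-1},u_{i,j-1}}$ and in the boundary cases: when $j=n$ the term $u_{i,j+1}$ drops out; when $i=0$ the elements $u_{0j}$ and $w_{0j}$ coincide, which means the second formula must be understood as giving the same value as the first in that degenerate situation; the Duflo correction contributes only in the subcase $u_{ij}=w_{ii}$, i.e.\ when $j=i$; and the closed forms of $p_{e,u_{00}}$ and $p_{e,w_{00}}$ from Proposition \ref{specialB} depend on the parity of $n+1$, forcing a short parity case-split at the end. Apart from these edge cases, the computation is a direct telescoping substitution.
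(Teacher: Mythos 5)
Your overall strategy is the same as the paper's, whose entire proof reads ``follows from Lemma~\ref{KLBlemma} by various inductions'': build the left index of $p_{\,\cdot\,,y}$ up letter by letter, use the easy case of the recursion whenever the letter differs from the relevant ascent of the target, and resolve the remaining nontrivial steps against the closed forms of Propositions~\ref{outerB}, \ref{Bbut1} and~\ref{specialB}. The four identities are correct and the method does work, but two specific claims in your plan would fail as written. First, the Duflo correction: by the identity $\delta_{y_{i,j}s_j,w_0}=\delta_{y_{i,j},w_{ii}}$ recorded in Lemma~\ref{KLBlemma}, this term is nonzero only when the \emph{target} equals $w_{ii}$, and since $u_{ii}\neq w_{ii}$ (the diagonal $\mathtt{H}$-cells have two elements) it never enters the computation of $p_{s_{ij},u_{ij}}$ or $p_{t_{ij},u_{ij}}$ --- not even for $j=i$. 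Your gloss ``$u_{ij}=w_{ii}$, i.e.\ when $j=i$'' would insert a spurious $p_{z,w_0}=v^{\ell(w_0)-\ell(z)}$ into the first and third formulas on the diagonal; e.g.\ in $B_3$ one has $p_{1,u_{11}}=-v\,p_{e,u_{11}}+p_{e,u_{12}}+p_{e,u_{10}}=v^{5}$, whereas adding the correction would give $v^{9}+v^{5}$. The correction genuinely appears only for the $w$-target with $j=i$ (where indeed $w_{ii}s_i=w_0$).

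Second, the passage from $s_{ij}$ to $t_{ij}$. It is not true that each of the $2i$ extra letters is a descent of the current target at the step where it occurs: for $j=i$ the word $t_{ii}$ \emph{begins} with $s_i=s_j$, the right ascent of $u_{ii}$, so the very first step is already the nontrivial case; and even for $i<j$ the neighbour terms $p_{z,y_{i,j\pm1}}$ produced by the final nontrivial step must themselves be evaluated at the longer prefix of $t_{ij}$, whose letters include the right ascents $s_{j\pm1}$ of those neighbours. The factor $v^{-2i}$ is therefore not an accumulation of $2i$ easy steps but the outcome of cancellations inside a nested induction (in $B_3$, $p_{101,u_{11}}=-v\,p_{10,u_{11}}+p_{10,u_{12}}+p_{10,u_{10}}=v^{3}$ involves two nontrivial steps, not two extra factors of $v^{-1}$). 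The bookkeeping you defer is exactly where the work lies, though it does close up. A smaller point: for $i=0$ the neighbour structure is that of Lemma~\ref{KLBlemma}(2) rather than (1), and the second displayed formula of the lemma should be read as restricted to $i\geq 1$, since at $i=0$ it does not literally collapse to the first one even though $u_{0j}=w_{0j}$; your remark that it ``must be understood'' as doing so is the right instinct, but the discrepancy is in the statement, not merely in the degeneracy.
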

\begin{proof}
Follows from Lemma \ref{KLBlemma} by various inductions.
\end{proof}

\begin{corollary}\label{Bk=1}
For $0 \leq i \leq j \leq n$ and $0<j$, we have %(recall that the graded lift of $\co_x$ is $\Delta_e/(\Delta_x\langle \ell(x)\rangle)$)
\[[\co_{b_{\times,1}}:L_{u_{ij}}] = 0 \qquad \text{ and } \qquad [\co_{b_{\times,1}}:L_{w_{ij}}] = 1,\]
and, moreover, the composition factor $L_{w_{ij}}$ appearing in  $\co_{b_{\times,1}}$ is in the minimal degree among the composition factors $L_{w_{ij}}$ in $\Delta_e$.
\end{corollary}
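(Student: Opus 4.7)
Plan: The crux of the proof is the identification $b^{ij}_{\times,1}=t_{ij}$, which is immediate from Proposition~\ref{BJIab}: for $k=1$ the product $t_{i+1,i}\cdots t_{i+k-1,i}$ is empty, and what remains is
\[
t_{ii}\cdot s_{i+1,i+1}\cdot s_{i+2,i+2}\cdots s_{j,j}
= i(i-1)\cdots 1\,0\,1\cdots (i-1)i\,(i+1)(i+2)\cdots j,
\]
which is precisely the defining reduced expression for $t_{ij}$ given in \eqref{align:wu}. (The case $i=j$ is the trivial instance of this, with no $s$-factors remaining.)

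With this identification in hand, the claim reduces to direct polynomial arithmetic. Using \eqref{eq:p_Delta} and the natural embedding $\Delta_{t_{ij}}\hookrightarrow\Delta_e$ (which shifts the grading by $\ell(t_{ij})$), the graded multiplicity of $L_y$ in $\co_{b_{\times,1}}$ is encoded by the coefficients of
\[
p_{e,y}\;-\;v^{\ell(t_{ij})}\,p_{t_{ij},y},
\]
for $y\in\{u_{ij},w_{ij}\}$. One then substitutes the explicit formulas: from Proposition~\ref{Bbut1}, $p_{e,y}$ is an arithmetic progression of $d(y)+1=n-j+1$ even-spaced terms starting at $v^{\ell(y)}$; from Lemma~\ref{lem:B_KL}, $p_{t_{ij},y}$ is a similar (shorter) arithmetic progression. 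A direct comparison of exponent sets shows that after multiplying $p_{t_{ij},y}$ by $v^{\ell(t_{ij})}$, all of its exponents appear among those of $p_{e,y}$ and the two exponent sets differ by exactly one term, namely the smallest exponent of $p_{e,y}$. This simultaneously establishes the numerical multiplicity count and pins down the unique remaining composition factor as lying in the minimal degree of $p_{e,y}$.

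There is no serious obstacle here: the proof is a routine bookkeeping exercise, provided one is careful about (i) the grading shift by $\ell(t_{ij})$ induced by the Verma embedding, and (ii) the convention distinguishing the two elements $u_{ij}$ and $w_{ij}$ of $^i\hc^j$ (with $u_{ij}$ the shorter one when $i,j>0$). Once these are fixed, the corollary is essentially a one-step arithmetic consequence of Proposition~\ref{Bbut1} and Lemma~\ref{lem:B_KL} applied to $b_{\times,1}=t_{ij}$.
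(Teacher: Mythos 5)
Your identification $b^{ij}_{\times,1}=t_{ij}$ is correct, and your reduction of the statement to comparing $p_{e,y}$ with $v^{\ell(t_{ij})}p_{t_{ij},y}$ for $y\in\{u_{ij},w_{ij}\}$ is exactly the comparison the paper performs (its proof is the one-line ``compare Lemma~\ref{lem:B_KL} with Proposition~\ref{Bbut1}''). The problem is your concluding sentence. You assert that for each of the two choices of $y$ the exponent set of $v^{\ell(t_{ij})}p_{t_{ij},y}$ sits inside that of $p_{e,y}$ and that the two sets ``differ by exactly one term, namely the smallest exponent of $p_{e,y}$,'' and you then claim this ``establishes the numerical multiplicity count.'' It does not: for $y=u_{ij}$ the corollary asserts multiplicity $0$, whereas a difference of exactly one term means multiplicity $1$. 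Concretely, Lemma~\ref{lem:B_KL} gives $p_{t_{ij},u_{ij}}=v^{-2i}p_{s_{ij},u_{ij}}$ with $n-j$ terms, while $p_{e,u_{ij}}$ has $n-j+1$ terms by Proposition~\ref{Bbut1}; after the shift by $\ell(t_{ij})=i+j+1$ the surviving monomial is $v^{\ell(u_{ij})-2(n-j)}$, so your own bookkeeping yields $[\co_{b_{\times,1}}:L_{u_{ij}}]=1$, not $0$. (A rank-one sanity check: in $B_2$ with $i=j=n=1$ one has $b_{\times,1}=s_1s_0s_1$, and $L_{u_{11}}=L_{s_0}\langle -1\rangle$ visibly occurs in $\Delta_e/\Delta_{s_1s_0s_1}$, since $\Delta_{s_1s_0s_1}$ has only the two composition factors $L_{s_1s_0s_1}$ and $L_{w_0}$.) So either you must exhibit a cancellation for $y=u_{ij}$ that your exponent comparison in fact rules out, or you must concede that the first displayed equality does not follow from this computation.

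What your computation does establish, and what is actually used downstream (in Proposition~\ref{Bsocab} and the remark after it), is that $\co_{b_{\times,1}}$ contains exactly one composition factor $L_{w_{ij}}$, in degree $\ell(w_{ij})-2(n-j)$, i.e.\ the minimal degree of $p_{e,w_{ij}}$, and that the single surviving $L_{u_{ij}}$ factor sits in the strictly smaller degree $\ell(u_{ij})-2(n-j)=\ell(w_{ij})-2i-2(n-j)$. Hence the maximal-degree penultimate composition factor of $\co_{b_{\times,1}}$ is the $L_{w_{ij}}$ one, which is all the socle argument needs. You should state this weaker (and correct) conclusion explicitly rather than assert that the multiplicity count of the corollary has been verified; as written, your proof claims to prove an equality that your own calculation contradicts.
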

\begin{proof}
Comparing Proposition~\ref{lem:B_KL} with Proposition~\ref{Bbut1} gives the claim.
\end{proof}

\begin{proposition}\label{Bsocab}
Let $1\leq i\leq j\leq n$. For each $1 \leq k \leq n+1-j$, and for
the corresponding join-irreducible elements 
$b_{\circ,k}$ and $b_{\times,k}$ in $\JI{i}{j}$, we have
\begin{align}
\label{soc:B_OA}
    \soc \Delta_e / \Delta_{b_{\circ,k}} &\cong \begin{cases}
    L_{u_{ij}} \langle -\ell(u_{ij}) +2(n+1-j-k)\rangle,
    & b_{\circ,k} \text{ is of type } O_A; \\[.75em]
    (L_{u_{ij}} \oplus L_{w_{ij}}) \langle -\ell(u_{ij}) +2(n+1-j-k)\rangle,     &  b_{\circ,k} \text{ is of type } O_B;    \end{cases} %\\[.75em]
\end{align}
and
\begin{equation}\label{bbx}
    \soc \Delta_e / \Delta_{b_{\times,k}} \cong L_k \langle -\ell(w_{ij}) +2(n+1-j-k)\rangle
\end{equation}
for either $L_k \cong L_{w_{ij}}$ or $L_k \cong L_{u_{ij}}$. 
% 
%\begin{enumerate}
%    \item The socle of $\co_{w_{\circ,r}}$ of type $O_A$ is the isotypic subquotient $L_\circ\langle -\ell(\circ)+2(n-a+1-r) \rangle$ of $\Delta_e$;
%\item The socle of $\co_{w_{\times,r}}$ of type $X$ is the isotypic subquotient $L_\times\langle -\ell(\times)+2(n-a+1-r) \rangle$ of $\Delta_e$;
%\item The socle of $\co_{w_{\circ,r}}$ of type $O_B$ is the sum of two isotypic subquotient $(L_\circ\oplus L_\times)\langle -\ell(\circ)+2(n-a+1-r) \rangle$ of $\Delta_e$.
%\end{enumerate}
\end{proposition}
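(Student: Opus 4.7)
The plan is to determine $\soc\co_b$ for each $b\in\JI{i}{j}$ in two stages. First, I would compute the multiplicities $[\co_b:L_y\langle -d\rangle]$ for $y\in\{u_{ij},w_{ij}\}$ and all $d$ via the generating identity
\[
\sum_{d}[\co_b:L_y\langle -d\rangle]\,v^d \;=\; p_{e,y}-v^{\ell(b)}p_{b,y},
\]
propagating the base values of $p_{s_{ij},y}$ and $p_{t_{ij},y}$ from Lemma~\ref{lem:B_KL} via the recursion in Lemma~\ref{KLBlemma}. Second, I would identify which of these composition factors actually lie in the socle by applying Lemma~\ref{socledie} to suitable intermediate elements provided by the socle-killing relations in Lemma~\ref{BY}.

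For the base cases $k=1$, direct expansion yields $[\co_{b_{\circ,1}}:L_{u_{ij}}\langle -d\rangle]=\delta_{d,\,\ell(u_{ij})-2(n-j)}$ and $[\co_{b_{\circ,1}}:L_{w_{ij}}]=0$, so Propositions~\ref{prop3} and~\ref{prop6} force $\soc\co_{b_{\circ,1}}\cong L_{u_{ij}}\langle -\ell(u_{ij})+2(n-j)\rangle$. For $b_{\times,1}=t_{ij}$, an analogous expansion using $p_{t_{ij},y}=v^{-2i}p_{s_{ij},y}$ pins down the composition factors of relevant type in $\co_{b_{\times,1}}$, namely $L_{u_{ij}}$ and $L_{w_{ij}}$ at their minimum degrees. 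Applying Lemma~\ref{socledie} to an intermediate $z$ with $b_{\circ,1}\leq z\leq b_{\times,1}$ for which $i\notin LD(z)$ (or $j\notin RD(z)$) then rules out $L_{u_{ij}}$ from the socle, leaving $\soc\co_{b_{\times,1}}\cong L_{w_{ij}}\langle -\ell(w_{ij})+2(n-j)\rangle$.

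For general $k$, the inductive step exploits the socle-killing chain~\eqref{bbfbfb} of length $2(n+1-j)$ from Proposition~\ref{Bbgchain}, which matches the total count of composition factors of type $L_{u_{ij}}$ and $L_{w_{ij}}$ in $\Delta_e$ by Corollary~\ref{Bnumber} and Proposition~\ref{Bbut1}. Lemma~\ref{newbgchain} ensures every such composition factor is captured by the sum of socles along this chain, while Lemma~\ref{lem:bruh_soc}(ii) forces strict growth of the maximum socle degree across each socle-killing step, pinning down the degree of the socle for each $b_{\circ,k}$ and $b_{\times,k}$ in turn. The simplicity in the $O_A$ and type $X$ cases then follows by combining these degree constraints with the exclusions from Lemma~\ref{socledie}.

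The hardest part will be the type $O_B$ case of $b_{\circ,k}$, where the claim requires that both $L_{u_{ij}}$ and $L_{w_{ij}}$ appear in the socle at the common degree $\ell(u_{ij})-2(n+1-j-k)$. To establish this, I would first verify, via the KL recursion in Lemma~\ref{KLBlemma} applied to the explicit reduced expression for $b_{\circ,k}$ of type $O_B$ from Proposition~\ref{BJIab}, that both composition factors occur in $\co_{b_{\circ,k}}$ at the claimed degree. Then a careful descent analysis of the reduced expressions in Proposition~\ref{BJIab} must show that no intermediate element $z$ with $b_{\circ,k}\leq z\leq y$ (for any $y\in\JI{i}{j}$ with $y>b_{\circ,k}$) omits a left or right descent of $u_{ij}$ or $w_{ij}$ at that particular degree, so that Lemma~\ref{socledie} cannot kill either summand from the socle.
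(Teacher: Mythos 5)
Your overall skeleton (the chain \eqref{bbfbfb}, socle-killing relations, the counting argument via Lemma~\ref{newbgchain}, and the strict growth of maximal degrees from Lemma~\ref{lem:bruh_soc}) matches the paper's proof, and your treatment of the base cases and of the simple-socle cases ($O_A$ and type $X$) is essentially the argument given there. However, there are two genuine gaps. First, your ``first stage'' -- computing all graded multiplicities $[\co_b:L_y\langle -d\rangle]$ for every $b\in\JI{i}{j}$ by propagating Lemma~\ref{lem:B_KL} through the recursion of Lemma~\ref{KLBlemma} -- is not available as stated: that recursion expresses $p_{zs,y}$ in terms of $p_{z,y'}$ for the \emph{prefix} $z$ of a reduced expression, and these prefixes are generally not join-irreducible (nor bigrassmannian), so one would need closed formulae for $p_{x,y}$ over a much larger set of $x$ than Lemma~\ref{lem:B_KL} provides. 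The paper deliberately avoids this and only ever computes $p_{s_{ij},y}$ and $p_{t_{ij},y}$, extracting everything else from the combinatorics of the chain.

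The more serious gap is the type $O_B$ case. You propose to establish that \emph{both} $L_{u_{ij}}$ and $L_{w_{ij}}$ lie in $\soc\co_{b_{\circ,k}}$ by checking that Lemma~\ref{socledie} cannot exclude either of them. But Lemma~\ref{socledie} is purely an exclusion tool: its inapplicability does not put a composition factor into the socle (the factor could simply extend nontrivially to something of higher degree inside $\co_{b_{\circ,k}}$). A positive argument is required, and the paper supplies two: (a) the auxiliary bigrassmannian elements $f_k=b_{\circ,k+1}\vee b_{\times,k}$ of Proposition~\ref{newelementsofimportance}, whose socle has length two and whose join expression $\JM(f_k)=\{b_{\circ,k+1},b_{\times,k}\}$ together with Corollary~\ref{halfsosumB} \emph{forces} the second socle summand of $\co_{f_k}$ into $\so_{b_{\circ,k+1}}$; and (b) Lemma~\ref{lem:bruh_soc}\eqref{bru mm} applied to the non-socle-killing relation $b_{\times,k-i}<b_{\circ,k}$, which transports the maximal-degree (hence socle) component of $\co_{b_{\times,k-i}}$ into the maximal-degree component of $\co_{b_{\circ,k}}$. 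Neither mechanism appears in your proposal, and without them the two-dimensionality of the socle in the $O_B$ case (as well as the identification $L'\not\cong L_{k-i}$ used in Remark~\ref{soclesumfk}) is not established.
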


\begin{remark}
%\hk{if we do not prove the final proposition we remark here the following}
For $k=1$ and $k>n+1-i-j$, we have $L_k\cong L_{w_{ij}}$ by Corollary~\ref{Bk=1} and for degree reasons. 
We conjecture that $L_k\cong L_{w_{ij}}$ is the case for all $k$, but to prove this in general, 
a computation of KL polynomials similar to Lemma~\ref{lem:B_KL}, e.g., 
using Lemma~\ref{KLBlemma}, seems required. 
\end{remark}

\begin{remark}\label{soclesumfk}
In the proof, we also prove 
\begin{equation}\label{eqf}
    \so_{f_k} = L'_k\langle\ell(u_{ij})+2(n+1-j-k)\rangle \oplus L_k \langle -\ell(w_{ij}) +2(n+1-j-k)\rangle %= L_{u_{ij}}\langle\ell(u_{ij})+2(n+1-j-k)\rangle \oplus \so_{b_{\times,k}},
\end{equation}
with $L'_k \cong L_{w_{ij}}$, if $L_{k-i}\cong L_{u_{ij}}$, and $L'_k \cong L_{u_{ij}}$, if $L_{k-i}\cong L_{w_{ij}}$, where $f_k$ is from Proposition~\ref{newelementsofimportance}. Thus the socle-sum property does not holds for $f_k$ when $k>i$.
\end{remark}

\begin{proof}
We use Proposition~\ref{Bbut1} and Lemma~\ref{socledie} throughout the proof without referring to it.

Consider the chain \eqref{bbfbfb} which we rename $x_1<\cdots <x_{2n-2j+2}$. Since the total multiplicity of composition factors in $\Delta_e$ isomorphic to shifts of $L_y$ with $y\in {}^i\hc^j$ agrees with $2n-2j+2$ by Proposition~\ref{Bnumber}, we have
\[[\co_{x_m}: L_u ]+[\co_{x_m}: L_w ] = m,\] 
for each $1\leq m\leq 2n-2j+2$ (see the proof of Proposition~\ref{newbgchain}), where $u= u_{ij}$ and $w=w_{ij}$.
Here we use that both subquotients $L_u$ and $L_w$ of $\Delta_e$ are graded multiplicity free.
By Proposition~\ref{BY}, for each even number $m$, the relation $x_{m-1}<x_{m}$ is socle-killing. 
Thus, by Lemma \ref{socledie}, the socle of $\co_{x_m}$ is simple, for each even number $m$.
That is, each $\so_{b_{\times,k}}$ is simple.
Now ,Corollary~\ref{Bk=1} gives \eqref{bbx}, for $k=1$. Since $b_{\times,k}<b_{\times,k+1}$ is socle-killing, Lemma~\ref{lem:bruh_soc}\eqref{sk m<m} establishes \eqref{bbx} for all $k$.

Similarly, each subquotient $\Delta_{x_{m-2}}/\Delta_{x_{m}}$ contain exactly two penultimate composition factors (i.e., composition factors isomorphic, up to shift, either to $L_u$ or to $L_w$). By the socle-killing relations and Lemma~\ref{socledie}, this subquotient must contain the maximal degree penultimate component in $\Delta_{e}/\Delta_{x_m}$.
Also, the socle-killing property shows that each $\so_{f_k}$ has length at most two. 
By \eqref{bbx} and degree comparison, we conclude that 
\begin{equation}\label{fkprelim}
\so_{f_k} = L'\langle - d \rangle  \oplus L\langle -\ell(w) +2(n+1-j-k )\rangle = L'\langle - d \rangle  \oplus \so_{b_{\times,k}}.    
\end{equation}
Here $L$ and $L'$ are just notation for the simple subquotients in the socle.
Then \eqref{eqJMf} and Proposition~\ref{halfsosumB} implies that the component $L'\langle -d\rangle$ is contained in $\so_{b_{\circ,k+1}}$.

We prove, by induction on $l = n+1-j-k = 1,\cdots ,n-j$, that 
\begin{equation}\label{eqbo}
    \max\deg(\so_{b_{\circ,k+1}}) = \ell(u) -2(l-1).
\end{equation}
Note that Lemma~\ref{skandmaxd} gives the ``$\geq$'' inequality in Formula~\eqref{eqbo}. 
Let $l=1$. We claim that the ``$\leq$'' inequality in Formula~\eqref{eqbo} follows from the following facts:
\begin{itemize}
\item that there is at most one (graded) penultimate composition factor of $\Delta_e$ in 
each degree $> \ell(u)-2(l-1)$,
\item that such composition factors are the socle of $\co_{b_{\times,k'}}$, for some $k'\geq n+2-i-j$, by \eqref{bbx}, 
\item that, for each  $k'\geq n+2-i-j$ such that $b_{\circ,k+1}>b_{\times, k'}$, 
the latter is socle-killing by Proposition~\ref{BY}.
\end{itemize}
Indeed, let $l>1$, then the socle-killing relation $b_{\circ,k+1}<b_{\circ,k+2}$ and 
induction provide the desired bound.

From Formula~\eqref{eqbo}, we obtain Formula~\eqref{soc:B_OA}, for type $O_A$, immediately.
For type $O_B$, the (non-socle-killing) relation $b_{\circ,k}>b_{\times,k-i}$, Formula~\eqref{bbx}, and Lemma~\ref{lem:bruh_soc} \eqref{bru mm}, together with Formula~\eqref{eqbo}, imply Formula~\eqref{soc:B_OA}.

Finally, Formula~\eqref{eqbo} and the second equality in Formula~\eqref{fkprelim}
implies $d=\ell(u) - 2(n+1-j-k )$ in Formula~\eqref{fkprelim}.
Now, the socle-killing relation $b_{\times,k-i}<b_{\times,k}$ shows that $L'\not\cong L_k$, 
as claimed in Remark~\ref{soclesumfk}.
\end{proof}

\subsection{Type $D$}

We assume $(W,S)$ is of type $D_{n+2}$. 
We use the following labeling of $S$:
\[\begin{tikzpicture}[scale=0.4,baseline=-3]
\protect\draw (4 cm,0) -- (2 cm,0);
\protect\draw (2 cm,0) -- (0 cm,0);
\protect\draw (0 cm,0) -- (-2 cm,0);
\protect\draw (-2 cm,0) -- (-4 cm,0.7 cm);
\protect\draw (-2 cm,0) -- (-4 cm,-0.7 cm);
\protect\draw[fill=white] (4 cm, 0 cm) circle (.15cm) node[above=1pt]{\scriptsize $n$};
\protect\draw[fill=white] (2 cm, 0 cm) circle (0cm) node[above=1pt]{\scriptsize $\cdots$};
\protect\draw[fill=white] (0 cm, 0 cm) circle (.15cm) node[above=1pt]{\scriptsize $2$};
\protect\draw[fill=white] (-2 cm, 0 cm) circle (.15cm) node[above=1pt]{\scriptsize $1$};
\protect\draw[fill=white] (-4 cm, 0.7 cm) circle (.15cm) node[above=1pt]{\scriptsize $\Ou{}$};
\protect\draw[fill=white] (-4 cm, -0.7 cm) circle (.15cm) node[above=1pt]{\scriptsize $\Od{}$};
\end{tikzpicture}
\]

Since $\jc$ is strongly regular, i.e., all $\mathtt{H}$-cells inside $\mathcal{J}$ are singletons, we denote the unique element in ${}^i\hc^j$ as $w_{ij}$. These elements can be described as follows.
Denote by $\widehat{\cdot}$ the identity on $S$, if $n$ is even, and the unique automorphism of the Dynkin diagram that swaps $\Ou{} \leftrightarrow \Od{}$, if $n$ is odd. For simple reflections $i,j$, denote by $s_{ij} \in W$ the product $i \cdots j$ of simple reflections along the unique shortest path starting in $i$ and ending in $j$ in the Dynkin diagram.
Then
\[w_{ij} = s_{i \, \widehat{\jmath}} \cdot w_0. \]

The Bruhat graph of $\jc$, for $n$ even, is given on Figure \ref{fig:Bruhat_J_D_even}. The Bruhat graph of $\jc$, for $n$ odd, is obtained from Figure \ref{fig:Bruhat_J_D_even} by reversing all the arrows that start from or end in one of the following elements: $w_{\Od{}\Od{}}$, $w_{\Od{}\Ou{}}$, $w_{\Ou{}\Od{}}$, $w_{\Ou{}\Ou{}}$. Both for even and odd $n$, the rows are right cells, and the columns are left cells.

\begin{figure}

    \[ \scalebox{.9}{\xymatrix{
    w_{\Od{}\Od{}} & w_{\Od{}\Ou{}} \ar[r] \ar@/^1pc/[dd] & w_{\Od{}1} \ar@/^1pc/[ll]  \ar@/^1pc/[dd] & w_{\Od{}2} \ar@/^1pc/[dd] \ar[l] & w_{\Od{}3} \ar@/^1pc/[dd] \ar[l] & \cdots \ar[l] & w_{\Od{}n} \ar@/^1pc/[dd] \ar[l] \\
    w_{\Ou{}\Od{}} \ar[d] \ar@/_1pc/[rr] & w_{\Ou{}\Ou{}} & w_{\Ou{}1} \ar[l] \ar[d] & w_{\Ou{}2} \ar[l] \ar[d] & w_{\Ou{}3} \ar[l] \ar[d] & \cdots \ar[l] & w_{\Ou{}n} \ar[d] \ar[l] \\
    w_{1\Od{}} \ar@/_1pc/[rr] \ar@/_1pc/[uu] & w_{1\Ou{}} \ar[r] \ar[u] & w_{11} & w_{12} \ar[l] \ar[d] & w_{13} \ar[l] \ar[d] & \cdots \ar[l] & w_{1n} \ar[l] \ar[d] \\
    w_{2\Od{}} \ar[u] \ar@/_1pc/[rr] & w_{2\Ou{}} \ar[r] \ar[u] & w_{21} \ar[u] \ar[r] & w_{22} & w_{23} \ar[l] \ar[d] & \cdots \ar[l] & w_{2n} \ar[l] \ar[d] \\
    w_{3\Od{}} \ar[u] \ar@/_1pc/[rr] & w_{3\Ou{}} \ar[r] \ar[u] & w_{31} \ar[u] \ar[r] & w_{32} \ar[u] & w_{33} \ar[l] & \cdots \ar[l] & w_{3n} \ar[l] \ar[d] \\
    \vdots \ar[u] & \vdots \ar[u] & \vdots \ar[u] & \vdots \ar[u] & \vdots \ar[u] & \ddots & \vdots \ar[d] \\
   w_{n\Od{}} \ar[u] \ar@/_1pc/[rr] &  w_{n\Ou{}} \ar[r] \ar[u] & w_{n1} \ar[r] \ar[u] & w_{n2} \ar[r] \ar[u] & w_{n3} \ar[r] \ar[u] & \cdots \ar[r] & w_{nn}  
    }} \]
    \caption{Bruhat graph of the penultimate two-sided cell in type $D_{n+2}$ with $n$ even.}
    \label{fig:Bruhat_J_D_even}
\end{figure}
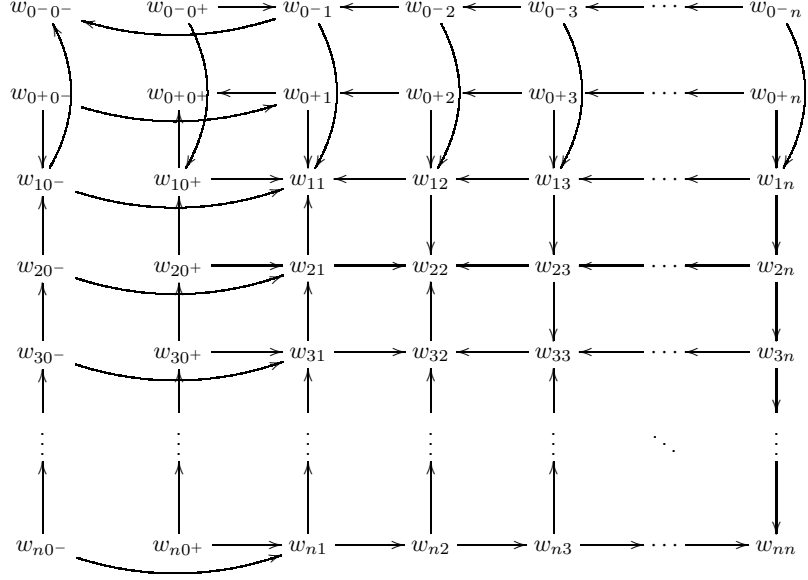

\subsubsection{Some Kazhdan-Lusztig computation in type D} \label{ssKLD}

Since the Coxeter number for $D_{n+2}$ is $h=2n+2$, we have $\ell(w_0)=(n+2)h/2=(n+2)(n+1)=n^2+3n+2$. We do not know any reference for an explicit formula for the value of the $\mathbf{a}$-function on $\jc$, so we will compute  this value below.
We start with the following  estimate.

\begin{lemma} \label{lem:D_a_half}
We have $\mathbf{a}(\jc)\geq \ell(w_0)-2n-1$.
\end{lemma}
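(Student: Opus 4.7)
Since the $\mathbf{a}$-function is constant on two-sided cells and, by the parity and range condition in \eqref{pdegrees}, equals $\min_{y\in\jc}\min\deg p_{e,y}$, it is enough to show $\min\deg p_{e,y}\ge \ell(w_0)-2n-1$ for a single element $y\in\jc$. I would take $y=w_{nn}=s_n\cdot w_0$, the unique element of the diagonal $\mathtt{H}$-cell ${}^n\hc^n$, which has length $\ell(w_0)-1$. The nonzero exponents of $p_{e,w_{nn}}$ all share the parity of $\ell(w_0)-1$, so the required bound reduces to showing that $p_{e,w_{nn}}$ has at most $n+1$ nonzero monomials.

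To obtain this bound, I would compute $p_{e,w_{i,n}}$ inductively along the chain $w_{\Ou{}n}<w_{1n}<w_{2n}<\cdots<w_{nn}$ in the left cell $\lc^n\subseteq \jc$ (see Figure~\ref{fig:Bruhat_J_D_even}). Applying Lemma~\ref{prop2p} with $s=s_i$, which is the unique left ascent of $w_{i,n}$ and satisfies $s_i\cdot w_{i,n}=w_{i+1,n}$, and observing that the only left descent $u$ of $w_{i,n}$ keeping $u\cdot w_{i,n}$ inside $\lc^n$ is $u=s_{i-1}$ (yielding $s_{i-1}\cdot w_{i,n}=w_{i-1,n}$), one obtains the two-term recursion
\[
(v+v^{-1})\,p_{e,w_{i,n}} \;=\; p_{e,w_{i-1,n}} + p_{e,w_{i+1,n}},
\]
analogous to the one appearing in the type~$B$ proof of Proposition~\ref{outerB}. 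Combined with a suitable base case, a two-step induction then yields an explicit closed form for $p_{e,w_{i,n}}$ whose number of monomials grows by exactly one at each step, producing the required bound at $i=n$.

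The main obstacle is the base case: establishing $p_{e,w_{\Ou{}n}}=v^{\ell(w_{\Ou{}n})}$, i.e., that it is a single monomial in the maximal possible degree. In type $B$ this was immediate from $u_{nn}$ being the longest element of a parabolic subgroup of $W$; no analogous parabolic longest-element lies in $\jc$ in type $D$, since the minimum length of an element of $\jc$ equals $(n+1)^2$, which strictly exceeds $\ell(w_0)-2n-1$. I would instead argue directly from the Bruhat structure around $w_{\Ou{}n}$: inspection of Figure~\ref{fig:Bruhat_J_D_even} shows that all its strict Bruhat predecessors that stay inside $\jc$ leave the left cell $\lc^n$, so the socle of $\Delta_e/\Delta_{w_{\Ou{}n}}$ is forced (via Propositions~\ref{prop3} and~\ref{prop6} combined with the parity constraint in \eqref{pdegrees}) to be a single graded simple concentrated in the top degree. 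The distinction between $n$ even and $n$ odd, which affects the diagram automorphism $\widehat{\cdot}$ and the precise position of $w_{\Ou{}n}$ in the graph, requires only routine bookkeeping and does not alter the argument.
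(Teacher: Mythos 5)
Your overall strategy is not the paper's, and it has a genuine gap at exactly the point you flag as "the main obstacle".

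First, a small repair: bounding $\min\deg p_{e,y}$ from below for a \emph{single} $y\in\jc$ only bounds $\mathbf{a}(\jc)$ if that $y$ actually achieves the minimum, i.e.\ is a Duflo element (for general $y$ one only knows $\min\deg p_{e,y}\geq \mathbf{a}(\jc)$, which gives nothing). Your choice $y=w_{nn}$ happens to be the Duflo element of $\lc^n$ (it is the unique element of the diagonal $\mathtt{H}$-cell ${}^n\hc^n$), so this step survives, but it must be said. Also, the recursion along $\lc^n$ does not make the number of monomials "grow by one at each step": starting from a monomial at $w_{\Ou n}$ one gets $p_{e,w_{in}}=v^{\ell(w_{in})}+v^{\ell(w_{in})-2i}$, i.e.\ exactly two monomials throughout (consistent with Proposition~\ref{Dmost} at $j=n$); the final bound would still come out, so this is cosmetic.

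The real problem is the base case $p_{e,w_{\Ou n}}=v^{\ell(w_{\Ou n})}$. Your proposed argument confuses two different invariants: the socle of $\Delta_e/\Delta_{w_{\Ou n}}$ and the graded multiplicities $[\Delta_e:L_{w_{\Ou n}}\langle -d\rangle]$ encoded by $p_{e,w_{\Ou n}}$. Even if the socle of $\Delta_e/\Delta_{w_{\Ou n}}$ were a single graded simple, that says nothing about how many times $L_{w_{\Ou n}}$ occurs in $\Delta_e$. Moreover, the constraints from Propositions~\ref{prop3} and~\ref{prop6} only force socle constituents to lie in ${}^s\hc^t$ with $s\in LD(w_{\Ou n})=S\setminus\{\Ou\}$ and $t\in RD(w_{\Ou n})=S\setminus\{n\}$, which leaves a large collection of $\mathtt{H}$-cells; nothing is "forced to be a single graded simple in top degree". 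Worst of all, the route is circular: in the paper the socle computations and the base case $p_{e,w_{n\Ou}}=v^{\ell(w_{n\Ou})}$ of the analogous recursion (Proposition~\ref{baseD}) are \emph{deduced from} Lemma~\ref{lem:D_a_half}, via \eqref{pdegrees} and the fact that $w_{1\Ou}$ is not Duflo. You cannot feed them back in to prove the lemma.

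The paper's actual proof sidesteps KL polynomials entirely: take $I=S\setminus\{n\}$ and its longest element $w_I$, with $\ell(w_I)=n^2+n=\ell(w_0)-2n-2$ and $\mathbf{a}(w_I)=\ell(w_I)$. Since the only element of $\jc$ with both descent sets equal to $I$ is $w_{nn}$, of length $\ell(w_0)-1\neq\ell(w_I)$, the element $w_I$ is not in $\jc$, and monotonicity of $\mathbf{a}$ along the two-sided order gives $\mathbf{a}(\jc)>\mathbf{a}(w_I)=\ell(w_0)-2n-2$. If you want to salvage your approach, you need an independent, non-circular proof that $p_{e,w_{\Ou n}}$ (or $p_{e,w_{n\Ou}}$) is a single monomial; as written, that step is missing.
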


\begin{proof}
Consider $I=\{\Od{},\Ou{},1,\cdots n-1\}\subset S$. We have $\ell(w_I)=(n+1)2n/2=n^2+n=\ell(w_0)-2n-2$. We want to prove that $w_I$ is not in $\jc$. If it were, then it would have to be in ${}^n \mathcal{H}^n$. But ${}^n \mathcal{H}^n$ has only one element $w_{nn} = n \cdot w_0$, whose length is $\ell(w_0)-1$, a contradiction.
Now, $w_I<_J \jc$ implies $\mathbf{a}(w_I) = \ell(w_I)  = \ell(w_0)-2n-2<\mathbf a(\jc)$, proving the claim.
\end{proof}

We will simplify the notation and use $p_{ij} := p_{e,w_{ij}}$, for $i,j \in S$.
The following is the base case of the inductive computation of the $p_{e,y}$, for $y\in \jc$.

\begin{proposition}\label{baseD}
For $y = w_{i\Ou{}}$, $w_{i\Od{}}$, $w_{\Ou{}i}$, or $w_{\Od{}i}$, where $i\geq 1$, we have
\begin{equation}
\label{eq:baseD}
    p_{e,y}=v^{\ell(y)}+v^{\ell(y)-2}+\cdots +v^{\ell(y)-2(n-i)}.
\end{equation}
\end{proposition}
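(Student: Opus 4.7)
The plan is to follow the template of Proposition~\ref{outerB} in type $B$.

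First, I reduce the four cases to just $y = w_{i\Od{}}$: the case $w_{i\Ou{}}$ reduces via the Dynkin diagram automorphism swapping $\Od{} \leftrightarrow \Ou{}$, and the cases $w_{\Od{}i}, w_{\Ou{}i}$ reduce via the inversion identity $p_{e,y} = p_{e,y^{-1}}$ together with the identity $(w_{i\Od{}})^{-1} = w_{\widehat{\Od{}}\,i}$, which follows from $w_0$-conjugation implementing the diagram involution $\widehat{\cdot}$.

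Next, I apply Lemma~\ref{prop2p} to $y = w_{i\Od{}}$ with $s = i$, the unique left ascent of $y$. Since $i \neq \Od{}$ for $i \ge 1$, the $\mathtt{H}$-cell ${}^i\mathcal{H}^{\Od{}}$ is off-diagonal, so both sides of \eqref{2p} reduce to $(v + v^{-1}) p_{e, w_{i\Od{}}}$. The ascent term is $i\, w_{i\Od{}} = w_{i-1,\Od{}}$. For the descent sum, $u\, w_{i\Od{}}$ (with $u \in LD(w_{i\Od{}})$) has length $\ell(w_0) - (i+2)$ and must lie in $\mathcal{L}^{\Od{}} \cap \jc$ to contribute; a length comparison against $\ell(w_{j\Od{}}) = \ell(w_0) - (j+1)$ shows that the only candidate is $u\, w_{i\Od{}} = w_{i+1,\Od{}}$, which fails to exist in $\jc$ for $i = n$. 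For $i \ge 2$, no corner element $w_{\Od{}\Od{}}, w_{\Od{}\Ou{}}, w_{\Ou{}\Od{}}, w_{\Ou{}\Ou{}}$ of the right length lies in $\mathcal{L}^{\Od{}}$, so I obtain the clean three-term recursion
\[
(v + v^{-1})\, p_{e, w_{i\Od{}}} \;=\; p_{e, w_{i-1,\Od{}}} + p_{e, w_{i+1,\Od{}}}, \qquad 2 \le i \le n,
\]
with the convention $p_{e, w_{n+1,\Od{}}} := 0$. A two-step induction on $n - i$, starting from the base case at $i = n$, then extracts the desired closed formula exactly as in the proof of Proposition~\ref{Bbut1}. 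In particular, $p_{e, w_{1\Od{}}}$ is read off from the $i = 2$ equation, so one does not have to tackle the more subtle recursion at $i = 1$, where the descent sum could see the $\Od{}/\Ou{}$ corner block.

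The main obstacle is the base case $p_{e, w_{n\Od{}}} = v^{\ell(w_{n\Od{}})}$. In type $B$ the analogue $u_{nn}$ is the longest element of a parabolic subgroup, forcing monomiality; in type $D$, however, no parabolic of $W$ has longest element of length $(n+1)^2 = \ell(w_{n\Od{}})$, so that shortcut is unavailable. I would combine Lemma~\ref{lem:D_a_half}, the uniform parity of exponents in~\eqref{pdegrees}, and the fact that $w_{n\Od{}}$ is not a Duflo element (as ${}^n\mathcal{H}^{\Od{}}$ is off-diagonal, forcing the minimum exponent strictly above $\mathbf{a}(\jc) \ge n^2 + n + 1$) to pin down the polynomial. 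For small ranks the admissible parity window already collapses to a singleton; for larger ranks a further input is required, most naturally the Koszul/inversion identity $\mu(x,y) = \mu(w_0 x^{-1}, w_0 y^{-1})$, which transfers the question to $w_0 (w_{n\Od{}})^{-1} = s_{\Od{}\, n}$ in the well-understood small cell $\jc_1$ via Proposition~\ref{smallJmu}.
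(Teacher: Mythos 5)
Your reduction to a single family, the derivation of the three-term recursion from Lemma~\ref{prop2p} (including the check that for $i\geq 2$ no corner element $w_{\Ou{}\Od{}}$, etc., can enter the descent sum, and that the sum is empty at $i=n$), and the concluding two-step induction all coincide with the paper's argument. You have also correctly identified the crux: unlike type $B$, where $u_{nn}$ is the longest element of a parabolic, there is no such shortcut for $w_{n\Od{}}$, so the base case $p_{e,w_{n\Od{}}}=v^{\ell(w_{n\Od{}})}$ needs a separate idea. The problem is that neither of your two proposed tools closes it. The direct bound places the exponents of $p_{e,w_{n\Od{}}}$ in the window $\bigl(\mathbf{a}(\jc),\,\ell(w_{n\Od{}})\bigr]$ with $\mathbf{a}(\jc)\geq \ell(w_0)-2n-1=n^2+n+1$ and $\ell(w_{n\Od{}})=(n+1)^2$; with the parity constraint this still leaves roughly $n/2$ admissible exponents, a singleton only for $n\leq 2$. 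And the identity $\mu(x,y)=\mu(w_0x^{-1},w_0y^{-1})$ cannot supply the missing input: $\mu(e,w_{n\Od{}})$ is by definition the coefficient of $v^{1}$ in $p_{e,w_{n\Od{}}}$, which vanishes for trivial degree reasons, so the transfer to $s_{\Od{}\,n}\in\jc_1$ carries no information about the coefficients in degrees near $\ell(w_{n\Od{}})$. The full Kazhdan--Lusztig inversion formula is an alternating-sum orthogonality, not a termwise equality of polynomials, so it does not ``move'' $p_{e,w_{n\Od{}}}$ into the small cell either. As it stands, the base case, and hence the proposition for $n\geq 3$, is not proved.

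The paper's resolution uses the recursion itself rather than a bound at $w_{n\Ou{}}$: if $v^{m}$ is the minimal monomial of $p_{e,w_{n\Ou{}}}$ (so $m\leq \ell(w_{n\Ou{}})=\ell(w_0)-n-1$), then positivity of all coefficients prevents cancellation in $p_{e,w_{i-1,\Ou{}}}=(v+v^{-1})p_{e,w_{i,\Ou{}}}-p_{e,w_{i+1,\Ou{}}}$, and one propagates the monomial $v^{m-k}$ into $p_{e,w_{n-k,\Ou{}}}$ all the way down to $p_{e,w_{1\Ou{}}}\ni v^{m-n+1}$. Applying the non-Duflo lower bound \eqref{pdegrees} together with Lemma~\ref{lem:D_a_half} \emph{at $w_{1\Ou{}}$}, where it is tight, gives $m-n+1>\mathbf{a}(\jc)\geq\ell(w_0)-2n-1$, forcing $m=\ell(w_0)-n-1$ and hence monomiality of $p_{e,w_{n\Ou{}}}$. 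If you add this ``propagate the lowest term down the chain and bound at the far end'' step, the rest of your argument goes through verbatim.
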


\begin{proof}
Applying Lemma~\ref{prop2p} to $w_{i\Ou{}} \to w_{i-1,\Ou{}}$, for, respectively, $i=2,3,\ldots,n$ (see also Figure~\ref{fig:Bruhat_J_D_even}), we get a series of equalities:
\begin{equation} \label{eq:D_pt0_ind}
\begin{split}
    &(v+v^{-1}) \cdot p_{i\Ou{}} = p_{i-1,\Ou{}} + p_{i+1,\Ou{}}, \qquad i=2,3, \ldots, n-1, \\
    &(v+v^{-1}) \cdot p_{n\Ou{}} = p_{n-1,\Ou{}}.
\end{split}
\end{equation}

Suppose that $v^m$ is the smallest monomial that appears in $p_{n\Ou{}}$, and note that $m \leq \ell(w_{n\Ou{}}) = \ell(w_0) - n-1$. By induction, it is easy to see, using (\ref{eq:D_pt0_ind}), that $p_{1\Ou{}}$ must contain $v^{m-n+1}$ as a monomial. Since $w_{1\Ou{}}$ is not a Duflo element, we have $m-n+1 > \mathbf{a}(\jc) \geq \ell(w_0) - 2n-1$ (see Lemma \ref{lem:D_a_half}). This gives us $m=\ell(w_0) - n-1$, and therefore $p_{n\Ou{}} = v^{\ell(w_{n\Ou{}})}$. The statement of the proposition, for $y=w_{t\Ou{}}$, now follows from (\ref{eq:D_pt0_ind}) by a two-step induction.

The statement for the remaining elements follows from the symmetry of the Dynkin diagram, and from the fact that $p_{ij} = p_{ji}$, for $i,j \in S$.
%
%By \eqref{2p}, it is enough to show that $p_{e,y}$ is trivial for $y=(\Ou{},n)$. Suppose it is not.
%By applying \eqref{2p} for $(\Ou{},n-t)$, inductively for $t$, at the last step $p_{e,(\Ou{},1)}$ necessarily has a term $v^m$ for $m<l(\Ou{},1)-2n+2=l(w_0)-2n$.
%But since $(\Ou{},1)$ is not a Duflo element, $m>\mathbf{a}(\jc)\geq l(w_0)-2n-1$, which is a contradiction.
\end{proof}

From the proof of Proposition \ref{baseD} we also get:
\begin{corollary}
We have $\mathbf{a}(\jc) = \ell(w_0)-2n-1 = n^2+n+1$.
\end{corollary}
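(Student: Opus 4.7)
The plan is to use the formulas established in Proposition~\ref{baseD} together with Lemma~\ref{lem:D_a_half} to pin down the value of $\mathbf{a}(\jc)$ exactly. The lower bound $\mathbf{a}(\jc) \geq \ell(w_0)-2n-1$ is already given by Lemma~\ref{lem:D_a_half}, so the task reduces to establishing the matching upper bound.

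For the upper bound, I would focus on the element $w_{1\Ou{}}$. From Proposition~\ref{baseD}, we have
\[ p_{e,w_{1\Ou{}}} = v^{\ell(w_{1\Ou{}})}+v^{\ell(w_{1\Ou{}})-2}+\cdots +v^{\ell(w_{1\Ou{}})-2(n-1)}, \]
and since $\ell(s_{1\Ou{}})=2$, we have $\ell(w_{1\Ou{}})=\ell(w_0)-2$. Hence the minimal degree appearing in $p_{e,w_{1\Ou{}}}$ is $\ell(w_0)-2n$. Because $w_{1\Ou{}}$ lies in the off-diagonal $\mathtt{H}$-cell ${}^1\mathcal{H}^{\Ou{}}$, it is an involution only if the diagonal condition holds; in particular, since every Duflo element belongs to a diagonal $\mathtt{H}$-cell (as recalled after Equation~\eqref{pdegrees}), $w_{1\Ou{}}$ is not a Duflo element.

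The characterization of $\mathbf{a}(\jc)$ as the minimal possible shift in composition multiplicities, achieved exactly at Duflo elements, then forces a strict inequality: the minimal degree $\ell(w_0)-2n$ of $p_{e,w_{1\Ou{}}}$ must be strictly greater than $\mathbf{a}(\jc)$. Thus $\mathbf{a}(\jc) \leq \ell(w_0)-2n-1$, which combined with Lemma~\ref{lem:D_a_half} yields the equality $\mathbf{a}(\jc)=\ell(w_0)-2n-1=n^2+n+1$.

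There is no significant obstacle here: the entire argument is a short sandwich of two bounds, each of which has already been done. The only subtle point is recognizing that strictness in the upper bound comes from the non-Duflo nature of $w_{1\Ou{}}$, which follows from the $\mathtt{H}$-cell structure recalled in Subsection~\ref{ss:J Jc} (all $\mathtt{H}$-cells in $\jc$ are singletons in type $D$, so $w_{1\Ou{}}$ is the unique element of a non-diagonal $\mathtt{H}$-cell and therefore cannot be an involution).
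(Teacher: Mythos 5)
Your argument is correct and is essentially the paper's own: the paper derives the corollary from the proof of Proposition~\ref{baseD}, where the same sandwich $\ell(w_0)-2n = \min\deg p_{e,w_{1\Ou{}}} > \mathbf{a}(\jc) \geq \ell(w_0)-2n-1$ appears, with strictness coming from $w_{1\Ou{}}$ not being a Duflo element. You extract the bound from the statement of Proposition~\ref{baseD} rather than from within its proof, but the mathematical content is identical.
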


\begin{proposition}\label{Dmost}
For $1 \leq i \leq j \leq n$ and $y=w_{ij}$, 
%Letting $d(y):= n-j$ and $p_{d}:=1+v^{-2}+\cdots v^{-2d}$,
we have
\begin{equation}
\begin{aligned}\label{mostD}
% p_{e,y}=v^{\ell(y)}p_{d(y)}+v^{\ell(y)-2i} p_{d(y)}.
%    
    p_{ij} = p_{ji} = v^{\ell(y)} + v^{\ell(y)-2} + &\cdots + v^{\ell(y)-2(n-j)}  + \\
% R: It's maybe better to keep this in two lines. Both lines are monotonous in degree, and their intersection consist of higher multiplicity monomials.
    &+ v^{\ell(y)-2i} + v^{\ell(y)-2i-2} + \cdots + v^{\ell(y)-2(n-j+i)} .
\end{aligned}
\end{equation}
\end{proposition}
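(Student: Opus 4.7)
The plan is to set up recursions among the polynomials $p_{ij}$ via Lemma~\ref{prop2p}, then induct from the base cases in Proposition~\ref{baseD}. For $y = w_{ij} \in {}^i\hc^j$ I would apply Lemma~\ref{prop2p} with $s = i$, the unique left ascent of $w_{ij}$. A direct computation from $w_{ij} = s_{i\widehat{\jmath}} w_0$ gives $i \cdot w_{ij} = w_{i+1, j}$. The remaining task is to enumerate, using the Bruhat graph in Figure~\ref{fig:Bruhat_J_D_even} together with the description $\jc = w_0 \jc_1$, all left descents $u \in S \setminus \{i\}$ for which $u w_{ij}$ still lies in $\jc$ (equivalently, is of the form $w_{i', j}$ for some $i'$).

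I expect the following to come out: $(v+v^{-1}) p_{ij} = p_{i+1, j} + p_{i-1, j}$ for $2 \leq i < j \leq n$ (only $u = i-1$ contributes), and $(v+v^{-1}) p_{1j} = p_{2j} + p_{\Od{} j} + p_{\Ou{} j}$ for $i = 1 \leq j$ (both $\Od{}$ and $\Ou{}$ contribute, reflecting the trivalent vertex of the $D$-diagram). A short telescoping check shows that the right-hand side of \eqref{mostD} satisfies both identities. Combined with $p_{\Od{} j}$ and $p_{\Ou{} j}$ from Proposition~\ref{baseD}, this determines $p_{ij}$ for all $1 \leq i < j \leq n$ by induction first on $j$ and then on $i$; the symmetry $p_{ij} = p_{ji}$, coming from $w_{ij}^{-1} = w_{ji}$ and $p_{e,x} = p_{e,x^{-1}}$, covers the case $i > j$.

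The diagonal case $p_{jj}$ has to be treated separately, because Lemma~\ref{prop2p} applied to $w_{jj} \in {}^j\hc^j$ retains an unremoved $p_{j, w_{jj}}$ term on the left. Instead, I would extract $p_{jj}$ by inverting a recursion that is already known: for $j \geq 3$ this is $p_{jj} = (v+v^{-1})p_{j-1,j} - p_{j-2,j}$; for $j = 2$ one solves the $i=1$ recursion for $p_{22}$; and for $j = 1$ one uses $(v+v^{-1})p_{\Od{} 1} = p_{11}$, with the analogous sum being empty since neither $\Ou{}$ nor any other simple reflection takes $w_{\Od{} 1}$ to an element of $\jc$. In each case one then matches the output against \eqref{mostD}.

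The hard part will be verifying the enumeration of contributing left descents. For each candidate $u \in S$ with $u w_{ij} < w_{ij}$, I must check whether $u \cdot s_{i\widehat{\jmath}}$, after braid and commutation moves, reduces to a shortest path $s_{i'\widehat{\jmath}}$ in the Dynkin diagram (so that $u w_{ij} = w_{i',j} \in \jc$) or not. The dichotomy between $i = 1$ and $i \geq 2$ is precisely where the trivalent vertex of the $D$-diagram intervenes, and it is responsible for the doubled middle coefficients in \eqref{mostD} (as foreshadowed in Remark~\ref{D4exrem}).
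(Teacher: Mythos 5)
Your overall route is the paper's: the same recursions from Lemma~\ref{prop2p} along the left cell $\lc^j$, seeded by Proposition~\ref{baseD}, with the identical enumeration of contributing descents (only $u=i-1$ for $2\leq i<j$, and both $\Ou$ and $\Od$ at the trivalent vertex when $i=1$), the same peeling-off of the diagonal case, and the same use of $p_{ij}=p_{ji}$. The recursions you write at $y=w_{ij}$, namely $(v+v\inv)p_{ij}=p_{i+1,j}+p_{i-1,j}$ and $(v+v\inv)p_{1j}=p_{2j}+p_{\Ou j}+p_{\Od j}$, are exactly the paper's equations after shifting the index (the paper phrases them as applications at $y=w_{i-1,j}$ with $sy=w_{ij}$).

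There is, however, one concrete gap: for $j\geq 2$ your scheme never determines $p_{1j}$, so the induction on $i$ has no starting point. For fixed $j$ you list the instances of Lemma~\ref{prop2p} at $y=w_{ij}$ with $1\leq i\leq j-1$ only; that is $j-1$ linear equations in the $j$ unknowns $p_{1j},\dots,p_{jj}$ (the values $p_{\Ou j},p_{\Od j}$ from Proposition~\ref{baseD} enter only as known constants in the $i=1$ equation). This system is underdetermined: the homogeneous version $(v+v\inv)q_1=q_2$, $(v+v\inv)q_i=q_{i+1}+q_{i-1}$ admits a one-parameter family of solutions, so checking that the right-hand side of \eqref{mostD} satisfies your identities does not force equality. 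The missing ingredient is one more application of Lemma~\ref{prop2p}, at $y=w_{\Ou j}$ (or $w_{\Od j}$) with $s=1$: since no $u\in S\setminus\{\Ou\}$ keeps $u\,w_{\Ou j}$ inside $\jc_1 w_0$ (commutation or braid moves appear for every such $u$), one gets $(v+v\inv)p_{\Ou j}=p_{1j}$, which together with Proposition~\ref{baseD} pins down $p_{1j}$ and lets the induction run. You do invoke exactly this equation in the special case $j=1$, so the fix is small, but as written the argument for $j\geq 2$ does not close.
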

Note that higher multiplicities appear in (\ref{mostD}) if and only if $i+j \leq n$. These multiplicities are at most $2$.
\begin{proof}
We apply Lemma \ref{prop2p} to $w_{\Ou{}j} \to w_{1j}$ to get
\begin{equation*}
%\label{Dfirst}
    (v+v\inv) \cdot p_{\Ou{}j}=p_{1j}.
\end{equation*}
Proposition \ref{baseD} applied to $p_{\Ou{}j}$ above proves our claim (\ref{mostD}) for $i=1$.
Applying again Lemma \ref{prop2p} to $w_{1j} \to w_{2j}$ and $j \geq 2$, we get
\begin{equation*}
%\label{s1}
    (v+v\inv) \cdot p_{1j}=p_{2j} + p_{\Ou{}j}+ p_{\Od{}j}.
\end{equation*}
Here we apply Proposition \ref{baseD} and (\ref{mostD}) for $i=1$, which proves \eqref{mostD} for $i=2$. For $i > 2$ we apply Lemma \ref{prop2p} to $w_{i-1,j} \to w_{ij}$ and $j \geq i$ to get
\begin{equation*}
%\label{s2}
    (v+v\inv) \cdot p_{i-1,j}=p_{ij} + p_{i-2,j},
\end{equation*}
where the induction hypothesis establishes \eqref{mostD}.
The proof is complete.
\end{proof}

Finally, we treat the remaining four cases where $i,j\in\{\Ou{},\Od{}\}$. The result depends on the parity of the rank.

\begin{proposition}\label{Dspecial}
For $n$ even, we have
\begin{equation}\label{00even}
    p_{\Ou{}\Ou{}} = p_{\Od{}\Od{}}=v^{\ell(w_0)-1}+v^{\ell(w_0)-5} + \cdots +v^{\ell(w_0)-2n-1},
\end{equation}
\begin{equation}\label{00-even}
    p_{\Ou{}\Od{}}=p_{\Od{}\Ou{}}=v^{\ell(w_0)-3}+v^{\ell(w_0)-7} + \cdots +v^{\ell(w_0)-2n+1}.
\end{equation}
%where the former polynomial has $\frac{n}{2}+1$ terms and the latter one $\frac{n}{2}$ terms.
%
For $n$ odd, we have
\begin{equation}\label{00odd}
    p_{\Ou{}\Ou{}} = p_{\Od{}\Od{}} =v^{\ell(w_0)-3}+v^{\ell(w_0)-7} + \cdots +v^{\ell(w_0)-2n-1},
\end{equation}
\begin{equation}\label{00-odd}
 p_{\Ou{}\Od{}}=p_{\Od{}\Ou{}} =v^{\ell(w_0)-1}+v^{\ell(w_0)-5} + \cdots +v^{\ell(w_0)-2n+1}.
\end{equation}
%where both polynomials have $\frac{n+1}{2}$ terms.
\end{proposition}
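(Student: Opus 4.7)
The proof mirrors that of Proposition~\ref{specialB}, combining Lemma~\ref{prop2p}, the Duflo-element characterization, and the nonnegativity bound \eqref{eq:p_s<p_e}. The new ingredient in type $D$ is tracking the Dynkin-diagram involution $\widehat{\,\cdot\,}$, which affects the identification $w_{ij}=s_{i\widehat{\jmath}}w_0$ and hence the length of the corner elements.

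First I would apply Lemma~\ref{prop2p} to $y=w_{1\Ou}$ with $s=1$. Since $y\in {}^1\hc^\Ou$ with $1\neq \Ou$, the simplified form applies and
\[ (v+v^{-1})\,p_{1\Ou}= p_{e, \,1\cdot w_{1\Ou}} + \sum_{\substack{u\in S\\ uw_{1\Ou}\sim_{\mathtt L} w_{1\Ou}\\ uw_{1\Ou}<w_{1\Ou}}} p_{e,\,uw_{1\Ou}}. \]
Using the reduced expression $w_{1\Ou}=s_{1\widehat{\Ou}}w_0$, one checks that $1\cdot w_{1\Ou}$ and exactly one further left-descent multiplication produce the two corner elements $w_{\Ou\Ou}$ and $w_{\Od\Ou}$ (the assignment depending on parity of $n$), while the remaining contributing $u\in\{2,\dots,n\}$ produce $w_{u\Ou}$, already known from Propositions~\ref{baseD}--\ref{Dmost}. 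The critical check is that the remaining candidate (either $\Ou\cdot w_{1\Ou}$ for $n$ even or $\Od\cdot w_{1\Ou}$ for $n$ odd) reduces, via the braid relation $\Ou 1\Ou=1\Ou 1$ or $\Od 1\Od=1\Od 1$, to an element with \emph{two} left ascents, hence lying outside $\jc$ and excluded from the sum. Substituting the known values and invoking the symmetries $p_{\Ou\Ou}=p_{\Od\Od}$, $p_{\Od\Ou}=p_{\Ou\Od}$ (from the diagram automorphism $\Ou\leftrightarrow\Od$ applied to $w_{ij}\mapsto w_{\phi(i)\phi(j)}$ together with $p_{ij}=p_{ji}$) collapses the equation to
\[ p_{\Ou\Ou}+p_{\Od\Ou} \;=\; v^{\ell(w_0)-1}+v^{\ell(w_0)-3}+\cdots+v^{\ell(w_0)-2n-1}, \]
which establishes \eqref{sumB}-type sum formula.

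Second, to separate the two summands I would apply Lemma~\ref{prop2p} in its unsimplified form (as ${}^\Ou\hc^\Ou$ is a diagonal $\mathtt H$-cell, the $(v+v^{-1})$-collapse does not apply) to $y=w_{\Ou\Ou}$ and $y=w_{\Od\Ou}$ separately, with $s$ the unique left ascent of each. Evaluating at $v=1$ and using $p_{s,x}(1)\leq p_{e,x}(1)$ yields lower bounds on $p_{\Ou\Ou}(1)$ and $p_{\Od\Ou}(1)$ that, combined with the sum formula above, must be saturated. The equality $p_{s,y}(1)=p_{e,y}(1)$ for the non-Duflo element forces the nonnegative polynomial $p_{e,y}-v\,p_{s,y}$ to vanish identically, and substituting $p_{e,y}=v\,p_{s,y}$ back into the recursion pins down the exact polynomial, hence also the Duflo partner via the sum formula.

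Finally, the parity of $n$ enters through the length $\ell(w_{\Ou\Ou})=\ell(w_0)-\ell(s_{\Ou\widehat{\Ou}})$, which equals $\ell(w_0)-1$ for $n$ even (since $\widehat{\Ou}=\Ou$) and $\ell(w_0)-3$ for $n$ odd (since $\widehat{\Ou}=\Od$ and $s_{\Ou\Od}=\Ou 1\Od$ has length $3$); in both parities $w_{\Ou\Ou}$ remains the Duflo element in ${}^\Ou\hc^\Ou$, as it must contain the minimal term $v^{\mathbf{a}(\jc)}=v^{\ell(w_0)-2n-1}$. This shift of the top degree, together with the parity-constant step-size of $4$ forced by the recursion and the overall sum formula, yields exactly the four explicit formulas \eqref{00even}--\eqref{00-odd}. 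The main obstacle is the careful enumeration in the first step: verifying precisely which $u\in S$ satisfy $uw_{1\Ou}\in\jc$ (equivalently, $uw_{1\Ou}$ has a unique left ascent) requires case-by-case braid computations driven by the singleton $\mathtt H$-cell property in type $D$.
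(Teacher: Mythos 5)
Your strategy is sound and would reach the correct formulas, but it takes a genuinely heavier route than the paper, and the extra machinery you import from type $B$ is not actually needed. The paper's proof exploits the fact that in type $D$ every $\mathtt{H}$-cell of $\jc$ is a singleton, so there is never a ``sum of two unknowns'' to split: for $n$ even one first applies Lemma~\ref{prop2p} at the \emph{off-diagonal} corner $y=w_{\Od\Ou}$, where the descent sum is empty and the off-diagonal collapse gives $(v+v\inv)p_{\Od\Ou}=p_{1\Ou}$ outright, determining $p_{\Od\Ou}$ from Proposition~\ref{baseD}; then the recursion at $w_{1\Ou}$, namely $(v+v\inv)p_{1\Ou}=p_{\Ou\Ou}+p_{2\Ou}+p_{\Od\Ou}$, has only one unknown left and yields $p_{\Ou\Ou}$. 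For $n$ odd the off-diagonal corner $w_{\Ou\Od}=\Ou w_0$ sits just below $w_0$ and its recursion reads $(v+v\inv)p_{\Ou\Od}=v^{\ell(w_0)}+p_{1\Od}$, after which the $w_{1\Ou}$ recursion again gives the diagonal corner. No Duflo identification, no evaluation at $v=1$, and no use of the positivity bound \eqref{eq:p_s<p_e} is required; your second step essentially re-derives, via inequalities, the identity $p_{s,y}=v\inv p_{e,y}$ that Lemma~\ref{prop2p} already hands you for free on the off-diagonal cell. What your approach buys is uniformity with the type $B$ argument (Proposition~\ref{specialB}), where the diagonal cell ${}^0\hc^0$ genuinely contains two elements and the split cannot be avoided; what the paper's approach buys is a two-line computation per parity.

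One place where you should be careful: in your recursion at $y=w_{1\Ou}$ the descent sum receives contributions only from $u=2$ and from the one corner reflection ($\Od$ for $n$ even, $\Ou$ for $n$ odd); the elements $u\cdot w_{1\Ou}$ for $u\in\{3,\dots,n\}$ and for the remaining corner reflection acquire two left ascents and leave $\jc$, so they do not appear. Your phrasing ``the remaining contributing $u\in\{2,\dots,n\}$ produce $w_{u\Ou}$'' is at best ambiguous: if all of $u=2,\dots,n$ contributed, the sum formula you state would fail (the total coefficient count would not match $2n$). The sum formula you write down is the correct one, so presumably you intend only $u=2$, but this enumeration is exactly the ``main obstacle'' you defer, and it is the entire content of the argument.
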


\begin{proof}
The equalities $p_{\Ou{}\Ou{}}=p_{\Od{}\Od{}}$ and $p_{\Ou{}\Od{}}=p_{\Od{}\Ou{}}$ follow from the symmetry of the Dynkin diagram.

Assume first that $n$ is even. Applying Lemma \ref{prop2p} to $w_{\Od{}\Ou{}} \to w_{1\Ou{}}$ yields $(v+v^{-1}) \cdot p_{\Od{}\Ou{}} = p_{1\Ou{}}$ (see Figure \ref{fig:Bruhat_J_D_even}), where the right hand side is known from Proposition \ref{baseD}. This gives (\ref{00-even}).

Applying Lemma \ref{prop2p} to $w_{1\Ou{}} \to w_{\Ou{}\Ou{}}$ yields $(v+v^{-1}) \cdot p_{1\Ou{}} = p_{\Ou{}\Ou{}} + p_{2\Ou{}} + p_{\Od{}\Ou{}}$ (see Figure \ref{fig:Bruhat_J_D_even}). Now we use Proposition~\ref{baseD} and (\ref{00-even}) to get (\ref{00even}).

Assume now that $n$ is odd. We apply Lemma \ref{prop2p} to $w_{\Ou{}\Od{}} \to w_0 = \Ou{} \, w_{\Ou{}\Od{}}$, which gives $(v+v^{-1} )\cdot p_{\Ou{}\Od{}} = v^{\ell(w_0)} + p_{1\Od{}}$. From this, one easily gets (\ref{00-odd}).

Finally, we apply Lemma \ref{prop2p} to $w_{1\Ou{}} \to w_{\Od{}\Ou{}}$, which gives $(v+v^{-1}) \cdot p_{1\Ou{}} = p_{\Od{}\Ou{}} + p_{2\Ou{}} + p_{\Ou{}\Ou{}}$. From this, one easily gets (\ref{00odd}).
%
\begin{comment}
%Old version:
Applying \eqref{2p} we get
\begin{equation*}
    (v+v\inv)p_{e,(1,\Ou{})}=p_{e,(\Ou{},\Ou{})} + p_{e,(\Od{},\Ou{})} + p_{e,(2,\Ou{})},
\end{equation*}
which, by Proposition \ref{baseD} yields 
\begin{equation}\label{10}
    p_{e,(\Ou{},\Ou{})} + p_{e,(\Od{},\Ou{})}=v^{l(w_0)-1}+v^{l(w_0)-3}\cdots +v^{l(w_0)-2n-1}.
\end{equation}
By the same argument we get the same formula for the sums
\begin{equation*}
    p_{e,(\Ou{},\Od{})} + p_{e,(\Od{},\Od{})}=p_{e,(\Od{},\Ou{})} + p_{e,(\Od{},\Od{})}=p_{e,(\Ou{},\Od{})} + p_{e,(\Ou{},\Od{})}, 
\end{equation*}
from which we conclude 
\[ p_{e,(\Ou{},\Od{})} = p_{e,(\Od{},\Ou{})},\ \ \ \ p_{e,(\Ou{},\Ou{})} = p_{e,(\Od{},\Od{})}\]
(this can also be seen by symmetry of the Dynkin diagram).

Now consider \eqref{2p} at $y=(\Ou{},\Od{})$.
Note that we have $l(\Ou{},\Ou{})=l(\Od{},\Od{})$ and $l(\Ou{},\Od{})=l(\Od{},\Ou{})$ and the two lengths differ by 2.
If $n$ is even then $l(\Ou{},\Ou)=l(\Od{},\Od{})=l(w_0)-1>l(w_0)-3=l(\Ou{},\Od{})=l(\Od{},\Ou{})$; if $n$ is odd then they are swapped.
If $n$ is odd, then $y=0w_0$ and we have
\begin{equation*}
    (v+v\inv)p_{e,(\Ou{},\Od{})}=p_{e,w_0}+p_{e,(1,\Od{})}=v^{l(w_0)}+v^{l(w_0)-2}+\cdots +v^{l(w_0)-2n},
\end{equation*}
proving \eqref{00-odd}. Then \eqref{00odd} follows from \eqref{10}.
If $n$ is even, then
\begin{equation*}
    (v+v\inv)p_{e,(\Ou{},\Od{})}=p_{e,(1,\Od{})}=v^{l(w_0)-2}+v^{l(w_0)-2}+\cdots +v^{l(w_0)-2n},
\end{equation*}
implying \eqref{00-even}.
From this and \eqref{10}, we obtain \eqref{00even}.
\end{comment}
%
\end{proof}

%\begin{remark}
%One can easily check using (\ref{eq:baseD}),  (\ref{mostD}) and Proposition \ref{Dspecial} that the number of graded composition factors in $\Delta_e$ isomorphic to $L_w$ for $w\in \jc$ adds up to $\frac{(n+1)(n+2)(2n+3)}{3}$, see \cite[A006331]{OEIS}. There does not seem to be any direct geometric explanation similar to the one in Remark \ref{octahedron} and \cite[4.1]{kmm2}.
%\end{remark}

\subsubsection{Join-irreducibles in type D}\label{ss:JID}

Consider the type $B$ and type $D$ Weyl groups $\WB$ and $\WD$ with Dynkin diagrams as in Subsection~\ref{ssKLB} and Subsection~\ref{ssKLD}.
Define a map
\[\phi \colon \WB\to \WD\]
by $0\mapsto \Ou{}\Od{}=\Od{}\Ou{}$ and $i\mapsto i$ for $1\leq i\leq n$. Since $\Ou{}\Od{}1\Ou{}\Od{}1=1\Ou{}\Od{}1\Ou{}\Od{}$, this defines a group homomorphism.
%\todo{and reduced expressions go to reduced expressions?}. If $0\not\in LD(y)\cup RD(y)$, then 
%\begin{equation}\label{BtoDdescents}
%LD(\phi(y))=LD(y)\ \ \ \ \ \text{    and    }\ \ \ \ \ RD(\phi(y))=RD(y), 
%\end{equation} where we identify the Coxeter generators $1,\ldots, n$ in $\WB$ and in $\WD$.

We denote by $\JI{}{}$ the join-irreducibles in $\WD$, by $\JI{}{}^B$ the join-irreducibles in $\WB$. % and by $\JI{i}{j}$ and $\JI{i}{j}^B$ the sub(po)sets of $\JI{}{}$, respectively $\JI{}{}^B$, with left and right descents $\{i\}$ and $\{j\}$ respectively. By $\BG{}{}$ we denote the bigrassmannians in $\WD$, and  analogous sub/super-script versions.
From the braids of the elements in $\JI{}{}^B$ given in Subsection~ \ref{ss:JIB}, we see that no element in $\JI{}{}^B$ has a reduced expression that contains consecutive simple reflections $0101$, and so, in particular, no two reduced expressions of an element in $\JI{}{}^B$ are related by the type $B$ braid relation $0101=1010$. Therefore, we have two well-defined maps
\[\phi^{\pm} \colon \JI{}{}^B \to \WD\]
which are given by taking a reduced expression and replacing every other appearance of `$0$' by $\Ou{}$ and $\Od{}$ alternatingly. The result is an expression %\todo{Why reduced?}
of an element in $\WD$. We let $\phi^+(w)$ has $\Ou{}$ at the first (leftmost) appearance of $0$ in a reduced expression of $w$, and $\phi^-(w)$ has $\Od{}$.
%Then we have for $y\in \JI{i}{j}^B$ 
%\begin{equation}\label{pmdescents}
%    LD(\phi^\pm(y))=\begin{cases}
%    \{0^\pm\}\ &\text{ if }j=0\\
%    \{j\} &\text{ otherwise}.
%    \end{cases}
%\end{equation}
%
For example,
\begin{align*}
&\phi(0102103210) = \Ou{}\Od{}1\Ou{}\Od{}21\Ou{}\Od{}321\Ou{}\Od{} , \\
&\phi^+(0102103210) = \Ou{}1\Od{}21\Ou{}321\Od{}, \\
&\phi^-(0102103210) = \Od{}1\Ou{}21\Od{}321\Ou{} .
\end{align*}
%
%Note that, by construction, the maps $\phi$ and $\phi^\pm$ preserve the Bruhat order.
Note that $\phi^+$ and $\phi^-$ differ by the automorphism of the Dynkin diagram which swaps $\Ou{} \leftrightarrow \Od{}$ and preserves the other simple roots.

\begin{proposition}
\label{BtoD0}
For $j\in S$, define the following elements in $\WD$:
\[ d_k^{0^{\pm}j} := \phi^{\pm} ( b_k^{0j}), \qquad   1 \leq k \leq n+1-j. \]
\begin{enumerate}
\item Then we have finite sets
\begin{align}
\label{al:JI0+0+}    & \JI{\Ou{}}{\Ou} = \big\{ d_1^{0^{+}0} < d_3^{0^{+}0} < d_5^{0^{+}0} < \ldots \big\}, \\
\label{al:JI0+0-}    & \JI{\Ou{}}{\Od} = \big\{ d_2^{0^{+}0} < d_4^{0^{+}0} < d_6^{0^{+}0} < \ldots \big\},
\end{align}
where the last elements above are $d_{n}^{0^{+}0}$ and $d_{n+1}^{0^{+}0}$, depending on the parity of $n$. 
\item For $j\geq 1$, we have
\begin{equation}
\label{al:JI0+j}    \JI{\Ou{}}{j} = \big\{ d_1^{0^{+}j} < d_2^{0^{+}j} < d_3^{0^{+}j} < \ldots < d_{n+1-j}^{0^{+}j} \big\}.
\end{equation}
\item Analogous statements hold for $\JI{\Od{}}{j}$.
\end{enumerate}
\end{proposition}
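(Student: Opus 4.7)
The plan is to transfer the chain description of $\JI{0}{j}^B$ from Proposition \ref{BJI0a} to type $D$ via $\phi^{\pm}$. First, I would write out the reduced expression of $d_k^{0^{\pm}j} = \phi^{\pm}(b_k^{0j})$ explicitly. From Proposition \ref{BJI0a}, a reduced word of $b_k^{0j}$ has exactly $k$ occurrences of the simple reflection $0$, no two adjacent, appearing as the terminal letter of each of the first $k$ factors $s_{i-1,0}$, $i = 1,\ldots,k$. The image $\phi^{+}(b_k^{0j})$ is obtained by replacing these $k$ zeros alternately by $\Ou{}$ and $\Od{}$, starting with $\Ou{}$. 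I would verify that the resulting word is reduced in $\WD$ (hence $\ell(d_k^{0^{+}j}) = \ell(b_k^{0j})$) by computing the associated signed permutation and reading off the length via the inversion statistic, or alternatively by matching the word with one of the normal forms in \cite[Theorem 4.6]{GeK}.

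From the explicit reduced expressions, the descent sets read off from the first and last letters: $LD(d_k^{0^{+}j}) = \{\Ou{}\}$ for all $k$; for $j \geq 1$, $RD(d_k^{0^{+}j}) = \{j\}$; and for $j = 0$, the terminal letter is the $k$-th copy of $0$, which $\phi^{+}$ sends to $\Ou{}$ if $k$ is odd and to $\Od{}$ if $k$ is even. This gives the partition of $\{d_k^{0^{+}0}\}$ by parity into $\JI{\Ou{}}{\Ou{}}$ and $\JI{\Ou{}}{\Od{}}$, and places each $d_k^{0^{+}j}$ ($j \geq 1$) in $\JI{\Ou{}}{j}$. To see that these lists exhaust the corresponding sets of join-irreducibles in $\WD$, I would invoke the classification in \cite[Theorem 4.6]{GeK}: the normal forms there with left descent $\Ou{}$ match $d_k^{0^{+}j}$ term by term, and the cardinalities agree with $|\JI{0}{j}^B| = n+1-j$ from Proposition \ref{BJI0a}.

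For the Bruhat relations, I use the subword characterization. In the case $j \geq 1$, the first $k$ zero-containing factors of $d_k^{0^{+}j}$ coincide literally with the first $k$ such factors of $d_{k+1}^{0^{+}j}$, since the alternation pattern of the first $k$ zeros is preserved; the ``second part'' $s_{k,1} \cdot s_{k+1,2} \cdots s_{k+j-1,j}$ of $b_k^{0j}$ is then a subword of the $(k+1)$-th zero factor together with the second part $s_{k+1,1} \cdots s_{k+j,j}$ of $b_{k+1}^{0j}$, by a straightforward index shift. Hence $d_k^{0^{+}j} < d_{k+1}^{0^{+}j}$. In the case $j = 0$, adding two zeros preserves the alternation on the first $k$ zeros, so $d_k^{0^{+}0}$ is a subword of $d_{k+2}^{0^{+}0}$, giving the chains within each parity class. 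The analogous statements for $\phi^{-}$, hence for $\JI{\Od{}}{j}$, follow by applying the Dynkin diagram automorphism swapping $\Ou{}$ and $\Od{}$.

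The main obstacle is bookkeeping in the Bruhat step: the alternation under $\phi^{+}$ prevents a naive prefix inclusion $d_k \subset d_{k+1}$, so one must extract the required subword using the shifted indexing of the reduced factors. Once the explicit reduced expressions are laid out, this becomes a routine, if tedious, combinatorial check; as a more conceptual alternative, one can work in the signed-permutation realization of $\WD$ and verify the Bruhat inequalities via the standard tableau criterion.
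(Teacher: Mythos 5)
Your proposal follows essentially the same route as the paper: apply $\phi^{\pm}$ to the reduced expressions of the $b_k^{0j}$ from Proposition~\ref{BJI0a}, recognize the resulting words as the Geck--Kim normal forms for join-irreducibles in type $D$, and then read off the descents and the chain relations from the reduced expressions (the paper leaves the Bruhat relations as ``follows easily''; your subword bookkeeping is the intended check). One correction: the classification you need for exhaustiveness is \cite[Theorem~5.7]{GeK}, the type $D$ statement, not \cite[Theorem~4.6]{GeK}, which is the type $B$ result underlying Proposition~\ref{BJI0a}; your appeal to normal forms ``with left descent $\Ou{}$'' only makes sense in the type $D$ setting. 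Relatedly, comparing cardinalities with $|\JI{0}{j}^B|=n+1-j$ does not by itself rule out extra join-irreducibles in $\WD$ with the given descent sets --- it is the term-by-term match with the type $D$ normal forms of Theorem~5.7 that simultaneously gives membership, reducedness of the images, and exhaustiveness. With that reference corrected, your argument is the paper's proof.
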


\begin{proof}
Note that by applying $\phi^\pm$ to the reduced expressions of elements $b^{0j}_k$ described in Proposition \ref{BJI0a}, we get exactly the reduced expressions of the elements from \cite[Theorem 5.7]{GeK}, from which the rest follows easily.
\end{proof}
%
%Usually we will fix left and right descents of join-irreducible elements, and omit writing the upper indices over $d_k$ to ease the notation.

%When $1\leq i\leq j\leq n$, we set for all $1\leq k \leq n+1-j$
%\begin{equation}
%    d_{\circ,k} := \phi ( b_{\circ,k}^{ij} ) \in \BG{i}{j}.
%\end{equation}

\begin{proposition}
\label{JIabD}
Let $1\leq i\leq j\leq n$. The set $\JI{i}{j}$ consists of the following three types of elements:
\begin{itemize}
    \item (type $O_A$)  for $1\leq k\leq \min \{i,n+1-j\}$:
    \[    d_{\circ,k}^{ij}  = d_{\circ,k} = \phi ( b_{\circ,k}^{ij} ) = \phi^+ ( b_{\circ,k}^{ij} ) =\phi^- ( b_{\circ,k}^{ij} ) , \] 
    \item (type $O_B$)  for $i< k\leq n+1-j$:
    \[d_{\pm ,k}^{ij} = d_{\pm ,k} := \phi^{\pm} ( b_{\circ,k}^{ij})  , \] 
    \item(type $X$)  for $1\leq k\leq n+1-j$:
    \[ d_{\times ,k}^{ij} = d_{\times ,k}:= \phi ( b_{\times,k}^{ij}).    \] 
\end{itemize}
Moreover, the partial order on $\JI{i}{j}$ 
is generated by the following relations (see Figure \ref{fig:BD_JI_gen}):
\begin{align}
\label{al:D_OAOA}    & d_{\circ,k} < d_{\circ,k+1}, & & 1 \leq k \leq \min \{i-1,n-j\}, \\
   & d_{\circ,i} < d_{\pm,i+1}, && \text{if } i\leq n-j,    \\
    & d_{\pm, k} < d_{\pm, k+1}, \ d_{\pm, k} < d_{\mp, k+1}, && i < k \leq n-j,   \\
   & d_{\times,k} < d_{\times,k+1}, & & 1 \leq k \leq n-j, \\
\label{al:D_OAX}    & d_{\circ,k} < d_{\times,k}, & & 1 \leq k \leq \min\{i,n+1-j\}, \\
\label{al:D_OBX}    & d_{\pm,k} < d_{\times,k} && i < k \leq n+1-j, \\
\label{al:D_XOB}   & d_{\times,k} < d_{\pm,k+i+1}, & & 1 \leq k \leq n-j-i.
\end{align}
\end{proposition}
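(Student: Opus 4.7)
My plan is to derive Proposition \ref{JIabD} by paralleling the proof of the type $B$ analogue Proposition \ref{BJIab}, transferring the explicit reduced expressions through the maps $\phi$ and $\phi^{\pm}$ defined in Subsection~\ref{ss:JID}.

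First, I would invoke \cite[Theorem 5.7]{GeK}, which enumerates the join-irreducibles of $\WD$ with prescribed one-element left and right descent sets. The task is then to match each element in that list with an explicit image under $\phi$ or $\phi^{\pm}$ of one of the type $B$ join-irreducibles $b^{ij}_{\circ,k}$ or $b^{ij}_{\times,k}$ from Proposition \ref{BJIab}. The trichotomy is dictated by the pattern of occurrences of the simple reflection $0$ in the type $B$ reduced expressions: the type $O_A$ expressions contain no $0$, so $\phi=\phi^+=\phi^-$ collapse to a single element $d_{\circ,k}$; the type $O_B$ expressions contain one $0$ per leading factor $s_{a,0}$, and $\phi^+$ and $\phi^-$ produce two genuinely distinct elements $d_{\pm,k}$ that are swapped by the Dynkin diagram involution $\Ou{}\leftrightarrow\Od{}$; the type $X$ expressions contain $k$ copies of $0$ (one per factor $t_{a,i}$), but here $\phi^{\pm}(b^{ij}_{\times,k})$ turn out to be bigrassmannian yet \emph{not} join-irreducible (compare Remark \ref{D4exrem}), so one must apply the group homomorphism $\phi$ (which replaces $0$ by the commuting product $\Ou{}\Od{}$) to obtain the correct join-irreducible lift $d_{\times,k}$, whose length exceeds $\ell(b^{ij}_{\times,k})$ by $k$. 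That each assignment yields a reduced expression in $\WD$ can be verified by comparing lengths with \cite[Theorem 5.7]{GeK}, using that the only relevant braid $\Ou{}\Od{}=\Od{}\Ou{}$ is length-preserving.

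Once the three families are identified, the listed Bruhat relations are obtained by the subword criterion directly from the reduced expressions. Most of them descend from the corresponding type $B$ relations \eqref{relo}--\eqref{relxo} by applying $\phi$ or $\phi^{\pm}$ to a chosen reduced expression and tracking the alternation pattern of $\Ou{}/\Od{}$. The new relations involving both $d_+$ and $d_-$, such as $d_{\pm,k}<d_{\mp,k+1}$ and $d_{\times,k}<d_{\pm,k+i+1}$, require more care about where the alternation begins and ends, but can be checked by writing down concrete reduced expressions for each pair.

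The main obstacle is showing that the listed relations \emph{generate} the entire Bruhat order on $\JI{i}{j}$, i.e., that no further comparabilities hold. My principal tool would be the Bruhat-monotone statistics counting the occurrences of $\Ou{}$ and $\Od{}$ separately in a reduced expression, supplemented by counts of top-index simple reflections as in the type $B$ proof. Type $O_A$ elements contain neither $\Ou{}$ nor $\Od{}$; the type $O_B$ elements $d_{\pm,k}$ contain $\lceil(k-i)/2\rceil$ copies of one of them and $\lfloor(k-i)/2\rfloor$ of the other; the type $X$ elements $d_{\times,k}$ contain exactly $k$ of each. Combined with the type $B$ non-relations from Proposition \ref{BJIab}, these counts should rule out every unwanted inequality; for example, $d_{\times,k}\not\leq d_{\pm,k'}$ when $k'\leq k+i$ follows by comparing the counts of $\Ou{}$ (or $\Od{}$), while $d_{\circ,k}\not\leq d_{\times,k-1}$ follows from the presence of $s_{j+k-1}$ in the former but not in the latter. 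Finally, to rule out $d_{+,k}\leq d_{-,k}$ (and symmetrically), one observes that these two elements have the same length, so a Bruhat inequality between them would force equality; but they disagree on which of $\Ou{}$ or $\Od{}$ appears as the leftmost folded letter and therefore are distinct.
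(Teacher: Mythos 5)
Your overall architecture matches the paper's: the enumeration is imported from \cite[Theorem 5.7]{GeK} by matching the listed elements with images of the type $B$ join-irreducibles under $\phi$ and $\phi^{\pm}$, the relations are read off from the concatenated reduced expressions (with \eqref{al:D_XOB} requiring a separate argument), and generation is established by exhibiting specific non-comparabilities. Two of your non-comparabilities ($d_{\circ,k}\not\le d_{\times,k-1}$ and the analogue for $d_{\pm,k}$, detected via the support containing the simple reflection $j+k-1$) are exactly the paper's, and the support of an element is indeed a reduced-word-independent, Bruhat-monotone invariant.

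The genuine gap is in your principal tool for the remaining non-comparability $d_{\times,k}\not\le d_{\pm,k'}$ for $k'\le k+i$: you compare ``the occurrences of $\Ou{}$ and $\Od{}$ separately in a reduced expression,'' treating these counts as Bruhat-monotone invariants. In type $B$ the analogous count of $s_0$ is a well-defined invariant of the element, because the only braid relation involving $0$, namely $0101=1010$, preserves the number of $0$'s; this is what makes the corresponding step in the proof of Proposition \ref{BJIab} legitimate. In type $D$ this fails: the braid relation $1\Ou{}1=\Ou{}1\Ou{}$ changes the number of occurrences of $\Ou{}$ (and likewise of $\Od{}$, and of their sum), so neither count is determined by the element, let alone monotone under the Bruhat order. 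To repair the argument you would need a lower bound on the $\Od{}$-count over \emph{all} reduced words of $d_{\times,k}$, which is essentially as hard as the original non-comparability. The paper instead verifies $d_{\times,k}\not\le d_{\pm,k+i}$ (and also the positive relation \eqref{al:D_XOB}, after reducing to the case $i=j$) using the explicit combinatorial criterion for the Bruhat order on even-signed permutations, \cite[Theorem 8.2.8]{BB2}. A smaller inaccuracy: your parenthetical that $\phi^{\pm}(b^{ij}_{\times,k})$ is ``bigrassmannian yet not join-irreducible'' is off --- e.g.\ $\phi^{+}(101)=1\Ou{}1=\Ou{}1\Ou{}$ has two left descents, so it is not even bigrassmannian; the phenomenon of Remark \ref{D4exrem} concerns $\phi(b^{ij}_{\circ,k})$ for type $O_B$ elements. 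This aside is not load-bearing, since the assignments you ultimately use agree with the statement.
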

\begin{proof}
The fact that $\JI{i}{j}$ consists of the elements as above follows from \cite[Theorem 5.7]{GeK}, as well as the fact that applying the maps $\phi$ and $\phi^\pm$ to the reduced expression of the corresponding element, we get a reduced expression. The Bruhat relations (\ref{al:D_OAOA})--(\ref{al:D_OBX}) follow directly the construction of the elements. 
For the relation  (\ref{al:D_XOB}), we see from the reduced expressions above that it is enough to prove it in the case $i=j$, which follows from e.g. \cite[Theorem 8.2.8]{BB2}.

To prove that (\ref{al:D_OAOA})--(\ref{al:D_XOB}) are essentially all relations in $\JI{i}{j}$, it is enough to observe the following:
\begin{itemize}
    \item $d_{\circ,k} \not\le d_{\times,k-1}$, for $1 \leq k \leq \min\{ i,n+1-j \} $, since the former one involves the simple reflection $(j+k-1)$ in its reduced expressions while the latter does not,
    \item $d_{\pm,k} \not\le d_{\times,k-1}$, for $i \leq k \leq n+1-j$, for the same reason as above,
    \item $d_{\times,k} \not\leq d_{\pm,k+i}$, for $1 \leq k \leq n+1-j-i$, which can be checked using \cite[Theorem 8.2.8]{BB2}.
    \qedhere
\end{itemize}
\end{proof}

\begin{lemma}\label{DY}
The Bruhat relations in (\ref{al:JI0+0+})--(\ref{al:JI0+j}) and (\ref{al:D_OAOA})--(\ref{al:D_OBX}) are socle-killing.
\end{lemma}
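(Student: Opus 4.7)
The plan is to verify the socle-killing property by a direct subword argument, family by family, using the explicit reduced expressions of the elements $d^{ij}_{\circ,k}$, $d^{ij}_{\times,k}$, $d^{ij}_{\pm,k}$, and $d^{0^\pm j}_k$ obtained from Propositions~\ref{BJI0a}, \ref{BJIab}, \ref{BtoD0}, and \ref{JIabD}. Recall that, for a relation $x<y$ with $y\in\JI{s}{t}$, socle-killing requires $x\le sy$ or $x\le yt$, which in each case is verified by extracting a reduced expression for $x$ as a subword of a reduced expression for $sy$ or $yt$.

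For the relations (\ref{al:D_OAOA})--(\ref{al:D_OBX}) inside $\JI{i}{j}$ with $1\le i\le j\le n$, both descents $i,j$ are disjoint from $\{\Ou{},\Od{}\}$, so left multiplication by $i$ and right multiplication by $j$ commute in an obvious way with the replacement maps $\phi$ and $\phi^{\pm}$. Consequently, the type $B$ socle-killing witnesses already supplied by Lemma~\ref{BY} for the analogous relations between the $b^{ij}_{\circ,k}$ and $b^{ij}_{\times,k}$ transfer letter-by-letter: one takes the same subword and then applies the appropriate ``replace-$0$'' rule used to build the type $D$ elements. The only ``interface'' relation $d_{\circ,i}<d_{\pm,i+1}$ bridging types $O_A$ and $O_B$ is not a pure transport from type $B$ and must be checked by hand; it follows from a one-step comparison of the two reduced expressions, which differ only in that a trailing descending chain $i-1,i-2,\ldots$ is replaced by a ``$0$''-chain.

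For the relations (\ref{al:JI0+0+})--(\ref{al:JI0+j}) inside $\JI{\Ou{}}{j}$ (with $\JI{\Od{}}{j}$ handled symmetrically), the left descent $\Ou{}$ interacts nontrivially with $\phi^{+}$: removing a leftmost $\Ou{}$ disturbs the ``replace every other $0$'' pattern. Here one argues directly that the reduced expression for $d^{0^{+}j}_{k+1}$ obtained from Proposition~\ref{BtoD0} begins with $\Ou{}\, s_{10}\cdots$ and, after stripping the initial $\Ou{}$, the word still contains a reduced expression for $d^{0^{+}j}_{k}$ as a subword (the pattern is merely shifted by one step in the alternation). The symmetric argument using right multiplication by $j$, which does not interact with the $\phi^{\pm}$ replacement, is easier and suffices whenever $j\ne 0$.

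The main obstacle is the case $j=0$, i.e.\ (\ref{al:JI0+0+})--(\ref{al:JI0+0-}), where the indexing jumps by $2$ and the chain alternates between $\JI{\Ou{}}{\Ou{}}$ and $\JI{\Ou{}}{\Od{}}$. Both descents lie in $\{\Ou{},\Od{}\}$ and the interaction with $\phi^{\pm}$ is most delicate. For this case, writing out the reduced expressions of $d^{0^{+}0}_{k}$ and $d^{0^{+}0}_{k+2}$ explicitly and matching them position by position is unavoidable; the verification is guided by the fact that the number of ``$0$-type'' letters drops by exactly one on each side, matching the length gap $\ell(d^{0^{+}0}_{k+2})-\ell(d^{0^{+}0}_{k})=2$, so that a stripping of $\Ou{}$ from either side leaves enough room for a subword equal to $d^{0^{+}0}_{k}$.
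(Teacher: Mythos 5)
Your plan matches the paper's approach: the paper's entire proof of this lemma is the single sentence ``This is a routine check of all the cases,'' i.e., precisely the direct case-by-case verification from the explicit reduced expressions that you organize here (including the observation, used elsewhere in the paper in the proof of Lemma~\ref{gdsk}, that witnesses transfer through $\phi$ and $\phi^{\pm}$ because $i,j\notin\{\Ou{},\Od{}\}$ commute with the replacement). One small correction in your last paragraph: for consecutive elements $d^{0^{+}0}_{k}<d^{0^{+}0}_{k+2}$ of the chains \eqref{al:JI0+0+}--\eqref{al:JI0+0-} the length gap is $\ell\bigl(b^{00}_{k+2}\bigr)-\ell\bigl(b^{00}_{k}\bigr)=2k+3$ (since $\ell\bigl(b^{00}_{k}\bigr)=k(k+1)/2$), not $2$ --- this does not harm the subword argument, which only needs the word obtained by stripping the relevant $\Ou{}$ or $\Od{}$ to still contain a reduced expression of the smaller element.
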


\begin{proof}
This is a routine check of all the cases.
\end{proof}

We note that the formulation of Lemma~\ref{DY} lists all  the
relations from Poposition~\ref{JIabD} except \eqref{al:D_XOB}.

As in type $B$, it helps to consider some of $\BG{i}{j}$ in addition to $\JI{i}{j}$:

\begin{proposition}\label{Dnewelementsofimportance}
Let $1\leq i\leq j$. Then the elements
\[g^{ij}_k = \phi(f^{ij}_k) \in W(D_{n+2})\]
belongs to $\BG{i}{j}$ and we have in $W(D_{n+2})$
\[d^{ij}_{\times,k}<g^{ij}_k <d^{ij}_{\times,k+1}.\]
\end{proposition}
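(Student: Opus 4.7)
The plan is to mirror the type-$B$ construction of Proposition \ref{newelementsofimportance} via the homomorphism $\phi$. I would first produce an explicit expression for $g^{ij}_k$ by applying $\phi$ letter-by-letter to the reduced expression of $f^{ij}_k$ displayed in the proof of Proposition \ref{newelementsofimportance}, namely
\[
g^{ij}_k \;=\; \phi(t_{i,i-1}) \cdot \phi(t_{i+1,i-1}) \cdots \phi(t_{i+k-1,i-1}) \cdot s_{i+k,i} \cdot s_{i+k+1,i+1} \cdots s_{j+k,j}.
\]
Since each factor $t_{m,i-1}$ contains exactly one occurrence of $0$ while the trailing $s_{\bullet,\bullet}$ carry no $0$'s (as $i\geq 1$), each $0$ gets replaced by the commuting pair $\Ou{}\Od{}=\Od{}\Ou{}$, giving an expression of length $\ell(f^{ij}_k)+k$. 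I then claim this is a reduced expression in $W(D_{n+2})$; the cleanest way to see this is to observe that it has the shape of the bigrassmannian expressions enumerated in \cite[Section~5]{GeK}, which simultaneously gives reducedness.

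Having a reduced expression in hand, membership $g^{ij}_k\in \BG{i}{j}$ follows from inspection: the leftmost letter is $i$ and the rightmost is $j$ (both nonzero in the type-$D$ Dynkin diagram), so $i\in LD(g^{ij}_k)$ and $j\in RD(g^{ij}_k)$, and uniqueness of the descents can again be read off from the classification in \cite[Section~5]{GeK}. For the chain $d^{ij}_{\times,k}<g^{ij}_k<d^{ij}_{\times,k+1}$, the type-$B$ analogue $b^{ij}_{\times,k}<f^{ij}_k<b^{ij}_{\times,k+1}$ recorded in \eqref{f<bx} serves as the template: I would compare the three explicit reduced expressions factor-by-factor and invoke the subword characterization of the strong Bruhat order. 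Under the substitution $0 \mapsto \Ou{}\Od{}$, the subword relations among the three type-$D$ expressions remain intact, since the substitution is performed uniformly and coherently across all three.

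The main obstacle I anticipate is that $\phi$ is not order-preserving in general, so one cannot transport Bruhat relations from type $B$ to type $D$ as a black box; one has to argue directly using the specific chosen reduced expressions. A secondary subtlety is verifying that the type-$D$ expression produced by $\phi$ is genuinely reduced---this may require either appealing to \cite[Section~5]{GeK} or checking by hand that no braid relation of type $D$ shortens the alternating pattern of $\Ou{}$ and $\Od{}$ introduced by $\phi$. Since for this proposition we only need $1\leq i\leq j$, the trailing segment $s_{i+k,i}\cdots s_{j+k,j}$ lives entirely in the type-$A$ parabolic subsystem generated by $\{1,2,\dots,n\}$, which simplifies the braid-relation check and localizes the combinatorial difficulty to the initial segment $\phi(t_{i,i-1})\cdots\phi(t_{i+k-1,i-1})$.
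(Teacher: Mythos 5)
Your plan inverts the logic of the paper's proof at the decisive point. The paper's argument is essentially two lines: the descent sets of $\phi(f^{ij}_k)$ are $\{i\}$ and $\{j\}$ by construction of $\phi$, and the chain $d^{ij}_{\times,k}<g^{ij}_k<d^{ij}_{\times,k+1}$ is obtained by applying $\phi$ to the type-$B$ chain $b^{ij}_{\times,k}<f^{ij}_k<b^{ij}_{\times,k+1}$, \emph{because $\phi$ preserves the order relations}. This is exactly the fact you dismiss (``$\phi$ is not order-preserving in general''), but it is true: $\phi$ identifies $W(B_{n+1})$ with the subgroup of $W(D_{n+2})$ fixed by the diagram involution swapping $\Ou\leftrightarrow\Od$, and for such fixed-point subgroups the induced Coxeter structure has Bruhat order equal to the restriction of the ambient Bruhat order (the same fact underlies the proof of Lemma~\ref{gdsk}). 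By rejecting it you commit yourself to a direct subword verification, and that is where your plan has a genuine gap. For the upper relation $g^{ij}_k<d^{ij}_{\times,k+1}$ a factor-by-factor comparison does work, mirroring \eqref{f<bx}; but for the lower relation $d^{ij}_{\times,k}<g^{ij}_k$ there is no factor-by-factor subword embedding, since the $a$-th factor $t_{i+a,i}$ of $b^{ij}_{\times,k}$ is \emph{longer} than the corresponding factor $t_{i+a,i-1}$ of $f^{ij}_k$ (any embedding must cross factor boundaries, and you supply none). Note also that \eqref{f<bx} records only $f^{ij}_k<b^{ij}_{\times,k+1}$; the relation $b^{ij}_{\times,k}<f^{ij}_k$ is not obtained by subwords anywhere in the paper --- it comes from the join computation $f^{ij}_k=b^{ij}_{\circ,k+1}\vee b^{ij}_{\times,k}$ in Proposition~\ref{newelementsofimportance} via \cite[Proposition 4.5]{GeK}. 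Your proposal offers no replacement for that step.

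A secondary problem is the appeal to \cite[Section~5]{GeK} for reducedness and for uniqueness of the descents: that section enumerates the \emph{join-irreducible} elements of $W(D_{n+2})$, and $g^{ij}_k$ is not join-irreducible --- that is precisely why it is introduced, as it lies in $\BG{i}{j}\setminus\JI{i}{j}$ and lengthens the chain beyond what $\JI{i}{j}$ alone provides --- so it does not appear in that list. Both reducedness of the letter-by-letter image and the descent sets are instead immediate from the folding: for $w$ in the fixed subgroup one has $\ell(\phi(w))=\ell(w)+\#\{0\text{'s in a reduced word for }w\}$, a simple reflection $s\notin\{\Ou,\Od\}$ is a descent of $\phi(w)$ if and only if it is a descent of $w$, and $\Ou,\Od$ are simultaneously descents of $\phi(w)$ if and only if $0$ is a descent of $w$. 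With these facts the whole proposition reduces to its type-$B$ counterpart, which is what the paper's proof does.
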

\begin{proof}
The left and right descents of $\phi(f^{ij}_k)$ are $\{i\}$ and $\{j\}$, respectively, by construction of $\phi$.
The second claim follows from $b^{ij}_{\times,k}<f^{ij}_k<b^{ij}_{\times,k}$ in $W(B_{n+1})$ since the map $\phi$ preserves the order relations.
\end{proof}

\begin{lemma}\label{gdsk}
Let $1\leq i\leq j$. The relation $g^{ij}_k <d^{ij}_{\times,k+1}$ is socle-killing.
\end{lemma}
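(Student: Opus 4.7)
The plan is to reduce the statement to its type-$B$ counterpart, Lemma~\ref{BY}, which asserts that $f^{ij}_k<b^{ij}_{\times,k+1}$ is socle-killing in $W(B_{n+1})$. The bridge is the homomorphism $\phi\colon W(B_{n+1})\to W(D_{n+2})$, which sends $f^{ij}_k$ to $g^{ij}_k$, sends $b^{ij}_{\times,k+1}$ to $d^{ij}_{\times,k+1}$, and fixes the simple reflections $i,j\in\{1,\dots,n\}$.

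The key technical ingredient is the folding length formula $\ell_D(\phi(w))=\ell_B(w)+n_0(w)$, where $n_0(w)$ denotes the number of occurrences of the simple reflection $0$ in any reduced expression of $w\in W(B_{n+1})$. This reflects the fact that $W(B_{n+1})$ embeds via $\phi$ as the fixed subgroup of the diagram automorphism of $D_{n+2}$ swapping $\Ou{}$ and $\Od{}$; concretely, replacing each $0$ in a reduced expression of $w$ by the commuting two-letter word $\Ou{}\Od{}$ produces a reduced expression of $\phi(w)$ in $W(D_{n+2})$. As an immediate consequence, $\phi$ preserves the Bruhat order on the elements at hand, since any reduced subword in $W(B_{n+1})$ corresponds, via the same termwise expansion, to a reduced subword in $W(D_{n+2})$ of the expanded expression.

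Given this, the proof is short. Lemma~\ref{BY} produces either $f^{ij}_k\le i\cdot b^{ij}_{\times,k+1}$ or $f^{ij}_k\le b^{ij}_{\times,k+1}\cdot j$ in $W(B_{n+1})$; applying $\phi$ and using $\phi(i)=i$, $\phi(j)=j$ gives the corresponding inequality $g^{ij}_k\le i\cdot d^{ij}_{\times,k+1}$ or $g^{ij}_k\le d^{ij}_{\times,k+1}\cdot j$ in $W(D_{n+2})$, which is precisely the socle-killing condition.

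The only real work is verifying the folding length formula for the specific reduced expressions of $f^{ij}_k$ and $b^{ij}_{\times,k+1}$ from Proposition~\ref{newelementsofimportance} (and for $i\cdot b^{ij}_{\times,k+1}$ and $b^{ij}_{\times,k+1}\cdot j$, which have the same $n_0$-count). This amounts to the same kind of bookkeeping already performed in Subsection~\ref{ss:JID} in the reduced-expression compatibility statements underlying Proposition~\ref{JIabD}, and is the main, though routine, obstacle.
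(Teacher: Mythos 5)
Your proposal is correct and follows essentially the same route as the paper: the paper's proof likewise takes the socle-killing relation $f^{ij}_k<b^{ij}_{\times,k+1}$ in type $B$ (Lemma~\ref{BY}), extracts $f\le s_i b$ or $f\le b s_j$, and pushes it through the order-preserving homomorphism $\phi$ using $\phi(s_i)=s_i$. The only difference is that you spell out why $\phi$ preserves the Bruhat order (via the folding length formula and the subword property), a fact the paper uses without comment already in Proposition~\ref{Dnewelementsofimportance}.
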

\begin{proof}
This follows from the fact that $f=f^{ij}_k <b^{ij}_{\times,k+1}=b$ is socle-killing.
In fact, the former implies either $f<s_ib$ or $f<bs_j$. If $f<s_ib$, 
then \[g^{ij}_k=\phi(f)<\phi(s_ib)=\phi(s_i)\phi(b)=s_id^{ij}_{\times,k+1},\]
and the second case is similar.
\end{proof}

\begin{figure}
\centering
\begin{subfigure}{.4\textwidth}
  \centering
\[  \xymatrix@C=3em@R=1.2em{
\ar@{}[ddd]^(-0.1){}="a"^(1.1){}="b" \ar@{|..|}_{\rotatebox[origin=c]{90}{$i$}} "a";"b" &    b_{\circ,1} \ar[d] \ar[rdddd]  & \\
&   b_{\circ,2} \ar[d] \ar[rdddd]  & \\
&   \vdots \ar[d]  &   \\
&   b_{\circ,i} \ar[d] \ar@{}[rdddd]^(0.05){}="a"^(0.2){}="b" \ar@{-} "a";"b"   &   \\
\ar@{}[ddddd]^(-0.1){}="a"^(1.1){}="b" \ar@{|..|}_{\rotatebox[origin=c]{90}{$n+1-j-i$}} "a";"b" &   b_{\circ,i+1} \ar[d]  \ar@{}[rdddd]^(0.05){}="a"^(0.2){}="b" \ar@{-} "a";"b" & b_{\times,1} \ar@{-->}[l]\ar[d] \\
&   b_{\circ,i+2} \ar[dd] \ar@{}[rdddd]^(0.05){}="a"^(0.2){}="b" \ar@{-} "a";"b" & b_{\times,2} \ar@{-->}[l]\ar[dd] \\
&    &   \\
&   \vdots \ar[d] & \vdots \ar[d]  \\
&   b_{\circ,n-j} \ar[d] \ar[rdddd] & b_{\times,n-j-i} \ar@{-->}[l] \ar[d] \ar@{}[luuuu]^(0.03){}="a"^(0.2){}="b" \ar@{<-} "a";"b" \\
&   b_{\circ,n+1-j} \ar[rdddd] & b_{\times,n+1-j-i} \ar@{-->}[l] \ar[d] \ar@{}[luuuu]^(0.03){}="a"^(0.2){}="b" \ar@{<-} "a";"b" \\
\ar@{}[ddd]^(-0.1){}="a"^(1.1){}="b" \ar@{|..|}_{\rotatebox[origin=c]{90}{$i$}} "a";"b" &   & b_{\times,n+2-j-i}  \ar[d] \ar@{}[luuuu]^(0.03){}="a"^(0.2){}="b" \ar@{<-} "a";"b" \\
&   & \vdots \ar[d]  \\
&   & b_{\times,n-j} \ar[d] \\
&   & b_{\times,n+1-j} }  \]
%  \caption{}
%  \label{fig:B_JI_gen}
\end{subfigure}%
\begin{subfigure}{.5\textwidth}
  \centering
\[  \xymatrix@C=1em@R=1.2em{
& d_{\circ,1} \ar[d] \ar[rrrdddd]  &&& \\
& d_{\circ,2} \ar[d] \ar[rrrdddd]  &&& \\
& \vdots \ar[d]  &&&   \\
& d_{\circ,i}  \ar[rd] \ar[ld]  &&&   \\
d_{+,i+1} \ar[d] \ar[drr]  && d_{-,i+1} \ar[d] \ar[dll]    && d_{\times,1} \ar@{-->}[lllld] \ar@{-->}[lld] \ar[d] \\
d_{+,i+2} \ar@{}[dd]^(0.1){}="a"^(0.6){}="b" \ar@{->} "a";"b" \ar[drr] && d_{-,i+2} \ar@{}[dd]^(0.1){}="a"^(0.6){}="b" \ar@{->} "a";"b" \ar[dll]  && d_{\times,2} \ar@{}[dd]^(0.1){}="a"^(0.6){}="b" \ar@{->} "a";"b" \ar@{-->}[lllld] \ar@{-->}[lld] \\
 &&  && \\
\vdots \ar[d] \ar[drr] && \vdots \ar[d] \ar[dll] && \vdots \ar[d] \ar@{-->}[lllld] \ar@{-->}[lld] \\
d_{+,n-j} \ar[d] \ar[drr] \ar[rrrrdddd] && d_{-,n-j} \ar[d] \ar[dll] \ar[rrdddd] && d_{\times,n-j-i} \ar[d] \ar@{-->}[lllld] \ar@{-->}[lld]\\
d_{+,n+1-j} \ar[rrrrdddd] && d_{-,n+1-j} \ar[rrdddd] && d_{\times,n+1-j-i} \ar[d] \\
&&&& d_{\times,n+2-j-i}  \ar[d] \\
&&&& \vdots \ar[d]  \\
&&&& d_{\times,n-j} \ar[d] \\
&&&& d_{\times,n+1-j} }  \]
%  \caption{}
%  \label{fig:D_JI_gen}
\end{subfigure}
\caption{The Bruhat graph of $\JI{i}{j}$ in types $B_{n+1}$ (left) and $D_{n+2}$ (right), for $1 \leq i \leq j \leq n$ and $i \leq n-j$, with solid arrows socle-killing. In the right-hand side diagram, all but the first two arrows from (\ref{al:D_OAX}) are omitted, as well as all but the last four from (\ref{al:D_OBX}). The case $i > n-j$ is similar, but without type $O_B$ elements (i.e., no dashed arrows).}
\label{fig:BD_JI_gen}
\end{figure}
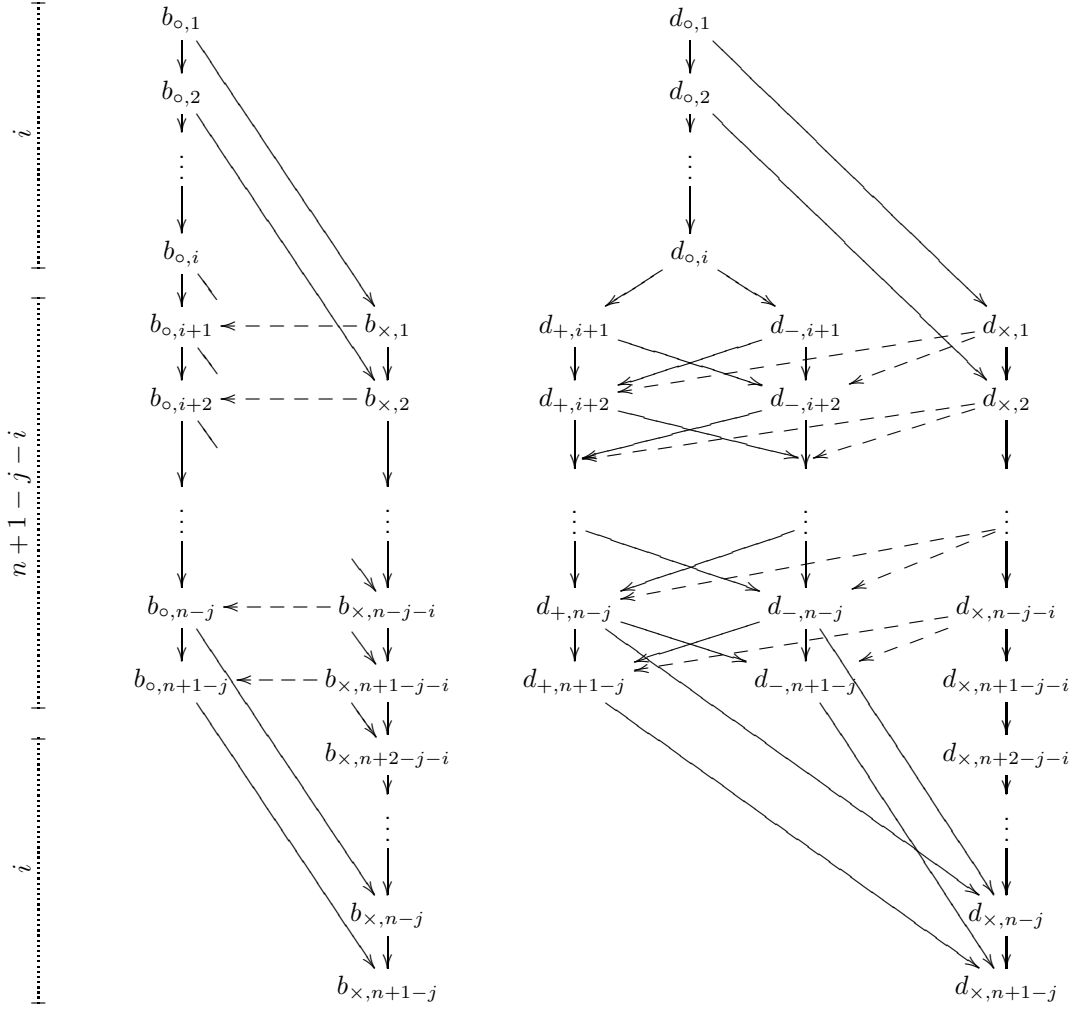

\begin{corollary}\label{Dnumber}
Let $i,j\in S$.
Then
\[ \displaystyle|\JI{i}{j}|= \begin{cases}  p_{e,w_{ij}}(1), &  i=0^\pm \text{ or } j=0^\pm ; \\ p_{e,w_{ij}}(1)  + c_{ij}, & 1 \leq i,j \leq n; \end{cases}\]
where $c_{ij} := \max\{0, n+1-j-i\}$.
\end{corollary}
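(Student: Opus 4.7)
The plan is to compute both sides of the claimed equality directly and then compare. The right-hand side is read off from the explicit Kazhdan–Lusztig formulas in Propositions~\ref{baseD}, \ref{Dmost} and \ref{Dspecial}; the left-hand side is read off from the enumerations in Propositions~\ref{BtoD0} and \ref{JIabD}. Throughout I may use the $D_{n+2}$ diagram symmetry swapping $\Ou{}\leftrightarrow\Od{}$ together with the identity $p_{ij}=p_{ji}$ and $\JI{i}{j}\cong\JI{j}{i}$ (via inversion) to reduce to the ``ordered'' cases.

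First I handle the cases where at least one of $i,j$ equals $0^{\pm}$. If $i=0^{\pm}$ and $j\geq 1$ (or vice versa), Proposition~\ref{BtoD0}(2) lists $n+1-j$ elements $d_k^{0^\pm j}$, while Proposition~\ref{baseD} gives $p_{e,w_{0^\pm j}}(1)=n+1-j$; so $|\JI{0^\pm}{j}|=p_{e,w_{0^\pm j}}(1)$ as claimed. If instead $i,j\in\{0^+,0^-\}$, Proposition~\ref{BtoD0}(1) partitions the $n+1$ elements $d_k^{0^+0}$ (for $k=1,\dots,n+1$) between $\JI{0^+}{0^+}$ (odd $k$) and $\JI{0^+}{0^-}$ (even $k$). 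Counting these by parity and comparing with the number of monomials in $p_{0^+0^+}$ and $p_{0^+0^-}$ from Proposition~\ref{Dspecial} in each parity of $n$ yields the desired equality; the bookkeeping matches because in every parity case the number of odd (resp.\ even) values in $\{1,\ldots,n+1\}$ equals the number of terms in the corresponding polynomial.

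Now suppose $1\leq i\leq j\leq n$. By Proposition~\ref{JIabD}, $\JI{i}{j}$ is a disjoint union of three families of sizes
\[
\#O_A=\min\{i,\,n+1-j\},\qquad \#O_B=2\max\{0,\,n+1-j-i\},\qquad \#X=n+1-j.
\]
On the other hand, from Proposition~\ref{Dmost} the two ``blocks'' of exponents appearing in $p_{ij}$ each contain $n+1-j$ monomials, so $p_{ij}(1)=2(n+1-j)$ irrespective of how much the two blocks overlap. It remains to show
\[
\min\{i,n+1-j\}+2\max\{0,n+1-j-i\}+(n+1-j)=2(n+1-j)+\max\{0,n+1-j-i\}.
\]
I verify this by splitting into the three subcases $i+j\leq n$, $i+j=n+1$, and $i+j\geq n+2$; in each case the left and right sides reduce to the same expression ($3(n+1-j)-i$ in the first case and $2(n+1-j)$ in the other two). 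The remaining cases (when $j<i$) follow by the $i\leftrightarrow j$ symmetry noted above.

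No real obstacle is expected: all inputs are already established in the preceding propositions, and the proof is a careful arithmetic verification. The only minor care needed is to ensure that, in case~(1) of Proposition~\ref{BtoD0}, the parities of $k$ are correctly identified with the left/right descent being $\Ou{}$ or $\Od{}$; this follows from the fact that $\phi^+$ alternates $\Ou{},\Od{},\Ou{},\ldots$ through consecutive occurrences of the letter $0$ in a reduced expression of $b_k^{00}$, which contains exactly $k$ such letters.
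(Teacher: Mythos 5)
Your proposal is correct and follows essentially the same route as the paper: the paper's proof is a one-line comparison of Propositions~\ref{BtoD0} and \ref{JIabD} with Propositions~\ref{baseD}, \ref{Dspecial} and \ref{Dmost}, noting that $c_{ij}$ equals half the number of type $O_B$ elements and also the number of monomials in $p_{ij}$ with coefficient $2$. Your count $\min\{i,n+1-j\}+2\max\{0,n+1-j-i\}+(n+1-j)=2(n+1-j)+c_{ij}$ and the parity bookkeeping in the $0^{\pm}$ cases are exactly the arithmetic the paper leaves implicit.
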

\begin{proof}
The claim follows from comparing Proposition \ref{BtoD0} and \ref{JIabD} with Proposition  \ref{baseD}, \ref{Dspecial}, and \ref{Dmost}. Note that $c_{ij}$ is equal to the one half of the number of type $O_B$ elements, and also to the number of monomials in $p_{ij}$ with coefficient $2$.
\end{proof}

\begin{proposition}\label{Dbgchain}
For any $i,j\in S$, there exists a chain of length $p_{e,w_{ij}}(1)$ in $\BG{i}{j}$.
\end{proposition}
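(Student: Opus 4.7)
The plan is to construct an explicit chain in $\BG{i}{j}$ of the correct length, in direct analogy with the chain used in Proposition~\ref{Bbgchain} for type $B$. I split into cases according to whether one of the indices is a ``spin'' simple reflection $\Ou,\Od$ or not.

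\smallskip

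\emph{Case 1: $i\in\{\Ou,\Od\}$ or $j\in\{\Ou,\Od\}$.} Proposition~\ref{BtoD0} already asserts that $\JI{i}{j}\subseteq\BG{i}{j}$ is itself a chain, and Corollary~\ref{Dnumber} (combined with Propositions~\ref{baseD} and \ref{Dspecial}) identifies its cardinality with $p_{e,w_{ij}}(1)$. So $\JI{i}{j}$ already supplies the required chain.

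\smallskip

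\emph{Case 2: $1\le i\le j\le n$.} I claim that
\[
d^{ij}_{\circ,1}\;<\;d^{ij}_{\times,1}\;<\;g^{ij}_1\;<\;d^{ij}_{\times,2}\;<\;g^{ij}_2\;<\;\cdots\;<\;g^{ij}_{n-j}\;<\;d^{ij}_{\times,n+1-j}
\]
is a chain in $\BG{i}{j}$. The first inequality $d^{ij}_{\circ,1}<d^{ij}_{\times,1}$ is instance $k=1$ of relation~\eqref{al:D_OAX} in Proposition~\ref{JIabD} (valid since $\min\{i,n{+}1{-}j\}\ge 1$); the consecutive relations $d^{ij}_{\times,k}<d^{ij}_{\times,k+1}$ come from the same proposition; and the interleaved bounds $d^{ij}_{\times,k}<g^{ij}_k<d^{ij}_{\times,k+1}$ are exactly the content of Proposition~\ref{Dnewelementsofimportance}. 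The membership $g^{ij}_k\in\BG{i}{j}$ is asserted there as well. Counting, the chain has
\[
1+(n+1-j)+(n-j)\;=\;2(n+1-j)
\]
elements. By Proposition~\ref{Dmost}, the polynomial $p_{e,w_{ij}}$ is a sum of two truncated geometric progressions of $n-j+1$ terms each, so $p_{e,w_{ij}}(1)=2(n+1-j)$ regardless of whether the overlap $c_{ij}$ is positive or zero. Hence the chain has the required length.

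\smallskip

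\emph{Case 3: $1\le j<i\le n$.} Here we invoke symmetry: the map $w\mapsto w^{-1}$ is an order-preserving bijection of $(W,\le)$ that interchanges $\BG{i}{j}$ with $\BG{j}{i}$ and sends $w_{ij}$ to $w_{ji}$. Since Kazhdan--Lusztig polynomials are invariant under inversion, $p_{e,w_{ij}}=p_{e,w_{ji}}$, so pulling back the chain from Case~2 produces one in $\BG{i}{j}$ of the required length.

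\smallskip

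There is no serious obstacle: essentially all the combinatorial work (the classification of $\JI{i}{j}$, the construction of the ``filler'' elements $g^{ij}_k$ in $\BG{i}{j}\setminus\JI{i}{j}$, the relevant Kazhdan--Lusztig computation, and the chain structure in the spin case) has already been assembled in the preceding propositions; the proof amounts to citing them in the right order and performing the bookkeeping on the chain length.
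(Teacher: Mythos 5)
Your proposal is correct and follows essentially the same route as the paper: the spin cases are handled by the chain $\JI{i}{j}$ from Proposition~\ref{BtoD0}, and for $1\le i\le j\le n$ you build exactly the paper's interleaved chain $d_{\circ,1}<d_{\times,1}<g_1<d_{\times,2}<\cdots<g_{n-j}<d_{\times,n+1-j}$ from Propositions~\ref{JIabD} and~\ref{Dnewelementsofimportance}. The only cosmetic differences are that you verify the length directly from Proposition~\ref{Dmost} rather than via Corollary~\ref{Dnumber}, and that you make the inversion symmetry for $j<i$ explicit.
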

\begin{proof}
When $i=0^\pm$ or $j=0^\pm$, the elements in $\JI{i}{j}$ form a chain, as shown in Proposition~\ref{BtoD0}, of the desired length.

Otherwise, Proposition~\ref{JIabD} and Proposition~\ref{Dnewelementsofimportance} provides the chain
\begin{equation}\label{ddgdgd}
  d_{\circ,1}<d_{\times,1}<g_1<d_{\times,2}<g_2<\cdots < g_{n-\max \{i,j\}} < d_{\times,n-\max \{i,j\}+1}  
\end{equation}
which has the desired length by Corollary~\ref{Dnumber}.
\end{proof}

\begin{corollary}\label{halfsosumD}
Let $w\in W$. The socle of $\co_w$ is contained in the sum of $\so_{x}$ taken  over $x\in \JM(w)$.
\end{corollary}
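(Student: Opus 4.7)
The plan is to reduce this corollary to the general combinatorial criterion established in Proposition~\ref{cor:sum}, in exactly the same way that Corollary~\ref{halfsosumB} was obtained in type $B$. Proposition~\ref{cor:sum} says that if, for every pair $s,t\in S$, we can exhibit a chain in $\BG{s}{t}$ whose length equals $\sum_{u\in {}^s\hc^t} p_{e,u}(1)$, then the half socle-sum property (i.e.~containment of $\soc\co_w$ in the sum of the $\so_x$, $x\in\JM(w)$) holds for every $w\in W$. So the whole task is to verify the chain hypothesis in type $D$.

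First I would invoke Proposition~\ref{Dbgchain}, which provides such a chain in $\BG{i}{j}$ of the required length $p_{e,w_{ij}}(1)$, for every $i,j\in S$. That proposition in turn combines two ingredients already in place: the enumeration of $\JI{i}{j}$ together with the count of the extra bigrassmannians $g^{ij}_k\in\BG{i}{j}\setminus\JI{i}{j}$ coming from Proposition~\ref{Dnewelementsofimportance}, which together match the total multiplicity $p_{e,w_{ij}}(1)$ by Corollary~\ref{Dnumber}; and the explicit chain \eqref{ddgdgd} interleaving the elements $d_{\times,k}$ and $g_k$. The two boundary cases (when $i$ or $j$ equals $0^\pm$, and when $1\leq i\leq j\leq n$) are both already handled there.

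Once this is in hand, the corollary follows immediately by feeding the chains from Proposition~\ref{Dbgchain} into Proposition~\ref{cor:sum}. There is no new obstacle specific to type $D$ at this stage; all type-dependent combinatorics has been absorbed into the construction of the chains \eqref{ddgdgd} and into the KL polynomial computations of Subsection~\ref{ssKLD}. Thus the proof is a one-line invocation, entirely parallel to the proof of Corollary~\ref{halfsosumB}.
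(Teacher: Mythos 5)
Your proposal is correct and coincides with the paper's own proof, which deduces the corollary directly from Proposition~\ref{Dbgchain} and Proposition~\ref{cor:sum}. Since the $\mathtt{H}$-cells in $\jc$ are singletons in type $D$, the chain length $p_{e,w_{ij}}(1)$ from Proposition~\ref{Dbgchain} is exactly the quantity $\sum_{u\in {}^i\hc^j}p_{e,u}(1)$ required by Proposition~\ref{cor:sum}, so the one-line invocation goes through as you describe.
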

\begin{proof}
This follows from Proposition~\ref{Dbgchain} and Proposition~\ref{cor:sum}.
\end{proof}

\subsubsection{The join-irreducible socles}

\begin{proposition}\label{socle0D}
Let $i =0^{\pm}$ and $j\in S$. Then, for $d_k \in \JI{0^{\pm}}{j}$, we have
%\[\soc\co_{\phi^\pm (b_{k})} = L_{w_{ij}}\langle -\ell(w_{ij})+2(n+1-k) \rangle\]
%if $j=0^\pm$, or
\[\soc\co_{d_k} \cong \begin{cases} L_{w_{ij}} \langle -\ell(w_{0})+1+2(n+1-k) \rangle, &  j = 0^\pm;  \\
L_{w_{ij}} \langle -\ell(w_{ij})+2(n+1-j-k) \rangle, &  j\geq 1 . \end{cases} \]
%if $j\geq 1$.
\end{proposition}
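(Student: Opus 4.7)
The proof follows the template of Proposition \ref{Bsoc00}, combining the singleton structure of $\mathtt{H}$-cells in type $D$ with a pigeonhole counting. Since ${}^{0^\pm}\hc^j = \{w_{0^\pm j}\}$ has exactly one element, Propositions \ref{prop3} and \ref{prop6} force $\soc\co_{d_k}$ to be a nonzero direct sum of graded shifts of the single simple module $L_{w_{ij}}$. By Proposition \ref{BtoD0}, $\JI{0^\pm}{j}$ is a chain, and its length equals $p_{e,w_{ij}}(1)$ by Corollary \ref{Dnumber}. Moreover, every graded copy of $L_{w_{ij}}$ appears in $\Delta_e$ with multiplicity one, since all coefficients of $p_{e,w_{ij}}$ are $1$ by Propositions \ref{baseD} and \ref{Dspecial}.

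The next step is to combine Lemma \ref{DY} (each chain relation is socle-killing) with Lemma \ref{socledie} to conclude that the socles $\soc\co_{d_k}$, for varying $k$, are pairwise disjoint as graded subquotients of $\Delta_e$: a graded component appearing in $\soc\co_{d_k}$ cannot reappear in $\soc\co_{d_{k'}}$ for any $k'>k$. Since each socle is nonzero and the total number of available graded copies of $L_{w_{ij}}$ in $\Delta_e$ equals the length $|\JI{0^\pm}{j}|$ of the chain, a pigeonhole argument forces each $\soc\co_{d_k}$ to be simple, and the collection of socles to exhaust all the graded copies bijectively.

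To pinpoint which shift appears in $\soc\co_{d_k}$, I would apply Lemma \ref{skandmaxd}: because every chain relation is socle-killing, $\skal(d_k) = k'-1$ and $\skbl(d_k) = r+1-k'$, where $k'$ is the position of $d_k$ in the chain and $r+1 = |\JI{0^\pm}{j}|$. The upper and lower bounds from Lemma \ref{skandmaxd} therefore coincide, yielding $\maxd\soc\co_{d_k} = d_{k'-1}$, the $k'$-th smallest exponent of $p_{e,w_{ij}}$. Combined with the simplicity of $\soc\co_{d_k}$ and the explicit lists of exponents (step $2$ for $j\geq 1$ from Proposition \ref{baseD}, step $4$ for $j=0^\pm$ from Proposition \ref{Dspecial}), this produces the shift formulas stated in the proposition.

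The main obstacle is the index bookkeeping in the case $j=0^\pm$. There the index $k$ in the statement does not run through consecutive integers starting from $1$; instead it ranges over only the odd or only the even values, depending on which of $\JI{0^+}{0^+}$, $\JI{0^+}{0^-}$, $\JI{0^-}{0^+}$, $\JI{0^-}{0^-}$ one considers, and on the parity of $n$ (see Proposition \ref{BtoD0}). One must translate between $k$ (the $\phi^\pm(b_k^{00})$-index) and $k'$ (the position in the chain), and reconcile the resulting depth with the formula $-\ell(w_0)+1+2(n+1-k)$, where the stride $4$ of the exponents in $p_{0^\pm 0^\pm}$ (as opposed to stride $2$ in the $j\geq 1$ case) is absorbed by the fact that $k$ jumps by $2$ between consecutive chain elements.
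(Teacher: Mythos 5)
Your proposal is correct and follows essentially the same route as the paper: the paper's proof likewise observes that $\JI{0^\pm}{j}$ is a socle-killing chain (Proposition \ref{BtoD0}, Lemma \ref{DY}) whose length matches $p_{e,w_{ij}}(1)$ (Propositions \ref{baseD} and \ref{Dspecial}), and then concludes by Lemma \ref{socledie} and the pigeonhole principle. Your additional explicit use of Lemma \ref{skandmaxd} and the index bookkeeping for $j=0^\pm$ only spell out details the paper leaves implicit.
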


\begin{proof}
Since $\JI{i}{j}$ is a chain with socle-killing relations by Proposition \ref{BtoD0} and Lemma \ref{DY}, and since $|\JI{i}{j}| = p_{ij}(1)$ by Proposition  \ref{Dspecial} and Propostion \ref{baseD}, the claim follows from Lemma \ref{socledie} and the pigeonhole principle.
\end{proof}

\begin{proposition}\label{socleabD}
Fix $1 \leq i \leq j \leq n$. For $d\in \JI{i}{j}$, the socle of $\co_d$ is isomorphic to some shift of $L_{w_{ij}}$. To be more precise,
\begin{enumerate}
    \item \label{socOA} for $1\leq k \leq \min\{i,n+1-j\}$, we have
    \[ \soc\co_{d_{\circ,k}} \cong L_{w_{ij}}\langle - \ell (w_{ij}) +2(n+1-j+i-k)\rangle ,\]
    \item \label{socOBX} for $i < k\leq n+1-j$, the socles  $\soc\co_{d_{+,k}}$, $\soc\co_{d_{-,k}}$ and $\soc\co_{d_{\times,k-i}}$ are three distinct simple summands of the isotypic component $ 2L_{w_{ij}}\langle -\ell (w_{ij}) +2(n+1-j+i-k) \rangle $ in $\Delta_e$,
    \item \label{socX} for $n+1-j-i< k \leq n+1-j$, we have
    \[ \soc\co_{d_{\times,k}} \cong L_{w_{ij}}\langle -\ell (w_{ij}) +2(n+1-j-k) \rangle .\]
\end{enumerate}
%(Recall that $\mathbf{a}(x)=n^2+n+1=\ell(w_0)-2n-1$.)
\end{proposition}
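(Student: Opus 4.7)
The plan is to mirror the proof of Proposition~\ref{Bsocab}: I use the type~$D$ Kazhdan--Lusztig data (Propositions~\ref{baseD}, \ref{Dmost}, \ref{Dspecial}), the socle-killing chain~\eqref{ddgdgd} of length $p_{ij}(1)$ in $\BG{i}{j}$ from Proposition~\ref{Dbgchain}, and the half socle-sum property from Corollary~\ref{halfsosumD}. The new feature in type~$D$ is the coefficient~$2$ appearing in $p_{ij}$ whenever $i+j\le n$, matched on the join-irreducible side by the branching of type~$O_B$ into $d_{+,k}$ and $d_{-,k}$.

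First I would run the chain~\eqref{ddgdgd}. By Lemmas~\ref{DY} and~\ref{gdsk} all its consecutive relations are socle-killing, so (arguing as in the proof of Proposition~\ref{newbgchain} and using Propositions~\ref{prop3} and~\ref{prop6}) each $\soc\co_{d_{\times,k}}$ is simple, and the full set of chain socles exhausts the isotypic component of $L_{w_{ij}}$ in $\Delta_e$. The base case $k=1$ for $\soc\co_{d_{\times,1}}$ is obtained by computing $p_{d_{\times,1},w_{ij}}$ through the induction in Lemma~\ref{lem:KL_sz_y}, exactly as in Lemma~\ref{lem:B_KL}; together with Lemma~\ref{lem:bruh_soc}(\ref{sk m<m}) and the socle-killing chain $d_{\times,1}<d_{\times,2}<\cdots$, this yields part~(\ref{socX}). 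Part~(\ref{socOA}) then follows by matching the upper and lower bounds from Lemmas~\ref{socledie} and~\ref{skandmaxd} along the socle-killing chain $d_{\circ,1}<\cdots<d_{\circ,\min\{i,n+1-j\}}$ and the relations $d_{\circ,k}<d_{\times,k}$, exactly as in the type~$B$ argument.

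For part~(\ref{socOBX}), the socle-killing chain $d_{\circ,i}<d_{\pm,i+1}<d_{\pm,i+2},d_{\mp,i+2}<\cdots$ from Lemma~\ref{DY} fixes the degree of each $\soc\co_{d_{\pm,k}}$ to the value $d:=-\ell(w_{ij})+2(n+1-j+i-k)$, which coincides with the degree of $\soc\co_{d_{\times,k-i}}$ provided by part~(\ref{socX}); hence all three socles lie in the isotypic component $2L_{w_{ij}}\langle d\rangle$ of $\Delta_e$ and each is simple (by the same parity/counting argument as for the $d_{\times,k}$). Lemma~\ref{newbgchain}, applied via Proposition~\ref{Dbgchain}, then forces these three $1$-dimensional subspaces to jointly exhaust $2L_{w_{ij}}\langle d\rangle$.

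The main obstacle will be establishing pairwise distinctness of the three simple socles in part~(\ref{socOBX}), since a priori three distinct $1$-dimensional subspaces of a $2$-dimensional space can coincide in pairs while their joint span remains the whole space. I plan to resolve this by exploiting the diagram involution $\sigma\colon\Ou{}\leftrightarrow\Od{}$, which exchanges $d_{+,k}\leftrightarrow d_{-,k}$ while fixing $d_{\times,k-i}$, combined with an analysis of $\sigma$-fixed vectors inside $2L_{w_{ij}}\langle d\rangle$; alternatively, one can construct an explicit element $w\in W$ (generalising the $D_4$ example of Remark~\ref{D4exrem}) with $\JM(w)=\{d_{+,k},d_{-,k},d_{\times,k-i}\}$ and apply Corollary~\ref{halfsosumD} directly to~$w$ to conclude.
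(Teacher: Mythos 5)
Your architecture is the right one and, for parts (i) and (iii), essentially coincides with the paper's proof: the base case $\so_{d_{\times,1}}$ via the type~$D$ analogue of Lemma~\ref{lem:B_KL}, the chain \eqref{ddgdgd}, and propagation along socle-killing relations. Two corrections there. First, not every consecutive relation in \eqref{ddgdgd} is socle-killing: only the relations ending at a $d_{\times,k}$ are (namely $d_{\circ,1}<d_{\times,1}$ and $g_k<d_{\times,k+1}$ from Lemma~\ref{gdsk}); the relations $d_{\times,k}<g_k$ cannot be, since $\so_{g_k}$ contains $\so_{d_{\times,k}}$ (cf.\ the analogue of Remark~\ref{soclesumfk}). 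Your conclusion that each $\so_{d_{\times,k}}$ is simple survives because it only uses the even-position relations. Second, for the $O$-type elements the bounds of Lemma~\ref{skandmaxd} do \emph{not} by themselves fix the degree: with $r=n-j+i$ homogeneous terms in $p_{ij}$ and a socle-killing chain of length only $n+1-j$ through $d_{\circ,\ast}$ and $d_{\pm,\ast}$, one is left with a gap of $i-1$ degrees. Closing it requires the downward induction of the type~$B$ proof (the analogue of \eqref{eqbo}), which uses that the penultimate components in the intermediate degrees are exactly the socles $\so_{d_{\times,k'}}$ for large $k'$ and that the relevant relations into $d_{\circ/\pm,k+1}$ are socle-killing; you gesture at this with ``exactly as in the type~$B$ argument,'' so I read this as implicit rather than missing.

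The genuine gap is in part (ii). The simplicity of $\so_{d_{\pm,k}}$ does not follow from ``the same parity/counting argument as for the $d_{\times,k}$'': the elements $d_{\pm,k}$ do not lie on the chain \eqref{ddgdgd}, so that counting says nothing about them. And for pairwise distinctness you correctly identify the obstacle but neither proposed fix works: the diagram involution only shows that a hypothetical common socle of $d_{+,k}$ and $d_{-,k}$ would be $\sigma$-stable, which is no contradiction; and producing $w$ with $\JM(w)=\{d_{+,k},d_{-,k},d_{\times,k-i}\}$ and applying Corollary~\ref{halfsosumD} merely reproduces the spanning statement you already have from Lemma~\ref{newbgchain}, which is consistent with two of the three lines coinciding. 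The tool you are missing is Lemma~\ref{lem:bruh_soc}(iii): if all composition factors of $\soc\co_x$ appear in $\co_y$, then $x\leq y$. If $\so_{d_{+,k}}$ had length two it would contain $\so_{d_{-,k}}$, forcing $d_{-,k}\leq d_{+,k}$, contradicting the incomparability of $d_{+,k}$ and $d_{-,k}$ (Proposition~\ref{JIabD}); and if two of the three socles coincided, the same lemma applied in both directions would force the corresponding two elements of $\JI{i}{j}$ to be equal. This single observation yields both simplicity and distinctness, and is how the paper concludes.
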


\begin{proof}
Using Lemma~\ref{lem:KL_sz_y} and induction one obtains the type D analogue of 
Lemma~\ref{lem:B_KL} which shows that the penultimate simple multiplicity of maximal
degree in  $\co_{d_{\times,1}}$ equals $1$. This implies \eqref{socOBX} for $d_{\times,1}$. 
This allows us to follow the steps in the proof of Proposition~\ref{Bsocab}.

We use Proposition~\ref{Dmost} and Lemma~\ref{socledie} throughout the proof without referring to it and we set  $w=w_{ij}$.

Consider the chain \eqref{ddgdgd} which we rename $x_1<\cdots <x_{2n-2j+2}$. Since the total multiplicity of penultimate composition factors (i.e., composition factors isomorphic, up to shift, to $L_w$) in $\Delta_e$ agrees with $2n-2j+2$ by Proposition~\ref{Dmost}, we have
\[[\co_{x_m}: L_w ] = m,\] 
for each $1\leq m\leq 2n-2j+2$ (see the proof of Proposition~\ref{newbgchain}).
By Proposition~\ref{gdsk}, for each even number $m$, the relation $x_{m-1}<x_{m}$ is socle-killing, and thus by Lemma \ref{socledie} the socle of $\co_{x_m}$, for each even number $m$, is simple, that is, each $\so_{d_{\times,k}}$ are simple.
As proved in the first paragraph, we have $\so_{d_{\times,1}}$ as stated in the formulation, and, in particular, it is in the desired degree.
 Since $d_{\times,k}<d_{\times,k+1}$ is socle-killing, Lemma~\ref{lem:bruh_soc}\eqref{sk m<m} 
 shows that, for all $k$, the socles $\so_{d_{\times,k}}$ are as in \eqref{socOBX} and in \eqref{socX}.
 
We prove, by induction on $l = n+1-j-k = 1,\cdots ,n-j$, that 
\begin{equation}\label{eqboD}
    \max\deg(\so_{d_{\pm,k+1}}) = \ell(w)-2i -2(l-1)
\end{equation}
Note that Lemma~\ref{skandmaxd} gives ``$\geq$'' in \eqref{eqboD}. 
Let $l=1$. Then ``$\leq$'' in \eqref{eqboD} follows from by combining the following facts:
\begin{itemize}
\item that there is at most one (graded) penultimate composition factor of $\Delta_e$ in each grade $> \ell(w)-2i-2(l-1)$,
\item that each such composition factor is the socle of $\co_{d_{\times,k'}}$, for some $k'\geq n+2-i-j$, by the previous paragraph,
\item that $d_{\pm,k+1}>d_{\times, k'}$, if true, is socle-killing (see Proposition~\ref{DY}),
for each $k'\geq n+2-i-j$.
\end{itemize}
Now let $l>1$. Then the socle-killing relation $d_{\pm,k+1}<d_{\pm,k+2}$ and 
induction provides the desired bound.

The socle-killing relation $d_{\pm,k-1}<d_{\pm,k}$ and Lemma~\ref{lem:soc_low_bound} upgrades \eqref{eqboD} to
\[ \deg(\so_{d_{\pm,k+1}}) = \ell(w)-2i -2(l-1).\]
This implies \eqref{socOA} immediately. For \eqref{socOBX}, it remains to observe that 
either of $\so_{d_{\pm,k}}$ having the maximal possible multiplicity, namely $2$,
with respect to the composition factor $L_w$
in the maximal degeree contradicts that $d_{+,k}$ and $d_{-,k}$ are Bruhat incomparable. This proves the claims for $d_{\pm,k}$ and $d_{\circ,k}$. 

The proposition is proved.
\end{proof}

\subsection{Type $E_6$}\label{ss:E6}

Before we discuss type $E$ in general, it is useful to first look at type $E_6$ in detail. 
Let $(W,S)$ of type $E_6$.
We denote the simple reflections by
\[ \begin{tikzpicture}[scale=0.4,baseline=-3]
\protect\draw (8 cm,0) -- (6 cm,0);
\protect\draw (6 cm,0) -- (4 cm,0);
\protect\draw (4 cm,0) -- (2 cm,0);
\protect\draw (2 cm,0) -- (0 cm,0);
\protect\draw (4 cm,0) -- (4 cm,1.5 cm);

\protect\draw[fill=white] (8 cm, 0 cm) circle (.15cm) node[below=1pt]{\scriptsize $6$};
\protect\draw[fill=white] (6 cm, 0 cm) circle (.15cm) node[below=1pt]{\scriptsize $5$};
\protect\draw[fill=white] (4 cm, 0 cm) circle (.15cm) node[below=1pt]{\scriptsize $4$};
\protect\draw[fill=white] (2 cm, 0 cm) circle (.15cm) node[below=1pt]{\scriptsize $3$};
\protect\draw[fill=white] (4 cm, 1.5 cm) circle (.15cm) node[right=1pt]{\scriptsize $2$};
\protect\draw[fill=white] (0 cm, 0 cm) circle (.15cm) node[below=1pt]{\scriptsize $1$};
\end{tikzpicture}.\]
Since $\jc$ is strongly regular, i.e., all $\mathtt{H}$-cells inside $\mathcal{J}$ are singletons, we denote the unique element ${}^i\hc^j$ as $w_{ij}$. These elements can be given as follows.
Denote by $\widehat{\cdot}$ the unique non-trivial automorphism of the Dynkin diagram. For simple reflections $i,j$, denote by $s_{ij} \in W$ the product $i \cdots j$ of simple reflections along the unique shortest path starting in $i$ and ending in $j$ in the Dynkin diagram.
Then we have $w_{ij} = s_{i \, \widehat{\jmath}} \cdot w_0$.

The Kazhdan-Lusztig polynomials $p_{e,w}$, for $w \in \mathcal{J}$, are collected in Table \ref{tab:KLE6}. We put $p_{e,w}$ in the position $(i,j)$ of the table, where $i$ is the left, and $j$ the right ascent for $w$. The computation was performed in SageMath v.9.0.

\begin{table}[ht]
    \centering
\renewcommand{\arraystretch}{1.2}
\scalebox{1}{
\begin{tabular}{c}
\begin{tabular}{|l||l|l|l|l|l|l|} \hline
 & $1$ & $2$ & $3$ \\ \hline\hline
$1$ & $v^{31} + v^{25}$ & $v^{32} + v^{28}$ & $v^{32} + v^{30} + v^{26}$ \\ \hline
$2$ & $v^{32} + v^{28}$ & $v^{35} + v^{31} + v^{29} + v^{25}$ & $v^{33} + v^{31} + v^{29} + v^{27}$  \\ \hline
$3$ & $v^{32} + v^{30} + v^{26}$ & $v^{33} + v^{31} + v^{29} + v^{27}$ & $v^{33} + 2v^{31} + v^{29} + v^{27} + v^{25}$ \\ \hline
$4$ & $v^{33} + v^{31} + v^{29} + v^{27}$ & $v^{34} + v^{32} + 2v^{30} + v^{28} + v^{26}$ & $v^{34} + 2v^{32} + 2v^{30} + 2v^{28} + v^{26}$ \\ \hline
$5$ & $v^{34} + v^{30} + v^{28}$ & $v^{33} + v^{31} + v^{29} + v^{27}$ & $v^{35} + v^{33} + v^{31} + 2v^{29} + v^{27}$ \\ \hline
$6$ & $v^{35} + v^{29}$ & $v^{32} + v^{28}$ & $v^{34} + v^{30} + v^{28}$ \\ \hline
%\hline
\end{tabular}
\\ \\
\begin{tabular}{|l||l|l|l|l|l|l|} \hline
 & $4$ & $5$ & $6$ \\ \hline\hline
$1$ &  $v^{33} + v^{31} + v^{29} + v^{27}$ & $v^{34} + v^{30} + v^{28}$ & $v^{35} + v^{29}$ \\ \hline
$2$ &  $v^{34} + v^{32} + 2v^{30} + v^{28} + v^{26}$ & $v^{33} + v^{31} + v^{29} + v^{27}$ & $v^{32} + v^{28}$ \\ \hline
$3$ &  $v^{34} + 2v^{32} + 2v^{30} + 2v^{28} + v^{26}$ & $v^{35} + v^{33} + v^{31} + 2v^{29} + v^{27}$ & $v^{34} + v^{30} + v^{28}$ \\ \hline
$4$ &  $v^{35} + 2v^{33} + 3v^{31} + 3v^{29} + 2v^{27} + v^{25}$ & $v^{34} + 2v^{32} + 2v^{30} + 2v^{28} + v^{26}$ & $v^{33} + v^{31} + v^{29} + v^{27}$ \\ \hline
$5$ &  $v^{34} + 2v^{32} + 2v^{30} + 2v^{28} + v^{26}$ & $v^{33} + 2v^{31} + v^{29} + v^{27} + v^{25}$ & $v^{32} + v^{30} + v^{26}$ \\ \hline
$6$ & $v^{33} + v^{31} + v^{29} + v^{27}$ & $v^{32} + v^{30} + v^{26}$ & $v^{31} + v^{25}$ \\ \hline
\end{tabular}
\end{tabular}}
\vskip 10pt
    \caption{Kazhdan-Lusztig polynomials $p_{e,w}$, for $w \in \mathcal{J}$, in $E_6$.}
    \label{tab:KLE6}
\end{table}

%\subsection{Type $E_6$}\label{ss:JIE6}
%We assume $(W,S)$ is of type $E_6$. Recall the notation from Subsection~\ref{ssKLE6}. 
The posets $\JI{1}{j}$, for $j=1,2,\ldots,6$, are given in Figure \ref{fig:E6_1j}, with all the relations socle-killing. 

\begin{figure}
    \centering
\[ \scalebox{.8}{\xymatrix{ 1 \ar[d] \\ 13452431 } \qquad
\xymatrix{ 1342 \ar[d] \\ 13456245342 } \qquad
\xymatrix{ 13 \ar[d] \\ 1345243 \ar[d] \\ 134562453413 } \qquad
\xymatrix{ 134 \ar[d] \\ 134524 \ar[d] \\ 1345624534 \ar[d] \\ 13456245341324 }   \qquad
\xymatrix{ 1345 \ar[d] \\ 13456245 \ar[d] \\ 134562453413245 } \qquad 
\xymatrix{ 13456 \ar[d] \\ 1345624534132456 }  }\]
    \caption{The posets $\JI{1}{j}$, for $j=1,2,\ldots,6$, in type $E_6$.}
    \label{fig:E6_1j}
\end{figure}

In Figure \ref{fig:E6_23} and Figure \ref{fig:E6_44}, we present all the remaining $\JI{i}{j}$, up to symmetries, with the non socle-killing arrows dashed. Moreover, to the right of each element $x \in \JI{i}{j}$ in the diagrams, we assign a symbol of the form $cL\langle -d\rangle$, which means the following:
\begin{itemize}
\item the maximal degree component of $\Delta_e/\Delta_x$ is isomorphic to $L_{w_{ij}}\langle -d\rangle$ (calculated by computer),
\item $[\Delta_e \colon L_{w_{ij}}\langle-d\rangle] = c$ (see Table \ref{tab:KLE6}).
\end{itemize}

\begin{figure}
    \centering
\[ \scalebox{.7}{\xymatrix@C=.5em{ 2 \ar[d] & L\langle-25\rangle \\
245342 \ar@{-->}[d] & L\langle-29\rangle \\
{24563451342} \ar[d] & L\langle-31\rangle \\
245341324565432451342 & L\langle-35\rangle &    }  \qquad\qquad

\xymatrix@C=.5em{ 243 \ar[d] \ar[rdd] && L\langle-27\rangle  \\ 
2453413 \ar[rdd] && L\langle-29\rangle \\ 
&  2456345243 \ar[d] & L\langle-31\rangle \\ 
&  24563413245432413 & L\langle-33\rangle}    \qquad\qquad
\xymatrix@C=0em{ & 24 \ar[d] && L\langle-26\rangle \\
& 24534 \ar[dl]\ar[d]\ar[dr] && L\langle-28\rangle \\
245341324 \ar[rd] & 245634524 \ar[d] & 2456345134 \ar[ld] & 2L\langle-30\rangle\\
& 245634132454324 \ar[d] && L\langle-32\rangle \\
& 24534132456543245134 && L\langle-34\rangle}   }\]
\vskip .5cm
\[ \scalebox{.7}{ \xymatrix@C=.5em{ & 3 \ar[d] \ar[rdd] &&& L\langle-25\rangle \\
& 3413 \ar[ddl]\ar[dd]\ar[ddr] &&& L\langle-27\rangle\\
& &  345243 \ar[lld]\ar[ld]\ar[d] && L\langle-29\rangle\\
 34561345243 \ar[dr] & 34562453413 \ar[d] & 3413245432413 \ar@{-->}[dl] && 2L\langle-31\rangle\\
& 3413245643245432413 &&& L\langle-33\rangle} \qquad\qquad 
\xymatrix@C=.5em{ & 345 \ar[ld] \ar[d] \ar[rd] && L\langle-27\rangle \\
 3413245 \ar[d] \ar[rdd] & 3456245 \ar[ld] \ar[dd] & 34561345 \ar@{-->}[lld] \ar@{-->}[ldd] & 2L\langle-29\rangle\\
 3413245643245 \ar[rdd] &&& L\langle-31\rangle\\
&  34562453413245 \ar[d] && L\langle-33\rangle \\
& 3456245341324562453413245 && L\langle-35\rangle  }}\]
\vskip .5cm
\[ \scalebox{.7}{\xymatrix@R=4em@C=.5em{   & 34  \ar[dl]\ar[d]\ar[dr] && L\langle-26\rangle \\
345134 \ar@{-->}[ddr]\ar[dr]\ar[rrd]   &    34524 \ar[dl]\ar[d]\ar[rd] & 341324 \ar@{-->}[ld]\ar[d]\ar[ldd] & 2L\langle-28\rangle    \\
{345624534} \ar[d]\ar[rd]\ar[rrd]  & 3456134524 \ar@{-->}[ld]\ar[rd]    & 34132454324 \ar[lld]\ar[d] & 2L\langle-30\rangle \\
3413245643245134   \ar[rd] & {3456245341324} \ar[d] & 34132456432454324 \ar[ld] & 2L\langle-32\rangle  \\
& 34132456453413245341324 && L\langle-34\rangle } } \]
    \caption{The posets $\JI{2}{2}$,  $\JI{2}{3}$ and $\JI{2}{4}$ in the first row, $\JI{3}{3}$ and $\JI{3}{5}$  in the second row, and $\JI{3}{4}$ in the third row, in type $E_6$.}
    \label{fig:E6_23}
\end{figure}

\begin{figure}
    \centering
\[ \scalebox{.75}{\xymatrix@R=6em@C=0em{
&&    4 \ar[dl]\ar[d]\ar[rd] &&&& L\langle-25\rangle \\
&   4324 \ar[ld]\ar@{-->}[d]\ar[dr]\ar@{-->}[drr]\ar[drrrr]
& 4524 \ar@{-->}[lld]\ar[ld]\ar@{-->}[d]\ar[dr]\ar[drrr]
& 4534  \ar[llld]\ar[lld]\ar[ld]\ar[d]\ar[dr]\ar[drr] &&& 2L\langle-27\rangle\\
43245134 \ar[d]\ar[drr]\ar@{-->}[drrr]\ar[drrrr] & 
45624534 \ar[d]\ar[dr]\ar@{-->}[drr]\ar[drrrr]& 
45341324 \ar[ld]\ar@{-->}[d]\ar[dr]\ar[drr]& 
45634524 \ar[llld]\ar@{-->}[ld]\ar[d]\ar[drr]& 
456345134 \ar[lld]\ar[ld]& 
432454324 \ar[llld]\ar[ld]\ar[d]\ar[dll] & 3L\langle-29\rangle\\
432456134524 \ar[drr]\ar@{-->}[drrr] & 
456245341324 \ar@{-->}[dr]\ar[drr] & 
43245643245134 \ar[dl]\ar[d]\ar[dr] & 
45634132454324 \ar[dll]\ar[dl]\ar[d] & 
453413245341324 \ar[dl]\ar[dll]  & 
432456432454324 \ar[dll]\ar[dlll] & 3L\langle-31\rangle\\
& 4534132456543245134 \ar[rd] &  453413245634132454324 \ar[d] & 432456453413245341324 \ar[ld] &&& 2L\langle-33\rangle\\
&&  45624534132456453413245341324 &&&& L\langle-35\rangle
}} \]
    \caption{The poset $\JI{4}{4}$ in type $E_6$.}
    \label{fig:E6_44}
\end{figure}

\begin{proposition}\label{E44}
For each $x \in \JI{}{}\setminus \{24563451342\}
$, the socle of $\Delta_e/\Delta_x$ is simple, and hence equal to the maximal degree component of $\Delta_e/\Delta_x$.
For $x=24563451342$, we have 
\begin{equation}\label{eqE6remainingone}
\soc\Delta_e/\Delta_x \cong L_{w_{22}}\langle -31 \rangle \quad \text{ or } \quad \soc\Delta_e/\Delta_x \cong L_{w_{22}}\langle -31 \rangle \oplus L_{w_{22}}\langle -29 \rangle  .    
\end{equation}
\end{proposition}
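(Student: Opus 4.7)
The plan is to determine $\soc\co_x$ for each $x \in \JI{}{}$ by combining Proposition~\ref{prop6} (which restricts which simples may appear) with Lemma~\ref{socledie} (which kills lower-shift candidates along socle-killing chains), using the explicit join-irreducible posets drawn in Figures~\ref{fig:E6_1j}, \ref{fig:E6_23}, and \ref{fig:E6_44}, together with the multiplicity data in Table~\ref{tab:KLE6}. By Lemma~\ref{JIisbig}, every $x\in\JI{}{}$ is bigrassmannian, with $LD(x)=\{i\}$ and $RD(x)=\{j\}$ for uniquely determined $i,j\in S$, and Propositions~\ref{prop3} and \ref{prop6} then force every simple subquotient of $\soc\co_x$ to be a graded shift of $L_{w_{ij}}$. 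The maximum-degree occurrence of $L_{w_{ij}}$ in $\co_x$ automatically lies in $\soc\co_x$ (nothing of greater shift is available to extend it), and this degree is recorded as the exponent in the label $cL\langle -d\rangle$ next to $x$ in the figures.

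Next I would run an induction on the Bruhat order inside each $\JI{i}{j}$. Suppose $x' < x$ is a socle-killing relation (solid arrow) in $\JI{i}{j}$ and, by the inductive hypothesis, $L_{w_{ij}}\langle -d\rangle$ lies in $\soc\co_{x'}$; then Lemma~\ref{socledie} prohibits this component from appearing in $\soc\co_x$. Working through the figures one checks, for every $x\neq 24563451342$ and every degree $-d$ strictly smaller than the top degree $-\maxd\co_x$ at which $L_{w_{ij}}$ occurs in $\co_x$, that there is a socle-killing chain from some $x' < x$ in $\JI{i}{j}$ with $\maxd\co_{x'} = d$; this kills the candidate and leaves $\soc\co_x$ simple. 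For $\JI{1}{j}$, Figure~\ref{fig:E6_1j} shows that every arrow is socle-killing and the relevant multiplicities in Table~\ref{tab:KLE6} are all $1$, so a single sweep concludes; the Dynkin-diagram symmetry reduces $\JI{5}{j}$ and $\JI{6}{j}$ to these. For the remaining posets in Figures~\ref{fig:E6_23} and \ref{fig:E6_44}, which involve multiplicity-$2$ and multiplicity-$3$ isotypic components, one nevertheless verifies case by case that enough socle-killing relations are present to isolate the maximum-degree component at every vertex except the flagged one.

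The exceptional element is $x=24563451342\in\JI{2}{2}$: the only element of $\JI{2}{2}$ strictly below $x$ whose $\co$-top has degree $-29$ is $245342$, and the relation $245342<24563451342$ is non-socle-killing (drawn dashed in Figure~\ref{fig:E6_23}). Concretely, non-socle-killing here means $245342\not\leq s_2\cdot 24563451342$ and $245342\not\leq 24563451342\cdot s_2$, so the lifting property forbids any intermediate $z$ with $2\notin LD(z)$ or $2\notin RD(z)$, and Lemma~\ref{socledie} cannot be invoked to rule out $L_{w_{22}}\langle -29\rangle$ from $\soc\co_x$. Since $L_{w_{22}}\langle -29\rangle$ is multiplicity-free in $\Delta_e$ by Table~\ref{tab:KLE6}, both alternatives in \eqref{eqE6remainingone} remain consistent with the combinatorial input.

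The main obstacle is not any single verification — each case in the figures is immediate once one knows which arrows are socle-killing — but the exceptional case itself: choosing between the two alternatives in \eqref{eqE6remainingone} appears to require a genuinely new ingredient, such as a direct computation of $p_{x,w_{22}}$ or some non-combinatorial refinement of Lemma~\ref{socledie}, which is outside the scope of the present combinatorial framework.
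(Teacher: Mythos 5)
Your overall strategy (restrict the socle to shifts of $L_{w_{ij}}$ via Propositions~\ref{prop3} and \ref{prop6}, put the maximal-degree component in the socle, and kill lower-degree candidates along socle-killing relations via Lemma~\ref{socledie}) is the right starting point and does match the paper for the multiplicity-free posets $\JI{1}{j}$ and their symmetric images, as well as for the exceptional element of $\JI{2}{2}$, which you identify and analyse correctly. However, your induction step --- ``for every degree $-d$ below the top there is a socle-killing $x'<x$ with $\maxd\co_{x'}=d$'' --- is both false as stated and insufficient in principle, so the ``case by case verification'' you defer to cannot be carried out by this mechanism alone. Concretely: in $\JI{2}{3}$ the element $2456345243$ (top degree $31$) has only $243$ (top degree $27$) below it, so no socle-killing relation kills the candidate $L_{w_{23}}\langle-29\rangle$; the paper instead rules it out by the \emph{contrapositive} of Lemma~\ref{lem:bruh_soc}(3): if that graded component appeared in $\co_{2456345243}$ it would force the Bruhat relation $2453413\leq 2456345243$, which does not hold. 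The same mechanism, absent from your proposal, is needed for $345243$ in $\JI{3}{3}$.

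Moreover, when the isotypic component $cL_{w_{ij}}\langle-d\rangle$ has $c\geq 2$, Lemma~\ref{socledie} only excludes the \emph{specific} simple submodule $\so_{x'}$ from $\soc\co_x$, not the whole isotypic component, so counting incoming solid arrows does not suffice. In $\JI{3}{4}$, the element $345624534$ receives exactly one socle-killing arrow from degree $28$ while $[\Delta_e:L_{w_{34}}\langle-28\rangle]=2$; the paper closes this by computing $[\co_{345624534}:L_{w_{34}}\langle-28\rangle]=1$ (a Kazhdan--Lusztig computation of $p_{x,w_{34}}$ for $x\neq e$, which your framework never invokes) and identifying that unique copy with $\so_{34524}$. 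Worst of all, in $\JI{4}{4}$ the element $43245643245134$ receives four socle-killing arrows from degree $29$, but the target component is $3L_{w_{44}}\langle-29\rangle$ and four simple submodules need not span a length-three isotypic module; the paper needs a genuinely new argument here, showing via Proposition~\ref{cor:sum} and the fact that a Verma submodule is determined by the socle of its cokernel that the existence of two distinct joins $x_1\vee x_2\neq x_1\vee x_4$ forces the four socles to span everything. Without these three additional ingredients your proof establishes simplicity of the socle only for the multiplicity-free posets, and the proposition remains unproved at several vertices of Figures~\ref{fig:E6_23} and \ref{fig:E6_44}.
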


\begin{proof}
Comparing the diagrams in Figure \ref{fig:E6_1j} with the corresponding Kazhdan-Lusztig polynomials in Table \ref{tab:KLE6}, one immediately gets the socles of $\Delta_e/\Delta_x$, for all $x$ in Figure \ref{fig:E6_1j}.

For $\JI{2}{2}$, we have the same argument for all elements except for $x = 24563451342$ which is hit by a non-socle-killing relation from $245342$. We thus have the two possibilities as stated.

In $\JI{2}{3}$, the only possibility for a non-simple socle is \[\Delta_e/\Delta_{2456345243} = L_{w_{23}}\langle -31 \rangle \oplus  L_{w_{23}}\langle -29 \rangle. \]
But in such a case, Lemma \ref{lem:bruh_soc} would imply that there is an arrow (dashed) between the two middle elements in $\JI{2}{3}$, a contradiction. 

The claim for $\JI{2}{4}$ follows from the Kazhdan-Lusztig polynomials. Note that, in this case,
the three non-comparable elements in the same degree have isomorphic socles, and that any two of the three socles generate the full isotypic component $2L_{w_{24}}\langle -30\rangle$ in $\Delta_e$.

In $\JI{3}{3}$, the element $345243$ has simple socle again because of Lemma \ref{lem:bruh_soc}. The biggest element there has simple socle because it is hit by two socle-killing arrows, and the socles corresponding to the sources of the two arrows generate the full isotypic component $2L_{w_{33}}\langle -31\rangle$ in $\Delta_e$. Similar arguments apply to $\JI{3}{5}$.

Let us consider now $\JI{3}{4}$. For $x=345624534$, the quotient $\Delta_e / \Delta_{x}$ contains the composition factor $L_{w_{34}}\langle -28\rangle$ with multiplicity one, as one can check by a Kazhdan-Lusztig computation. This composition factor must be the one coming from the socle of $x' =34524$. Since the arrow $x' \to x$ is socle-killing, this composition factor is not in the socle of $\Delta_e / \Delta_{x}$, and therefore $\Delta_e / \Delta_{x}$ has simple socle. An analogous argument applies to $3456245341324$.

Finally, consider now $\JI{4}{4}$. One can check that the socles are simple for all the elements in degrees $25$, $27$, and $29$ by the same arguments as above. In degree $31$, the first two elements from the left, as well as the first two elements from the right contain $L_{w_{44}}\langle-29\rangle$ with multiplicity $2$, and therefore it is enough to observe that each is hit by at least two solid arrows. 

Consider now $x=43245643245134$ in degree $31$.
There are four socle-killing relations $x'<x$ for $x'$ in degree $29$ and it is enough to show that the sum of $\co_{x_i}$, where we name these four elements $x_1,x_2,x_3,x_4$ from the left, is the isotypic component $3L\langle -29 \rangle$. Suppose not. Then the sum is a subquotient $X\cong 2L\langle -29 \rangle$ in $3L\langle -29 \rangle$. 
However, computation shows that the joins $y=x_1\vee x_2$ and $z = x_1\vee x_4$ exists and that $y\neq z$.
By Proposition~\ref{cor:sum}, the socles of $\co_y$ and $\co_z$ are contained in $X$.
At the same time, by $x_1<y$ and $x_1<z$, these socle strictly contain $\so_{x_1}$. 
It follows that $\so_y=X=\so_z$ which contradicts $y\neq z$.

Analogous arguments cover the rest of the elements in $\JI{4}{4}$.
\end{proof}

\begin{example}\label{e644bigex}
Let $b\in W$ be the join of the six elements in Figure~\ref{fig:E6_44} whose socle is contained in $3L\langle-29\rangle$ (computations confirm that this join exists). 
Computation also shows that $\JM(b)$ consists exactly of these six elements, while we also have $b=x\vee y$ where $x,y\in\JM(b)$ are the two length 9 elements among these six.
Since $\so_z\cong L\langle-29 \rangle$, for each $z\in\JM(b)$, by Proposition~\ref{E44}, we have
\[[\Delta_x\langle -\ell(x) \rangle \cap \Delta_y\langle -\ell(y) \rangle : L\langle-29\rangle] = 1,\]
while a Kazhdan-Lusztig computation shows 
\[[\Delta_b\langle -\ell(b) \rangle:L\langle-29\rangle] = 0.\]
It follows that $\Delta_x \langle -\ell(x) \rangle\cap \Delta_y \langle -\ell(y) \rangle \neq \Delta_b\langle -\ell(b) \rangle$, which provides an example where the answer to Question~\ref{con2} is negative.
%Since we have a chain $z_1 < z_1\vee z_2 < \vee\{z_1,z_2,z_3\} = b$ with some $z_i\in \JM(b)$ we have (see Proposition~\ref{chaincond} below)
%\[[\bigcup_{z\in\JM(b)} \Delta_z : L\langle-29\rangle] = 0\]
\end{example}

\subsection{Type $E$}\label{ss:JIE}

Let $(W,S)$ be of type $E$.
The polynomials $p_{e,w}$, for $w \in \mathcal{J}$, in types $E_7$ and $E_8$ are listed in Appendix
(see Subsection~\ref{ss:E6} for $E_6$).

Computer computations confirm the following fact:

\begin{proposition}\label{Ebgchain}
For any $i,j\in S$, there exists a chain of length $p_{e,w_{ij}}(1)$ in $\BG{i}{j}$.
\end{proposition}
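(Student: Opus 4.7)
The plan is to prove this proposition by direct computer verification, in the spirit of Propositions~\ref{Bbgchain} and \ref{Dbgchain} but without attempting a uniform construction. First, for each of the three types $E_6$, $E_7$, $E_8$ and each pair $(i,j) \in S \times S$, enumerate the set $\BG{i}{j}$. This is a finite combinatorial problem: $\BG{i}{j}$ consists exactly of those elements $w \in W$ with $LD(w) = \{i\}$ and $RD(w) = \{j\}$, and can be extracted either from the classification of bigrassmannians in \cite{LS,GeK} or by a direct walk through the Bruhat graph using the same SageMath setup that produced Table~\ref{tab:KLE6} and the Kazhdan-Lusztig tables in the Appendix.

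Next, equip $\BG{i}{j}$ with the restricted Bruhat order and compute its Hasse diagram. The length of the longest chain is then obtained by a standard dynamic programming pass over this finite acyclic diagram. Finally, compare this number with $p_{e,w_{ij}}(1)$, read directly from Table~\ref{tab:KLE6} (for $E_6$) or the tables in the Appendix (for $E_7$ and $E_8$). The verification is purely mechanical once the data are assembled.

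The main obstacle is computational. In type $E_8$ the bigrassmannian sets $\BG{i}{j}$ can be large, reflecting the high values of $|_s\JM_t|$ (up to $21$) mentioned in Theorem~\ref{intthm:ext}, so care is required in storing both elements and covering relations, and in pruning so that one never enumerates all of $W$. A natural alternative would be to mimic the explicit construction of the chain \eqref{bbfbfb} in type $B$ (resp.\ \eqref{ddgdgd} in type $D$): interpolate between consecutive elements of a maximal chain inside $\JI{i}{j}$ by elements of $\BG{i}{j} \setminus \JI{i}{j}$ analogous to the $f^{ij}_k$ and $g^{ij}_k$ of Propositions~\ref{newelementsofimportance} and \ref{Dnewelementsofimportance}. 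The diagrams in Figures~\ref{fig:E6_1j}--\ref{fig:E6_44} already show that a maximal chain inside $\JI{i}{j}$ is typically much shorter than $p_{e,w_{ij}}(1)$ (for instance, $\JI{4}{4}$ in $E_6$ has chains of length at most $5$ while $p_{e,w_{44}}(1) = 12$), so one genuinely needs these interpolating bigrassmannians; however, the combinatorics of bigrassmannians in type $E$ lacks the uniform parametrization available in classical types, making a uniform construction impractical and a case-by-case computational verification the cleanest route.
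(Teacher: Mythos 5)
Your proposal matches the paper exactly: the paper offers no argument for Proposition~\ref{Ebgchain} beyond the sentence ``Computer computations confirm the following fact,'' so your case-by-case computational verification (enumerate $\BG{i}{j}$, take the longest chain in the restricted Bruhat order, compare with $p_{e,w_{ij}}(1)$ from the tables) is precisely what the authors did, just spelled out in more detail. Your remarks on why a uniform interpolation in the style of Propositions~\ref{newelementsofimportance} and~\ref{Dnewelementsofimportance} is impractical in type $E$ are consistent with the paper's silence on that point.
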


As a consequence of Proposition~\ref{Ebgchain}, we have:

\begin{corollary}\label{halfsosumE}
Let $w\in W$. The socle of $\co_w$ is contained in the sum of $\so_{x}$ taken over $x\in \JM(w)$.
\end{corollary}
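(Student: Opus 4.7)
The plan is to mimic verbatim the strategy already employed for types $B$ and $D$ (Corollaries~\ref{halfsosumB} and \ref{halfsosumD}). The statement should be a formal consequence of the combinatorial input provided by Proposition~\ref{Ebgchain} together with the general criterion established in Proposition~\ref{cor:sum}.

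More precisely, Proposition~\ref{Ebgchain} guarantees that, for each pair $(i,j)\in S\times S$, the poset $\BG{i}{j}$ contains a chain of length $p_{e,w_{ij}}(1)=\sum_{u\in {}^i\hc^j} p_{e,u}(1)$ (since each $\mathtt{H}$-cell in $\jc$ is a singleton in type $E$, this sum reduces to the single polynomial at $v=1$). This is exactly the hypothesis needed to apply Proposition~\ref{cor:sum}: feeding the chain into Lemma~\ref{newbgchain} shows that every composition factor of $\Delta_e$ isomorphic to a shift of $L_{w_{ij}}$ is contained in the sum of $\soc\co_y$, taken over $y\in\JI{i}{j}$; then Proposition~\ref{ccoor:sum} propagates this to arbitrary $w\in W$ via the $\JM$-decomposition.

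Concretely, the plan is simply to write: ``This follows from Proposition~\ref{Ebgchain} and Proposition~\ref{cor:sum}.'' The only non-trivial content has already been absorbed into Proposition~\ref{Ebgchain}, whose verification is the computer-assisted check of the existence of a socle-killing chain of maximal length in each $\BG{i}{j}$ across the three exceptional groups $E_6,E_7,E_8$. The main obstacle, therefore, is not in writing the proof of Corollary~\ref{halfsosumE} itself but in trusting Proposition~\ref{Ebgchain}; once the chains are produced (by computer), the socle-killing property of each cover relation is automatic from Definition~\ref{soclechain}, and the rest of the argument is purely formal.
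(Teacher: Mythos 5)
Your proposal is correct and is exactly the paper's proof: Corollary~\ref{halfsosumE} is deduced by combining Proposition~\ref{Ebgchain} with Proposition~\ref{cor:sum}, with all the substantive (computer-assisted) content residing in Proposition~\ref{Ebgchain}. One small caveat: your closing remark about the socle-killing property is superfluous here, since Lemma~\ref{newbgchain} and Proposition~\ref{cor:sum} only require the existence of a chain of the stated length in $\BG{i}{j}$, not that its cover relations be socle-killing.
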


\begin{proof}
Proposition~\ref{Ebgchain} and Proposition~\ref{cor:sum} proves the claim.
\end{proof}

%We compute the posets $\JI{i}{j}$ and $\BG{i}{j}$ for each $i,j\in S$. We also compute the joins of the elements in $\JI{i}{j}$ (for a fixed pair $i,j$). 

%Let $x$ be the unique element in $\hc(a,b)$ and write
%\begin{equation}\label{eq:newpex2}
%    p_{e,x}=c_0v^{d_0}+ c_1v^{d_1}+\cdots c_rv^{d_r}
%\end{equation} with $c_i=c_i(a,b)\neq 0$ and $d_i-c_i(a,b)\in \mathbb N$.
%Thus, $r+1=r(a,b)+1$ is the number of homogenoeus terms in $p_{e,x}$.
%\subsubsection{Multiplicity-free cases}
%Let $a,b\in S$ be such that $c_i=1$ for all $i$.
%Computer computation verifies that, in this case, $p_{e,w_{a,b}}(1) = |JI(a,b)|$.
%By pigeon hole principle, the socles of $\co_w$ for $w\in JI(a,b)$ are simple.
%\subsubsection{Cases with multiplicity}
%Now consider the case where $p_{e,x}$ has multiplicity, i.e., some $c_i$ is greater than $1$.

\begin{conjecture}\label{combE}
For each $x\in \JI{}{}$, the socle of $\Delta_e/\Delta_x$ is simple. 
%Moreover, the image of the assignment % $qs:JI(s,t)\to CF$ given by  
%$x\mapsto \soc\Delta_e/\Delta_x$ contains exactly $\frac{(c+1)c}{2}$ distinct summands in each isotypic component $cL_{w_{i,j}}\langle -d\rangle$.
\end{conjecture}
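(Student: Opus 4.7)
The plan is to extend the case-by-case analysis carried out for $E_6$ in Proposition~\ref{E44} to the exceptional types $E_7$ and $E_8$, relying on the Kazhdan--Lusztig tables in the Appendix. First I would compile, by computer, the Bruhat subposet $(\JI{i}{j},\leq)$ for every descent pair $(i,j)\in S\times S$, and mark each relation as socle-killing or not in the sense of Definition~\ref{soclechain}. This produces the analogues of Figures~\ref{fig:E6_1j}--\ref{fig:E6_44}, together with the polynomial $p_{e,w_{ij}}$ attached to the corresponding $\mathtt{H}$-cell ${}^i\hc^j$.

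For each join-irreducible $x\in\JI{i}{j}$, I would then argue in three layers. Using the longest socle-killing chain ending at $x$, Lemma~\ref{skandmaxd} gives an upper bound for $\maxd\so_x$, while Lemma~\ref{lem:bruh_soc} propagates the known maximal-degree composition factor from smaller join-irreducibles forward. Combining these with Proposition~\ref{prop6} and the fact that only shifts of $L_{w_{ij}}$ can appear in $\so_x$ by Proposition~\ref{prop3}, a pigeonhole count along $\JI{i}{j}$ against the coefficients $c_i$ in the expansion $p_{e,w_{ij}}=\sum_i c_iv^{d_i}$ pins down both $\maxd\so_x$ and its multiplicity --- straightforwardly so in the degrees where $c_i=1$. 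In degrees where $c_i\geq 2$, I would adapt the argument used in the last paragraph of the proof of Proposition~\ref{E44}: if $x$ sits above $c_i$ pairwise incomparable socle-killing predecessors $x_1,\ldots,x_{c_i}$ whose socles are already known to be simple, one checks that the sum $\sum_a \so_{x_a}$ exhausts the relevant isotypic component of $\Delta_e$ by forming intermediate joins $x_a\vee x_b$ and invoking Corollary~\ref{halfsosumE} together with the uniqueness of such joins in $W$. Once the entire isotypic component is absorbed by $\sum_a \so_{x_a}$, no contribution in that degree can survive in $\so_x$, forcing the maximal degree of $\so_x$ to drop strictly, and the simplicity argument recurses.

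The main obstacle is the same one that left a gap in $E_6$: occasionally $x$ is reached by a non-socle-killing covering from an element $x'$ whose socle lies in a degree that cannot be excluded on purely combinatorial grounds, as happens for $x=24563451342$ in $\JI{2}{2}$ of $E_6$. In such cases the maximal-degree component of $\so_x$ is forced to be simple by the above, but $\so_x$ could a priori contain an additional composition factor in a strictly smaller degree, and ruling this out appears to require genuine Kazhdan--Lusztig input --- for instance, a direct computation of $p_{x,w_{ij}}$ (an analogue of Lemma~\ref{lem:B_KL}) or of the graded composition series of $\Hom_{\cO}(\Delta_e,T_w)$ for a suitable tilting module $T_w$. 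For $E_7$ and $E_8$ the number of such exceptional $x$ is small and the remaining cases can in principle be dispatched by computer-assisted Kazhdan--Lusztig computations, yielding a computer-assisted proof in all exceptional types, pending a conceptual argument that would also settle the remaining $E_6$ case captured in \eqref{eqE6remainingone}.
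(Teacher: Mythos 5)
The statement you are addressing is stated in the paper as a conjecture and is not proved there: the authors only record (in the remark following Conjecture~\ref{combE}) that it has been verified in a majority of cases in types $E_7$ and $E_8$ using exactly the tools you list --- the socle-killing bounds of Subsection~\ref{s:Strategies}, the Kazhdan--Lusztig tables in the Appendix, and computer computations --- and that a complete proof ``seems to require new ideas''. Your outline therefore coincides with the paper's own partial verification strategy (pigeonhole along socle-killing chains in $\JI{i}{j}$ against the coefficients of $p_{e,w_{ij}}$, plus the join argument from the last paragraph of the proof of Proposition~\ref{E44} for isotypic components of multiplicity $\geq 2$), and you correctly isolate the same obstruction the authors do: elements reached only by non-socle-killing relations, of which \eqref{eqE6remainingone} in type $E_6$ is the prototype, where the combinatorics pins down the maximal-degree socle component but cannot exclude an additional summand in strictly lower degree. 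In short, there is no gap in your proposal relative to the paper, because the paper itself leaves the statement open; the extra Kazhdan--Lusztig input you flag as missing (an analogue of Lemma~\ref{lem:B_KL}, or the projective-cover argument used for \eqref{gath:F4_ex1} in type $F_4$) is precisely what the authors say is still required.
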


\begin{remark}

Conjecture~\ref{combE} is verified in a majority of cases in types $E_7$ and $E_8$ by the bounds from Subsection~ \ref{s:Strategies} and various computer computations. 
Completely proving Conjecture~\ref{combE}, however, seems to require new ideas, as already seen in \eqref{eqE6remainingone}.
% In fact, there seems no reason in \eqref{eqE6remainingone} to expect that $\so_x$ is simple.
% We do still make Conjecture~\ref{combE} in order to have a complete conjectural description of the socles in all types.
\end{remark}

If Conjecture~\ref{combE} is true, all $\so_z$, for $z\in\JI{}{}$, are determined by Kazhdan-Lusztig computations and follow a pattern similar to the one described in Subsection~\ref{ss:E6} for type $E_6$. 
For example, consider the case of a `large' $\JI{i}{j}$, for example, $\JI{4}{4}$ in type $E_8$.
Recall our notation $(c_r)$ for the coefficients of KL polynomials, see \eqref{eq:newpex}, 
(e.g., we have $ (1,2,3,4,5,6,6,6,6,6,5,4,3,2,1)$ in type $E_8$ at $\JI{4}{4}$). 
Then, for each degree $r$, the isotypic component $c_r L\langle d_r\rangle$ contains $\frac{(c_r+1)c_r}{2}$ distinct simple subquotients which are the socles of $\frac{(c_r+1)c_r}{2}$ elements in $\JI{i}{j}$. 
It would be nice to explain why the number $\frac{(c+1)c}{2}$ appears.

\begin{comment}

\begin{obs}\label{join pattern} 
Let $cL\langle -d \rangle$ be an isotypic component in $\Delta_e$, (i.e., $c=c_k(i,j)$ and $d=d_k(i,j)$ for some $0\leq k\leq r(i,j)$ in \eqref{eq:newpex}).
Then $|\JI{i}{j}^d| = \frac{(c+1)c}{2}$, where  $\JI{i}{j}^d = \{y\in \JI{i}{j}\ |\ m(y)=d\}$. @@when JId is defined@@
Moreover, for each computable (write when?)\hk{We verified E7 and maybe some of E8 or all of? check and write}
  $y\in \JI{i}{j}$, we can find $y_k\in \JI{i}{j}^d$ such that the chain  $y=y_1<y_1\vee y_2 <\cdots< y_1\vee\cdots\vee y_{c}=\bigvee \JI{i}{j}^d$ is strictly increasing.
\end{obs}
%The  and Lemma~\ref{join pattern} suggests the following. The proof is not complete.

\begin{proof}
%By Lemma \ref{Ehomogsocle}, the socle of $\co_x$ is homogeneous. 
Let $c L\langle -d \rangle$ be the isotypic component of $\Delta_e$ that contains $\soc\co_x$. %and let $JI_d(a,b) = \{y\in JI(a,b)\ |\ m(y) = d\}$.
Then by Proposition \ref{join pattern} there exists a chain $x=x_1<x_1\vee x_2<\cdots x_1\vee\cdots\vee x_c$ for $x_k\in \JI{i}{j}^d$ in $W$. By Lemma \ref{lm:cosets} the elements $y_k = x_1\vee \ldots\vee x_k$ in fact belongs to $\BG{i}{j}$. 
Then by Proposition \ref{prop6}, the socles of $\co_{y_i}$ are contained in the isotypic component $cL\langle -d\rangle$, and  $\{\soc\co_{y_i}\}_i$ forms a strictly increasing chain of submodules in $cL\langle -d \rangle$. 
It follows that $\co_{y_1}=\co_x$ is simple.
\end{proof}
\end{comment}

\begin{remark}
It is interesting to point out that, in the simply laced types, the maximal number appearing as a 
coefficient of the KL polynomial between $e$ and a penultimate element coincides with the
maximal coefficient of a root. For non-simply laced types this fails.
\end{remark}

\subsection{Type $F_4$}\label{ssKLF}
Assume that $(W,S)$ is of type $F_4$.
We denote the simple reflections by
\begin{tikzpicture}[scale=0.4,baseline=-3]
\protect\draw (6 cm,0) -- (4 cm,0);
\protect\draw (4 cm,0.1cm) -- (2 cm,0.1 cm);
\protect\draw (4 cm,-0.1cm) -- (2 cm,-0.1 cm);
\protect\draw (2 cm,0) -- (0 cm,0);

\protect\draw[fill=white] (6 cm, 0 cm) circle (.15cm) node[above=1pt]{\scriptsize $4$};
\protect\draw[fill=white] (4 cm, 0 cm) circle (.15cm) node[above=1pt]{\scriptsize $3$};
\protect\draw[fill=white] (2 cm, 0 cm) circle (.15cm) node[above=1pt]{\scriptsize $2$};
\protect\draw[fill=white] (0 cm, 0 cm) circle (.15cm) node[above=1pt]{\scriptsize $1$};
\end{tikzpicture}.
%Note that in this type not all $\mathtt{H}$-cells in $\mathcal{J}$ are singletons. 
We let ${}^i\mathcal{H}^j = \{u_{ij},w_{ij}\}$, where either $u_{ij} = w_{ij}$ or $u_{ij}$ is strictly shorter than $w_{ij}$.

The Kazhdan-Lusztig polynomials $p_{e,w}$ for $w \in \mathcal{J}$ are collected in Table \ref{tab:KLF4}. We put $p_{e,w}$ in the position $(i,j)$ of the table, where $i$ is the left, and $j$ the right ascent for $w$. The computations was performed in SageMath v.9.0.

\begin{table}[ht]
    \centering
\renewcommand{\arraystretch}{1.2}
\scalebox{0.92}{
\begin{tabular}{|l||l|l|l|l|}
\hline
 & $1$  &  $2$  &  $3$  &  $4$ 
 \\
\hline\hline
$1$ & \begin{tabular}{@{}l@{}}$v^{19} + v^{13}$ \\ $v^{23} + v^{17}$\end{tabular} & \begin{tabular}{@{}l@{}}$v^{20} + v^{18} + v^{14}$ \\ $v^{22} + v^{18} + v^{16}$\end{tabular} & $v^{21} + v^{19} + v^{17} + v^{15}$ & $v^{20} + v^{16}$ \\ \hline
$2$ & \begin{tabular}{@{}l@{}}$v^{20} + v^{18} + v^{14}$ \\ $v^{22} + v^{18} + v^{16}$\end{tabular} & \begin{tabular}{@{}l@{}} $v^{21} + 2v^{19} + v^{17} + v^{15} + v^{13}$ \\ $v^{23} + v^{21} + v^{19} + 2v^{17} + v^{15}$\end{tabular} & $v^{22} + 2v^{20} + 2v^{18} + 2v^{16} + v^{14}$ & $v^{21} + v^{19} + v^{17} + v^{15}$ \\ \hline
$3$ & $v^{21} + v^{19} + v^{17} + v^{15}$ & $v^{22} + 2v^{20} + 2v^{18} + 2v^{16} + v^{14}$ & \begin{tabular}{@{}l@{}} $v^{21} + 2v^{19} + v^{17} + v^{15} + v^{13}$ \\ $v^{23} + v^{21} + v^{19} + 2v^{17} + v^{15}$\end{tabular} & \begin{tabular}{@{}l@{}}$v^{20} + v^{18} + v^{14}$ \\ $v^{22} + v^{18} + v^{16}$\end{tabular} \\ \hline
$4$ & $v^{20} + v^{16}$ & $v^{21} + v^{19} + v^{17} + v^{15}$ & \begin{tabular}{@{}l@{}}$v^{20} + v^{18} + v^{14}$ \\ $v^{22} + v^{18} + v^{16}$\end{tabular} & \begin{tabular}{@{}l@{}}$v^{19} + v^{13}$ \\ $v^{23} + v^{17}$\end{tabular} \\ \hline
\end{tabular}
} \vskip 10pt
    \caption{Kazhdan-Lusztig polynomials $p_{e,w}$, for $w \in \mathcal{J}$, in $F_4$.}
    \label{tab:KLF4}
\end{table}

%We can completely determine the Bruhat structure of the subposet $\JI{i}{j}$ for each $i,j\in S$.
%These determine the socles of $\co_y$ in most cases.
%but for $\JI{1}{1}$, $\JI{2}{2}$ and $\JI{2}{3}$ we only reduce the answer to 2 or 4 possibilities.  
%\tothink{Fix the descents (a,b). Suppose $\Delta_e$ has penultimate $L(w1) \oplus L(w2)$  in degree $d$ (i.e., both KL poly at $\hc(a,b)={w1,w2}$, has $v^d$ with coefficient one). And suppose we proved that $x<y$ in $JI(a,b)$ has \[\so_x = Lw1<-d>   \] \[\so_y = Lw1<-d>\oplus Lw2<-d> .\] How can we tell which element in $\hc(a,b)$ is w1? - By calculating KL poly's.}
%
Figure \ref{fig:F_JI} lists all $\JI{i}{j}$, up to symmetries, where the dashed arrows denote non-socle killing relations. %\todo{there might be a better presetantion of the graphs}.
\begin{figure}
    \centering
\begin{gather*}
\xymatrix{ 1 \ar[d] \\
12321 \ar@{-->}[d] \\  12342321 \ar[d] \\
123423123412321} \qquad \xymatrix@C=0.5em{ 12 \ar[d] \\
1232 \ar[d] \\
1234232 \ar@{-->}[r] & 12342312 \ar[d] \\
& 12342312312 \ar[d] \\
& 12342312341232  } \qquad \xymatrix{  123 \ar[d] \\
123423 \ar[d] \\
123423123 \ar[d] \\
1234231234123 } \qquad  \xymatrix{ 1234 \ar[d] \\
1234231234 }     \\
\xymatrix@C=0.7em@R=3em{ &  2 \ar[d] \\
    &  232 \ar@{-->}[r] \ar[ld] &  2312 \ar[dl] \ar[d] \ar[dr] \\      
    234232 \ar@{-->}[r] \ar@{-->}@/^1pc/[rr] & 2341232 \ar@{-->}[d] \ar[dr] \ar[rrd] & 2342312 \ar[ld] \ar@{-->}[d] \ar[rd] &  2312312 \ar@{-->}[d] \ar[lld] \ar[ld]  \\
    & 2342312312 \ar[rd]    &  2312341232 \ar[d]  & 23123432312 \ar[d] \\
    && 2342312341232 \ar@{-->}[r]   & 2342312342312312 \ar[d]  \\
    &&& 23123432312342312312}  \quad  
\xymatrix@C=-4em@R=3em{  && 23 \ar[dr] \ar[dl] && \\
& 23123 \ar[dl] \ar[dr] \ar[drrr]  && 23423 \ar[dl] \ar[dr] \ar[dlll] & \\
 231234123 \ar[rd] \ar@{-->}[rrrd] &&  23423123 \ar[rd] \ar[ld] && 231234323 \ar[ld] \ar@{-->}[llld] \\
& 234231234123 \ar[rd] &&  231234323123 \ar[ld] & \\
&& 231234323123423123    }   \end{gather*}
    \caption{The Bruhat graph of $\JI{i}{j}$ in type $F_{4}$, for $(i,j)=(1,1)$, $(1,2)$, $(1,3)$, $(1,4)$, $(2,2)$ and $(2,3)$, with the non socle-killing arrows dashed.}
    \label{fig:F_JI}
\end{figure}
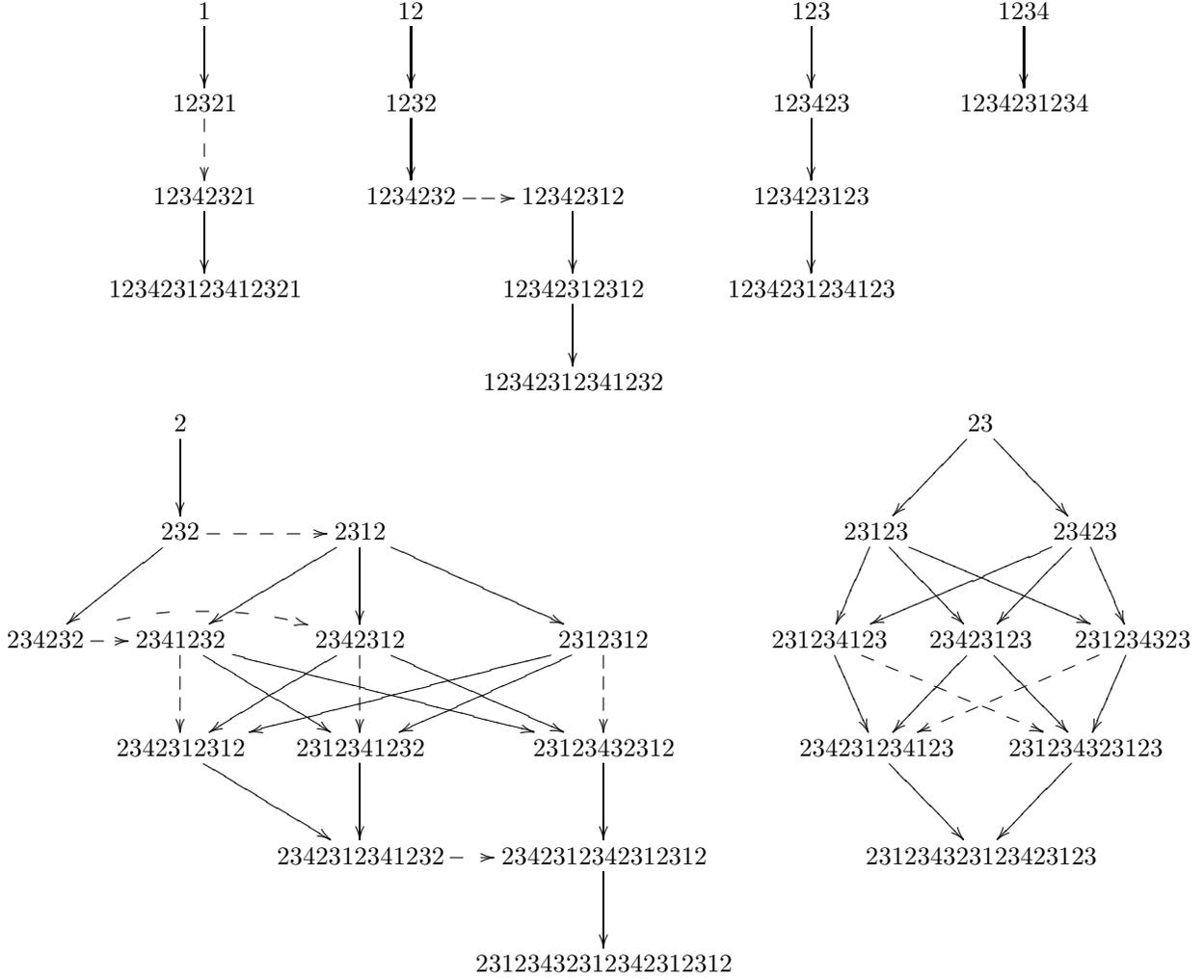

A Kazhdan-Lusztig computation determines the maximal degree components of $\Delta_e / \Delta_x$. %They can be read, up to symmetry, from Table \ref{tab:F4_lowest_deg}. 
Using this information, we calculate the socles of $\co_x$.

\begin{proposition}
\label{prop:F4_soc}
Table \ref{tab:F4_lowest_deg} gives the socles of $\Delta_e/\Delta_x$. In Table \ref{tab:F4_lowest_deg} ,we denote a simple subquotient by its parameter $y$, instead of $L_y$. If a socle component is not a (graded) isotypic component in $\Delta_e$, but a simple submodule in an isotypic component of multiplicity 2, we write $y$ in {\color{gray}gray}.)%, except for $x=12342321$, in which case we have
%\begin{equation}
%\label{eq:F4_ex}    \soc \Delta_e/\Delta_{12342321} = L_{u_{11}} \langle -19 \rangle \oplus L_{w_{11}} \langle -17 \rangle.
%\end{equation}
\end{proposition}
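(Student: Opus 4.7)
The strategy parallels the analysis of type $E_6$ carried out in Proposition~\ref{E44}. For each $x\in\JI{i}{j}$, Proposition~\ref{prop3} and Proposition~\ref{prop6} force every simple subquotient of $\soc\Delta_e/\Delta_x$ to be isomorphic to a graded shift of either $L_{u_{ij}}$ or $L_{w_{ij}}$; the graded multiplicities of these two simples inside $\Delta_e$ are read off from Table~\ref{tab:KLF4} via \eqref{eq:p_Delta}. The list of possible candidates for $\soc\Delta_e/\Delta_x$ is therefore explicit and finite.

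The first step will be to identify the top of each socle. By Lemma~\ref{lem:bruh_soc}\eqref{bru mm}, the maximal-degree penultimate component of $\Delta_e/\Delta_x$ always lies in the socle, and it is computed directly from $p_{e,x}$ and $p_{s,x}$ (for $s\in LD(x)$) using Lemma~\ref{lem:KL_sz_y}. This pins down the ``highest'' entry of each row of Table~\ref{tab:F4_lowest_deg}. The second step is to rule out the presence of any lower-degree penultimate composition factors in the socle: for this the main tool is Lemma~\ref{socledie}, which propagates along every solid (socle-killing) arrow in Figure~\ref{fig:F_JI}, combined with Lemma~\ref{skandmaxd} applied to the socle-killing chains inside each $\JI{i}{j}$. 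For the chain-shaped posets $\JI{1}{3}$ and $\JI{1}{4}$ (and symmetric counterparts), and for the top vertices of the larger $\JI{i}{j}$ which are reached by several socle-killing arrows whose sources already cover the full isotypic component, these two lemmas force the socle to be simple and identify it precisely.

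The genuine obstacles are the dashed, non-socle-killing relations in Figure~\ref{fig:F_JI}, together with the diagonal $\mathtt{H}$-cells where both $u_{ii}$ and $w_{ii}$ can contribute simultaneously (visible for example in the multiplicities $2v^{19}$ and $2v^{17}$ in $p_{e,w_{22}}$). Here the plan is to invoke Theorem~\ref{intthm:sosum}: for any $w\in W$, $\soc\Delta_e/\Delta_w$ is contained in the sum of $\soc\Delta_e/\Delta_y$ over $y\in\JM(w)$. Applying this to the target $x$ of a dashed arrow whose $\JM(x)$ consists of elements whose socles have already been determined, one obtains an upper bound on $\soc\Delta_e/\Delta_x$ which, together with the lower bound coming from Lemma~\ref{lem:soc_low_bound} and the pigeonhole count
\[ \sum_{y\in \JI{i}{j}}\operatorname{length}\soc\Delta_e/\Delta_y \; \le \; \sum_{y\in{}^i\hc^j} p_{e,y}(1), \]
determines $\soc\Delta_e/\Delta_x$ in all but finitely many ambiguous cases.

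The hardest part will be distinguishing, in the remaining ambiguous spots, whether the socle component at a given degree is a shift of $L_{u_{ij}}$ or of $L_{w_{ij}}$; degree considerations alone are insufficient since both elements of ${}^i\hc^j$ can appear in the same degree. For this I would compute the relevant $p_{x,y}$ (analogous to Lemma~\ref{lem:B_KL}) by iterating Lemma~\ref{lem:KL_sz_y} along short paths from $x$ towards $u_{ij}$ and $w_{ij}$, which in type $F_4$ is a finite calculation that can be verified on a computer. The few remaining indeterminacies (such as the two elements of length $7$ in the middle of $\JI{2}{2}$ linked by the dashed arrow) are resolved by direct computer computation of the socle, exactly as was done for $24563451342$ in the $E_6$ analysis.
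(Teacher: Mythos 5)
Your overall scheme (read off the maximal-degree penultimate component from the Kazhdan--Lusztig data, then use Lemma~\ref{socledie} along the socle-killing chains of Figure~\ref{fig:F_JI} to cut down the rest, then targeted $p_{x,y}$ computations via Lemma~\ref{lem:KL_sz_y} to separate $u_{ij}$ from $w_{ij}$) is essentially the paper's, and it does dispose of all but three entries, including the two cases \eqref{gath:F4_ex2} and \eqref{gath:F4_ex3}, where the candidate extra summand simply does not occur as a composition factor of $\Delta_e/\Delta_x$. But one of your tools is illegitimate here: you cannot invoke Theorem~\ref{intthm:sosum} inside this proof. In type $F_4$ that statement is Corollary~\ref{halfsosumF}, which the paper deduces \emph{from} Proposition~\ref{prop:F4_soc} via Proposition~\ref{ccoor:sum}; the alternative chain criterion (Proposition~\ref{cor:sum}) is not available because no chain of length $\sum_{u\in{}^2\hc^2}p_{e,u}(1)=13$ is exhibited in $\BG{2}{2}$, and $\JI{2}{2}$ itself is far from a chain. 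Moreover, for $x\in\JI{}{}$ one has $\JM(x)=\{x\}$, so the theorem says nothing about the very elements (targets of dashed arrows) you want to apply it to.

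The more serious gap is that your toolkit only bounds socles from above (or excludes low degrees via Lemma~\ref{lem:soc_low_bound}); it has no mechanism for showing that a component strictly \emph{below} the maximal degree actually lies in the socle. This is exactly what is needed for $x=12342321$, whose socle is $L_{u_{11}}\langle-19\rangle\oplus L_{w_{11}}\langle-17\rangle$, the second summand sitting below the top degree. The paper's argument here is of a different nature: if $L_{w_{11}}\langle-17\rangle$ were not in the socle it would have to extend $L_{u_{11}}\langle-19\rangle$ inside $\Delta_e$, which would force $[P_{w_{11}}:L_{u_{11}}\langle-21\rangle]\neq 0$, and a Kazhdan--Lusztig computation rules this out. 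Nothing in your sketch produces an argument of this kind, and deferring to ``direct computer computation of the socle'' is not a safe fallback: the analogous ambiguity for $24563451342$ in type $E_6$ is left \emph{unresolved} in \eqref{eqE6remainingone}, precisely because socles, unlike Kazhdan--Lusztig polynomials, are not routinely computable.
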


\begin{table}[ht]
    \centering
    \begin{tabular}{ccc}
    \begin{tabular}{|c|c|}
\hline
 $x \in \JI{}{}$ &  $\soc\Delta_e / \Delta_x$ \\
 \hline\hline
 $1$ & $u_{11}\langle -13 \rangle$ \\
 $12321$ & $w_{11}\langle -17 \rangle$ \\
 $12342321$ & $u_{11}\langle -19 \rangle  \oplus w_{11}\langle -17 \rangle $ \\
 $123423123412321$ & $w_{11}\langle -23 \rangle$ \\ \hline
 $12$ & $u_{12}\langle -14 \rangle$ \\
 $1232$ & $w_{12}\langle -16 \rangle$ \\
 $1234232$ & $u_{12}\langle -18 \rangle$ \\
 $12342312$ & $(u_{12} \oplus  w_{12})\langle -18 \rangle$ \\
 $12342312312$ & $u_{12}\langle -20 \rangle$ \\
 $12342312341232$ & $w_{12}\langle -22 \rangle$ \\ \hline
 $123$ & $u_{13}\langle -15 \rangle$ \\
 $123423$ & $u_{13}\langle -17 \rangle$ \\
 $123423123$ & $u_{13}\langle -19 \rangle$ \\
 $1234231234123$ & $u_{13}\langle -21 \rangle$ \\ \hline
  $1234$ & $u_{14}\langle -16 \rangle$ \\
 $1234231234$ & $u_{14}\langle -20 \rangle$ \\ \hline
\end{tabular}      &     & \begin{tabular}{|c|c|}
\hline
 $x \in \JI{}{}$ & $\soc\Delta_e / \Delta_x$ \\
 \hline\hline
  $2$ & $u_{22}\langle -13 \rangle$ \\
 $232$ & $w_{22}\langle -15 \rangle$ \\
 $2312$ & $(u_{22} \oplus  w_{22})\langle -15 \rangle$ \\
 $234232$ & $u_{22}\langle -17 \rangle$ \\
 $2312312$ & ${\color{gray}w_{22}\langle -17 \rangle}$ \\
 $2342312$ & $(u_{22} \oplus  {\color{gray}w_{22}})\langle -17 \rangle$ \\
 $2341232$ & $(u_{22} \oplus  {\color{gray}w_{22}})\langle -17 \rangle$ \\
 $2312341232$ & ${\color{gray}u_{22}\langle -19 \rangle}$ \\
 $2342312312$ & ${\color{gray}u_{22}\langle -19 \rangle}$ \\
 $23123432312$ & $({\color{gray}u_{22}} \oplus  w_{22})\langle -19 \rangle$ \\
 $2342312341232$ & $w_{22}\langle -21 \rangle$ \\
 $2342312342312312$ & $(u_{22} \oplus  w_{22})\langle -21 \rangle$ \\
 $23123432312342312312$ & $w_{22}\langle -23 \rangle$ \\ \hline
  $23$ & $u_{23}\langle -14 \rangle$ \\
 $23423$ & ${\color{gray}u_{23}\langle -16 \rangle}$ \\
 $23123$ & ${\color{gray}u_{23}\langle -16 \rangle}$ \\
 $23423123$ & ${\color{gray}u_{23}\langle -18 \rangle}$ \\
 $231234123$ & ${\color{gray}u_{23}\langle -18 \rangle}$ \\
 $231234323$ & ${\color{gray}u_{23}\langle -18 \rangle}$ \\
 $234231234123$ & ${\color{gray}u_{23}\langle -20 \rangle}$ \\
 $231234323123$ & ${\color{gray}u_{23}\langle -20 \rangle}$ \\
 $231234323123423123$ & $u_{23}\langle -22 \rangle$ \\ \hline
\end{tabular}
    \end{tabular}
    \bigskip
    \caption{The socle of $\Delta_e / \Delta_x$ for $x$ from Figure \ref{fig:F_JI}.}
    \label{tab:F4_lowest_deg}
\end{table}

\begin{proof}
Note that $\soc \Delta_e/\Delta_x$ always contains the maximal degree component of $\Delta_e/\Delta_x$, but, other than that, all other summands, if any, must be of strictly higher degree. Using the diagrams in Figure \ref{fig:F_JI}, %the data in Table \ref{tab:F4_lowest_deg}
the Kazhdan-Lusztig computation of the maximal degree components, and the property of socle-killing relations (Lemma \ref{socledie}), we can verify Table \ref{tab:F4_lowest_deg} except maybe in the following three cases, each of which has two possibilities:
\begin{gather}
\label{gath:F4_ex1}    \soc \Delta_e/\Delta_{12342321} = L_{u_{11}} \langle -19 \rangle \ \text{ or } \  L_{u_{11}} \langle -19 \rangle \oplus L_{w_{11}} \langle -17 \rangle, \\
\label{gath:F4_ex2}     \soc \Delta_e/\Delta_{234232} = L_{u_{22}} \langle -17 \rangle \ \text{ or } \  L_{u_{22}} \langle -17 \rangle \oplus L_{u_{22}} \langle -15 \rangle, \\
\label{gath:F4_ex3}     \soc \Delta_e/\Delta_{2342312341232} = L_{u_{22}} \langle -21 \rangle \ \text{ or } \  L_{u_{22}} \langle -21 \rangle \oplus L_{w_{22}} \langle -19 \rangle .
\end{gather}

For (\ref{gath:F4_ex2}) and (\ref{gath:F4_ex3}), one can check by calculating the Kazhdan-Lusztig polynomials that the hypothetical summand above the maximal degree does not appear in the corresponding $\Delta_e/\Delta_x$, and therefore cannot appear in the socle.

Now we consider (\ref{gath:F4_ex1}). For $x = 12342321$, we claim that $L_{w_{11}} \langle -17 \rangle$ does appear in $\Delta_e/\Delta_{x}$. Assume that it does not appear in the socle. Then $L_{w_{11}} \langle -17 \rangle$ must extend to $L_{u_{11}} \langle -19 \rangle$ inside $\Delta_e$, which implies that the projective cover $P_{w_{11}}$ of $L_{w_{11}}$ must contain $L_{u_{11}}\langle-21\rangle$ as a composition factor. However, this is not the case, as one can check by Kazhdan-Lusztig computations. The claim follows.
\end{proof}

\begin{corollary}\label{halfsosumF}
Let $w\in W$. The socle of $\co_w$ is contained in the sum of $\so_{x}$ taken over $x\in \JM(w)$.
\end{corollary}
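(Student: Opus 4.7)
The plan is to adapt the arguments used for the analogous Corollaries~\ref{halfsosumB}, \ref{halfsosumD}, and \ref{halfsosumE}, now applied to $F_4$. Rather than produce chains of the required length in each $\BG{s}{t}$ (which seems awkward because the larger $\JI{s}{t}$ in type $F_4$ do not appear to contain chains saturating $\sum_{u\in{}^s\hc^t}p_{e,u}(1)$), I would invoke the more flexible Proposition~\ref{ccoor:sum}. By that proposition, it suffices to verify, for each pair $s,t\in S$, that every graded composition factor of $\Delta_e$ isomorphic to a shift of some $L_u$ with $u\in{}^s\hc^t$ occurs inside the sum $\sum_{y\in\JI{s}{t}}\so_y$ (taken inside $\Delta_e$).

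First, I would use the Dynkin-diagram symmetry of $F_4$ that exchanges $1\leftrightarrow 4$ and $2\leftrightarrow 3$ to reduce to the six pairs $(s,t)\in\{(1,1),(1,2),(1,3),(1,4),(2,2),(2,3)\}$, namely precisely those tabulated in Figure~\ref{fig:F_JI}. For each such pair, the Kazhdan-Lusztig polynomials $p_{e,u_{st}}$ and $p_{e,w_{st}}$ from Table~\ref{tab:KLF4} give the complete list of graded composition factors of $\Delta_e$ of the form $L_u\langle -d\rangle$ with $u\in{}^s\hc^t$; meanwhile Proposition~\ref{prop:F4_soc} (equivalently, Table~\ref{tab:F4_lowest_deg}) lists $\so_y$ for each $y\in\JI{s}{t}$.

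The verification is then a finite case-by-case check. In most cases the socles $\so_y$ ($y\in\JI{s}{t}$) are pairwise distinct simple subquotients occupying exactly the right bidegrees to cover all multiplicity-one penultimate composition factors of $\Delta_e$. The only subtlety arises at the bidegrees where the isotypic component has multiplicity two (the bold ``$2v^{\ell}$'' terms in Table~\ref{tab:KLF4}): here, several elements $y\in\JI{s}{t}$ have $\so_y$ realised as a non-direct simple submodule of the $2L$-isotypic component (the entries marked in gray in Table~\ref{tab:F4_lowest_deg}), but a straightforward inspection shows that, for each such multiplicity-two isotypic component, the sum of the corresponding $\so_y$ is the whole isotypic component. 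Hence the hypothesis of Proposition~\ref{ccoor:sum} is satisfied, and the corollary follows.

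The main obstacle is bookkeeping rather than conceptual: one must carefully pair the numerical data of Table~\ref{tab:KLF4} with the socle data of Table~\ref{tab:F4_lowest_deg} for each of the six cases, paying particular attention to the few multiplicity-two components, where simplicity of individual socles no longer forces the direct-sum matching one sees in type $A$. No new ingredients beyond Proposition~\ref{ccoor:sum} and the already-established Proposition~\ref{prop:F4_soc} are needed.
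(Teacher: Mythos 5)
Your proposal matches the paper's proof: the paper likewise derives Corollary~\ref{halfsosumF} by observing that the socles computed in Proposition~\ref{prop:F4_soc} (Table~\ref{tab:F4_lowest_deg}), matched against the Kazhdan--Lusztig data of Table~\ref{tab:KLF4}, verify the hypothesis of Proposition~\ref{ccoor:sum}, from which the claim follows. Your extra remarks (reduction by diagram symmetry, care at the multiplicity-two isotypic components) are just an expansion of the same finite check the paper leaves implicit.
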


\begin{proof}
The socles of $\co_z$ for $z\in\JI{}{}$ determined in Proposition~\ref{prop:F4_soc} satisfy the assumption of Proposition~\ref{ccoor:sum}, thus the claim follows from Proposition~\ref{ccoor:sum}.
\end{proof}

\begin{example}\label{F4example}
Let
%x=c y=b z= a, w=vabc
\[x=32341232, y=234231234, z=2312312, w= 23423123432.\]
We have $x,y,z\in \JI{}{}$ and $w\in W$ are such that 
$\JM(w)=\{x,y,z\}$, $\JM''(w)=\{x,y\}$ and $w=x\vee y \vee z = x\vee y$.
We claim 
\begin{equation}\label{F4ineq}
\so_w = \so_x\oplus \so_y \subsetneq \so_x +\so_y +\so_z.
\end{equation}
In particular, the socle-sum property does not hold for $w$.

To prove the claim, we need to consider the elements $x',y'\in\JI{2}{2}$
given by
\[x' = 2341232, \quad y'=2342312\]
which satisfy $x'<x$ and $y'<y$, necessarily socle-killing, and $x'\vee y' > z$.
The sum of socles of $\co_{x'}$ and $\co_{y'}$ thus contains $\so_z$ while it is not contained in $\so_w$.
On the other hand, by Corollary~\ref{halfsosumF}, the socle of $\co_w$ is contained in
$\so_x\oplus\so_y\oplus\so_z$ (the sum is direct since $x,y,z\in\JI{}{}$ with distinct descents).
We conclude that $\so_w \subseteq \so_x\oplus\so_y$, but a strict inclusion would imply $w= x$ or $w = y$ which
is not true. This proves Formula~\eqref{F4ineq}.
\end{example}

\subsection{Type $G$}\label{ssKLG}

Assume that $(W,S)$ is of type $G_2$. We denote the simple reflections by
\begin{tikzpicture}[scale=0.4,baseline=-3]
\protect\draw (4 cm,0.2cm) -- (2 cm,0.2 cm);
\protect\draw (4 cm,0.0cm) -- (2 cm,0.0 cm);
\protect\draw (4 cm,-0.2cm) -- (2 cm,-0.2 cm);

\protect\draw[fill=white] (4 cm, 0 cm) circle (.15cm) node[above=1pt]{\scriptsize $2$};
\protect\draw[fill=white] (2 cm, 0 cm) circle (.15cm) node[above=1pt]{\scriptsize $1$};
\end{tikzpicture}.

The KL polynomials are trivial (see Table \ref{tab:KLG2}) and the submodule structure of the dominant Verma module is well-known. For $x \neq e,w_0$, we have $\soc\Delta_e/\Delta_x=L_{\sigma(x)}\langle -\ell(x)\rangle$ where $\sigma$ is the diagram automorphism swapping the two elements in $S$, and the socle-sum property and Conjecture~\ref{con2JM} hold, for all $w\in W$. 

\begin{table}[ht]
    \centering
\renewcommand{\arraystretch}{1.2}
\scalebox{0.92}{
\begin{tabular}{|l||p{.6cm}|p{.6cm}|}
\hline
 & $1$  &  $2$  \\
\hline\hline
$1$ & \begin{tabular}{@{}l@{}}$v$ \\ $v^{3}$ \\ $v^{5}$ \end{tabular} & \begin{tabular}{@{}l@{}}$v^{2}$ \\ $v^{4}$ \end{tabular}  \\ \hline
$2$ & \begin{tabular}{@{}l@{}}$v^{2}$ \\ $v^{4}$ \end{tabular} & \begin{tabular}{@{}l@{}}$v$ \\ $v^{3}$ \\ $v^{5}$ \end{tabular} \\ \hline
\end{tabular}
} \vskip 10pt
    \caption{Kazhdan-Lusztig polynomials $p_{e,w}$ for $w \in \mathcal{J}$ in $G_2$.}
    \label{tab:KLG2}
\end{table}

\subsection*{Acknowledgments}
The second author was partially supported by
the Swedish Research Council, 
G{\"o}ran Gustafsson Stiftelse and Vergstiftelsen. The third author was supported by G{\"o}ran Gustafsson Stiftelse, Vergstiftelsen, and the QuantiXLie Center of Excellence grant no. KK.01.1.1.01.0004 funded by the European Regional Development Fund.
%This research was partially supported by
%the Swedish Research Council, 
%G{\"o}ran Gustafsson Stiftelse and Vergstiftelsen.
%The third author was also partially supported by the QuantiXLie Center of Excellence grant no. KK.01.1.1.01.0004 funded by the European Regional Development Fund.
For computer assisted calculations we used SageMath v9.0.
We thank Meinolf Geck for sharing the computer code for calculating join-irreducible elements in Bruhat orders. The code runs in the GAP-part of CHEVIE platform, \cite{chevie}.

\vspace{2mm}

\noindent
H.~K.: Department of Mathematics, Uppsala University, Box. 480,
SE-75106, Uppsala,\\ SWEDEN, email: {\tt hankyung.ko\symbol{64}math.uu.se}

\noindent
V.~M.: Department of Mathematics, Uppsala University, Box. 480,
SE-75106, Uppsala,\\ SWEDEN, email: {\tt mazor\symbol{64}math.uu.se}

\noindent
R.~M.: Department of Mathematics, Uppsala University, Box. 480,
SE-75106, Uppsala,\\ SWEDEN, email: {\tt rafaelmrdjen\symbol{64}gmail.com}

%\end{document}

\vspace{2cm}

\section{Appendix: Kazhdan-Lusztig polynomials for $E_7$ and $E_8$}

In the tables below we list the Kazhdan-Lusztig polynomials $p_{e,w}$, for $w \in \mathcal{J}$, for the exceptional types $E_7$ and $E_8$. We put $p_{e,w}$ in the position $(i,j)$ of the table, where $i$ is the left, and $j$ the right ascent for $w$. The computations for $E_7$ were performed in SageMath v.9.0. For $E_8$, some additional tricks were necessary.

\subsection{Type $E_7$}

We denote the simple reflections by
\begin{tikzpicture}[scale=0.4,baseline=-3]
\protect\draw (10 cm,0) -- (8 cm,0);
\protect\draw (8 cm,0) -- (6 cm,0);
\protect\draw (6 cm,0) -- (4 cm,0);
\protect\draw (4 cm,0) -- (2 cm,0);
\protect\draw (2 cm,0) -- (0 cm,0);
\protect\draw (4 cm,0) -- (4 cm,1.5 cm);

\protect\draw[fill=white] (10 cm, 0 cm) circle (.15cm) node[below=1pt]{\scriptsize $7$};
\protect\draw[fill=white] (8 cm, 0 cm) circle (.15cm) node[below=1pt]{\scriptsize $6$};
\protect\draw[fill=white] (6 cm, 0 cm) circle (.15cm) node[below=1pt]{\scriptsize $5$};
\protect\draw[fill=white] (4 cm, 0 cm) circle (.15cm) node[below=1pt]{\scriptsize $4$};
\protect\draw[fill=white] (2 cm, 0 cm) circle (.15cm) node[below=1pt]{\scriptsize $3$};
\protect\draw[fill=white] (4 cm, 1.5 cm) circle (.15cm) node[right=1pt]{\scriptsize $2$};
\protect\draw[fill=white] (0 cm, 0 cm) circle (.15cm) node[below=1pt]{\scriptsize $1$};
\end{tikzpicture}. See Table \ref{tab:KLE7} for the results.

\begin{table}[ht]
    \centering
\renewcommand{\arraystretch}{1.4}
\scalebox{.8}{
\begin{tabular}{c}
\begin{tabular}{|l||p{4.5cm}|p{4.5cm}|p{4.5cm}|p{4.5cm}|} \hline
 & $1$ & $2$ & $3$ & $4$ \\ \hline\hline
$1$ & $v^{62} + v^{56} + v^{52} + v^{46}$ & $v^{59} + v^{55} + v^{53} + v^{49}$ & $v^{61} + v^{57} + v^{55} + v^{53} + v^{51} + v^{47}$ & $v^{60} + v^{58} + v^{56} + 2v^{54} + v^{52} + v^{50} + v^{48}$  \\ \hline
$2$ & $v^{59} + v^{55} + v^{53} + v^{49}$ & $v^{62} + v^{58} + v^{56} + v^{54} + v^{52} + v^{50} + v^{46}$ & $v^{60} + v^{58} + v^{56} + 2v^{54} + v^{52} + v^{50} + v^{48}$ & $v^{61} + v^{59} + 2v^{57} + 2v^{55} + 2v^{53} + 2v^{51} + v^{49} + v^{47}$ \\ \hline
$3$ & $v^{61} + v^{57} + v^{55} + v^{53} + v^{51} + v^{47}$ & $v^{60} + v^{58} + v^{56} + 2v^{54} + v^{52} + v^{50} + v^{48}$ & $v^{62} + v^{60} + v^{58} + 2v^{56} + 2v^{54} + 2v^{52} + v^{50} + v^{48} + v^{46}$ & $v^{61} + 2v^{59} + 2v^{57} + 3v^{55} + 3v^{53} + 2v^{51} + 2v^{49} + v^{47}$ \\ \hline
$4$ & $v^{60} + v^{58} + v^{56} + 2v^{54} + v^{52} + v^{50} + v^{48}$ & $v^{61} + v^{59} + 2v^{57} + 2v^{55} + 2v^{53} + 2v^{51} + v^{49} + v^{47}$ & $v^{61} + 2v^{59} + 2v^{57} + 3v^{55} + 3v^{53} + 2v^{51} + 2v^{49} + v^{47}$ & $v^{62} + 2v^{60} + 3v^{58} + 4v^{56} + 4v^{54} + 4v^{52} + 3v^{50} + 2v^{48} + v^{46}$ \\ \hline
$5$ & $v^{59} + v^{57} + v^{55} + v^{53} + v^{51} + v^{49}$ & $v^{60} + v^{58} + 2v^{56} + v^{54} + 2v^{52} + v^{50} + v^{48}$ & $v^{60} + 2v^{58} + 2v^{56} + 2v^{54} + 2v^{52} + 2v^{50} + v^{48}$ & $v^{61} + 2v^{59} + 3v^{57} + 3v^{55} + 3v^{53} + 3v^{51} + 2v^{49} + v^{47}$ \\ \hline
$6$ & $v^{58} + v^{56} + v^{52} + v^{50}$ & $v^{59} + v^{57} + v^{55} + v^{53} + v^{51} + v^{49}$ & $v^{59} + 2v^{57} + v^{55} + v^{53} + 2v^{51} + v^{49}$ & $v^{60} + 2v^{58} + 2v^{56} + 2v^{54} + 2v^{52} + 2v^{50} + v^{48}$  \\ \hline
$7$ & $v^{57} + v^{51}$ & $v^{58} + v^{54} + v^{50}$ & $v^{58} + v^{56} + v^{52} + v^{50}$ & $v^{59} + v^{57} + v^{55} + v^{53} + v^{51} + v^{49}$ \\ \hline
\end{tabular}
\\ \\
\begin{tabular}{|l||p{4.5cm}|p{4.5cm}|p{4.5cm}|} \hline
 & $5$ & $6$ & $7$ \\ \hline\hline
$1$ & $v^{59} + v^{57} + v^{55} + v^{53} + v^{51} + v^{49}$ & $v^{58} + v^{56} + v^{52} + v^{50}$ & $v^{57} + v^{51}$ \\ \hline
$2$ & $v^{60} + v^{58} + 2v^{56} + v^{54} + 2v^{52} + v^{50} + v^{48}$ & $v^{59} + v^{57} + v^{55} + v^{53} + v^{51} + v^{49}$ & $v^{58} + v^{54} + v^{50}$ \\ \hline
$3$ & $v^{60} + 2v^{58} + 2v^{56} + 2v^{54} + 2v^{52} + 2v^{50} + v^{48}$ & $v^{59} + 2v^{57} + v^{55} + v^{53} + 2v^{51} + v^{49}$ & $v^{58} + v^{56} + v^{52} + v^{50}$ \\ \hline
$4$ & $v^{61} + 2v^{59} + 3v^{57} + 3v^{55} + 3v^{53} + 3v^{51} + 2v^{49} + v^{47}$ & $v^{60} + 2v^{58} + 2v^{56} + 2v^{54} + 2v^{52} + 2v^{50} + v^{48}$ & $v^{59} + v^{57} + v^{55} + v^{53} + v^{51} + v^{49}$ \\ \hline
$5$ & $v^{62} + v^{60} + 2v^{58} + 2v^{56} + 3v^{54} + 2v^{52} + 2v^{50} + v^{48} + v^{46}$ & $v^{61} + v^{59} + v^{57} + 2v^{55} + 2v^{53} + v^{51} + v^{49} + v^{47}$ & $v^{60} + v^{56} + v^{54} + v^{52} + v^{48}$ \\ \hline
$6$ & $v^{61} + v^{59} + v^{57} + 2v^{55} + 2v^{53} + v^{51} + v^{49} + v^{47}$ & $v^{62} + v^{60} + v^{56} + 2v^{54} + v^{52} + v^{48} + v^{46}$ & $v^{61} + v^{55} + v^{53} + v^{47}$ \\ \hline
$7$ & $v^{60} + v^{56} + v^{54} + v^{52} + v^{48}$ & $v^{61} + v^{55} + v^{53} + v^{47}$ & $v^{62} + v^{54} + v^{46}$ \\ \hline
\end{tabular}
\end{tabular}
}
\vskip 10pt
    \caption{Kazhdan-Lusztig polynomials $p_{e,w}$ for $w \in \mathcal{J}$ in $E_7$.}
    \label{tab:KLE7}
\end{table}

\subsection{Type $E_8$}
We did not manage to get the polynomials directly from the computer, due to the complexity of the Weyl group, and the length of the elements from $\jc$, so we use a different approach here. Denote the simple reflections by
\[ \begin{tikzpicture}[scale=0.4,baseline=-3]
\protect\draw (12 cm,0) -- (10 cm,0);
\protect\draw (10 cm,0) -- (8 cm,0);
\protect\draw (8 cm,0) -- (6 cm,0);
\protect\draw (6 cm,0) -- (4 cm,0);
\protect\draw (4 cm,0) -- (2 cm,0);
\protect\draw (2 cm,0) -- (0 cm,0);
\protect\draw (4 cm,0) -- (4 cm,1.5 cm);

\protect\draw[fill=white] (12 cm, 0 cm) circle (.15cm) node[below=1pt]{\scriptsize $8$};
\protect\draw[fill=white] (10 cm, 0 cm) circle (.15cm) node[below=1pt]{\scriptsize $7$};
\protect\draw[fill=white] (8 cm, 0 cm) circle (.15cm) node[below=1pt]{\scriptsize $6$};
\protect\draw[fill=white] (6 cm, 0 cm) circle (.15cm) node[below=1pt]{\scriptsize $5$};
\protect\draw[fill=white] (4 cm, 0 cm) circle (.15cm) node[below=1pt]{\scriptsize $4$};
\protect\draw[fill=white] (2 cm, 0 cm) circle (.15cm) node[below=1pt]{\scriptsize $3$};
\protect\draw[fill=white] (4 cm, 1.5 cm) circle (.15cm) node[right=1pt]{\scriptsize $2$};
\protect\draw[fill=white] (0 cm, 0 cm) circle (.15cm) node[below=1pt]{\scriptsize $1$};
\end{tikzpicture}. \]

The element $w_0$ is central, and we have $\ell(w_0)=120$. According to \cite[Table C.6]{GG}, we have $\mathbf{a}(\mathcal{J})=91$. Since $\mathcal{J}$ is strongly regular, i.e., all $\mathtt{H}$-cells inside $\mathcal{J}$ are singletons, we denote the unique element $\!^i\hc^j$ as $w_{ij}$. These elements can be given explicitly as $w_{ij} = s_{ij} \cdot w_0$, where $s_{ij} \in W$ denotes the product $i \cdots j$ of simple reflections along the unique shortest path starting in $i$ and ending in $j$ in the Dynkin diagram. The Bruhat graph of $\mathcal{J}$ is given on Figure \ref{fig:Bruhat_J_E_8}. The rows are right cells, and the columns are left cells.  We will simplify the notation and use $p_{ij} := p_{e,w_{ij}}$ for $i,j \in S$.

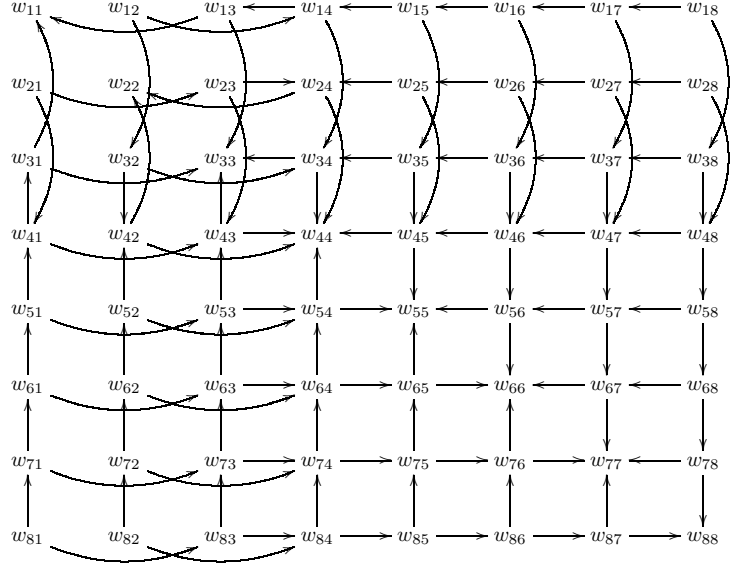
\begin{figure}
    \[ \scalebox{.8}{\xymatrix{
w_{11}  & w_{12} \ar@/_1pc/[rr] \ar@/^1pc/[dd] & w_{13} \ar@/^1pc/[ll] \ar@/^1pc/[dd] & w_{14} \ar[l] \ar@/^1pc/[dd] & w_{15} \ar[l] \ar@/^1pc/[dd] & w_{16} \ar[l] \ar@/^1pc/[dd] & w_{17} \ar[l] \ar@/^1pc/[dd] & w_{18} \ar[l] \ar@/^1pc/[dd] \\
w_{21} \ar@/_1pc/[rr] \ar@/^1pc/[dd] & w_{22}  & w_{23} \ar[r] \ar@/^1pc/[dd] & w_{24} \ar@/^1pc/[ll] \ar@/^1pc/[dd] & w_{25} \ar[l] \ar@/^1pc/[dd] & w_{26} \ar[l] \ar@/^1pc/[dd] & w_{27} \ar[l] \ar@/^1pc/[dd] & w_{28} \ar[l] \ar@/^1pc/[dd] \\
w_{31} \ar@/_1pc/[uu] \ar@/_1pc/[rr] & w_{32} \ar@/_1pc/[rr] \ar[d] & w_{33}  & w_{34} \ar[l] \ar[d] & w_{35} \ar[l] \ar[d] & w_{36} \ar[l] \ar[d] & w_{37} \ar[l] \ar[d] & w_{38} \ar[l] \ar[d] \\
w_{41} \ar[u] \ar@/_1pc/[rr] & w_{42} \ar@/_1pc/[uu] \ar@/_1pc/[rr] & w_{43} \ar[u] \ar[r] & w_{44}  & w_{45} \ar[l] \ar[d] & w_{46} \ar[l] \ar[d] & w_{47} \ar[l] \ar[d] & w_{48} \ar[l] \ar[d] \\
w_{51} \ar[u] \ar@/_1pc/[rr] & w_{52} \ar[u] \ar@/_1pc/[rr] & w_{53} \ar[u] \ar[r] & w_{54} \ar[u] \ar[r] & w_{55}  & w_{56} \ar[l] \ar[d] & w_{57} \ar[l] \ar[d] & w_{58} \ar[l] \ar[d] \\
w_{61} \ar[u] \ar@/_1pc/[rr] & w_{62} \ar[u] \ar@/_1pc/[rr] & w_{63} \ar[u] \ar[r] & w_{64} \ar[u] \ar[r] & w_{65} \ar[u] \ar[r] & w_{66}  & w_{67} \ar[l] \ar[d] & w_{68} \ar[l] \ar[d] \\
w_{71} \ar[u] \ar@/_1pc/[rr] & w_{72} \ar[u] \ar@/_1pc/[rr] & w_{73} \ar[u] \ar[r] & w_{74} \ar[u] \ar[r] & w_{75} \ar[u] \ar[r] & w_{76} \ar[u] \ar[r] & w_{77}  & w_{78} \ar[l] \ar[d] \\
w_{81} \ar[u] \ar@/_1pc/[rr] & w_{82} \ar[u] \ar@/_1pc/[rr] & w_{83} \ar[u] \ar[r] & w_{84} \ar[u] \ar[r] & w_{85} \ar[u] \ar[r] & w_{86} \ar[u] \ar[r] & w_{87} \ar[u] \ar[r] & w_{88} 
}} \]
    \caption{Bruhat graph of the penultimate two-sided cell in type $E_8$.}
    \label{fig:Bruhat_J_E_8}
\end{figure}

\begin{proposition}
The polynomials $p_{ij}$ are given in Table \ref{tab:KLE8}.
\begin{table}[ht]
    \centering
\renewcommand{\arraystretch}{1.4}
\scalebox{.744}{
\begin{tabular}{|l||p{5cm}|p{5cm}|p{5cm}|p{5cm}|} \hline
 & $1$ & $2$ & $3$ & $4$ \\ \hline\hline
$1$ & $v^{119} + v^{113} + v^{109} + v^{107} + v^{103} + v^{101} + v^{97} + v^{91}$ & $v^{116} + v^{112} + v^{110} + v^{108} + v^{106} + v^{104} + v^{102} + v^{100} + v^{98} + v^{94}$ & $v^{118} + v^{114} + v^{112} + v^{110} + 2v^{108} + v^{106} + v^{104} + 2v^{102} + v^{100} + v^{98} + v^{96} + v^{92}$ & $v^{117} + v^{115} + v^{113} + 2v^{111} + 2v^{109} + 2v^{107} + 2v^{105} + 2v^{103} + 2v^{101} + 2v^{99} + v^{97} + v^{95} + v^{93}$ \\ \hline
$2$ & $v^{116} + v^{112} + v^{110} + v^{108} + v^{106} + v^{104} + v^{102} + v^{100} + v^{98} + v^{94}$ & $v^{119} + v^{115} + v^{113} + v^{111} + 2v^{109} + v^{107} + 2v^{105} + v^{103} + 2v^{101} + v^{99} + v^{97} + v^{95} + v^{91}$ & $v^{117} + v^{115} + v^{113} + 2v^{111} + 2v^{109} + 2v^{107} + 2v^{105} + 2v^{103} + 2v^{101} + 2v^{99} + v^{97} + v^{95} + v^{93}$ & $v^{118} + v^{116} + 2v^{114} + 2v^{112} + 3v^{110} + 3v^{108} + 3v^{106} + 3v^{104} + 3v^{102} + 3v^{100} + 2v^{98} + 2v^{96} + v^{94} + v^{92}$ \\ \hline
$3$ & $v^{118} + v^{114} + v^{112} + v^{110} + 2v^{108} + v^{106} + v^{104} + 2v^{102} + v^{100} + v^{98} + v^{96} + v^{92}$ & $v^{117} + v^{115} + v^{113} + 2v^{111} + 2v^{109} + 2v^{107} + 2v^{105} + 2v^{103} + 2v^{101} + 2v^{99} + v^{97} + v^{95} + v^{93}$ & $v^{119} + v^{117} + v^{115} + 2v^{113} + 2v^{111} + 3v^{109} + 3v^{107} + 2v^{105} + 3v^{103} + 3v^{101} + 2v^{99} + 2v^{97} + v^{95} + v^{93} + v^{91}$ & $v^{118} + 2v^{116} + 2v^{114} + 3v^{112} + 4v^{110} + 4v^{108} + 4v^{106} + 4v^{104} + 4v^{102} + 4v^{100} + 3v^{98} + 2v^{96} + 2v^{94} + v^{92}$ \\ \hline
$4$ & $v^{117} + v^{115} + v^{113} + 2v^{111} + 2v^{109} + 2v^{107} + 2v^{105} + 2v^{103} + 2v^{101} + 2v^{99} + v^{97} + v^{95} + v^{93}$ & $v^{118} + v^{116} + 2v^{114} + 2v^{112} + 3v^{110} + 3v^{108} + 3v^{106} + 3v^{104} + 3v^{102} + 3v^{100} + 2v^{98} + 2v^{96} + v^{94} + v^{92}$ & $v^{118} + 2v^{116} + 2v^{114} + 3v^{112} + 4v^{110} + 4v^{108} + 4v^{106} + 4v^{104} + 4v^{102} + 4v^{100} + 3v^{98} + 2v^{96} + 2v^{94} + v^{92}$ & $v^{119} + 2v^{117} + 3v^{115} + 4v^{113} + 5v^{111} + 6v^{109} + 6v^{107} + 6v^{105} + 6v^{103} + 6v^{101} + 5v^{99} + 4v^{97} + 3v^{95} + 2v^{93} + v^{91}$ \\ \hline
$5$ & $v^{116} + v^{114} + v^{112} + 2v^{110} + v^{108} + 2v^{106} + 2v^{104} + v^{102} + 2v^{100} + v^{98} + v^{96} + v^{94}$ & $v^{117} + v^{115} + 2v^{113} + 2v^{111} + 2v^{109} + 3v^{107} + 2v^{105} + 3v^{103} + 2v^{101} + 2v^{99} + 2v^{97} + v^{95} + v^{93}$ & $v^{117} + 2v^{115} + 2v^{113} + 3v^{111} + 3v^{109} + 3v^{107} + 4v^{105} + 3v^{103} + 3v^{101} + 3v^{99} + 2v^{97} + 2v^{95} + v^{93}$ & $v^{118} + 2v^{116} + 3v^{114} + 4v^{112} + 4v^{110} + 5v^{108} + 5v^{106} + 5v^{104} + 5v^{102} + 4v^{100} + 4v^{98} + 3v^{96} + 2v^{94} + v^{92}$ \\ \hline
$6$ & $v^{115} + v^{113} + v^{111} + v^{109} + v^{107} + 2v^{105} + v^{103} + v^{101} + v^{99} + v^{97} + v^{95}$ & $v^{116} + v^{114} + 2v^{112} + v^{110} + 2v^{108} + 2v^{106} + 2v^{104} + 2v^{102} + v^{100} + 2v^{98} + v^{96} + v^{94}$ & $v^{116} + 2v^{114} + 2v^{112} + 2v^{110} + 2v^{108} + 3v^{106} + 3v^{104} + 2v^{102} + 2v^{100} + 2v^{98} + 2v^{96} + v^{94}$ & $v^{117} + 2v^{115} + 3v^{113} + 3v^{111} + 3v^{109} + 4v^{107} + 4v^{105} + 4v^{103} + 3v^{101} + 3v^{99} + 3v^{97} + 2v^{95} + v^{93}$ \\ \hline
$7$ & $v^{114} + v^{112} + v^{108} + v^{106} + v^{104} + v^{102} + v^{98} + v^{96}$ & $v^{115} + v^{113} + v^{111} + v^{109} + v^{107} + 2v^{105} + v^{103} + v^{101} + v^{99} + v^{97} + v^{95}$ & $v^{115} + 2v^{113} + v^{111} + v^{109} + 2v^{107} + 2v^{105} + 2v^{103} + v^{101} + v^{99} + 2v^{97} + v^{95}$ & $v^{116} + 2v^{114} + 2v^{112} + 2v^{110} + 2v^{108} + 3v^{106} + 3v^{104} + 2v^{102} + 2v^{100} + 2v^{98} + 2v^{96} + v^{94}$ \\ \hline
$8$ & $v^{113} + v^{107} + v^{103} + v^{97}$ & $v^{114} + v^{110} + v^{106} + v^{104} + v^{100} + v^{96}$ & $v^{114} + v^{112} + v^{108} + v^{106} + v^{104} + v^{102} + v^{98} + v^{96}$ & $v^{115} + v^{113} + v^{111} + v^{109} + v^{107} + 2v^{105} + v^{103} + v^{101} + v^{99} + v^{97} + v^{95}$ \\ \hline \hline
 & $5$ & $6$ & $7$ & $8$ \\ \hline\hline
$1$ & $v^{116} + v^{114} + v^{112} + 2v^{110} + v^{108} + 2v^{106} + 2v^{104} + v^{102} + 2v^{100} + v^{98} + v^{96} + v^{94}$ & $v^{115} + v^{113} + v^{111} + v^{109} + v^{107} + 2v^{105} + v^{103} + v^{101} + v^{99} + v^{97} + v^{95}$ & $v^{114} + v^{112} + v^{108} + v^{106} + v^{104} + v^{102} + v^{98} + v^{96}$ & $v^{113} + v^{107} + v^{103} + v^{97}$ \\ \hline
$2$ & $v^{117} + v^{115} + 2v^{113} + 2v^{111} + 2v^{109} + 3v^{107} + 2v^{105} + 3v^{103} + 2v^{101} + 2v^{99} + 2v^{97} + v^{95} + v^{93}$ & $v^{116} + v^{114} + 2v^{112} + v^{110} + 2v^{108} + 2v^{106} + 2v^{104} + 2v^{102} + v^{100} + 2v^{98} + v^{96} + v^{94}$ & $v^{115} + v^{113} + v^{111} + v^{109} + v^{107} + 2v^{105} + v^{103} + v^{101} + v^{99} + v^{97} + v^{95}$ & $v^{114} + v^{110} + v^{106} + v^{104} + v^{100} + v^{96}$ \\ \hline
$3$ & $v^{117} + 2v^{115} + 2v^{113} + 3v^{111} + 3v^{109} + 3v^{107} + 4v^{105} + 3v^{103} + 3v^{101} + 3v^{99} + 2v^{97} + 2v^{95} + v^{93}$ & $v^{116} + 2v^{114} + 2v^{112} + 2v^{110} + 2v^{108} + 3v^{106} + 3v^{104} + 2v^{102} + 2v^{100} + 2v^{98} + 2v^{96} + v^{94}$ & $v^{115} + 2v^{113} + v^{111} + v^{109} + 2v^{107} + 2v^{105} + 2v^{103} + v^{101} + v^{99} + 2v^{97} + v^{95}$ & $v^{114} + v^{112} + v^{108} + v^{106} + v^{104} + v^{102} + v^{98} + v^{96}$ \\ \hline
$4$ & $v^{118} + 2v^{116} + 3v^{114} + 4v^{112} + 4v^{110} + 5v^{108} + 5v^{106} + 5v^{104} + 5v^{102} + 4v^{100} + 4v^{98} + 3v^{96} + 2v^{94} + v^{92}$ & $v^{117} + 2v^{115} + 3v^{113} + 3v^{111} + 3v^{109} + 4v^{107} + 4v^{105} + 4v^{103} + 3v^{101} + 3v^{99} + 3v^{97} + 2v^{95} + v^{93}$ & $v^{116} + 2v^{114} + 2v^{112} + 2v^{110} + 2v^{108} + 3v^{106} + 3v^{104} + 2v^{102} + 2v^{100} + 2v^{98} + 2v^{96} + v^{94}$ & $v^{115} + v^{113} + v^{111} + v^{109} + v^{107} + 2v^{105} + v^{103} + v^{101} + v^{99} + v^{97} + v^{95}$ \\ \hline
$5$ & $v^{119} + v^{117} + 2v^{115} + 3v^{113} + 3v^{111} + 4v^{109} + 4v^{107} + 4v^{105} + 4v^{103} + 4v^{101} + 3v^{99} + 3v^{97} + 2v^{95} + v^{93} + v^{91}$ & $v^{118} + v^{116} + 2v^{114} + 2v^{112} + 3v^{110} + 3v^{108} + 3v^{106} + 3v^{104} + 3v^{102} + 3v^{100} + 2v^{98} + 2v^{96} + v^{94} + v^{92}$ & $v^{117} + v^{115} + v^{113} + 2v^{111} + 2v^{109} + 2v^{107} + 2v^{105} + 2v^{103} + 2v^{101} + 2v^{99} + v^{97} + v^{95} + v^{93}$ & $v^{116} + v^{112} + v^{110} + v^{108} + v^{106} + v^{104} + v^{102} + v^{100} + v^{98} + v^{94}$ \\ \hline
$6$ & $v^{118} + v^{116} + 2v^{114} + 2v^{112} + 3v^{110} + 3v^{108} + 3v^{106} + 3v^{104} + 3v^{102} + 3v^{100} + 2v^{98} + 2v^{96} + v^{94} + v^{92}$ & $v^{119} + v^{117} + v^{115} + v^{113} + 2v^{111} + 3v^{109} + 2v^{107} + 2v^{105} + 2v^{103} + 3v^{101} + 2v^{99} + v^{97} + v^{95} + v^{93} + v^{91}$ & $v^{118} + v^{116} + v^{112} + 2v^{110} + 2v^{108} + v^{106} + v^{104} + 2v^{102} + 2v^{100} + v^{98} + v^{94} + v^{92}$ & $v^{117} + v^{111} + v^{109} + v^{107} + v^{103} + v^{101} + v^{99} + v^{93}$ \\ \hline
$7$ & $v^{117} + v^{115} + v^{113} + 2v^{111} + 2v^{109} + 2v^{107} + 2v^{105} + 2v^{103} + 2v^{101} + 2v^{99} + v^{97} + v^{95} + v^{93}$ & $v^{118} + v^{116} + v^{112} + 2v^{110} + 2v^{108} + v^{106} + v^{104} + 2v^{102} + 2v^{100} + v^{98} + v^{94} + v^{92}$ & $v^{119} + v^{117} + v^{111} + 2v^{109} + v^{107} + v^{103} + 2v^{101} + v^{99} + v^{93} + v^{91}$ & $v^{118} + v^{110} + v^{108} + v^{102} + v^{100} + v^{92}$ \\ \hline
$8$ & $v^{116} + v^{112} + v^{110} + v^{108} + v^{106} + v^{104} + v^{102} + v^{100} + v^{98} + v^{94}$ & $v^{117} + v^{111} + v^{109} + v^{107} + v^{103} + v^{101} + v^{99} + v^{93}$ & $v^{118} + v^{110} + v^{108} + v^{102} + v^{100} + v^{92}$ & $v^{119} + v^{109} + v^{101} + v^{91}$ \\ \hline
\end{tabular}}
\vskip 10pt
    \caption{Kazhdan-Lusztig polynomials $p_{e,w}$ for $w \in \mathcal{J}$ in $E_8$.}
    \label{tab:KLE8}
\end{table}
\end{proposition}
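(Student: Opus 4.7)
The plan is to prove the formulas in Table \ref{tab:KLE8} by propagating the recursion of Lemma \ref{prop2p} along the Bruhat graph of $\mathcal{J}$ depicted in Figure \ref{fig:Bruhat_J_E_8}, seeded at corner cells that can be pinned down by hand. For $y \in {}^i\hc^j$ with $i \neq j$ and any left ascent $s$ of $y$, that lemma reduces to $(v+v\inv) p_{e,y} = p_{e,sy} + \sum_{u} p_{e,uy}$, the sum ranging over left descents $u$ of $y$ with $uy \sim_{\mathtt{L}} y$. Figure \ref{fig:Bruhat_J_E_8} records exactly which cells contain $sy$ and the various $uy$, so this reads off as an explicit linear relation among the entries of Table \ref{tab:KLE8}; Proposition \ref{muJ} guarantees that no $\mu$-corrections beyond those visible in the Bruhat graph are needed.

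First I would fix the base cases. The value $\mathbf{a}(\mathcal{J}) = 91$ from \cite[Table C.6]{GG}, combined with the parity/degree bound in \eqref{pdegrees}, forces every $p_{ij}$ to have exponents in $\{91, 93, \ldots, \ell(w_{ij})\}$ all of the same parity as $\ell(w_{ij})$. Each diagonal $w_{ii}$ is a Duflo element, so $v^{91}$ appears in $p_{ii}$ with coefficient $1$. For the extreme corner $w_{88} = 8 \cdot w_0$, applying Lemma \ref{prop2p} with $s = 8$ gives $sy = w_0$ and hence $p_{e, sy} = v^{120}$; combining this identity with non-negativity of the coefficients of $p_{8, w_{88}}$ and the parity/degree bound determines $p_{88}$. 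Analogous boundary arguments deliver row $i = 8$ and column $j = 8$ of the table.

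Next I would solve the recursion inward. Because the $E_8$ Dynkin diagram is a tree, reading the recursion from a boundary vertex toward the center gives, for each off-diagonal cell, a single linear equation $(v+v\inv) p_{ij} = p_{i',j} + (\text{branch terms})$ with $i'$ adjacent to $i$ in the Dynkin diagram, expressing $p_{ij}$ in terms of cells already known. For the diagonal entries $p_{ii}$ the simplified form of Lemma \ref{prop2p} is unavailable, and one must invoke the full Lemma \ref{lem:KL_sz_y} together with the Duflo constraint and previously computed auxiliary polynomials $p_{s,y}$. Finally, it suffices to verify that the polynomials tabulated in Table \ref{tab:KLE8} satisfy every such relation; this is a routine finite symbolic check.

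The main obstacle, and the reason direct computation failed, is the branching at vertex $2$ of the $E_8$ Dynkin diagram: along the branch, the sum $\sum_{u} p_{e,uy}$ couples together cells that are themselves still being determined, so the recursion does not decompose into independent chains as it did for types $B$ and $D$. One remedy is to set up and solve the resulting coupled linear system on the cells adjacent to the branch. A more conceptual alternative is to exploit the centrality of $w_0$ and the duality $\mu(x,y) = \mu(w_0 x\inv, w_0 y\inv)$ of Proposition \ref{muJ} to transport the problem into the small cell $\mathcal{J}_1$, whose elements have unique reduced expressions so that targeted computation of short KL polynomials becomes tractable, and then transfer the answer back to $\mathcal{J}$ via multiplication by $w_0$.
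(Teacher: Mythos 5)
Your overall strategy --- propagate the recursion of Lemma~\ref{prop2p} across the Bruhat graph of $\jc$ in Figure~\ref{fig:Bruhat_J_E_8} starting from boundary data --- is the same first step as the paper's proof, which observes that every $p_{ij}$ is determined by $p_{18}$ via these relations. The gap is in how you obtain the seed. You claim that the single relation at $w_{88}\to w_0$, namely $v\cdot p_{88}+p_{8,w_{88}}=v^{120}+p_{78}$, ``combined with non-negativity of the coefficients of $p_{8,w_{88}}$ and the parity/degree bound determines $p_{88}$.'' It does not: that one equation contains three unknown polynomials ($p_{88}$, $p_{78}$ and $p_{8,w_{88}}$), and non-negativity of $p_{8,w_{88}}$ alone leaves infinitely many solutions consistent with the degree and parity constraints. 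The same problem undermines your unsubstantiated claim that ``analogous boundary arguments deliver row $i=8$ and column $j=8$'': the recursion only expresses the whole last column as explicit Laurent-polynomial multiples of the single unknown $p_{18}$ (e.g.\ $p_{88}=(v^{6}-1+v^{-6})\,p_{18}$ and a similar formula for $p_{78}$); it cannot determine any of these entries outright.

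What actually closes the argument in the paper is a different and stronger positivity input: since $\Delta_8\langle-1\rangle\subset\Delta_e$, the \emph{difference} $p_{88}-v\cdot p_{8,w_{88}}$ has non-negative coefficients. Substituting the expressions of $p_{78}$ and $p_{88}$ in terms of $p_{18}$ into the relation at $w_{88}\to w_0$ turns this into a positivity constraint on a Laurent-polynomial multiple of $p_{18}$ with negative coefficients in the multiplier; combined with the Duflo condition (the coefficient of $v^{91}$ in $p_{88}$ equals $1$, pinning the extreme coefficients of $p_{18}$) and the forced divisibility of $p_{18}$ by $v^{2}+1$, this yields a system of linear inequalities in the seven unknown middle coefficients of $p_{18}$ whose only solution is $p_{18}=v^{113}+v^{107}+v^{103}+v^{97}$. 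Your proposal contains no mechanism of comparable strength. Also, your diagnosis that the branching at vertex $2$ is the main obstacle is misplaced (the recursion propagates through the branch without trouble once $p_{18}$ is known), and the suggested detour through the small cell via $\mu(x,y)=\mu(w_0x\inv,w_0y\inv)$ transports only the $\mu$-values, not the polynomials $p_{e,w}$, so it does not supply the missing seed either.
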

\begin{proof}
By applying Lemma \ref{prop2p} to non-diagonal elements of $\jc$, one can see that all $p_{ij}$'s can be reconstructed from $p_{18}$ (similarly to the proofs from Subsection \ref{ssKLD}), see Figure \ref{fig:Bruhat_J_E_8}. In fact, it is convenient to use the computer to  calculate each $p_{ij}$ from $p_{18}$. So it is enough to prove that $p_{18} = v^{113} + v^{107} + v^{103} + v^{97}$.

Using Lemma \ref{prop2p} along the vertical arrows in the last column in Figure \ref{fig:Bruhat_J_E_8}, we can get
\begin{align}
\label{align:p78}       & p_{78} = \frac{v^6 + v^4 -1 + v^{-4} + v^{-6}}{v + v^{-1}} \cdot p_{18} , \\
\label{align:p88}        &  p_{88}=  (v^6 -1 + v^{-6})  \cdot p_{18} .
\end{align}
Applying again Lemma \ref{prop2p} to $w_{88} \to w_0$, we get
\begin{align}
\label{align:p888}    & v\cdot p_{88} + p_{8,w_{88}} = v^{120} + p_{78}.
\end{align}

From (\ref{eq:p_Delta}) and the fact that $\Delta_8\langle -1 \rangle\subset \Delta_e$, it follows that  $p_{88} - v\cdot p_{8,w_{88}}$ has non-negative coefficients. From (\ref{align:p78}), (\ref{align:p88}) and (\ref{align:p888}) we get
\begin{equation}
\label{eq:non_neg_coeff}
    p_{88} - v\cdot p_{8,w_{88}} = \frac{v^{16} + v^{14} - v^{10} - v^8 -v^6 + v^2 +1}{v^6(v^2+1)} \cdot p_{18} - v^{121}.
\end{equation}
Recall that $w_{88}$ is a Duflo element, so the lowest degree monomial appearing in $p_{88}$ is $v^{\mathbf{a}(\jc)}=v^{91}$, and moreover, it appears with coefficient $1$. Therefore, from (\ref{align:p88}) we see that we can write
\begin{equation*}
    p_{18} = v^{113} + a_1 v^{111} + a_2 v^{109} + a_3 v^{107} + a_4 v^{105} + a_5 v^{103} + a_6 v^{101} + a_7 v^{99} + v^{97}, \qquad a_i \geq 0.
\end{equation*}

From (\ref{eq:non_neg_coeff}) we see that $p_{18}$ must be divisible by $v^2+1$, and therefore
\begin{equation*}
%\label{eq:alt_sum}
    a_1 - a_2 + a_3 - a_4 + a_5 - a_6 +a_7 = 2 .
\end{equation*}
Write also
\begin{equation}
\label{eq:b}
    \frac{p_{18}}{v^6(v^2+1)} = v^{105} + b_1 v^{103} + b_2 v^{101} + b_3 v^{99} + b_4 v^{97} + b_5 v^{95} + b_6 v^{93} + v^{91} , 
\end{equation}
where
\begin{equation}
\label{eq:ab}
    \begin{split}
        &a_1 = 1 + b_1, \\
        &a_i = b_{i-1} + b_i, \quad i=2,3,\ldots, 6, \\
        &a_7 = b_6 +1.
    \end{split}
\end{equation}

By multiplying (\ref{eq:b}) with the numerator in (\ref{eq:non_neg_coeff}), and from the non-negativity of the coefficients in (\ref{eq:non_neg_coeff}), we get the following conditions on $b_i$'s:
\begin{align*}
      &b_2 + b_3 -1 \geq 0 &     &b_1 - b_3 - b_4 - b_5 + 2 \geq 0 \\
   -  &b_1 + b_3 + b_4 - 1 \geq 0 &     &b_1 + b_2 - b_4 - b_5 - b_6 \geq 0 \\
    - &b_1 - b_2 + b_4 + b_5 - 1  \geq 0 &      &b_2 + b_3 - b_5 - b_6 - 1 \geq 0 \\
    - &b_1 - b_2 - b_3 + b_5 + b_6 \geq 0 &      &b_3 + b_4 - b_6 - 1 \geq 0 \\
    - &b_2 - b_3 - b_4 + b_6 + 2 \geq 0 &      &b_4 + b_5 - 1 \geq 0 .
\end{align*}

Under (\ref{eq:ab}), these conditions translate to the following conditions on $a_i$'s:
\begin{align*}
      &a_3 \geq 1  &     &a_2 + 2 \geq a_3 + a_5 \\
      &a_4 \geq a_1 &     &a_2 + 1 \geq a_5 + a_7 \\
      &a_5  \geq a_2+1 &      &a_3 \geq a_6 + 1 \\
      &a_6+1 \geq a_1 + a_3 &      &a_4 \geq a_7 \\
      &a_1 + a_7 \geq a_2 + a_4 &      &a_5 \geq 1 .
\end{align*}
The fact the latter set of conditions implies $a_3 = a_5 = 1$ and $a_1=a_2=a_4=a_6=a_7=0$ is left as an exercise. This finishes the proof.
\end{proof}

%\todo{it might be better to present type E result as a plot with colors instead of tables, by assigning multiplicity a color.}

%\section{Appendix: $\BG{2}{2}$ in type $F_4$}\label{F4B22}
%\[\includegraphics[height = 16cm]{B_22.png}\]

\end{document}